\newcounter{ENUM}
\def\un{\mathrm{un}}
\def\Lie{\mathrm{Lie}}
\title{On families of degenerate representations of $\GL_n(F)$}
\author{Johannes Girsch}
\address{Johannes Girsch, School of Mathematics and Statistics, University of Sheffield, Sheffield, S3 7RH, United Kingdom.}
\email{j.girsch@sheffield.ac.uk}
\author{David Helm}
\address{David Helm, Department of Mathematics, Imperial College, London, SW7 2AZ, United Kingdom.}
\email{dhelm@imperial.ac.uk }
\def\QQ{\mathbb{Q}}
\def\CP{\mathcal{P}}
\def\Spec{\operatorname{Spec}}
\def\ab{\operatorname{ab}}
\def\Hom{\mathrm{Hom}}
\def\Ext{\mathrm{Ext}}
\def\End{\mathrm{End}}
\def\Rep{\mathrm{Rep}}
\def\coker{\mathrm{coker}}
\def\L{\mathrm{L}}
\def\Ind{\mathrm{Ind}}
\def\GL{\mathrm{GL}}
\def\CO{\mathcal{O}}
\def\CK{\mathcal{K}}
\def\mm{\mathfrak{m}}
\def\cInd{\operatorname{c-Ind}}
\def\Rep{\operatorname{Rep}}
\newcommand{\margh}[1]{}
\newtheorem{thm}{Theorem}[section]
\newtheorem{prop}[thm]{Proposition}
\newtheorem{lemma}[thm]{Lemma}
\newtheorem{cor}[thm]{Corollary}
\newtheorem{conj}[thm]{Conjecture}
\theoremstyle{definition}
\newtheorem{definition}[thm]{Definition}
\newtheorem{rem}[thm]{Remark}
\numberwithin{equation}{section}
\begin{document}
\begin{abstract} 
We consider the stratification of the category of smooth representations of $\GL_n(F)$ (for $F$ a $p$-adic field) induced by degenerate Whittaker models.  We show that, remarkably, over algebraically closed fields of characteristic zero, the successive quotients in this stratification turn out to be module categories over {\em commutative} rings.  In fact, they are infinite products of rings of functions on smooth varieties over the ground field.  We further obtain explicit descriptions of these in terms of the Zelevinsky classification; these descriptions closely resemble the rings appearing in the Bernstein-Deligne description of the Bernstein center.
\end{abstract} 

\maketitle
\section{Introduction}

Let $F$ be a finite extension of $\QQ_p$ and ${\mathbb G}$ a connected reductive group over $F$.  Let $G$ denote the group of $F$-points of ${\mathbb G}$.  If $\pi$ is an irreducible smooth representation $\pi$ of $G$, then one may associate to $\pi$ a natural invariant known as its {\em wave front set}.  This is a subset of the set of nilpotent orbits in $\Lie(G)$, consisting of the  orbits ${\mathcal O}$ that are maximal (with respect to orbit closure) among those for which the coefficients $c_{\mathcal O}$ of the local character expansion of $\pi$ is nonzero.

Another way to associate a set of nilpotent orbits to $\pi$ is via the theory of degenerate Whittaker models.  For each nilpotent orbit ${\mathcal O}$ one has a space of (compactly supported) degenerate Whittaker functions $W_{\mathcal O}$ (we refer the reader to section~\ref{sec:whittaker} for the construction of this space in the case ${\mathbb G} = \GL_n$), and one can ask for which ${\mathcal O}$ there is a nonzero map $W_{\mathcal O} \rightarrow \pi$.  

A fundamental result of Moeglin-Waldspurger~\cite{MW} for $p$ odd, and Varma~\cite{varma} for $p=2$, relates these two questions.  More precisely, this result states that the wave front set of $\pi$ coincides with the set of maximal orbits ${\mathcal O}$ such that $\Hom_G(W_{\mathcal O},\pi)$ is nonzero, and moreover that for such orbits the dimension of this space is equal to the coefficient $c_{\mathcal O}$.

When ${\mathbb G} = \GL_n$ these results can be sharpened significantly.  In this setting nilpotent orbits are indexed by partitions, and Moeglin-Waldspurger show that the wave front set of any irreducible representation $\pi$ consists of a single orbit.  The partition corresponding to this orbit is thus a natural invariant of $\pi$.  Moreover, in this setting the relevant Hom-spaces are all one-dimensional; that is, when $\lambda(\pi)$ is the partition associate to $\pi$, then $\Hom_{\GL_n(F)}(W_{{\mathcal O}_{\lambda(\pi)}},\pi)$ has dimension one.  

The partition $\lambda(\pi)$ has several alternative interpretations.  In terms of the local Langlands correspondence, a result of Gourevich-Sahi (\cite{GS}, Theorem B) shows that it is equal to the {\em classification partition} of $\pi$ (that is, the partition conjugate to that describing the orbit of the monodromy operator on the Langlands parameter for the Zelevinsky dual of $\pi$).  On the other hand there is also an interpretation in terms of Bernstein-Zelevinsky derivatives: if $\lambda(\pi)$ is given by the sequence $\lambda_1 \geq \lambda_2 \geq \dots \geq \lambda_r$, then for each $i$, $\lambda_i$ is the largest integer such that the iterated derivative $(((\pi^{(\lambda_1)})^{(\lambda_2)})^{\dots})^{(\lambda_i)}$ is nonzero.  We thus refer to $\lambda$ as the highest derivative partition of $\pi$; these are discussed in detail in section~\ref{sec:highest}, below.

The association $\pi \mapsto \lambda(\pi)$ naturally defines a stratification of the category $\Rep(\GL_n(F))$ of smooth representations of $\GL_n(F)$.  Indeed, for each $\lambda$ one can define the full subcategories $\Rep(\GL_n(F))^{\preceq \lambda}$ (resp. $\Rep(\GL_n(F))^{\prec \lambda}$) consisting of smooth representations $V$ for which every irreducible subquotient $\pi$ of $V$ satisfies $\lambda(\pi) \preceq \lambda$ (resp. $\lambda(\pi) \prec \lambda$). (Here $\preceq$ is the natural dominance order on partitions, which corresponds to the closure ordering on nilpotent orbits.)  From these two subcategories one can also form the Serre quotient:
$$\Rep(\GL_n(F))^{\lambda} := \Rep(\GL_n(F))^{\preceq \lambda}/\Rep(\GL_n(F))^{\lambda}.$$
One can think of this quotient as a summand of the ``associated graded'' category of $\Rep(\GL_n(F))$ with respect to this stratification.  

The goal of this paper is to study the structure of the categories $\Rep(\GL_n(F))^{\lambda}$.  To this end we first construct projective objects $W'_{\lambda}$ of $\Rep(\GL_n(F))$, that are closely related to (in fact direct summands of) the spaces of compact degenerate Whittaker functions $W_{{\mathcal O}_{\lambda}}$.  Like the latter, the $W'_{\lambda}$ have the property that $\Hom(W'_{\lambda},\pi)$ is zero unless $\lambda(\pi) \preceq \lambda$, and is one-dimensional if $\lambda(\pi) = \lambda$.  Unlike the $W_{{\mathcal O}_{\lambda}}$, the representations $W'_{\lambda}$ have good finiteness properties (their projections to individual Bernstein blocks are finitely generated), and this allows us to easily establish their projectivity.  (It is possible that the $W_{{\mathcal O}_{\lambda}}$ are also projective but this is not known.  If so, one could argue with the $W_{{\mathcal O}_{\lambda}}$ in place of the $W'_{\lambda}$ in what follows.)

From the $W'_{\lambda}$ we construct projective objects $\CP_{\lambda}$ of $\Rep(\GL_n(F))^{\lambda}$.  Indeed, the inclusion of this category in $\Rep(\GL_n(F))$ is exact, and thus its left adjoint $V \mapsto V^{\preceq \lambda}$ that takes projectives to projectives.  We set $\CP_{\lambda} = (W'_{\lambda})^{\preceq \lambda}$.  This is a projective object of $\Rep(\GL_n(F))^{\preceq \lambda}$, and the functor $\Hom(\CP_{\lambda}, -)$ is an equivalence between the Serre quotient $\Rep(\GL_n(F))^{\lambda}$ and the category of modules over the endomorphism ring $E_{\lambda}$ of $\CP_{\lambda}$.

In light of this, the remainder of the paper is devoted to a study of the rings $E_{\lambda}$.  Remarkably, in many settings these rings turn out to be commutative!  Our main result,  Theorem~\ref{thm:main}, covers the case where the coefficient field is algebraically closed of characteristic zero. In this context the rings in question turn out to be (infinite products of) rings of functions on smooth varieties over the ground field, and can be described in a very concrete and explicit fashion, that closely resembles the Bernstein-Deligne description of the Bernstein center.

Our constructions work well in a modular or mixed-characteristic setting as well, and we conjecture that the rings $E_{\lambda}$ remain commutative in these contexts, although this seems much harder to prove.

The proof of this result in characteristic zero occupies sections~\ref{sec:char zero ends} through~\ref{sec:proof} of the paper, together with the appendix.  The most difficult part of the argument is the commutativity; in section~\ref{sec:char zero ends} we work locally, and reduce this statement to a computation of $I^{\preceq \lambda}$, where $I$ is a certain parabolic induction.  This computation (Theorem~\ref{thm:truncate}) is the technical heart of the paper, and proceeds via Bernstein-Zelevinsky theory; since the approach to this computation is largely disjoint from the rest of the paper we postpone the proof of this theorem to an appendix (section~\ref{sec:appendix}.)

Once commutativity is established we build a theory of families of objects of $\Rep(\GL_n(F))^{\preceq \lambda}$ in section~\ref{sec:families}, in a manner that exactly follows the development of the theory of co-Whittaker families in earlier work of the second author~\cite{whittaker}.  (Indeed, the theory of co-Whittaker families manifests in our more general setting, precisely as the case where $\lambda$ is the one-element partition.)  

The theory of families in particular gives us an interpretation of tangent vectors to $\Spec E_{\lambda}$ as families over a ring of dual numbers, which we use to give a description of this tangent space in terms of self-extensions of a certain ``standard module'' that arises as the parabolic induction of a tensor product of representations associated to Zelevinsky segments.  We compute these self-extensions in sections \ref{sec:segment} and \ref{sec:exts}; this self-extension computation (Theorem~\ref{thm:ext}) generalizes an earlier result of Chan (\cite{chan}, Lemma 7.3) and may be of independent interest.  The upshot of the computation is that $E_{\lambda}$ is a product of smooth algebras over the ground field.  Once this is established the rest of the argument is reasonably straightforward, and boils down to the fact that a map of smooth varieties that is bijective on points is an isomorphism; the details are given in section~\ref{sec:proof}.

The upshot of these results is a very precise structure theory for families of representations of type $\lambda$ over algebraically closed fields of characteristic zero.  We expect this to have several applications. For instance, the ``twisted doubling method in algebraic families'' of the first author \cite{doubling} aims to build gamma factors of pairs of families of representations of the group $G \times \GL_k$, for $G$ classical.  A key step in this construction is to replace the family of representations of $\GL_k$ with a family of representations of $\GL_{nk}$ for a suitable $n$; the representations in this latter family have highest derivative partition $(k,\dots,k)$.  We expect our structure theory for such representations to provide such a construction, and will address this question in a future revision of this paper.

Our stratification also seems to be closely related to a construction of Schneider-Zink~\cite{schneider-zink}. They also construct a stratification of $\Rep(\GL_n(F))$, indexed by partitions, that is closely related to the Langlands classification.  They show that the Serre quotients arising from this stratification corresponding to module categories over rings that are commutative {\em modulo nilpotents}.  We expect their stratification to be related to ours via a ``Zelevinsky involution''.  More precisely, we expect that Bernstein duality should provide a derived anti-equivalence on $\Rep(\GL_n(F))$ that intertwines their stratification with ours.  Such a result would imply that the rings arising in their theory are commutative as well.

\noindent
{\bf Acknowledgements:} The authors are grateful to Robert Kurinczuk, Shaun Stevens, and Eugen Hellmann for helpful conversations on the subject, as well as to Raphael Beuzart-Plessis for pointing out a simplification to a key construction of ours. The first author was supported by the Engineering and Physical Sciences Research Council EP/V001930/1 and EP/L015234/1, the EPSRC Centre for Doctoral Training in Geometry and Number Theory (The London School of Geometry and Number Theory), University College London and Imperial College London. The second author was partially supported by EPSRC New Horizons grant EP/V018744/1.

\section{The highest derivative partition} \label{sec:highest}
Henceforth let $G_n$ denote the group $\GL_n(F)$. We will be interested in representations of $G_n$ over an algebraically closed field $k$ of finite characteristic $\ell \neq p$, over the Witt vectors $W(k),$ which we denote by $\CO$, and over the field of fractions $\CK$ of $\CO$. We denote by $\Rep_{\CO}(G_n)$ the category of smooth $\CO[G_n]$-modules, and let $\Rep_k(G_n)$ and $\Rep_{\CK}(G_n)$ be the full subcategories of smooth $k[G_n]$ and $\CK[G_n]$-modules, respectively.
m
For $1 \leq k \leq n$ we denote by $V \mapsto V^{(k)}$ the $k$th derivative functor of Bernstein-Zelevinsky; it is an exact functor from $\Rep_{\CO}(G_n)$ to $\Rep_{\CO}(G_{n-k})$ (where we take $G_0$ to be the trivial group when $n=k$.)

Fix a Borel subgroup $B_n$ of $G_n$ with unipotent radical $U_n$, and a generic character $\psi: U_n \rightarrow \CO^{\times}$.  The representation $W_n = \cInd_{U_n}^{G_n} \psi$ is, up to isomorphism, independent of these choices, and is closely related to the highest derivative functor $V \mapsto V^{(n)}$.  Indeed, if $V$ is a finitely generated $\CO[G_n]$-module, or more generally a module such that $eV = V$ for $e$ a finite sum of primitive idempotents in the center $Z_n$ of $\Rep_{\CO}[G_n]$, then there is a natural isomorphism $\Hom_{G_n}(W_n,V) \cong V^{(n)}$.  (In what follows, we will refer to such $V$ as having ``finite Bernstein support''.)

This description extends to natural description of the $k$th derivative functor for all $k$.  Let $P_{k,n-k}$ be the standard parabolic of $G_n$ whose standard Levi subgroup $M_{k,n-k}$ is block diagonal with block sizes $k$ and $n-k$.  For $V$ with finite Bernstein support we then have an isomorphism:
$$V^{(k)} \cong \Hom_{G_k}(W_k, r_{G_n}^{P_{k,n-k}} V)$$
where $r_{G_n}^{P_{k,n-k}}$ denotes the parabolic restriction functor from $\Rep_{\CO}(G_n)$ to $\Rep_{\CO}(M_{k,n-k})$ and $G_k$ acts on this parabolic restriction by its identification with the $k$ by $k$ block in $M_{k,n-k}$.

From this description it follows that one can define iterated derivatives in terms of parabolic restriction as well: if $k_1 + \dots + k_r = n$, and $P$ is the standard parabolic with block sizes $k_1,\dots,k_r$, with Levi subgroup $M$, then for any $V$ with finite Bernstein support we have an isomorphism:
$$((V^{(k_1)})^{(k_2)\dots})^{(k_r)} \cong \Hom_M(W_M, r_{G_n}^P V),$$
where $W_M$ denotes the tensor product of the $W_{k_i}$, considered as a representation of $M$.  In particular this means that if $k'_i$ is any permutation of the $k_i$, then one has an isomorphism:
$$((V^{(k_1)})^{(k_2)\dots})^{(k_r)} \cong ((V^{(k'_1)})^{(k'_2)\dots})^{(k'_r)}.$$

If $V$ is an irreducible representation of $G_n$, then the highest derivative of $V$ is the largest $k$ such that $V^{(k)}$ is nonzero; in this case $k \geq 1$ and $V^{(k)}$ is irreducible.  We can thus define a finite sequence of integers $k_1, \dots, k_r$ such that $k_i$ is the highest derivative of $V_{i-1}$ for all $i$ (taking $V_0 = V$), and $k_1 + \dots + k_r = n$.  We will call this (provisionally) the {\em highest derivative sequence} of $V$.  From the commutativity of iterated derivatives proved in the previous paragraph, we immediately obtain:

\begin{lemma}
Let $V$ be an irreducible representation of $G_n$, and $k_1,\dots,k_r$ its highest derivative sequence.  Then $k_1 \geq \dots\geq k_r$; that is, the $k_i$ are a partition of $n$.
\end{lemma}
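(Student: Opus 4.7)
The plan is to argue by induction on $i$ that $k_i \geq k_{i+1}$, which will establish the weakly decreasing condition at every consecutive pair. By definition of the highest derivative sequence, $V_i := (((V^{(k_1)})^{(k_2)})^{\dots})^{(k_i)}$ is irreducible with highest derivative $k_{i+1}$. Since irreducibility is preserved at each step, it suffices to prove the single statement: if $W$ is an irreducible representation of some $G_m$ with highest derivative $a$, and $W^{(a)}$ has highest derivative $b$, then $a \geq b$. Applied with $W = V_{i-1}$, $a = k_i$, $b = k_{i+1}$, this yields the lemma.

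To prove this reduced statement, I would proceed by contradiction: suppose $b > a$. By definition, $(W^{(a)})^{(b)} \neq 0$, since $b$ is the highest derivative of $W^{(a)}$. Now apply the commutativity of iterated derivatives discussed immediately before the lemma statement: for any sequence $k_1, \dots, k_r$ summing to $m$, the iterated derivative is invariant under permutation of the $k_i$. Applied with the sequence $(a, b, m - a - b)$ (padding with a trivial final step, or directly with $(a,b)$ if $a + b = m$, interpreted via the isomorphism with $\Hom_M(W_M, r_{G_m}^P W)$ where $M$ has block sizes $a$ and $b$ inside $G_{a+b}$), we obtain $(W^{(b)})^{(a)} \cong (W^{(a)})^{(b)} \neq 0$. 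In particular $W^{(b)} \neq 0$. But $b > a$ contradicts the fact that $a$ is the highest integer for which $W^{(a)}$ is nonzero.

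The only mild subtlety is ensuring that the commutativity statement quoted in the excerpt, which is formulated for compositions of a partition of $n$, applies in the setting where $a + b$ may be strictly less than $m$. This is handled either by invoking the general form via the parabolic $P_{a,b,m-a-b}$ and noting that the equality of iterated derivatives is an equality of representations of $G_{m-a-b}$, then taking the underlying vector space is nonzero; or by observing that the two-step commutativity $(W^{(a)})^{(b)} \cong (W^{(b)})^{(a)}$ follows directly from the isomorphism $(\,\cdot\,)^{(a+b)}$-style identification via $\Hom_{G_a \times G_b}(W_a \boxtimes W_b, r_{G_m}^{P_{a,b,m-a-b}} W)$ and the symmetry of the Hom in the two factors of the Levi. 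Either way, no genuine obstacle arises, and the lemma follows immediately.
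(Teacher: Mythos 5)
Your argument is exactly the one the paper has in mind: the lemma is stated as an immediate consequence of the commutativity of iterated derivatives established in the preceding paragraph, and your contradiction argument (if $k_{i+1} > k_i$ then $(V_{i-1}^{(k_{i+1})})^{(k_i)} \cong (V_{i-1}^{(k_i)})^{(k_{i+1})} \neq 0$, so $V_{i-1}^{(k_{i+1})} \neq 0$, contradicting maximality of $k_i$) is precisely the intended deduction. Your handling of the case $a + b < m$ via padding or the Hom-description with the parabolic $P_{a,b,m-a-b}$ is the right way to make the appeal to commutativity rigorous.
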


We thus refer to the sequence $\lambda = k_1, \dots, k_r$ as above as the {\em highest derivative partition} of $V$.  For $\lambda$ a partition we will denote by $V^{\lambda}$ the corresponding iterated derivative $((V^{(\lambda_1)})^{(\lambda_2)\dots})^{(\lambda_r)}$.

The highest derivative has the following useful ``multiplicativity'' property:

\begin{lemma}
Let $V$ and $W$ be irreducible representations of $G_n$ and $G_m$, respectively, and let $i$ and $j$ denote the highest derivatives of $V$ and $W$, respectively.  Then $i + j$ is the highest derivative of the parabolic induction $i_{P_{n,m}}^{G_{n+m}} V \otimes W$, and one has a natural isomorphism:
$$(i_{P_{n,m}}^{G_{n+m}} V \otimes W)^{(i + j)} \cong i_{P_{n-i,m-j}}^{G_{n+m-i-j}} V^{(i)} \otimes W^{(j)}.$$
\end{lemma}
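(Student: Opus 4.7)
The plan is to apply the Bernstein--Zelevinsky geometric lemma, which gives a Leibniz-type formula for the derivatives of a parabolic induction. Specifically, for any smooth representations $V$ of $G_n$ and $W$ of $G_m$, and any $k$ with $0 \leq k \leq n+m$, the representation $(i_{P_{n,m}}^{G_{n+m}} V \otimes W)^{(k)}$ carries a finite filtration whose associated graded pieces are (up to twists) of the form
$$i_{P_{n-a,m-b}}^{G_{n+m-k}} \bigl( V^{(a)} \otimes W^{(b)} \bigr), \qquad a+b = k, \ 0 \leq a \leq n, \ 0 \leq b \leq m.$$
This is exactly the content of the classical Bernstein--Zelevinsky analysis of the restriction of an induced representation to the mirabolic subgroup; see \cite{BZ}.

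Now set $k = i+j$. Since $V$ and $W$ are irreducible with highest derivatives $i$ and $j$ respectively, we have $V^{(a)} = 0$ whenever $a > i$ and $W^{(b)} = 0$ whenever $b > j$. Among the pairs $(a,b)$ with $a+b = i+j$ appearing in the Leibniz filtration, the only one with both $V^{(a)}$ and $W^{(b)}$ nonzero is $(a,b) = (i,j)$. Hence the filtration collapses to a single nonzero term, producing a natural isomorphism
$$\bigl( i_{P_{n,m}}^{G_{n+m}} V \otimes W \bigr)^{(i+j)} \cong i_{P_{n-i,m-j}}^{G_{n+m-i-j}} V^{(i)} \otimes W^{(j)},$$
as desired. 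Moreover, the right hand side is nonzero since $V^{(i)}$ and $W^{(j)}$ are irreducible and parabolic induction preserves nonvanishing.

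For $k > i+j$, every pair $(a,b)$ with $a+b = k$ satisfies either $a > i$ or $b > j$, so every graded piece of the Leibniz filtration vanishes, and hence $(i_{P_{n,m}}^{G_{n+m}} V \otimes W)^{(k)} = 0$. This shows $i+j$ is precisely the highest derivative of the parabolic induction, completing the proof. The only nontrivial input is the Bernstein--Zelevinsky Leibniz formula; once this is in hand, the vanishing of all but one term is automatic from the definition of the highest derivative.
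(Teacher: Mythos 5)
The paper states this lemma without proof; it is a classical result of Zelevinsky, and your argument via the Bernstein--Zelevinsky Leibniz filtration (Corollary 4.14 of \cite{BZI}) is the standard one. The reasoning is correct: for $k = i+j$ the filtration on $(i_{P_{n,m}}^{G_{n+m}} V \otimes W)^{(k)}$ has exactly one potentially nonzero graded piece, namely the one indexed by $(a,b) = (i,j)$, so the filtration collapses and gives the stated natural isomorphism; for $k > i+j$ every graded piece vanishes, so the $k$-th derivative is zero. The final nonvanishing follows since $V^{(i)}$ and $W^{(j)}$ are nonzero by definition of the highest derivative and parabolic induction of a nonzero representation is nonzero. (One small remark: you do not actually need irreducibility of $V^{(i)}$, $W^{(j)}$ here, only their nonvanishing, though irreducibility does hold by another theorem of Zelevinsky.)
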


From this we can inductively conclude a similar ``multiplicativity'' for the highest derivative partition:
\begin{cor} \label{cor:iterated derivative multiplicativity}
Let $V$ and $W$ be irreducible representations of $G_n$ and $G_m$, respectively, and let $\lambda$ and $\lambda'$ denote their highest derivative partitions.  Then $\lambda + \lambda'$ is the highest derivative partition of $i_{P_{n,m}}^{G_{n+m}} V \otimes W$, and there is a natural isomorphism:
$$(i_{P_{n,m}}^{G_{n+m}} V \otimes W)^{\lambda + \lambda'} \cong V^{\lambda} \otimes W^{\lambda'}.$$
\end{cor}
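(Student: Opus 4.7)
The plan is to induct on $n + m$, with the preceding Lemma doing all the substantive work. In the base case $n = 0$ (or symmetrically $m = 0$), $V$ is the trivial representation of $G_0$, $\lambda$ is empty, and $I := i_{P_{n,m}}^{G_{n+m}} V \otimes W \cong W$, so the statement is immediate from $\emptyset + \lambda' = \lambda'$.

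For the inductive step with $n, m \geq 1$, set $i = \lambda_1$ and $j = \lambda'_1$, the highest derivatives of $V$ and $W$. The preceding Lemma produces the first part of the target partition: $i + j$ is the highest derivative of $I$, with a natural isomorphism
$$I^{(i+j)} \cong i_{P_{n-i,m-j}}^{G_{n+m-i-j}} V^{(i)} \otimes W^{(j)}.$$
By definition of the highest derivative partition, $V^{(i)}$ and $W^{(j)}$ are irreducible with highest derivative partitions $(\lambda_2, \ldots, \lambda_r)$ and $(\lambda'_2, \ldots, \lambda'_s)$ respectively. Since $i + j \geq 2$, the total dimension strictly decreases, so the inductive hypothesis applies to this new pair: the highest derivative partition of the right-hand side is $(\lambda_2, \ldots, \lambda_r) + (\lambda'_2, \ldots, \lambda'_s)$, and its full iterated derivative is $V^{\lambda} \otimes W^{\lambda'}$.

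Concatenating the two steps yields that the highest derivative partition of $I$ is $\lambda + \lambda'$ and $I^{\lambda + \lambda'} \cong V^{\lambda} \otimes W^{\lambda'}$. The fact that $\lambda + \lambda'$ is genuinely weakly decreasing (hence a partition) is immediate from the same property for $\lambda$ and $\lambda'$ individually. I do not anticipate any real obstacle: the preceding Lemma is the whole engine, and this Corollary is a formal iteration of it, the only minor point of care being cases where $\lambda$ and $\lambda'$ have different lengths---handled transparently by the induction on $n + m$, which terminates precisely when one of the two factors becomes the trivial representation of $G_0$.
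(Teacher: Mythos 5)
Your proof is correct and follows exactly the route the paper intends: the paper states the Corollary as an inductive consequence of the preceding Lemma, and your induction on $n+m$ with the Lemma as the engine is precisely that iteration, filled out with the obvious base case $n=0$ or $m=0$.
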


These iterated derivative functors are represented by natural projective objects in $\Rep_{\CO}(G_n)$ (at least with respect to the subcategory of objects with finite Bernstein support).  More precisely, for any $\lambda$, let $P$ be the standard parabolic with block sizes corresponding to the $\lambda_i$, and $M$ its standard Levi subgroup.  Then, if $P^{\circ}$ denotes the opposite parabolic to $P$, Bernstein's second adjointness gives an isomorphism:
$$\Hom_{G_n}(i_{P^{\circ}}^{G_n} W_M, V) \cong \Hom_M(W_M, r_{G_n}^P V),$$
and we have seen that the latter is necessarily isomorphic to $V^{\lambda}$ for any $V$ with finite Bernstein support.

Let $W'_{\lambda}$ denote the parabolic induction $i_{P^{\circ}}^{G_n} W_M$.  We have:

\begin{thm}
The $W'_{\lambda}$ are projective objects of $\Rep_{\CO}(G_n)$.  Moreover, for any primitive idempotent $e$ of the center $Z_n$, the representation $eW'_{\lambda}$ is finitely generated.
\end{thm}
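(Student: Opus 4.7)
The strategy is to reduce projectivity of $W'_\lambda$ via Bernstein's second adjointness to projectivity of $W_M$ as a smooth $M$-module, which can then be established blockwise using exactness of iterated Bernstein–Zelevinsky derivatives. The finite-generation claim follows by combining this blockwise structure with standard finiteness properties of parabolic induction and the known finiteness for the Gelfand–Graev module $W_k$.

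For the projectivity assertion, Bernstein's second adjointness (used just before the theorem statement) exhibits $i_{P^\circ}^{G_n}$ as the left adjoint of the exact parabolic restriction functor $r_{G_n}^P$, so parabolic induction preserves projectives. It therefore suffices to show $W_M = \bigotimes_i W_{\lambda_i}$ is projective in $\Rep_\CO(M)$. By Frobenius reciprocity for compact induction, $\Hom_M(W_M, V) \cong V^{U_M, \psi_M}$, where $U_M = \prod_i U_{\lambda_i}$ and $\psi_M$ is the product character. Decomposing $V = \bigoplus_{e'} e'V$ via the Bernstein center $Z_M$, this Whittaker-functional functor commutes with direct sums (it is simply a subspace), so exactness reduces to exactness on each Bernstein block of $M$. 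On each such block every object has finite Bernstein support, and the Frobenius-reciprocity / second-adjointness argument used in the excerpt to identify $\Hom_{G_n}(W_n, V) \cong V^{(n)}$ — adapted factor-by-factor to the product group $M = \prod_i G_{\lambda_i}$ via the factorization $W_M = \bigotimes_i W_{\lambda_i}$ — identifies $V^{U_M, \psi_M}$ with an iterated BZ derivative on the factors of $M$, a composition of exact functors, and hence exact. This gives projectivity of each $e' W_M$ in its block, and therefore projectivity of $W_M$ in $\Rep_\CO(M)$.

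For the finite-generation claim, I would combine three standard facts: (i) parabolic induction preserves finite generation; (ii) projection to a Bernstein block preserves finite generation; and (iii) the map on inertial classes induced by parabolic induction has finite fibers, so only finitely many primitive idempotents $e'_1,\ldots,e'_m$ of $Z_M$ satisfy $e \cdot i_{P^\circ}^{G_n}(e'_j W_M) \neq 0$. Setting $f = \sum_j e'_j$, we obtain $e W'_\lambda = e \cdot i_{P^\circ}^{G_n}(f W_M)$. Since $W_M = \bigotimes_i W_{\lambda_i}$ and Bernstein blocks of $M$ factor as tensor products of Bernstein blocks of the $G_{\lambda_i}$, each $e'_j W_M$ is itself a tensor product of blockwise pieces $e'' W_{\lambda_i}$. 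The problem thus reduces to finite generation of $e'' W_k$ for each primitive idempotent $e'' \in Z_k$, which is a well-known property of the Gelfand–Graev module of $\GL_k(F)$.

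The main technical obstacle is the extension of the identification $\Hom_M(W_M, V) \cong V^\lambda$ from the $V$ of the form $r_{G_n}^P V'$ treated in the excerpt to an arbitrary $M$-module with finite Bernstein support. This requires re-running the Frobenius-reciprocity computation factor-by-factor on the product group $M$, using compatibility of Bernstein blocks of $M$ with external tensor products of blocks on the factors; beyond this the argument is a formal composition of familiar properties of parabolic induction and the Bernstein decomposition.
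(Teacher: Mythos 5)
Your overall strategy tracks the paper's proof closely: both arguments rest on the exactness of the iterated-derivative functor, represented by $W_M$ via second adjointness, together with the observation that projectivity and finite generation can be checked block-by-block because $W'_\lambda = \bigoplus_e eW'_\lambda$. The paper simply observes $\Hom_{G_n}(eW'_\lambda, V) \cong (eV)^\lambda$ is a composite of exact functors; you organize it in two steps (projectivity of $W_M$, then $i_{P^\circ}$ preserves projectives since it is left adjoint to $r_{G_n}^P$), which is equivalent. Your finite-generation argument is somewhat more carefully spelled out than the paper's terse version, but is in the same spirit.

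The one genuine problem is in your justification of the representability statement. You write that ``by Frobenius reciprocity for compact induction, $\Hom_M(W_M, V)\cong V^{U_M,\psi_M}$'' and then identify this with the iterated derivative. Both steps are wrong as stated. Compact induction $\cInd_{U_M}^{M}$ is left adjoint to restriction only when the inducing subgroup is \emph{open}; $U_M$ is a maximal unipotent, which is closed but far from open, so there is no Frobenius reciprocity for $\cInd_{U_M}^M$. Moreover, even where such an adjunction does hold it produces the $\psi_M$-\emph{eigenspace} of $V$, whereas the Bernstein--Zelevinsky derivative $V^\lambda$ is a twisted \emph{coinvariant} (Jacquet) functor; these two spaces do not agree in general. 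The identity $\Hom_{G_k}(W_k, V) \cong V^{(k)}$ is a genuine theorem, valid precisely for $V$ with finite Bernstein support (this is why the paper restricts to such $V$, and why you must work blockwise), and it does not reduce to any version of Frobenius reciprocity. The paper cites it without proof; your proposal mis-attributes it. What you flag as ``the main technical obstacle''---extending from $r_{G_n}^P V'$ to arbitrary $V$ over the product group $M$---is actually routine; the real input is the single-factor representability statement, and for that you need the correct mechanism (the Bernstein--Zelevinsky structure theory of $W_k$, not adjunction of $\cInd$). If you replace your reciprocity step with a citation of that fact, the rest of your argument goes through.
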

\begin{proof}
Let $e$ be a primitive idempotent of $Z_n$, and let $e_M$ be a primitive summand of the corresponding idempotent of the center of $\Rep_{\CO}(M)$.  Then $e_M W_M$ is a finitely generated $\CO[M]$-module, and the parabolic induction $i_{P^{\circ}}^{G_n} e_M W_M$ is isomorphic to $e W'_{\lambda}$.  Thus $e W'_{\lambda}$ is finitely generated.  It is projective because we have an isomorphism of functors:
$$\Hom_{G_n}(e W'_{\lambda}, V) \cong (eV)^{\lambda}.$$ 
Since $W'_{\lambda}$ is the direct sum of the $eW'_{\lambda}$ it is projective as well.
\end{proof}

The representation $W'_{\lambda}$ has an alternative description as a compact induction from a closed subset.  Indeed, let $\psi_{\lambda}$ be the character of $U$ given by 
$$\psi_{\lambda}(U) = \sum\limits_{i \in S_{\lambda}} \psi(u_{i,i+1})$$
for some nontrivial character $\psi: F\rightarrow \CO^{\times}$, where the set $S_{\lambda}$ is given by 
$$S_{\lambda} = \{1, 2, \dots, n\} \setminus \{\lambda_r, \lambda_r + \lambda_{r-1}, \dots, \lambda_r + \dots + \lambda_1\}.$$
We then have:

\begin{lemma} \label{lemma:W' induction}
There is a natural isomorphism $W'_{\lambda} \cong \cInd_{U_n}^{G_n} \psi_{\lambda}.$
\end{lemma}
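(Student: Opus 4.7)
The plan is to prove the isomorphism via induction in stages together with the twist-invariance of Whittaker representations. First I would decompose $U_n = U_M \ltimes N$, where $N$ is the unipotent radical of the standard parabolic $P$ whose Levi is $M$, and $U_M = U_n \cap M$ is the product of the upper unipotents of the blocks. A direct inspection of the definition of $S_\lambda$ shows that $\psi_\lambda$ restricted to $U_M$ coincides with the Whittaker character $\psi_M = \bigotimes_i \psi_{\lambda_i}$, while $\psi_\lambda$ is trivial on $N$; this is because $S_\lambda$ omits precisely the block-boundary positions, which are exactly the simple root indices parameterizing $N$.

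Next, since $U_n \subset P$, transitivity of compact induction gives $\cInd_{U_n}^{G_n}\psi_\lambda \cong \cInd_{P}^{G_n}\bigl(\cInd_{U_n}^{P}\psi_\lambda\bigr)$. A short direct computation, using the triviality of $\psi_\lambda$ on $N$ and the fact that $M$ normalizes $N$, identifies $\cInd_{U_n}^P \psi_\lambda$ with $W_M$ viewed as a $P$-representation with $N$ acting trivially. Because $G_n/P$ is compact, $\cInd_P^{G_n}$ agrees with unnormalized smooth induction $\operatorname{Ind}_P^{G_n}$. To pass to the normalized parabolic induction, I would use that each $W_k$ is invariant under arbitrary character twists by $G_k$: the assignment $f \mapsto \chi(\cdot)f(\cdot)$ is a well-defined $G_k$-equivariant isomorphism $W_k \otimes \chi \cong W_k$ (since every smooth character of $G_k$ factors through the determinant, hence is trivial on $U_k$). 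Applying this componentwise with $\chi = \delta_P^{1/2}$ identifies $\operatorname{Ind}_P^{G_n} W_M$ with $i_P^{G_n} W_M$.

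Finally one must identify $i_P^{G_n} W_M$ with $W'_\lambda = i_{P^\circ}^{G_n} W_M$. This is the main obstacle, since $P$ and $P^\circ$ are not $G_n$-conjugate in a way that preserves $W_M$. I would proceed by representability: Bernstein's second adjointness (as already recorded earlier in the paper), combined with the commutativity of iterated Bernstein-Zelevinsky derivatives under permutation of their indices, shows that both $i_P^{G_n} W_M$ and $i_{P^\circ}^{G_n} W_M$ represent the same functor $V \mapsto V^\lambda$ on representations of finite Bernstein support; since $V^\lambda$ is represented by a projective object, Yoneda then yields the required isomorphism, completing the proof. The representability argument is considerably cleaner than attempting to construct an explicit intertwining $i_P^{G_n} W_M \to i_{P^\circ}^{G_n} W_M$ (say, via a Jacquet-type integral), whose convergence and non-degeneracy would require separate analysis.
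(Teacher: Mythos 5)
Your proposal goes in the reverse direction (from $\cInd_{U_n}^{G_n}\psi_\lambda$ to $W'_\lambda$) and contains a correct and worthwhile observation about normalization that the paper glosses over — namely that the twist-invariance of each $W_k$ under characters of $G_k$ lets one pass freely between unnormalized and normalized parabolic induction. However there is a genuine gap in the first step.

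The set $S_\lambda$ omits the positions $\lambda_r,\ \lambda_r+\lambda_{r-1},\ \dots$, i.e.\ the cumulative sums of $\lambda$ read \emph{in increasing order}. These are the block boundaries of the standard parabolic $P'$ whose blocks have sizes $\lambda_r,\lambda_{r-1},\dots,\lambda_1$, not of the parabolic $P$ (blocks $\lambda_1,\dots,\lambda_r$) used to define $W'_\lambda = i_{P^\circ}^{G_n}W_M$. So the claim that $\psi_\lambda$ is trivial on the unipotent radical $N$ of $P$ and restricts to the Whittaker character on $U_M=U_n\cap M$ is false unless $\lambda$ is a rectangle (all $\lambda_i$ equal); for a general partition the block boundaries of $P$ and of $P'$ differ. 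Your computation of $\cInd_{U_n}^{P}\psi_\lambda$ therefore needs to be carried out with $P'$, $M'$, $U_{M'}$ in place of $P$, $M$, $U_M$, and it then yields $\cInd_{U_n}^{G_n}\psi_\lambda \cong i_{P'}^{G_n}W_{M'}$.

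Once this is corrected, the last step you flagged as ``the main obstacle'' dissolves: $P'$ is by construction the standard parabolic $G_n$-conjugate to $P^\circ$, and the conjugating Weyl group element $w$ (essentially $w_0$) sends $M$ to $M'$ and sends $W_M$ to a space of Whittaker functions on $M'$ with respect to the opposite Borel of each factor — which is isomorphic to $W_{M'}$ because $W_k$ is, up to isomorphism, independent of the choice of Borel. Thus $i_{P^\circ}^{G_n}W_M\cong i_{P'}^{G_n}W_{M'}$ in one line, which is exactly how the paper proceeds. Your alternative via Bernstein second adjointness, Yoneda, and commutativity of iterated derivatives can be made to work, but when one unwinds how $r_{G_n}^{P^\circ}$ is compared to iterated derivatives, that argument secretly invokes the same conjugation, so it is more machinery for the same content. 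The representability route also requires care that the functorial identification $\Hom(W'_\lambda,-)\cong(-)^\lambda$ extends naturally beyond objects of finite Bernstein support, which you would need to check explicitly (it does, using the block-by-block decomposition of $W'_\lambda$).
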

\begin{proof}
Let $P'$ be the standard parabolic subgroup of $G_n$ conjugate to $P_{\lambda}^{\circ}$; it is block upper triangular with block sizes $\lambda_r, \lambda_{r-1}, \dots, \lambda_1$.  Let $M'$ be the standard Levi subgroup of $P'$ and $W_{M'}$ the space of Whittaker functions on $M'$.  Then $W'_{\lambda}$ is naturally isomorphic to $i_{P'}^{G_n} W_{M'}$.

On the other hand, we can write $W_{M'} = \cInd_{U_n \cap M'}^{M'} \psi_{\lambda}|_{U_n \cap M'}$. Then $i_{P'}^{G_n} W_{M'}$ is, by definition, the representation:
$$\cInd_{P'}^{G_n} \operatorname{inf}_{M'}^{P'} \cInd_{U_n \cap M'}^{M'} \psi_{\lambda}|_{U_n \cap M"},$$ 
where $\operatorname{inf}$ denotes inflation from $M'$ to $P'$.

But we have a natural isomorphism 
$$\operatorname{inf}_{M'}^{P'} \cInd_{U_n \cap M'}^{M'} \psi_{\lambda}|_{U_n \cap M"} \cong
\cInd_{U_n}^{P'} \psi_{\lambda}$$
and so the claim follows.
\end{proof}

\section{The Zelevinsky classification}

In this section we recall the Zelevinsky classification of irreducible smooth representations of $G_n$.  We follow the approach of M{\'i}nguez-S{\'e}cherre~\cite{MS}, whose main results we now recall.

Let $\nu$ denote the cyclotomic character of $F^{\times}$, and let $\rho$ be an irreducible cuspidal representation of $G_n$ over $k$ or $\overline{\CK}.$  A {\em segment} $[a,b]_{\rho}$, for $b > a$ integers is a sequence of the form $[\nu^a \rho, \nu^{a+1}\rho, \dots, \nu^b \rho]$.  The segments $[a,b]_{\rho}$ and $[a',b']_{\rho'}$ are considered equivalent if $b-a = b' - a'$ and $\nu^a \rho$ is isomorphic to $\nu^{a'} \rho'$.  For a segment $\Delta = [a,b]_{\rho}$, where $\rho$ is a cuspidal representation of $G_n$, we let $n(\Delta) = n$, $\ell(\Delta) = b - a + 1,$ and $d(\Delta) = n(\Delta)\ell(\Delta).$  The {\em support} of $\Delta$ is the multiset $\{\nu^a \rho, \dots, \nu^b\rho\}$ of representations appearing in $\Delta$; note that over $k$ this can have multiplicities above $1$ if $b-a$ is at least the order of $\nu$ mod $\ell.$  The segment $\Delta$ is called {\em supercuspidal} if $\rho$ is a supercuspidal representation of $G_n.$  (This is automatic in characteristic zero, but nontrivial over $k$.)

To each segment $\Delta = [a,b]_{\rho}$ over $k$ (resp. $\overline{\CK}$) is attached an irreducible smooth representation $Z(\Delta)$ of $G_{d(\Delta)}$ over $k$ (resp. $\overline{\CK}$). If $\Delta$ is supercuspidal then the supercuspidal support of $Z(\Delta)$ is given by the support of $\Delta.$  Among the representations with this supercuspidal support, $Z(\Delta)$ is characterized by the fact that it has exactly one nonzero derivative, of degree $n(\Delta)$, and that $Z(\Delta)^{(n(\Delta))}$ is isomorphic to $Z(\Delta^{-}),$ where $\Delta^{-}$ is the segment $[a,b-1]_{\rho}.$  (If $\Delta^{-}$ is empty we let $Z(\Delta^{-})$ denote the one-dimensional vector space over $k$ (resp. $\overline{\CK}$), with the trivial action of the trivial group $G_0$.)

A {\em supercuspidal multisegment} over $k$ (resp. $\overline{\CK}$) is a multiset of supercuspidal segments over $k$ (resp. $\overline{\CK}$).  To a multisegment $\mm = \{\Delta_1, \dots, \Delta_r\}$ we associate the integer $d(\mm) = \sum_{j=1}^r d(\Delta_j)$, and the partition $\lambda(\mm)$ of $d(\mm)$, defined by setting $\lambda(\mm)_i = \sum_{j: \ell(\Delta_j) \geq i} n(\Delta_j)$.

Let $\mm = \{\Delta_1, \dots, \Delta_r\}$ be a supercuspidal multisegment, and let $I(\mm)$ denote the parabolic induction
$$i_P^{G_{d(\mm)}} Z(\Delta_1) \otimes Z(\Delta_2) \otimes \dots \otimes Z(\Delta_r).$$
It will be useful to us to understand the iterated derivatives of this parabolic induction.  To do this we first recall the standard partial order $\preceq$ on partitions: if $\lambda$ and $\lambda'$ are paritions of $n$, we say that $\lambda \preceq \lambda'$ if for all $i$,
$$\lambda_1 + \dots + \lambda_i \leq \lambda'_1 + \dots + \lambda'_i.$$
We have:

\begin{prop}
Let $\lambda$ be a partition of $d(\mm)$.  Then:
\begin{enumerate}
    \item $\Hom_{G_{d(\mm)}}(W_{\lambda}', I(\mm)) = 0$ unless $\lambda \preceq \lambda(\mm),$ and
    \item $\Hom_{G_{d(\mm)}}(W_{\lambda(\mm)}', I(\mm))$ is one-dimensional.
\end{enumerate}
\end{prop}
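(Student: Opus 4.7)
The starting point is the identification $\Hom_{G_{d(\mm)}}(W'_\lambda, I(\mm)) \cong I(\mm)^\lambda$ established earlier in this section, which applies because $I(\mm)$ has finite Bernstein support (being the parabolic induction of finitely generated irreducibles). The problem thus reduces to computing $I(\mm)^\lambda$, and the plan is to analyze this iterated derivative via the Bernstein--Zelevinsky Leibniz-type filtration for derivatives of parabolic inductions.

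The two-factor form of this filtration says that $(\pi \times \sigma)^{(k)}$ admits a filtration whose graded pieces are the parabolic inductions $\pi^{(a)} \times \sigma^{(b)}$ with $a+b=k$. Iterating to $r$ factors, the $k$-th derivative of $Z(\Delta_1) \times \dots \times Z(\Delta_r)$ has a filtration with graded pieces $\prod_j Z(\Delta_j)^{(k_j)}$ (parabolically induced) for tuples $(k_1,\dots,k_r)$ with $\sum_j k_j = k$. Since $Z(\Delta_j)^{(k_j)}$ vanishes unless $k_j \in \{0, n(\Delta_j)\}$, and equals $Z(\Delta_j^-)$ in the latter case, the nonzero pieces are parametrized by subsets $S \subseteq \{1,\dots,r\}$ with $k = \sum_{j \in S} n(\Delta_j)$, each realized as the parabolic induction of $\bigotimes_{j \in S} Z(\Delta_j^-) \otimes \bigotimes_{j \notin S} Z(\Delta_j)$. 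Applying this analysis iteratively (using exactness of the derivative functor to pass from a filtration of one derivative to a filtration of the next), after $i$ derivatives taken along $\lambda_1,\dots,\lambda_i$ one obtains a filtration whose nonzero graded pieces are indexed by sequences $(S_1,\dots,S_i)$ of subsets of $\{1,\dots,r\}$ satisfying $\sum_{j \in S_h} n(\Delta_j) = \lambda_h$ for each $h$, subject to the non-vacuity constraint that each $j$ lies in at most $\ell(\Delta_j)$ of the $S_h$.

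For part (1), summing the first $i$ of the identities $\sum_{j \in S_h} n(\Delta_j) = \lambda_h$ yields
$$\sum_{h=1}^i \lambda_h = \sum_j n(\Delta_j)\,|\{h \leq i : j \in S_h\}| \leq \sum_j n(\Delta_j)\min(i,\ell(\Delta_j)) = \sum_{h=1}^i \lambda(\mm)_h,$$
so the existence of any nonzero graded piece forces $\lambda \preceq \lambda(\mm)$; in particular $I(\mm)^\lambda = 0$ whenever $\lambda \not\preceq \lambda(\mm)$. For part (2), the choice $\lambda = \lambda(\mm)$ saturates every partial-sum bound, which uniquely determines $S_h = \{j : \ell(\Delta_j) \geq h\}$. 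At each intermediate stage only this single configuration satisfies the numerical condition, so every other candidate piece vanishes and the derivative coincides with that unique piece. After exhausting $\lambda(\mm)$, every $\Delta_j$ has been reduced to the empty segment, so the final piece is the parabolic induction of $\bigotimes_j Z(\emptyset)$, i.e.\ the trivial representation of the trivial group, which is one-dimensional.

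The main delicate point will be verifying that the Leibniz-type filtration behaves well under iteration, and specifically that when the combinatorial constraints admit a unique solution at every stage the surviving graded piece coincides with the entire derivative rather than appearing as a proper subquotient. This reduces to the elementary observation that in a filtration where all but one graded piece vanishes the surviving one is the whole object, so the uniqueness of the combinatorial data propagates cleanly through the iteration to give a one-dimensional final answer.
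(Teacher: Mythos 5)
Your proof is correct, and it is genuinely more self-contained than the paper's. The paper simply cites M\'inguez--S\'echerre, Proposition 9.19, together with a translation between ``residually generic'' and ``generic'' and an appeal to second adjointness; it does not give a direct argument. You instead compute $\Hom_{G_{d(\mm)}}(W'_\lambda, I(\mm)) \cong I(\mm)^\lambda$ directly via the Bernstein--Zelevinsky filtration for derivatives of parabolic inductions, using the characterizing fact that $Z(\Delta)$ has a unique positive-degree nonvanishing derivative $Z(\Delta)^{(n(\Delta))} \cong Z(\Delta^-)$. The combinatorial bookkeeping is carried out correctly: the partial-sum inequality $\sum_{h\le i}\lambda_h \le \sum_j n(\Delta_j)\min(i,\ell(\Delta_j)) = \sum_{h\le i}\lambda(\mm)_h$ gives part (1), and saturation of every such inequality forces $S_h = \{j : \ell(\Delta_j)\ge h\}$, reducing every segment to the empty one and leaving a single one-dimensional graded piece for part (2). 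Your concluding remark that a filtration with all but one graded piece zero collapses to that piece is exactly the observation needed to pass from ``unique nonzero piece'' to ``the whole derivative is one-dimensional,'' and the exactness of the derivative functors justifies propagating the filtration through the iteration. What your approach buys is an argument readable without the M\'inguez--S\'echerre machinery; what the paper's citation buys is brevity and, in principle, direct applicability in the modular setting (where MS work), though for this proposition the combinatorics you use are characteristic-independent as well.
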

\begin{proof}
This is essentially~\cite{MS}, Proposition 9.19.  To translate their result into the language used in this paper, one must note that M{\'i}nguez-S{\'e}cherre work in terms of ``residually generic'' irreducible representations, rather than generic, but the two notions are equivalent for general linear groups.  Thus the set they call $\operatorname{Part}(\mm)$ is precisely the set of $\lambda$ such that $r_{G_{d(\mm)}}^{P_{\lambda}} I(\mm)$ contains a generic representation, and by second adjointness this is the same as the set of $\lambda$ such that $\Hom_{G_{(d(\mm))}}(W'_{\lambda}, I(\mm))$ is nonzero.
\end{proof}

From this one immediately sees:

\begin{cor}
\begin{enumerate}
\item The parabolic induction $I(\mm)$ admits a unique irreducible subquotient $Z(\mm)$ whose highest derivative partition is equal to $\lambda(\mm)$.
\item If $\lambda$ is the highest derivative partition of an irreducible subquotient $\pi$ of $I(\mm)$, then $\lambda \preceq \lambda(\mm)$.
\item $\Hom_{G_{d(\mm)}}(W'_{\lambda}, Z(\mm)) = 0$ unless $\lambda \preceq \lambda(\mm)$.
\item $\Hom_{G_{d(\mm)}}(W'_{\lambda(\mm)}, Z(\mm))$ is one-dimensional.
\end{enumerate}
\end{cor}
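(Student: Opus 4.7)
The strategy is to derive all four parts from the preceding proposition together with two general properties of $W'_\lambda$ already established: its projectivity, and the fact that for any irreducible $\pi$ of $G_n$, the space $\Hom_{G_n}(W'_\lambda,\pi)$ vanishes unless $\lambda\preceq\lambda(\pi)$ and is one-dimensional when $\lambda=\lambda(\pi)$.

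I would start with (2). Given an irreducible subquotient $\pi$ of $I(\mm)$, the one-dimensionality of $\Hom_{G_{d(\mm)}}(W'_{\lambda(\pi)},\pi)$ produces a nonzero map $W'_{\lambda(\pi)}\to\pi$. Writing $\pi=A/B$ with $B\subset A\subset I(\mm)$, projectivity of $W'_{\lambda(\pi)}$ lifts this map through the surjection $A\twoheadrightarrow\pi$; composing with the inclusion $A\hookrightarrow I(\mm)$ yields a nonzero map $W'_{\lambda(\pi)}\to I(\mm)$, and the proposition then forces $\lambda(\pi)\preceq\lambda(\mm)$.

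For (1), I would exploit the exactness of $\Hom_{G_{d(\mm)}}(W'_{\lambda(\mm)},-)$ (a consequence of projectivity). Applied to any composition series of $I(\mm)$ it yields a filtration of the one-dimensional space $\Hom_{G_{d(\mm)}}(W'_{\lambda(\mm)},I(\mm))$, so exactly one composition factor $Z(\mm)$ satisfies $\Hom_{G_{d(\mm)}}(W'_{\lambda(\mm)},Z(\mm))\ne 0$, and this Hom is itself one-dimensional. The general comparability property gives $\lambda(\mm)\preceq\lambda(Z(\mm))$, while (2) gives the reverse inequality, so $\lambda(Z(\mm))=\lambda(\mm)$. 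Uniqueness follows because any other irreducible subquotient $\pi'$ of $I(\mm)$ with $\lambda(\pi')=\lambda(\mm)$ would have $\Hom_{G_{d(\mm)}}(W'_{\lambda(\mm)},\pi')\ne 0$ and hence must coincide with $Z(\mm)$.

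Parts (3) and (4) then drop out. For (3), any nonzero map $W'_\lambda\to Z(\mm)$ lifts by the same projectivity argument used in (2) to a nonzero map $W'_\lambda\to I(\mm)$, and the proposition yields $\lambda\preceq\lambda(\mm)$. Part (4) was already extracted in the course of the proof of (1). I do not expect any genuine obstacle; once the proposition and the projectivity of the $W'_\lambda$ are in hand, the argument consists of careful bookkeeping of lifts through subquotients.
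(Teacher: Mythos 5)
Your proofs of parts (2), (3), and (4) are correct and match the intended route: lift nonzero maps from $W'_{\lambda}$ through subquotients to $I(\mm)$ using projectivity, then invoke the preceding proposition; and apply exactness of $\Hom_{G_{d(\mm)}}(W'_{\lambda(\mm)},-)$ to a composition series of $I(\mm)$ to isolate the unique factor carrying the one-dimensional Hom-space.

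However, your argument for part (1) is circular. You invoke what you call the ``general comparability property''---that $\Hom_{G_n}(W'_{\lambda'},\pi)\neq 0$ forces $\lambda'\preceq\lambda(\pi)$ for an arbitrary irreducible $\pi$---as ``already established.'' It is not. That statement is Corollary~\ref{cor:iterated derivative multiplicity}, which the paper derives from the bijection $\mm\mapsto Z(\mm)$ \emph{together with} the very corollary you are proving, so it is not available at this point. Nor does it follow from the material of Section~\ref{sec:highest} alone: nonvanishing of $\pi^{(\lambda'_1)}$ constrains $\lambda'_1$, but when $\lambda'_1<\lambda(\pi)_1$ the derivative $\pi^{(\lambda'_1)}$ need not be irreducible, and dominance does not propagate without further control over the composition factors of lower derivatives.

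The fix is short and elementary. Set $\mu=\lambda(Z(\mm))$; part (2) already gives $\mu\preceq\lambda(\mm)$, and you have produced a nonzero element of $Z(\mm)^{\lambda(\mm)}\cong\Hom_{G_{d(\mm)}}(W'_{\lambda(\mm)},Z(\mm))$. Nonvanishing of $Z(\mm)^{(\lambda(\mm)_1)}$ gives $\lambda(\mm)_1\le\mu_1$, while dominance gives $\mu_1\le\lambda(\mm)_1$, so $\mu_1=\lambda(\mm)_1$. Hence $Z(\mm)^{(\mu_1)}$ is the highest derivative of an irreducible representation, so irreducible with highest derivative partition $(\mu_2,\mu_3,\dots)$; nonvanishing of its $\lambda(\mm)_2$-th derivative gives $\lambda(\mm)_2\le\mu_2$, while $\mu\preceq\lambda(\mm)$ together with $\mu_1=\lambda(\mm)_1$ gives $\mu_2\le\lambda(\mm)_2$. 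Iterating termwise yields $\mu=\lambda(\mm)$ with no appeal to the general comparability property.
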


Moreover, one has:

\begin{thm}[\cite{MS}, Th{\'e}or{\`e}me 9.36]
The map $\mm \mapsto Z(\mm)$ gives a bijection between equivalence classes of supercuspidal multisegments $\mm$ with $d(\mm) = n$ over $k$ (resp. $\overline{\CK})$ and irreducible smooth representations of $G_n$ over $k$ (resp. $\overline{\CK}$).
\end{thm}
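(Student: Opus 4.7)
The plan is to prove bijectivity by induction on $n$, treating surjectivity and injectivity separately. The main lever is the preceding corollary: for any supercuspidal multisegment $\mm$, the representation $Z(\mm)$ has highest derivative partition exactly $\lambda(\mm)$, and it is the unique irreducible subquotient of $I(\mm)$ with that property. The base cases $n = 0, 1$ are immediate, the latter reducing to the classification of cuspidal characters.

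For injectivity, suppose $Z(\mm) \cong Z(\mm')$. The preceding corollary gives $\lambda(\mm) = \lambda(\mm') =: \lambda$, and the two representations share their supercuspidal support, which coincides with the multiset union of segment supports. The central step is to identify $Z(\mm)^{(\lambda_1)}$ with $Z(\mm^-)$, where $\mm^-$ is obtained from $\mm$ by removing the rightmost cuspidal from every segment (discarding any segments that become empty). This should follow by combining Corollary~\ref{cor:iterated derivative multiplicativity} applied to $I(\mm)$ with the defining identity $Z(\Delta)^{(n(\Delta))} = Z(\Delta^-)$, which together give $I(\mm)^{(\lambda_1)} \cong I(\mm^-)$; since $Z(\mm)^{(\lambda_1)}$ is irreducible with highest derivative partition $(\lambda_2, \ldots, \lambda_r) = \lambda(\mm^-)$, the uniqueness clause of the preceding corollary applied to $\mm^-$ forces this derivative to equal $Z(\mm^-)$. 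By induction, $\mm^- = \mm'^-$, and knowing the supercuspidal support of $Z(\mm) \cong Z(\mm')$ determines the multiset of cuspidals removed in passing to $\mm^-$, thereby recovering $\mm$.

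For surjectivity, let $\pi$ be an irreducible representation of $G_n$ with highest derivative partition $\lambda$. Then $\pi^{(\lambda_1)}$ is irreducible of $G_{n - \lambda_1}$ with highest derivative partition $(\lambda_2, \ldots, \lambda_r)$, so by the inductive hypothesis $\pi^{(\lambda_1)} \cong Z(\mm^-)$ for a unique $\mm^-$. I would construct a candidate $\mm$ by extending $\mm^-$: the supercuspidal support of $\pi$ differs from that of $\pi^{(\lambda_1)}$ by a prescribed multiset of cuspidals, and each of these must be appended either to a segment of $\mm^-$ (as the canonical ``next cuspidal'' extending it on the right) or introduced as a new singleton segment. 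One then checks $Z(\mm) \cong \pi$ by showing $\pi$ occurs as a subquotient of $I(\mm)$ (for instance by noting that $\pi$ admits a nonzero map from $W'_\lambda$, which via Bernstein second adjointness forces the Jacquet module along $P_\lambda$ to contain a generic constituent matching that of $I(\mm)$) and invoking uniqueness in the preceding corollary.

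The hard part will be the combinatorial extension step in the surjectivity argument: one must verify that the multiset of ``missing'' cuspidals distributes uniquely and consistently among extensions of existing segments of $\mm^-$ and new singleton segments, in a way that produces a multisegment with the prescribed $\lambda(\mm) = \lambda$, and that the resulting $\pi$ genuinely embeds as a subquotient of $I(\mm)$. This compatibility encodes subtle interactions between the highest derivative, Jacquet modules, and the supercuspidal support, and in the M{\'\i}nguez--S{\'e}cherre framework it is extracted via a careful application of the Bernstein--Zelevinsky geometric lemma to $I(\mm)$, together with the dominance-order constraints on highest derivative partitions of subquotients.
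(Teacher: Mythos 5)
This statement is cited verbatim to M\'{\i}nguez--S\'echerre, Th\'eor\`eme 9.36, and the paper offers no proof of its own; so there is no ``paper's proof'' to compare against beyond the citation. Your sketch is an attempt to reprove it from scratch following the skeleton of Zelevinsky's original inductive argument, and that skeleton is reasonable, but as written it contains a genuine gap and also elides the modular case, which is the whole point of citing M\'{\i}nguez--S\'echerre rather than Zelevinsky.

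The injectivity direction is essentially sound, granting two facts that should be flagged explicitly: that the supercuspidal support of $Z(\mm)$ is the multiset union of the supports of the segments of $\mm$ (the paper states this only for $Z(\Delta)$, but it follows since $Z(\mm)$ is a subquotient of $I(\mm)$), and that $\mm$ is recovered uniquely from $\mm^-$ together with that support --- each segment of $\mm^-$ has a unique admissible right-extension, and the remaining cuspidals must form the singleton segments. The derivative identity $Z(\mm)^{(\lambda_1)} \cong Z(\mm^-)$ does follow from Corollary~\ref{cor:iterated derivative multiplicativity} applied to $I(\mm)$ together with the uniqueness clause in the corollary preceding the theorem, as you say.

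The surjectivity direction is where the argument breaks. You reconstruct a candidate $\mm$ from $\pi^{(\lambda_1)} = Z(\mm^-)$ and the supercuspidal support of $\pi$, and then must show $\pi \cong Z(\mm)$, i.e.\ that $\pi$ occurs as a subquotient of $I(\mm)$. The justification you offer --- that both $\pi$ and $I(\mm)$ receive a nonzero map from $W'_\lambda$, so by second adjointness their Jacquet modules along $P_\lambda$ share a generic constituent --- does not establish that $\pi$ is a subquotient of $I(\mm)$; two representations with the same supercuspidal support always share many Jacquet-module constituents without one being a subquotient of an induction built from the other. The step actually needed here is the theorem that an irreducible representation is determined by its highest derivative (Zelevinsky's Theorem 8.1 over $\mathbb{C}$, and its modular analogue developed over the course of M\'{\i}nguez--S\'echerre's paper). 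That theorem is not among the paper's prior results, nor does it follow from the material the paper has set up, so the surjectivity step is not just ``combinatorially fiddly'' but missing a substantive ingredient.

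Finally, the entire sketch implicitly works as if in characteristic zero. Over $k$ of characteristic $\ell$ the picture is substantially more delicate: segments can have repeated cuspidal members once $b - a$ exceeds the order of $\nu$ modulo $\ell$; the distinction between cuspidal and supercuspidal is nontrivial; and even the irreducibility of the highest derivative of an irreducible --- which you use repeatedly to run the induction --- is itself a hard theorem in this setting. These are precisely the subtleties M\'{\i}nguez--S\'echerre handle, and none of them are addressed in the proposal.
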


In particular, one can conclude:
\begin{cor} \label{cor:iterated derivative multiplicity}
Let $\pi$ be an irreducible representation of $G_n$, over an algebraically closed field of characteristic $\ell$ or zero, and $\lambda$ its highest derivative partition.  Then:
\begin{enumerate}
\item $\Hom_{G_{d(\mm)}}(W'_{\lambda}, \pi) = 0$ unless $\lambda \preceq \lambda(\mm)$.
\item $\Hom_{G_{d(\mm)}}(W'_{\lambda(\mm)}, \pi)$ is one-dimensional.
\end{enumerate}
\end{cor}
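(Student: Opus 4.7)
The plan is to deduce this corollary as an essentially immediate bookkeeping consequence of the Zelevinsky classification theorem together with the preceding corollary. First I would invoke the bijection theorem (M{\'i}nguez-S{\'e}cherre, Th{\'e}or{\`e}me 9.36) to write $\pi \cong Z(\mm)$ for a unique (up to equivalence) supercuspidal multisegment $\mm$ with $d(\mm) = n$, over $k$ or $\overline{\CK}$ as appropriate. This converts the problem from a statement about an arbitrary irreducible into a statement about the specific representations $Z(\mm)$ produced by the classification, to which the earlier results directly apply.

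Next, I would use part (1) of the preceding corollary, which asserts that the highest derivative partition of $Z(\mm)$ is $\lambda(\mm)$. Since by hypothesis $\lambda$ is the highest derivative partition of $\pi = Z(\mm)$, this forces $\lambda = \lambda(\mm)$. With this identification in hand, assertions (1) and (2) of the current corollary translate verbatim into assertions (3) and (4) of the preceding corollary: $\Hom_{G_{d(\mm)}}(W'_{\mu}, Z(\mm))$ vanishes unless $\mu \preceq \lambda(\mm)$, and is one-dimensional when $\mu = \lambda(\mm)$. No separate argument is required.

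There is no real obstacle at this stage: the substantive content has already been established in the preceding corollary, which in turn rests on M{\'i}nguez-S{\'e}cherre's Proposition~9.19 (the analogue of the Mœglin-Waldspurger multiplicity-one statement in this setting) together with Bernstein's second adjointness to pass from parabolic restriction to Hom-spaces from the $W'_{\lambda}$. The only mild point worth flagging is uniformity in the coefficient field: the statement should hold both in characteristic zero and in characteristic $\ell \neq p$. This uniformity is already built into the bijection theorem and the preceding corollary, each of which is formulated over $k$ and $\overline{\CK}$ simultaneously, so no additional case analysis is needed.
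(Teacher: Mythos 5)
Your proof is correct and takes exactly the route the paper intends but does not spell out: by the M{\'i}nguez--S{\'e}cherre bijection theorem every irreducible $\pi$ is $Z(\mm)$ for a unique supercuspidal multisegment $\mm$, part (1) of the preceding corollary then identifies $\lambda = \lambda(\mm)$, and parts (3) and (4) of that corollary give the two assertions directly. No gap; the observation about uniformity over $k$ and $\overline{\CK}$ is a nice sanity check but, as you note, already built into the cited inputs.
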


\section{Degenerate Whittaker models} \label{sec:whittaker}

The highest derivative partition is closely related to theory of {\em degenerate Whittaker models}, and also to the notion of the {\em wave front set}.  The relationship between these objects was first studied by Moeglin-Waldspurger~\cite{MW}.  To any pair $(S,\phi)$, where $\phi$ is a nilpotent element of $\Lie(G_n) = M_n(F)$ and $S$ is a semisimple element of $\Lie(G_n)$ such that $[S,\phi] = -2\phi$, Moeglin-Waldspurger associate a unipotent subgroup $U_{S,\phi}$ of $G_n$, and a character $\psi_{S,\phi}$ of $U_{S,\phi}$ (that depends on our fixed choice of $\psi$ above). Let $W_{S,\phi}$ be the smooth $\CO[G_n]$-module $\cInd_{U_{S,\phi}}^{G_n} \psi_{S,\phi}$ (where we regard $\psi_{S,\phi}$ as a free $\CO$-module of rank one).  We refer the reader to~\cite{GGS}, particularly section 2.5, for the details of the construction; note that they work with coefficients in $\mathbb C$ rather than $\CO$.  The notation used here is that of \cite{GGS} and not that of \cite{MW}.

Up to isomorphism, the space $W_{S,\phi}$ depends only on the conjugacy class of $\phi$, and not on the choices of $S$ or $\psi.$  Since the conjugacy classes of nilpotent orbits are in bijection with partitions of $n$, via the Jordan decomposition, for any partition $\lambda$ of $n$ we will let $W_{\lambda}$ denote a representation of the form $W_{S,\phi}$ for $\phi$ in the conjugacy class associated to $\lambda.$  We call $W_{\lambda}$ the ``space of degenerate Whittaker functions'' attached to $\lambda.$

A natural question to ask is, given an irreducible smooth representation $\pi$ of $G_n$ (over $k$ or $\overline{\CK}$), for which $\lambda$ do there exist nontrivial maps $W_{\lambda} \rightarrow \pi$?  Over fields of characteristic zero, this question has an answer in terms of the {\em classification partition} of $\pi$:

\begin{definition} Let $\pi$ be an irreducible smooth representation of $G_n$ over $\overline{\CK},$ let $\pi^t$ denote the representation obtained by applying the Zelevinsky involution to $\pi$, and let $(\phi,N)$ be the Weil-Deligne representation associated to $\pi^t$ via the local Langlands correspondence.  (In particular, $N$ is a nilpotent element of $\GL_n(\overline{\CK})$.) The {\em classification partition} of $\pi$ is the partition that is conjugate to the partition associated to the conjugacy class of $N$.
\end{definition}

In the process of the proof of~\cite{GS}, Theorem B, Gourevitch-Sahi show:

\begin{prop}
Let $\pi$ be an irreducible smooth $\overline{\CK}$-representation of $G_n$.  Then the classification partition of $\pi$ is equal to its highest derivative partition.
\end{prop}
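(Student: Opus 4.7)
The plan is to match both sides of the claimed equality by computing them separately from the Zelevinsky classification data of $\pi$. First I would invoke the M{\'i}nguez--S{\'e}cherre bijection to write $\pi = Z(\mm)$ for a unique supercuspidal multisegment $\mm = \{\Delta_1,\dots,\Delta_r\}$. The corollary just above already identifies the highest derivative partition of $\pi$ with the combinatorial partition $\lambda(\mm)$, where $\lambda(\mm)_i = \sum_{j:\ell(\Delta_j)\geq i} n(\Delta_j)$. So the task reduces entirely to showing that the classification partition of $\pi$, defined via the Zelevinsky involution and the local Langlands correspondence, also equals $\lambda(\mm)$.

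For this I would next analyse the effect of the Zelevinsky involution on $Z(\mm)$. The standard fact here is that $Z(\mm)^t$ agrees with the Langlands quotient $L(\mm)$ attached to the same multisegment in the Langlands (rather than Zelevinsky) classification, and that the Weil--Deligne parameter of $L(\mm)$ is compatible with parabolic induction: it is the direct sum, over segments $\Delta_j = [a_j,b_j]_{\rho_j}$, of the parameters of the individual Langlands quotients $L(\Delta_j)$. Each $L(\Delta_j)$ has Weil--Deligne parameter isomorphic to $\phi_{\rho_j} \otimes \mathrm{Sp}_{\ell(\Delta_j)}$, where $\phi_{\rho_j}$ is the $n(\Delta_j)$-dimensional parameter of $\rho_j$ and $\mathrm{Sp}_k$ is the $k$-dimensional Weil--Deligne representation with a single nilpotent Jordan block. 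The nilpotent part of this tensor product contributes exactly $n(\Delta_j)$ Jordan blocks of size $\ell(\Delta_j)$ to $N$, so the Jordan type of $N$ for $L(\mm) = \pi^t$ is the multiset $\mu$ in which $\ell(\Delta_j)$ appears with multiplicity $n(\Delta_j)$.

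The final step is purely combinatorial: by the definition of conjugate partition,
$$\mu^*_i = \#\{\text{parts of } \mu \text{ of size} \geq i\} = \sum_{j:\ell(\Delta_j)\geq i} n(\Delta_j) = \lambda(\mm)_i.$$
Thus the classification partition of $\pi$, which by definition is $\mu^*$, equals $\lambda(\mm)$, matching the highest derivative partition.

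The main obstacle is a bookkeeping one: fixing precise conventions for the Zelevinsky involution on irreducible representations, the identification $Z(\mm)^t \cong L(\mm)$, and the normalization of local Langlands that identifies the parameter of $L(\Delta)$ with $\phi_\rho \otimes \mathrm{Sp}_{\ell(\Delta)}$. These facts are standard but scattered across Zelevinsky's original paper, M{\'i}nguez--S{\'e}cherre, and the usual accounts of LLC for $\GL_n$. Once the conventions are aligned, the argument itself reduces to the bookkeeping above; the real content has already been encapsulated in the earlier corollary identifying the highest derivative partition with $\lambda(\mm)$.
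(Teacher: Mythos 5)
Your proof is correct and follows the same route the paper sketches: reduce via the M\'{i}nguez--S\'{e}cherre bijection, use the earlier corollary to identify the highest derivative partition of $Z(\mm)$ with $\lambda(\mm)$, and then compute the nilpotent Jordan type of the Weil--Deligne parameter of $\pi^t = L(\mm)$ via the tensor-with-$\mathrm{Sp}_{\ell(\Delta_j)}$ description, matching the conjugate partition. The paper itself defers the details to Gourevitch--Sahi (section 6 of \cite{GS}), describing the argument exactly as ``an application of the Zelevinsky classification and the additivity of the highest derivative partition with respect to parabolic induction,'' which is precisely what you have carried out.
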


The proof is essentially an application of the Zelevinsky classification and the additivity of the highest derivative partition with respect to parabolic induction.  We refer the reader to section 6 of~\cite{GS} for the details.

The following is Theorem B of~\cite{GS}; rephrased in terms of highest derivative paritions; it is also implicit in section II.2 of~\cite{MW}:

\begin{thm} \label{thm:char zero multiplicity}
Let $\pi$ be an irreducible smooth representation of $G_n$ over $\overline{\CK}$ and let $\lambda$ be its highest derivative partition.  Then:
\begin{enumerate}
    \item If $\lambda'$ is any partition such that $\Hom_{G_n}(W_{\lambda'}, \pi)$ is nonzero, then $\lambda' \preceq \lambda.$
    \item $\Hom_{G_n}(W_{\lambda}, \pi)$ is one-dimensional.
\end{enumerate}
\end{thm}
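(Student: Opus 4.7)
The plan is to reduce the statement to the Moeglin--Waldspurger theorem \cite{MW} (and its extension to $p=2$ by Varma \cite{varma}) together with the preceding identification of the highest derivative partition with the classification partition, essentially translating \cite[Theorem B]{GS} into the language used here.

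First I would recall the content of Moeglin--Waldspurger in this setting: the wave front set of an irreducible smooth $\overline{\CK}$-representation $\pi$ of $G_n$ consists of those nilpotent orbits $\CO$ that are maximal (with respect to the closure order) among orbits satisfying $\Hom_{G_n}(W_\CO,\pi)\neq 0$; moreover, for each such maximal $\CO$ one has $\dim \Hom_{G_n}(W_\CO,\pi)=c_\CO$, the coefficient of $\CO$ in the local character expansion of $\pi$. In the $\GL_n$-case there are two sharpenings already recorded in the introduction: the wave front set consists of a single orbit, and the associated coefficient equals $1$. Combining these with the preceding proposition, the unique orbit in the wave front set corresponds, under the Jordan bijection between nilpotent orbits and partitions of $n$, precisely to the highest derivative partition $\lambda$.

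For part (1), suppose $\lambda'$ is any partition with $\Hom_{G_n}(W_{\lambda'},\pi)\neq 0$. Since the poset of nilpotent orbits of $\Lie(G_n)$ is finite, the orbit $\CO_{\lambda'}$ is dominated in the closure order by some orbit $\CO$ that is maximal among orbits with nonvanishing Hom-space. By Moeglin--Waldspurger, such a maximal orbit lies in the wave front set, which we have identified with $\{\CO_\lambda\}$; hence $\CO_{\lambda'}\subseteq\overline{\CO_\lambda}$, which is exactly the assertion $\lambda'\preceq\lambda$. For part (2), applying the dimension equality of Moeglin--Waldspurger at the maximal orbit $\CO_\lambda$ gives $\dim \Hom_{G_n}(W_\lambda,\pi)=c_{\CO_\lambda}=1$.

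The bulk of the work sits in the inputs rather than in the deductions above: the Moeglin--Waldspurger theorem is substantial, the uniqueness of the orbit in the wave front set for $\GL_n$ is itself proved by Moeglin--Waldspurger via the Zelevinsky classification, and the identification of the classification partition with the highest derivative partition rests on the multiplicativity of the latter under parabolic induction (the preceding proposition, via Gourevitch--Sahi's argument). Given these, the proof reduces to the bookkeeping above; the main obstacle is thus not any new mathematics but rather ensuring that the various indexing conventions of \cite{MW} and \cite{GS} line up with the highest-derivative-partition language of this paper.
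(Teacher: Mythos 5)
Your route coincides with the paper's own treatment, which states this result as a citation to Theorem B of Gourevitch--Sahi (and notes it is implicit in section II.2 of Moeglin--Waldspurger), and your derivation of parts (1) and (2) from the Moeglin--Waldspurger statement via finiteness of the orbit poset is fine once the unique wave front set orbit is identified with $\lambda$.

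That identification, however, does not actually follow from the inputs you explicitly combine. You infer ``the unique orbit in the wave front set corresponds to the highest derivative partition $\lambda$'' from the Moeglin--Waldspurger facts (single orbit, coefficient $1$) together with the preceding proposition. But the proposition only equates the classification partition with the highest derivative partition; it says nothing about which partition the wave front set orbit corresponds to. To relate the wave front set orbit to either of these you need the substantive assertion of Gourevitch--Sahi's Theorem B, namely that the wave front set partition \emph{is} the classification partition (equivalently, the Zelevinsky-classification computation underlying Moeglin--Waldspurger II.2). Since you name Theorem B as the governing reference, this is more a precision issue than a missing idea, but the logical chain as written leaves the crucial identification unjustified: you should state explicitly that Theorem B supplies the equality of the wave front set partition with the classification partition, and that the proposition then transfers this to the highest derivative partition, before running the poset argument for part (1) and the dimension equality for part (2).
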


In fact, this result continues to hold in characteristic $\ell$.  More precisely, we have:

\begin{thm} \label{thm:modular multiplicity}
Let $\pi$ be an irreducible smooth representation of $G_n$ over $k$, with highest derivative partition $\lambda$.  Then:
\begin{enumerate}
    \item $\Hom_{G_n}(W_{\lambda},\pi)$ is one-dimensional over $k$, and
    \item if $\lambda'$ is any partition such that $\Hom_{G_n}(W_{\lambda'}, \pi)$ is nonzero, then $\lambda' \preceq \lambda.$
\end{enumerate}
\end{thm}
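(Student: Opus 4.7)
The plan is to reduce Theorem~\ref{thm:modular multiplicity} to Corollary~\ref{cor:iterated derivative multiplicity}, which already handles the analogous statement for the projective model $W'_\lambda$ in arbitrary characteristic. The bridge is the direct summand decomposition $W_\lambda \cong W'_\lambda \oplus W''_\lambda$ announced in the introduction, which gives
\[
\Hom_{G_n}(W_\lambda, \pi) \cong \Hom_{G_n}(W'_\lambda, \pi) \oplus \Hom_{G_n}(W''_\lambda, \pi).
\]
Corollary~\ref{cor:iterated derivative multiplicity} controls the first summand, so both assertions of the theorem reduce to showing that $\Hom_{G_n}(W''_\mu, \pi) = 0$ whenever $\mu \not\prec \lambda(\pi)$; this single condition subsumes both $\mu = \lambda(\pi)$ (needed for the uniqueness in part~(1)) and $\mu \not\preceq \lambda(\pi)$ (needed for the vanishing in part~(2)).

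The key intermediate I would aim to establish is that $W''_\mu$ admits a finite filtration whose graded pieces are (direct summands of) representations of the form $W'_{\mu'}$ for partitions $\mu' \succ \mu$ strictly dominating $\mu$. Given such a filtration, Corollary~\ref{cor:iterated derivative multiplicity}(1) forces $\Hom_{G_n}(W'_{\mu'}, \pi) = 0$ unless $\mu' \preceq \lambda(\pi)$, and a short combinatorial check closes the argument: if $\mu \prec \mu' \preceq \lambda(\pi)$, then by transitivity $\mu \preceq \lambda(\pi)$ with $\mu \neq \mu'$, hence $\mu \prec \lambda(\pi)$ strictly, contradicting the hypothesis $\mu \not\prec \lambda(\pi)$. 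To construct the filtration I would work with the explicit compact-induction presentations $W_\mu \cong \cInd_{U_{S,\phi}}^{G_n}\psi_{S,\phi}$ and $W'_\mu \cong \cInd_{U_n}^{G_n}\psi_\mu$ (Lemma~\ref{lemma:W' induction}), and stratify the relevant unipotent-orbit data by the partition it refines.

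The main obstacle will be the geometric and combinatorial bookkeeping required to confirm that only partitions strictly dominating $\mu$ appear as indices in the filtration of $W''_\mu$; even granted the direct summand decomposition, the description of the complement is not immediately explicit and would have to be extracted from the geometry of the Whittaker pair $(S,\phi)$ together with its character (noting, e.g., that $U_{S,\phi}$ need not be contained in $U_n$, so the relationship is more subtle than a transitivity-of-compact-induction calculation). As a fallback, one could integrally lift $\pi$ via the M{\'i}nguez-S{\'e}cherre framework~\cite{MS} to a torsion-free finitely generated $\CO[G_n]$-module, apply Theorem~\ref{thm:char zero multiplicity} to bound $\Hom$-spaces over $\CK$, and propagate the bound to characteristic $\ell$ via torsion-freeness of the integral $\Hom$-module; this route must contend with the usual difficulty that reduction modulo $\ell$ can introduce additional Jordan-H{\"o}lder factors whose highest derivative partitions differ from $\lambda(\pi)$.
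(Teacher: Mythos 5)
Your primary route is not what the paper does and has a genuine gap: it hinges on the assertion that $W''_\mu$ (the complement of the split surjection $W_\mu \twoheadrightarrow W'_\mu$) admits a filtration with graded pieces of the form $W'_{\mu'}$ for $\mu' \succ \mu$. The paper never describes $W''_\mu$ in any detail, and I see no straightforward way to produce such a filtration from the compact-induction presentations; the geometry of the Whittaker pair $(S,\phi)$ does not obviously stratify that way. You flag this yourself, but that means the plan as stated does not close. (Also, Proposition~\ref{prop:W comparison} appears in the paper only \emph{after} this theorem, so this route would also need some reordering.)

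Your fallback is in the spirit of the paper's actual proof, but it omits the two mechanisms that make that argument work. First, ``torsion-freeness of the integral $\Hom$-module'' is not enough to identify $\Hom_{G_n}(W_{\lambda},r_\ell\tilde\pi)$ with $\Hom_{G_n}(W_{\lambda},L)\otimes k$; one needs \emph{exactness} of $\Hom_{G_n}(W_{\lambda},-)$ on admissible modules, which the paper establishes in Lemma~\ref{lemma:near projectivity} by showing $W_{\lambda}^{\vee}$ is an injective $\CO[G_n]$-module and dualizing. Without this, the connecting map to $\Ext^1$ could be nonzero. Second, you correctly observe that $r_\ell\tilde\pi$ (or, as the paper uses, $I(\mathfrak m)$) may have extra Jordan--H\"older factors; the paper deals with this by \emph{inducting on $\lambda$}: lifting the multisegment $\mathfrak m$ (not the representation $\pi$ directly) so that $r_\ell I(\tilde{\mathfrak m}) \cong I(\mathfrak m)$, and using the inductive hypothesis to show that all subquotients of $I(\mathfrak m)$ other than $\pi$ --- which have strictly smaller highest derivative partition --- contribute nothing to $\Hom_{G_n}(W_{\lambda},I(\mathfrak m))$. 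Your fallback names the difficulty but supplies neither of these ingredients, so it remains a sketch rather than a proof.
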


We will prove this theorem by lifting from characteristic $\ell$ to characteristic zero and applying Theorem~\ref{thm:char zero multiplicity}.  Let $\tilde \pi$ be an irreducible smooth representation of $G_n$ over $\overline{\CK}$, and let $\CK'$ be a finite extension of $\CK$ such that $\tilde \pi$ is defined over $\CK'$.  Let $\pi$ be a model of $\tilde \pi$ over $\CK'$; that is, a smooth $\CK'[G_n]$-module such that $\pi \otimes_{\CK'} \overline{\CK}$ is isomorphic to $\tilde \pi$.  Let $\CO'$ be the integral closure of $\CO$ in $\CK'$.  We will say that $\tilde \pi$ is {\em integral} if there exists a $G_n$-stable $\CO'$-lattice $L$ in $\pi$ (this condition is independent of the choices of $\CK'$ and the model $\pi$).  If this is the case, we define $r_{\ell} \pi := L \otimes_{\CO'} k$.  The $k[G_n]$-module $r_{\ell} \pi$ depends on the choice of lattice $L$, but its semisimplification is independent of these choices.

We first show:
\begin{lemma} \label{lemma:near projectivity}
For any exact sequence:
$$0 \rightarrow U \rightarrow V \rightarrow W \rightarrow 0$$
of admissible $\CO[G_n]$-modules, the sequence:
$$0 \rightarrow \Hom_{G_n}(W_{\lambda}, U) \rightarrow \Hom_{G_n}(W_{\lambda}, V) \rightarrow \Hom_{G_n}(W_{\lambda}, W) \rightarrow 0$$
is exact.  In particular, let
$\tilde \pi$ be an irreducible, smooth, integral representation of $G_n$ over $\overline{\CK}$.  Then $$\dim_{\overline{\CK}} \Hom_{G_n}(W_{\lambda},\pi) = \dim_k \Hom_{G_n}(W_{\lambda}, r_{\ell} \tilde \pi).$$
\end{lemma}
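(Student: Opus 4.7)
The plan is to identify $\Hom_{G_n}(W_\lambda, V)$ with the twisted invariants $V^{U_{S,\phi},\psi_{S,\phi}}:=\{v \in V : uv = \psi_{S,\phi}(u)v \text{ for all } u \in U_{S,\phi}\}$ via Frobenius reciprocity for compact induction, to prove exactness of this functor on admissibles by a stabilization argument, and then to deduce the dimension formula by a standard lattice/uniformizer computation.

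Since $U_{S,\phi}$ is unipotent, all of its compact open subgroups are pro-$p$; pick an exhaustion $U_{S,\phi} = \bigcup_n K_n$ by an increasing chain of such subgroups and set $\chi_n := \psi_{S,\phi}|_{K_n}$. Because $K_n$ is pro-$p$, the finite quotient $K_n/\ker\chi_n$ is a finite $p$-group, so its order is invertible in $\CO$ (as $\ell \neq p$). The functor $V \mapsto V^{\ker\chi_n}$ is exact on smooth $\CO[G_n]$-modules by the standard averaging argument for pro-$p$ compact opens, and projection onto the $\chi_n$-isotypic component via the idempotent $\tfrac{1}{|K_n/\ker\chi_n|}\sum_{g \in K_n/\ker\chi_n}\chi_n(g)^{-1}g$ is also exact, so $V \mapsto V^{K_n,\chi_n}$ is exact. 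For $m \geq n \geq 1$ one checks $V^{K_m,\chi_m} \subseteq V^{K_n,\chi_n} \subseteq V^{\ker\chi_1}$, and the latter is finitely generated over $\CO$ when $V$ is admissible. Since $\CO$ is Noetherian, the descending chain $V^{K_1,\chi_1} \supseteq V^{K_2,\chi_2} \supseteq \cdots$ stabilizes, necessarily at $\bigcap_n V^{K_n,\chi_n} = V^{U_{S,\phi},\psi_{S,\phi}}$. Given a short exact sequence $0 \to U \to V \to W \to 0$ of admissibles, choose $n$ past the stabilization index for all three modules and transfer the exactness of $V \mapsto V^{K_n,\chi_n}$ to the statement for $\Hom_{G_n}(W_\lambda,-)$.

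For the dimension formula, fix a $G_n$-stable $\CO'$-lattice $L \subseteq \pi$ and apply the exactness just established to the admissible short exact sequence $0 \to L \xrightarrow{\varpi} L \to r_\ell\tilde\pi \to 0$, where $\varpi$ is a uniformizer of $\CO'$. This yields $\Hom_{G_n}(W_\lambda, r_\ell\tilde\pi) \cong \Hom_{G_n}(W_\lambda, L)/\varpi\Hom_{G_n}(W_\lambda, L)$. Since $\Hom_{G_n}(W_\lambda, L) = L^{U_{S,\phi},\psi_{S,\phi}}$ is an $\CO'$-submodule of the torsion-free $L$, it is itself torsion-free, and inverting $\varpi$ recovers $\Hom_{G_n}(W_\lambda, \pi)$, which is finite-dimensional over $\CK'$ because $\pi$ is admissible irreducible (invoking the stabilization above again). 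Hence $\Hom_{G_n}(W_\lambda, L)$ is a finitely generated torsion-free, hence free, module over the DVR $\CO'$, of rank equal to $\dim_{\overline{\CK}}\Hom_{G_n}(W_\lambda, \pi)$, and reducing modulo $\varpi$ produces a $k$-vector space of the same dimension. The principal technical point is the stabilization of the chain $\{V^{K_n,\chi_n}\}_n$, which crucially uses unipotence of $U_{S,\phi}$ (to place us in a pro-$p$ setting) together with Noetherianness of $\CO$; the remainder of the argument is routine.
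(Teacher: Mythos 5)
Your proposal has a fundamental gap at the very first step. You write that $\Hom_{G_n}(W_\lambda,V)$ can be identified with the twisted invariants $V^{U_{S,\phi},\psi_{S,\phi}}$ ``via Frobenius reciprocity for compact induction.'' But the adjunction $\Hom_G(\cInd_H^G\sigma,V)\cong\Hom_H(\sigma,V|_H)$ holds only when $H$ is an \emph{open} subgroup of $G$. Here $U_{S,\phi}$ is a closed unipotent subgroup of $G_n$ of strictly smaller dimension, hence not open, and this adjunction simply does not exist. (Indeed, one cannot even define the would-be unit of the adjunction, since the ``delta function'' supported on $U_{S,\phi}$ is not a smooth element of $\cInd_{U_{S,\phi}}^{G_n}\psi_{S,\phi}$ when the subgroup is not open.) Consistently with this, the functor that $\Hom_{G_n}(W_\lambda,-)$ actually computes on well-behaved modules is a twisted \emph{Jacquet module}, i.e.\ a space of coinvariants $V_{U_{S,\phi},\psi_{S,\phi}}$, not the invariants you write down; these are genuinely different objects.

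Even setting that aside, the Noetherian stabilization step also fails. You observe that $V^{K_1,\chi_1}\supseteq V^{K_2,\chi_2}\supseteq\cdots$ is a descending chain inside $V^{\ker\chi_1}$ and claim the latter is finitely generated over $\CO$ because $V$ is admissible. But admissibility guarantees finite generation of $V^K$ only for compact \emph{open} subgroups $K$ of $G_n$. The group $\ker\chi_1$ is a compact open subgroup of $U_{S,\phi}$, hence compact but \emph{not} open in $G_n$, and for such subgroups $V^{\ker\chi_1}$ is typically infinite-dimensional (a principal series already provides an example). So the chain has no reason to stabilize, and the intended exactness argument collapses.

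The paper proceeds by a duality trick that avoids any attempt at an adjunction for compact induction from a non-open subgroup. One shows instead that $W_\lambda^{\vee}\cong\Ind_{U_{S,\phi}}^{G_n}\psi_{S,\phi}^{-1}$ is an \emph{injective} object of $\Rep_{\CO}(G_n)$: the $\CO[U_{S,\phi}]$-module $\psi_{S,\phi}^{-1}$ with underlying module $\CK/\CO$ is injective, and $\Ind_{U_{S,\phi}}^{G_n}$ preserves injectives because it is a right adjoint (that part of Frobenius reciprocity \emph{does} hold for closed subgroups). For admissible $V$ one then invokes the biduality $V\cong(V^\vee)^\vee$ and the isomorphism $\Hom_{G_n}(V,W^\vee)\cong\Hom_{G_n}(W,V^\vee)$ to transfer exactness of $\Hom_{G_n}(-,W_\lambda^\vee)$ to the desired exactness of $\Hom_{G_n}(W_\lambda,-)$. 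Your lattice reduction at the end is in the right spirit (it is essentially what the paper does), but it rests on the exactness claim that your argument does not actually establish.
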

\begin{proof}
For any smooth $\CO[G]$-module $V$, let $V^{\vee}$ denote the smooth vectors in $\Hom_{\CO}(V,\CK/\CO);$ then $V \mapsto V^{\vee}$ is an exact contravariant functor and one has natural isomorphisms:
$$\Hom_{G_n}(V,W^{\vee}) \cong \Hom_{G_n}(W,V^{\vee})$$
for any $W,V$ smooth $\CO[G]$-modules.  If $V$ is an admissible $\CO[G]$-module, then the natural map
$V \rightarrow (V^{\vee})^{\vee}$ is an isomorphism.

Note that $W_{\lambda}^{\vee}$ is injective.  Indeed, we have $W_{\lambda} = \cInd_{U_{S,\phi}}^{G_n} \psi_{S,\phi}$ for some $S$ and $\phi$ as in the previous section, so $W_{\lambda}^{\vee}$ is isomorphic to $\Ind_{U_{S,\phi}}^{G_n} \psi_{S,\phi}^{-1}$, where $\psi_{S,\phi}^{-1}$ has underlying module $\CK'/\CO'$ and a $U_{S,\phi}$ action via the inverse of $\psi_{S,\phi}.$  In particular $\psi_{S,\phi}^{-1}$ is an injective $\CO[U_{S,\phi}]$-module; since induction from a closed subgroup is a right adjoint it follows that $W_{\lambda}^{\vee}$ is injective as a $\CO[G_n]$-module.

Applying the exact functor $\Hom_{G_n}(-, W_{\lambda}^{\vee})$ to the exact sequence:
$$0 \rightarrow W^{\vee} \rightarrow V^{\vee} \rightarrow U^{\vee} \rightarrow 0$$
we obtain an exact sequence:
$$0 \rightarrow \Hom_{G_n}(U^{\vee}, W_{\lambda}^{\vee}) \rightarrow \Hom_{G_n}(V^{\vee}, W_{\lambda}^{\vee}) \rightarrow \Hom_{G_n}(W^{\vee}, W_{\lambda}^{\vee}) \rightarrow 0.$$
Invoking the isomorphisms 
$$\Hom_{G_n}(U^{\vee}, W_{\lambda}^{\vee}) \cong \Hom_{G_n}(W_{\lambda}, (U^{\vee})^{\vee}) \cong \Hom_{G_n}(W_{\lambda},U)$$
(and the analogous isomorphisms for $V$ and $W$,) we obtained the desired result.

The final statement follows by applying this exactness to the sequence:
$$0 \rightarrow L \rightarrow L \rightarrow r_{\ell} (\tilde \pi) \rightarrow 0,$$
where $L$ is a $\CO'$ lattice in some model $\pi$ of $\tilde \pi$ over a suitable finite extension $\CK'$ of $\CK.$
\end{proof}

\begin{rem} \rm
The lemma makes it look quite plausible that the spaces $W_{\lambda}$ are actually projective, (and indeed, Gomez, Gourevitch, and Sahi have conjectured that this is the case) but this does not follow directly from the argument of the lemma and as far as we are aware the question remains open.  Indeed, even though we can prove that $W_{\lambda}^{\vee}$ is an injective $\CO'[G]$-module, it does not follow that $W_{\lambda}$ is projective; the most we can deduce from this injectivity is that $W_{\lambda}$ is flat as a $\CO[G_n]$-module.  When $\lambda$ is the maximal partition $(n)$ of $n$, then $W_{\lambda}$ is a direct sum of finitely generated $\CO[G_n]$-modules, and in this case projectivity of $W_{\lambda}$ follows from the fact that finitely generated flat smooth $\CO[G_n]$-modules are projective.  In general however $W_{\lambda}$ does not have good finiteness properties, and such an argument cannot be made to work.
\end{rem}

To obtain information about $\Hom_{G_n}(W_{\lambda},\pi)$ from lifts of $\pi$ to characteristic zero we need to understand the behaviour of $r_{\ell}$ in terms of the Zelevinsky classification.
We first recall that any supercuspidal representation $\rho$ of $G_n$ over $k$ admits an integral, supercuspidal lift ${\tilde \rho}$ over $\overline{\CK}$ such that $r_{\ell}(\tilde \rho) = \rho$.
A lift of $\Delta$ is a segment ${\tilde \Delta}$ of the form $[a,b]_{\tilde \rho}$ for some lift ${\tilde \rho}$ of $\rho$.  The key result we need is part (1) of~\cite{MS}, Th{\'e}or{\`e}me 9.39:

\begin{thm}
For any supercuspidal segment $\Delta$ over $k$, and any lift ${\tilde \Delta}$ of $\Delta$, the reduction $r_{\ell} Z(\tilde \Delta)$ is isomorphic to $Z(\Delta).$  
\end{thm}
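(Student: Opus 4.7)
The plan is to argue by induction on $\ell(\Delta)$. The base case $\ell(\Delta) = 1$ is immediate: here $Z(\Delta) = \rho$ and any lift $\tilde\Delta$ has $Z(\tilde\Delta) = \tilde\rho$, so $r_\ell Z(\tilde\Delta) = Z(\Delta)$ by the choice of $\tilde\rho$ as a lift of $\rho$ recalled just before the theorem.

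For the inductive step, write $\tilde\Delta = [a,b]_{\tilde\rho}$ and consider the parabolic induction $I(\tilde\Delta) := i_{P}^{G_{d(\tilde\Delta)}}(Z(\tilde\Delta^-) \otimes \nu^b \tilde\rho)$, where $P$ is the standard parabolic with blocks of sizes $d(\tilde\Delta^-)$ and $n(\tilde\Delta)$. The representation $Z(\tilde\Delta)$ can be realized as the unique irreducible subquotient of $I(\tilde\Delta)$ whose $n(\tilde\Delta)$-th derivative is $Z(\tilde\Delta^-)$, and it is integral (an integral lattice can be produced by starting from an integral lattice in $I(\tilde\Delta)$, which exists because both $Z(\tilde\Delta^-)$ and $\nu^b\tilde\rho$ are integral by induction, and parabolic induction preserves integrality). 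Pick a $G$-stable $\CO'$-lattice $L \subset Z(\tilde\Delta)$ and consider $r_\ell Z(\tilde\Delta) = L/\varpi L$. Since the $k$-th derivative functor of Bernstein--Zelevinsky is exact and commutes with flat base change, and since taking $U_k$-coinvariants along the relevant unipotent is compatible with reduction mod $\varpi$, one obtains
\[
(r_\ell Z(\tilde\Delta))^{(n(\Delta))} \;\cong\; r_\ell\bigl(Z(\tilde\Delta^-)\bigr) \;\cong\; Z(\Delta^-),
\]
where the second isomorphism uses the inductive hypothesis. In particular this derivative is nonzero and irreducible.

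It remains to identify $r_\ell Z(\tilde\Delta)$ with $Z(\Delta)$. By compatibility of parabolic restriction with reduction, the supercuspidal support of every irreducible subquotient of $r_\ell Z(\tilde\Delta)$ agrees with the support of $\Delta$. Combined with the fact that $(r_\ell Z(\tilde\Delta))^{(n(\Delta))}$ is irreducible and equals $Z(\Delta^-)$, the characterization of $Z(\Delta)$ recalled above shows that $Z(\Delta)$ occurs in the semisimplification of $r_\ell Z(\tilde\Delta)$ with multiplicity exactly one, and that any other irreducible constituent $\pi'$ would have to satisfy $(\pi')^{(n(\Delta))} = 0$. The principal obstacle is therefore to rule out such ``degenerate'' extra constituents, i.e.\ to prove that $r_\ell Z(\tilde\Delta)$ is in fact irreducible. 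The natural approach is to compare Jordan--H\"older multiplicities in $r_\ell I(\tilde\Delta)$ against those of the corresponding mod $\ell$ parabolic induction $I(\Delta)$ using the combinatorics of Zelevinsky multisegments: the M\'inguez--S\'echerre classification (together with integrality of parabolic induction) shows that $r_\ell I(\tilde\Delta)$ and $I(\Delta)$ have the same semisimplification, and among subquotients with supercuspidal support $\operatorname{supp}(\Delta)$ only those indexed by multisegments $\mathfrak m$ refining $\{\Delta\}$ can occur; the derivative computation above forces $r_\ell Z(\tilde\Delta)$ to consist of exactly the summand indexed by $\{\Delta\}$, yielding $r_\ell Z(\tilde\Delta) \cong Z(\Delta)$. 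The hard part is genuinely this last combinatorial step, which is where the bulk of the work in \cite{MS}, Th\'eor\`eme 9.39, is concentrated.
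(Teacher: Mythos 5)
The paper does not give a proof of this theorem: it is quoted verbatim as part~(1) of M\'inguez--S\'echerre~\cite{MS}, Th\'eor\`eme~9.39, and the authors simply cite that result. Your proposal reconstructs a plausible reduction (induction on $\ell(\Delta)$, compatibility of the Bernstein--Zelevinsky derivative with reduction modulo $\varpi$, identification of $(r_\ell Z(\tilde\Delta))^{(n(\Delta))}$ with $Z(\Delta^-)$, preservation of supercuspidal support), and then explicitly defers to~\cite{MS}, Th\'eor\`eme~9.39 for the remaining irreducibility/combinatorial step. In the end you and the paper are both resting on the same external result, so the routes are not genuinely different; yours just makes part of the citation's content explicit.

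Two small remarks on the sketch itself. First, you write that the derivative functor ``commutes with flat base change,'' but reduction $\mathcal O' \to k$ is not flat; what actually does the work is exactness of the derivative functor, which makes it commute with the short exact sequence $0 \to L \xrightarrow{\varpi} L \to L/\varpi L \to 0$. This is what your argument uses, and it is correct, but the phrasing is misleading. Second, the ``hard combinatorial step'' is genuinely subtle over $k$ in a way worth flagging: if $\nu$ has finite order $e$ modulo $\ell$, then a segment is not determined by its support as a multiset, so one cannot conclude immediately from supercuspidal support considerations that the unique subquotient with highest derivative $n(\Delta)$ is $Z(\Delta)$ rather than $Z(\Delta')$ for some other segment $\Delta'$ with the same support. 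Your derivative computation $(r_\ell Z(\tilde\Delta))^{(n(\Delta))} \cong Z(\Delta^-)$, combined with the inductive hypothesis and the characterization of $Z(\Delta)$ by its unique nonzero derivative, is what distinguishes $Z(\Delta)$ from such $Z(\Delta')$; but making this watertight requires the bookkeeping that~\cite{MS} carry out, which is precisely where you (appropriately) stop.
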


{\em Proof of Theorem~\ref{thm:modular multiplicity}:}
We prove this by induction on the highest derivative partition $\lambda.$  If $\lambda = (1,\dots,1)$ is minimal, then it follows from the Zelevinsky classification that $\pi = Z(\Delta)$ for some supercuspidal segment $\Delta.$  We can then take ${\tilde \pi} = Z(\tilde \Delta)$; the claim then follows from Theorem~\ref{thm:char zero multiplicity} applied to ${\tilde \pi},$ followed by Lemma~\ref{lemma:near projectivity}.

Now assume the claim holds for all representations with highest derivative partition below $\lambda,$
and fix a representation $\pi = Z(\mm)$ with highest derivative partition $\lambda.$  Write $\mm = \{\Delta_1, \dots, \Delta_r\}$ and choose a lift ${\tilde \mm} = \{{\tilde \Delta}_1, \dots, {\tilde \Delta}_r\}$ of $\mm.$  Note that $I(\tilde \mm)$ and $I(\mm)$ both have highest derivative partition $\lambda.$  If $\pi'$ is a subquotient of $I(\mm)$ other than $\pi,$ then the highest derivative partition of $\pi'$ lies strictly below $\lambda$, so by our inductive hypothesis $\Hom_{G_n}(W_{\lambda'},\pi') = 0$ unless $\lambda' \prec \lambda.$

On the other hand, by Theorem~\ref{thm:char zero multiplicity}, and Lemma~\ref{lemma:near projectivity}, $\Hom_{G_n}(W_{\lambda}, I(\mm))$ is one-dimensional; from this and the previous paragraph we deduce that $\Hom_{G_n}(W_{\lambda}, \pi)$ must be one-dimensional as well.  Similarly, $\Hom_{G_n}(W_{\lambda'}, \pi)$ must be zero unless $\lambda' \prec \lambda.$ \null\hfill $\Box$

The results of this section suggest a strong connection between $W_{\lambda}$ and the spaces $W'_{\lambda}$ studied above.  In fact, this can be made completely explicit.  Indeed, one has:
\begin{prop} \label{prop:W comparison}
There is a natural surjection $W_{\lambda} \rightarrow W'_{\lambda}$, that is noncanonically split.
\end{prop}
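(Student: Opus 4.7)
The plan is to realize both $W_\lambda$ and $W'_\lambda$ as compact inductions of characters, match the characters on a common subgroup, and construct the map via induction-in-stages together with a careful averaging.

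By Lemma~\ref{lemma:W' induction}, $W'_\lambda = \cInd_{U_n}^{G_n} \psi_\lambda$. I would choose a representative $(S, \phi)$ of the nilpotent orbit for $\lambda$ with $\phi = \sum_{i \in S_\lambda} E_{i+1, i}$. A direct computation then shows that $\psi_{S,\phi}(\exp X) = \psi(\mathrm{tr}(\phi X)) = \psi\!\left(\sum_{i \in S_\lambda} X_{i, i+1}\right)$ for any $X \in \Lie(U_n) \cap \Lie(U_{S,\phi})$, so $\psi_{S,\phi}$ and $\psi_\lambda$ agree on the common subgroup $H := U_{S,\phi} \cap U_n$; let $\chi$ denote this common restriction. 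Both $W_\lambda$ and $W'_\lambda$ then receive canonical surjections from $\cInd_H^{G_n} \chi$ via the Frobenius co-units of the inclusions $H \hookrightarrow U_{S,\phi}$ and $H \hookrightarrow U_n$, obtained by averaging over $H \backslash U_{S,\phi}$ and $H \backslash U_n$ against $\psi_{S,\phi}$ and $\psi_\lambda$ respectively.

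To produce the natural surjection $W_\lambda \to W'_\lambda$, I would verify that the kernel of the first averaging map is contained in the kernel of the second; this reduces to a Mackey/double coset computation comparing the ``extra'' directions in $U_{S,\phi}/H$ and $U_n/H$. Equivalently, using the parabolic induction presentation $W'_\lambda = i_{P^\circ}^{G_n} W_M$, Bernstein's second adjointness reduces the construction to exhibiting an $M$-equivariant map from the Jacquet module $r_{G_n}^{P} W_\lambda$ to $W_M$, which can be identified via a Mackey-type computation on the Moeglin-Waldspurger realization of $W_\lambda$. The non-canonical splitting is obtained by choosing a set-theoretic transversal of $H$ in $U_n$ (equivalently, a complementary subspace in $\Lie(U_n)$) and using it to lift functions from $W'_\lambda$ back to $W_\lambda$; different choices of transversal give different splittings.

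The main technical obstacle is the Mackey/Heisenberg analysis needed when $U_{S,\phi}$ is not contained in $U_n$, as happens for example for rectangular partitions $\lambda = (k, k, \ldots, k)$. In these cases the naive averaging over $U_n$ against $\psi_\lambda^{-1}$ is divergent because the relevant transversals are not compact, and one must work with the intermediate subgroup $H$ together with the Heisenberg-type structure of $U_{S,\phi}/H$. The key combinatorial input making this work uniformly is that $\psi_{S,\phi}$ and $\psi_\lambda$ arise from the same nilpotent element $\phi$, which guarantees the needed character compatibility on $H$.
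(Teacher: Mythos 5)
You have correctly set up the two compact induction realizations and correctly observed that $\psi_{S,\phi}$ and $\psi_\lambda$ match on $H = U_{S,\phi}\cap U_n$, and you have also correctly diagnosed the central technical difficulty: in general $U_{S,\phi}$ is not contained in $U_n$ (nor conversely), so there is no direct averaging from $W_\lambda$ to $W'_\lambda$, and one has to contend with the Heisenberg-type quotient $U_{S,\phi}/H$. However, your proposal stops at naming that difficulty without resolving it, and this is exactly the step the paper does \emph{not} carry out by hand. The paper instead invokes a known result (Theorem A.1 of Gourevitch's appendix to Lapid--Offen, cited as \cite{gourevich appendix}) which identifies $W_\lambda = W_{S,\phi}$ with the ``Zelevinsky model'' $\cInd_{U_\lambda}^{G_n}\psi_\lambda|_{U_\lambda}$, where $U_\lambda$ is a specific subgroup \emph{of} $U_n$. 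That isomorphism is precisely the Heisenberg/Weil-representation analysis you allude to, but it is a substantial result in its own right and cannot be waved away as ``a Mackey/double coset computation.'' Once it is in hand, the problem collapses to the tame situation $U_\lambda \subseteq U_n$: the map $\cInd_{U_\lambda}^{G_n}\psi_\lambda|_{U_\lambda}\to\cInd_{U_n}^{G_n}\psi_\lambda$ is the obvious averaging over $U_\lambda\backslash U_n$, and surjectivity follows by passing to smooth duals, where the map becomes the manifestly injective ``forget equivariance'' inclusion $\Ind_{U_n}^{G_n}\psi_\lambda^{-1}\hookrightarrow \Ind_{U_\lambda}^{G_n}\psi_\lambda^{-1}|_{U_\lambda}$. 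So the proof you would need to fill in is not a routine verification; it is the content of the cited theorem.

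A secondary issue concerns the splitting. Your proposal to split by choosing a set-theoretic transversal of $H$ in $U_n$ does not obviously produce a $G_n$-equivariant section, and in fact a $(U_n,\psi_\lambda)$-equivariant function with compact support mod $U_n$ is generally \emph{not} compactly supported mod $H$ (since $H\backslash U_n$ is not compact), so the naive ``extend by the character'' map from $W'_\lambda$ into $\cInd_H^{G_n}\chi$ is not even well-defined. The paper gets the splitting for free from the projectivity of $W'_\lambda$, which was established earlier in the section on highest derivative partitions. You should use that instead; it is both simpler and correct.
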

\begin{proof}
By \cite{gourevich appendix}, Theorem A.1, there is a natural isomorphism of $W_{\lambda}$ with $\cInd_{U_{\lambda}}^{G_n} \psi_{\lambda}|_{U_{\lambda}}$, where $\psi_{\lambda}$ is the character defined in section~\ref{sec:highest} and $U_{\lambda}$ is the subgroup of $U_n$ denoted by $N_{\lambda}$ in \cite{gourevich appendix}.

We have a natural injective map:
$$\cInd_{U_n}^{G_n} \psi^{-1}_{\lambda} \hookrightarrow \Ind_{U_{\lambda}}^{G_n} \psi^{-1}_{\lambda} |_{U_{\lambda}}$$
that takes a left $U_n,\psi^{-1}_{\lambda}$-equivariant function $f$ on $G_n$ to the same function, but considered as a left $U_{\lambda}, \psi^{-1}_{\lambda}$-equivariant function.  Using the identity
$$\Hom_{G_n}(V,W^{\vee}) \cong \Hom_{G_n}(W,V^{\vee}),$$
together with the duality between $\cInd$ and $\Ind$, we see that this gives rise to a map:
$$\cInd_{U_{\lambda}}^{G_n} \psi_{\lambda}|_{U_{\lambda}} \rightarrow \Ind_{U_n}^{G_n} \psi_{\lambda}.$$

This map is given explicitly as follows: it takes a compactly supported, left $U_{\lambda},\psi_{\lambda}$-equivariant function $f$ to the function given by:
$$g \mapsto \int\limits_{U_{\lambda} \backslash U_n} \psi_{\lambda}^{-1}(u) f(ug) du.$$
In particular it takes compactly supported functions mod $U_{\lambda}$ to compactly supported functions mod $U_n$, and thus yields a map:
$$\cInd_{U_{\lambda}}^{G_n} \psi_{\lambda}|_{U_{\lambda}} \rightarrow \cInd_{U_n}^{G_n} \psi_{\lambda}.$$
This is the desired map $W_{\lambda} \rightarrow W'_{\lambda}$; it remains to show that it is surjective.  (It will then be split by projectivity of $W'_{\lambda}$.)

The dual of this map is the map:
$$\Ind_{U_n}^{G_n} \psi_{\lambda}^{-1} \rightarrow \Ind_{U_{\lambda}}^{G_n} \psi_{\lambda}^{-1}|_{U_{\lambda}}$$
that takes a function $f$ to itself, forgetting part of the equivariance.  This map is clearly injective, so the map $W_{\lambda} \rightarrow W'_{\lambda}$ is surjective as claimed.
\end{proof}

\section{The subcategory $\Rep_{\CO}(G_n)^{\preceq \lambda}$ and its localization}

For any partition $\lambda$ of $n$, we can consider the full subcategories
$\Rep_{\CO}(G_n)^{\preceq \lambda}$ and $\Rep_{\CO}(G_n)^{\prec \lambda}$, whose objects consist, respectively, of those objects $V$ for which every simple subquotient has highest derivative partition $\lambda'$ satisfying $\lambda' \preceq \lambda$ or $\lambda' \prec \lambda.$

Alternatively, one can regard $\Rep_{\CO}(G_n)^{\preceq \lambda}$ as the full subcategory of $\Rep_{\CO}(G_n)$ consisting of objects $V$ such that $\Hom_{G_n}(W'_{\lambda'}, V) = 0$ for all $\lambda'$ such that $\lambda' \not\preceq \lambda$, and $\Rep_{\CO}(G_n)^{\prec \lambda}$ as the full subcategory of $\Rep_{\CO}(G_n)^{\preceq \lambda}$ whose objects $V$ satisfy the additional condition $\Hom_{G_n}(W'_{\lambda},V) = 0$.

The inclusion of $\Rep_{\CO}(G_n)^{\preceq \lambda}$ in $\Rep_{\CO}(G_n)$ has a left adjoint $V \mapsto V^{\preceq \lambda}.$  The natural map $V \rightarrow V^{\preceq \lambda}$ is surjective, and identifies $V^{\preceq \lambda}$ with the largest quotient of $V$ that lies in $\Rep_{\CO}(G_n)^{\preceq \lambda}$.  In particular, the functor $V \mapsto V^{\preceq \lambda}$ sends finitely generated objects to finitely generated objects.  Explicitly, one has:
$$V^{\preceq \lambda} = \coker \oplus_{\lambda' \not\preceq \lambda} W'_{\lambda'} \otimes \Hom_{G_n}(W'_{\lambda'}, V) \rightarrow V.$$

As a left adjoint of an exact functor, the functor $V \mapsto V^{\preceq \lambda}$ is right exact, and takes projective objects of $\Rep_{\CO}(G_n)$ to projective objects of $\Rep_{\CO}(G_n)^{\preceq \lambda}$ (although of course such objects will not typically be projective objects of $\Rep_{\CO}(G_n).$)  In particular, the $\CO[G]$-module $(W'_{\lambda})^{\preceq \lambda}$ is a projective object of $\Rep_{\CO}(G_n)^{\preceq \lambda}.$  

Let $\Rep_{\CO}(G_n)^{\lambda}$ denote the Serre quotient of $\Rep_{\CO}(G_n)^{\preceq \lambda}$ by the Serre subcategory $\Rep_{\CO}(G_n)^{\prec \lambda}.$  This is in effect a localization of $\Rep_{\CO}(G_n)^{\preceq \lambda}$ in which we identify all objects of $\Rep_{\CO}(G_n)^{\prec \lambda}$ with the zero object, or equivalently, in which we formally invert the maps $V \rightarrow 0$ for any $V$ in $\Rep_{\CO}(G_n)^{\prec \lambda}$.  We denote by $\Hom_{G_n,\lambda}(V,W)$ the set of morphisms from $V$ to $W$ in $\Rep_{\CO}(G_n)^{\lambda}$.

\begin{lemma}
For any object $V$ of $\Rep_{\CO}(G_n)^{\lambda}$, there are a natural isomorphisms:
$$\Hom_{G_n}(W'_{\lambda}, V) \cong \Hom_{G_n}((W'_{\lambda})^{\preceq \lambda}, V) \cong
\Hom_{G_n,\lambda}((W'_{\lambda})^{\preceq \lambda}, V)$$
\end{lemma}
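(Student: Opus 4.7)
The first isomorphism is immediate from the adjunction recalled just above the lemma: $V \mapsto V^{\preceq \lambda}$ is left adjoint to the inclusion $\Rep_{\CO}(G_n)^{\preceq \lambda} \hookrightarrow \Rep_{\CO}(G_n)$, and any object of the Serre quotient $\Rep_{\CO}(G_n)^{\lambda}$ is by definition an object of $\Rep_{\CO}(G_n)^{\preceq \lambda}$. Applying the adjunction to $W'_{\lambda}$ and $V$ yields $\Hom_{G_n}(W'_{\lambda}, V) \cong \Hom_{G_n}((W'_{\lambda})^{\preceq \lambda}, V)$ directly.

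For the second isomorphism my plan is to invoke the following standard principle: if $\mathcal{A}$ is abelian, $\mathcal{B} \subseteq \mathcal{A}$ is a Serre subcategory, and $P \in \mathcal{A}$ is a projective object satisfying $\Hom_{\mathcal{A}}(P, B) = 0$ for every $B \in \mathcal{B}$, then the quotient functor induces an isomorphism $\Hom_{\mathcal{A}}(P, V) \cong \Hom_{\mathcal{A}/\mathcal{B}}(P, V)$ for every $V \in \mathcal{A}$. I would take $\mathcal{A} = \Rep_{\CO}(G_n)^{\preceq \lambda}$, $\mathcal{B} = \Rep_{\CO}(G_n)^{\prec \lambda}$, and $P = (W'_{\lambda})^{\preceq \lambda}$. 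Projectivity of $P$ in $\mathcal{A}$ was already established earlier in this section (since $(-)^{\preceq \lambda}$ sends projectives to projectives and $W'_{\lambda}$ is projective in $\Rep_{\CO}(G_n)$). The Hom-vanishing hypothesis follows by applying the first isomorphism with $V$ replaced by $B \in \mathcal{B}$: this rewrites $\Hom_{\mathcal{A}}(P, B)$ as $\Hom_{G_n}(W'_{\lambda}, B)$, which is zero by the very definition of $\Rep_{\CO}(G_n)^{\prec \lambda}$ recalled at the start of this section.

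The proof of the underlying Serre-quotient fact becomes transparent under these hypotheses. Morphisms in the quotient admit the colimit description
\[
\Hom_{\mathcal{A}/\mathcal{B}}(P, V) = \operatorname{colim}_{(P', V')} \Hom_{\mathcal{A}}(P', V/V')
\]
over pairs with $P' \subseteq P$, $P/P' \in \mathcal{B}$, and $V' \subseteq V$, $V' \in \mathcal{B}$. The key observation is that the condition $\Hom_{\mathcal{A}}(P, P/P') = 0$, applied to the canonical surjection $P \twoheadrightarrow P/P'$, forces $P/P' = 0$, so the only admissible $P'$ is $P$ itself. The colimit therefore runs only over $V'$. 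Projectivity of $P$ gives $\Ext^1_{\mathcal{A}}(P, V') = 0$, so $\Hom_{\mathcal{A}}(P, V) \to \Hom_{\mathcal{A}}(P, V/V')$ is surjective with kernel $\Hom_{\mathcal{A}}(P, V') = 0$. Each transition map in the filtered system is therefore an isomorphism, and the colimit collapses to $\Hom_{\mathcal{A}}(P, V)$ as desired. The only non-formal point to watch is the verification of the Hom-vanishing hypothesis itself, which, as indicated above, reduces cleanly to the very definition of $\Rep_{\CO}(G_n)^{\prec \lambda}$.
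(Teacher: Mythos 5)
Your proof is correct and follows essentially the same route as the paper: the first isomorphism from the adjunction, and the second from the colimit description of $\Hom$ in a Serre quotient, collapsed via projectivity of $(W'_{\lambda})^{\preceq\lambda}$ and the vanishing of $\Hom(W'_{\lambda},-)$ on $\Rep_{\CO}(G_n)^{\prec\lambda}$. The only difference is cosmetic: you package the second step as a general Serre-quotient lemma and check its hypotheses, and you force $P'=P$ by applying the Hom-vanishing directly to the surjection $P\twoheadrightarrow P/P'$, whereas the paper argues that every irreducible quotient of $(W'_{\lambda})^{\preceq\lambda}$ has highest derivative partition exactly $\lambda$; both observations do the same work.
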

\begin{proof}
The first isomorphism is immediate from the definition of $W \mapsto W^{\preceq \lambda}$ as left adjoint to the inclusion functor.  For the second, note that $\Hom_{G_n,\lambda}((W'_{\lambda})^{\preceq \lambda}, V)$ is, by definition, the colimit over pairs $W$, $V'$ of $\Hom_{G_n}(W, V/V'),$ where $W$ is a subobject of $(W'_{\lambda})^{\preceq \lambda}$ such that the quotient $(W'_{\lambda})^{\preceq \lambda}/W$ lies in $\Rep_{\CO}(G_n)^{\prec \lambda}$ and $V'$ is a subobject of $V$ lying in $\Rep_{\CO}(G_n)^{\prec \lambda}$.   Since every nonzero irreducible quotient of $(W'_{\lambda})^{\preceq \lambda}$ has highest derivative partition $\lambda$, we must have $W = (W'_{\lambda})^{\preceq \lambda}.$  Moreover, since $(W'_{\lambda})^{\preceq \lambda}$ is projective, and $\Hom_{G_n}((W'_{\lambda})^{\preceq \lambda}, V') = 0$, any element of $\Hom_{G_n}((W'_{\lambda})^{\preceq \lambda}, V/V')$ lifts uniquely to a map to $V$.  Thus the colimit may be identified with $\Hom_{G_n}((W'_{\lambda})^{\preceq \lambda}, V)$ as claimed.
\end{proof}

Henceforth denote the representation $(W'_{\lambda})^{\preceq \lambda}$ by $\CP_{\lambda}$.  By Proposition~\ref{prop:W comparison} this representation is also isomorphic to $W_{\lambda}^{\preceq \lambda}$.  We let $E_{\lambda}$ denote the endomorphism ring of $\CP_{\lambda}$; the lemma implies it does not matter whether we take these endomorphisms in $\Rep_{\CO}(G_n)$ or in $\Rep_{\CO}(G_n)^{\lambda}$.

The projective $\CP_{\lambda}$ is nearly a projective generator of $\Rep_{\CO}(G_n)^{\lambda};$ it is projective and admits a nonzero map to every object of this category, but fails to be finitely generated.  To fix this, let $e$ be an idempotent of the center $Z_n$ of $\Rep_{\CO}(G_n),$ such that $e$ is a finite sum of primitive idempotents.  Then $e\CP_{\lambda}$ is a projective generator of $e\Rep_{\CO}(G_n)^{\lambda}$.  In particular the functor
$$V \mapsto \Hom_{G_n,\lambda}(e(W'_{\lambda})^{\preceq \lambda},V) \cong \Hom_{G_n}(e\CP_{\lambda}, V)$$
defines an equivalence between $\Rep_{\CO}(G_n)^{\lambda}$ and the category of $eE_{\lambda}$-modules.

It is therefore worth studying the structure of the rings $eE_{\lambda}.$  We first note:
\begin{prop}
The ring $eE_{\lambda}$ is a finitely generated $\CO$-algebra.
\end{prop}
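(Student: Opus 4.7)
The plan is to reduce the statement to standard finiteness results for Bernstein blocks. Since $e$ is a finite sum of primitive idempotents and $eE_\lambda$ decomposes accordingly, I may immediately reduce to the case where $e$ is a single primitive idempotent, so that $e\Rep_\CO(G_n)$ is a single Bernstein block.

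The first step is to observe that $e\CP_\lambda = (eW'_\lambda)^{\preceq \lambda}$ is a finitely generated $\CO[G_n]$-module. Indeed, the theorem in section~\ref{sec:highest} gives that $eW'_\lambda$ is finitely generated, and the functor $V \mapsto V^{\preceq \lambda}$ is right exact (being a left adjoint), so it sends the finitely generated $eW'_\lambda$ to a finitely generated quotient $e\CP_\lambda$.

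Next I would invoke the Bernstein-Deligne structure theory of the block $e\Rep_\CO(G_n)$. There exists a progenerator $P_e$ whose endomorphism ring $H_e := \End_{G_n}(P_e)^{\op}$ (the associated Hecke algebra) has the following properties: the functor $V \mapsto \Hom_{G_n}(P_e, V)$ gives an equivalence between $e\Rep_\CO(G_n)$ and the category of $H_e$-modules that carries finitely generated objects to finitely generated $H_e$-modules; the center of $H_e$ is identified with $eZ_n$, a finitely generated (hence Noetherian) $\CO$-algebra; and $H_e$ is itself a finitely generated module over $eZ_n$.

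Setting $M := \Hom_{G_n}(P_e, e\CP_\lambda)$, the first step combined with the equivalence shows $M$ is a finitely generated $H_e$-module, and hence a finitely generated $eZ_n$-module. The lemma above identifies $eE_\lambda$ with $\End_{G_n}(e\CP_\lambda)$, which under the equivalence is $\End_{H_e}(M)$. This ring sits inside $\End_{eZ_n}(M)$, and the latter is a finitely generated $eZ_n$-module because $eZ_n$ is Noetherian and $M$ is finitely generated over $eZ_n$ (pick a finite presentation of $M$, apply $\Hom_{eZ_n}(-,M)$, and use that submodules of finitely generated modules over Noetherian rings are finitely generated). Hence its $eZ_n$-submodule $\End_{H_e}(M) = eE_\lambda$ is finitely generated over $eZ_n$, and combined with the finite generation of $eZ_n$ over $\CO$ this shows $eE_\lambda$ is a finitely generated $\CO$-algebra. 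The argument is essentially bookkeeping with Bernstein-Deligne theory; no serious obstacle arises beyond recalling the correct compatibility of the Hecke-algebra equivalence with finite generation.
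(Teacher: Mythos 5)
Your proof is correct, and the underlying finiteness mechanism is the same as the paper's, but you route through a different piece of machinery. The paper argues directly that since $e\CP_\lambda$ is finitely generated over $\CO[G_n]$, it is admissible as an $eZ_n[G_n]$-module, and the endomorphism ring of a finitely generated admissible $eZ_n[G_n]$-module embeds into $\End_{eZ_n}(e\CP_\lambda^K)$ for a sufficiently small compact open $K$, hence is a finitely generated $eZ_n$-module because $eZ_n$ is Noetherian. You instead pass through the Morita equivalence with the Hecke algebra $H_e = \End_{G_n}(P_e)^{\op}$ of a progenerator for the block, so that $M = \Hom_{G_n}(P_e, e\CP_\lambda)$ is a finitely generated $H_e$-module (hence finitely generated $eZ_n$-module), and $eE_\lambda = \End_{H_e}(M) \subseteq \End_{eZ_n}(M)$ is finitely generated over $eZ_n$ by the same Noetherian argument. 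Both approaches rest on the same two inputs — $e\CP_\lambda$ being a finitely generated quotient of the finitely generated $eW'_\lambda$, and $eZ_n$ being a Noetherian finitely generated $\CO$-algebra. The paper's version is leaner because it works with the block directly without invoking a progenerator; yours makes the reduction to commutative Noetherian module theory slightly more explicit, at the cost of citing the finitely-generated-type Hecke algebra machinery of the $\CO$-coefficient Bernstein--Deligne theory. Both are valid, and neither buys an essential advantage over the other for this statement.
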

\begin{proof}
As $e\CP_{\lambda}$ is a finitely generated $\CO[G_n]$-module, it is admissible as an $eZ_n[G_n]$-module.  In particular its endomorphisms are finitely generated as an $eZ_n$-module.  Since $eZ_n$ is a finitely generated $\CO$-algebra the claim follows.
\end{proof}

Let $eZ_{\lambda}$ be the quotient of $eZ_n$ that acts faithfully on $eE_{\lambda}$; that is, the image of the natural map $eZ_n \rightarrow E_{\lambda}.$  We conjecture the following:

\begin{conj} \label{conj:big}
\begin{enumerate}
    \item The ring $eZ_{\lambda}$ is reduced and flat over $\CO$.
    \item For each minimal prime $\eta$ of $eZ_{\lambda},$ the map $eZ_{\lambda} \rightarrow eE_{\lambda}$ becomes an isomorphism after localizing at $\eta.$
    \item The natural map:
    $$eE_{\lambda} \rightarrow \prod_{\eta} (eE_{\lambda})_{\eta}$$
    in which $\eta$ runs over the minimal primes of $eZ_{\lambda},$ is injective.  (In particular, $eE_{\lambda}$ is reduced, commutative, and flat over $\CO$.)
    \item $eE_{\lambda} \otimes_{\CO} \overline{\CK}$ is a smooth $\overline{\CK}$-algebra.
\end{enumerate}  
\end{conj}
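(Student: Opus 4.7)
The plan is to prove the conjecture in the algebraically closed characteristic-zero setting (where it becomes Theorem~\ref{thm:main}) by reducing to two separate technical inputs—the truncation $I(\mm)^{\preceq \lambda}$ (Theorem~\ref{thm:truncate}) and the self-extension group $\Ext^1_{G_n}(I(\mm), I(\mm))$ (Theorem~\ref{thm:ext})—and then transferring conclusions from the generic fiber to the integral level using the $\CO$-flatness of $e\CP_\lambda$.

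Step one is the reduction of commutativity, and of the explicit description of $eE_\lambda \otimes \overline{\CK}$, to Theorem~\ref{thm:truncate}. Because $e$ is a finite sum of primitive idempotents of $Z_n$, Corollary~\ref{cor:iterated derivative multiplicity} and the Zelevinsky classification confine the irreducible quotients of $e\CP_\lambda$ with highest derivative partition $\lambda$ to finitely many $Z(\mm_1), \dots, Z(\mm_s)$, each of multiplicity one. Applying Theorem~\ref{thm:truncate} identifies $I(\mm_i)^{\preceq \lambda}$ as an outer tensor product of segment pieces $Z(\Delta)^{\preceq(\ell(\Delta))}$; by Corollary~\ref{cor:iterated derivative multiplicativity}, any endomorphism of $e\CP_\lambda \otimes \overline{\CK}$ is determined by compatible endomorphisms of these segment pieces, and each such piece has commutative endomorphism algebra (essentially functions on a cuspidal line). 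This simultaneously gives commutativity and produces a natural candidate product $A = \prod_i A_i$ of smooth $\overline{\CK}$-algebras, together with a map $eE_\lambda \otimes \overline{\CK} \to A$ that is bijective on closed points.

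Step two is the theory of families and the tangent-space calculation. Following \cite{whittaker}, I would define a family of type $\preceq \lambda$ over an $eE_\lambda$-algebra $R$ as an object $V$ of $\Rep_{\CO}(G_n)^{\preceq \lambda}$ equipped with an isomorphism $\Hom_{G_n}(e\CP_\lambda, V) \cong R$. The tangent space of $\Spec(eE_\lambda \otimes \overline{\CK})$ at the closed point $Z(\mm_i)$ is then identified with the set of families over $\overline{\CK}[\epsilon]/\epsilon^2$ deforming $Z(\mm_i)$, which embeds into $\Ext^1_{G_n}(I(\mm_i), I(\mm_i))$ by standard deformation theory. Theorem~\ref{thm:ext}, generalizing \cite{chan}, Lemma 7.3, computes this Ext-group: its dimension equals the number of segments in $\mm_i$, matching $\dim A_i$. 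Thus the map $eE_\lambda \otimes \overline{\CK} \to A$ is a bijection on closed points between smooth varieties of equal dimension, and hence an isomorphism. This establishes (4) and, over $\overline{\CK}$, commutativity, reducedness, the product decomposition, and the generic-isomorphism statement of (2).

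The final step is to descend from $\overline{\CK}$ to $\CO$. Since $e\CP_\lambda$ is projective it is $\CO$-flat, and any endomorphism annihilated by $\ell$ would factor through $e\CP_\lambda / \ell e\CP_\lambda$ while landing in the $\ell$-torsion, which vanishes; hence $eE_\lambda$ is $\CO$-flat and embeds in $eE_\lambda \otimes \overline{\CK}$. The integral versions of commutativity and reducedness then follow, giving (1) and the remaining content of (3); the generic-isomorphism statement (2) descends similarly via the flat map $eZ_\lambda \hookrightarrow eZ_\lambda \otimes \overline{\CK}$. The main obstacle is Theorem~\ref{thm:ext}, whose proof demands a delicate analysis via the Bernstein-Zelevinsky filtration and is postponed to a dedicated section; once both Theorem~\ref{thm:truncate} and Theorem~\ref{thm:ext} are in hand, the assembly above is largely a bookkeeping exercise in commutative algebra.
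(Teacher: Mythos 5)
The statement you are addressing is labelled a \emph{conjecture} in the paper, and the paper does not prove it: it proves only the analogue over $\overline{\CK}$ (Theorem~\ref{thm:main}), and the introduction explicitly remarks that the integral and modular cases ``seem much harder to prove.'' Your Steps 1 and 2 are essentially a compressed account of the paper's proof of Theorem~\ref{thm:main}, and while some details are loose (for instance, Theorem~\ref{thm:truncate} identifies the truncation with a parabolic induction $i_{Q''}^{G_n} Z(\Delta_1)\otimes\dots\otimes Z(\Delta_r)$ rather than an outer tensor product, and the paper's commutativity argument proceeds via completions at maximal ideals through Lemma~\ref{lemma:commutativity} rather than by ``compatible endomorphisms of segment pieces''), the substance of those steps is sound.

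The fatal gap is your Step 3, the descent from $\overline{\CK}$ to $\CO$. You assert that $e\CP_\lambda$ is $\CO$-flat ``since it is projective,'' but projectivity of $\CP_\lambda$ as an object of $\Rep_\CO(G_n)^{\preceq\lambda}$ does \emph{not} imply $\CO$-flatness. The module $\CP_\lambda = (W'_\lambda)^{\preceq\lambda}$ is the image of the $\CO$-free module $W'_\lambda$ under the merely right-exact functor $V\mapsto V^{\preceq\lambda}$, and the failure of left-exactness of this functor (measured by the first left-derived functor applied to $W'_\lambda/\ell W'_\lambda$) is exactly what could introduce $\ell$-torsion into $\CP_\lambda$. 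Put differently: if $\Rep_\CO(G_n)^{\preceq\lambda}$ were equivalent to modules over a ring $R$, projective objects being $\CO$-flat would be equivalent to $R$ itself being $\CO$-flat, which is precisely what one is trying to prove, so the argument is circular. Moreover, parts (1) and (2) of the conjecture concern $eZ_\lambda$, the image of the Bernstein centre inside $eE_\lambda$; nothing in your outline establishes its reducedness or $\CO$-flatness, nor the behaviour of $eZ_\lambda\to eE_\lambda$ at minimal primes of $eZ_\lambda$, and the phrase ``descends similarly via the flat map $eZ_\lambda\hookrightarrow eZ_\lambda\otimes\overline{\CK}$'' again presupposes the flatness that is part of the assertion. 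The characteristic-zero portion of your argument tracks the paper correctly, but the integral descent is not a bookkeeping exercise: it is the open content of the conjecture, and your sketch does not close it.
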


We illustrate this conjecture when $n=2$.  When $\lambda = \{2\}$ these results are well known; in this case the natural map $eZ_{\lambda} \rightarrow eE_{\lambda}$ is an isomorphism for all $e$.  When $\lambda = \{1,1\}$, on the other hand, the full subcategory $\Rep_{\CO}(G_2)^{\preceq \lambda}$ is the full subcategory of representations on which the action of $G_2$ factors through the determinant.  This category is thus equivalent to $\Rep_{\CO}(G_1),$ and the claims of Conjecture~\ref{conj:big} are easily verified.

Over $\overline{\CK}$ we can go much further, and in fact the rings $E_{\lambda} \otimes \overline{\CK}$ can be described in a very explicit fashion.  

We first need some terminology.  We will say two segments $[a,b]_{\rho}$ and $[a',b']_{\rho'}$ are {\em inertially equivalent} if $b-a = b' - a'$ and $\rho'$ is isomorphic to an unramified twist of $\rho.$  An equivalence class of segments will be called an {\em inertial segment}.  Similarly two multisegments ${\mathfrak m}$ and ${\mathfrak m}'$ are inertially equivalent if for any inertial segment $[\Delta]$, the mutiplicities of the segments of ${\mathfrak m}$ and ${\mathfrak m'}$ in the class $[\Delta]$ are the same; we will call the inertial equivalence class of ${\mathfrak m}$ an {\em inertial multisegment,} and denote it $[{\mathfrak m}]$.  If $\pi$ is an irreducible representation of $G_n$ over $\overline{\CK}$, corresponding to a multisegment ${\mathfrak m}$, then we let $[\pi]$ denote the corresponding inertial multisegment $[\mathfrak m]$.

Note that the highest derivative partition $\lambda({\mathfrak m})$ is constant on the representatives of an inertial multisegment $[\mathfrak m]$; we denote this invariant by $\lambda([\mathfrak m])$. One similarly defines $d([\mathfrak m])$.

For an inertial multisegment $[\mathfrak m]$ we let $X_{[\mathfrak m]}$ denote the free abelian group with generators $X_{\Delta_i}$, where $\Delta_i$ are the inertial segments in $[\mathfrak m]$.  Let $W_{[\mathfrak m]}$ denote the subgroup of permutations $w$ of the segments $\Delta_i$ in $[\mathfrak m]$ such that $\Delta_i$ is inertially equivalent to $\Delta_{w(i)}$ for all $i$.  Then $W_{[\mathfrak m]}$ acts on $X_{[\mathfrak m]}$ by permuting the generators.

Normalize $[\mathfrak m]$ by demanding that $\Delta_{w(i)} = \Delta_i$ for all $i$ and all $w \in W_{[\mathfrak m]}$.  Let ${\tilde Z}(\Delta_i)$ denote the $\overline{\CK}[X_{[\mathfrak m]}][G_{d(\Delta_i)}]$-module obtained by twisting $Z(\Delta_i)$ by the character $g \mapsto X_{\Delta_i}^{v(\det g)}$ of $G_{d(\Delta_i)}$, and let ${\tilde I}([\mathfrak m])$ denote the parabolic induction:
$$i_P^{G_{d([\mathfrak m])}} {\tilde Z}(\Delta_1) \otimes {\tilde Z}(\Delta_2) \otimes \dots \otimes {\tilde Z}(\Delta_r).$$
As an object of $\Rep_{\overline{\CK}}(G_{d([\mathfrak m])})^{\preceq \lambda}$ the isomorphism class of ${\tilde I}([\mathfrak m])$ depends on the ordering of the segments; on the other hand we will show that in the localization $\Rep_{\overline{\CK}}(G_{d([\mathfrak m])})^{\lambda}$ such a parabolic induction is independent of the ordering.

To a homomorphism $f: \overline{\CK}[X_{[\mathfrak m]}] \rightarrow \overline{\CK}$ we associate the multisegment ${\mathfrak m}_f$ whose segments consists of $\Delta_1 \otimes (\chi_{f(X_1)} \circ \det), \dots, \Delta_r \otimes (\chi_{f(X_r)} \circ \det)$, where $\chi_{\alpha}$ denotes the unramified character of $F^{\times}$ that takes a uniformizer to $\alpha$.  Note that for $w \in W_{[\mathfrak m]}$ we have ${\mathfrak m}_{f \circ w} = {\mathfrak m}_f$.  This defines a bijection between the following sets:

\begin{enumerate}
    \item $W_{[\mathfrak m]}$-orbits of maps $\overline{\CK}[X_{[\mathfrak m]}] \rightarrow \overline{\CK}$, 
    \item maps $\overline{\CK}[X_{[\mathfrak m]}]^{W_{[\mathfrak m]}} \rightarrow \overline{\CK}$
    \item multisegments ${\mathfrak m}$ in the inertial equvalence class $[\mathfrak m]$, and
    \item irreducible representations $\pi$ of $G_{d([\mathfrak m])}$ over $\overline{\CK}$ with $[\pi] = [\mathfrak m]$.
\end{enumerate}
(More precisely, the bijection takes a map $f$ first to its restriction to the ring of $W_{[\mathfrak m]}$-invariants, then to the corresponding multisegment ${\mathfrak m}_f$, and finally to $Z({\mathfrak m}_f)$.)

Let $\lambda$ be a partition of $n$.  For any $[\mathfrak m]$ with $[\mathfrak m] = \lambda$,  let $\Rep_{\overline{\CK}}(G_n)^{[\mathfrak m]}$ be the full subcategory of $\Rep_{\overline{\CK}}(G_n)^{\lambda}$ consisting of representations $V$ such that every simple subobject $\pi$ of $V$ satisfies $[\pi] = [\mathfrak m]$.  We then have:

\begin{thm} \label{thm:main}
Let $\lambda$ be a partition of $n$, and let $E_{\lambda,\overline{\CK}}$ denote the endomorphism ring of $\CP_{\lambda} \otimes_{\CO} \overline{\CK}$.
\begin{enumerate}
    \item There is a natural product decomposition:
    $$\Rep_{\overline{\CK}}(G_n)^{\lambda} \cong \prod_{\lambda([\mathfrak m]) = \lambda} \Rep_{\overline{\CK}}(G_n)^{[\mathfrak m]}$$
    where the product is over equivalence classes of inertial multisegments $[\mathfrak m]$ with $\lambda(\mathfrak m) = \lambda$.
    \item The decomposition from part (1) induces a product decomposition:
    $$E_{\lambda,\overline{\CK}} \cong \prod_{\lambda([\mathfrak m]) = \lambda} E_{[\mathfrak m]}$$
    where for each $[\mathfrak m]$, we denote by $E_{[\mathfrak m]}$ the endomorphism ring of the projection of $\CP_{\lambda}$ to the direct factor $\Rep_{\overline{\CK}}(G_n)^{[\mathfrak m]}$.
    \item For each $[\mathfrak m]$, there is a natural isomorphism:
    $$E_{[\mathfrak m]} \cong \overline{\CK}[X_{[\mathfrak m]}]^{W_{[\mathfrak m]}}.$$
    (In particular the $E_{[\mathfrak m]}$ are smooth, finite type, commutative $\overline{\CK}$-algebras.)
    This isomorphism is uniquely characterized by the property that for any map:
    $$f: \overline{\CK}[X_{[\mathfrak m]}]^{W_{[\mathfrak m]}} \rightarrow \overline{\CK}$$
    one has an isomorhism:
    $$\CP_{\lambda} \otimes_{E_{\lambda},f} \overline{\CK} \cong Z({\mathfrak m}_f)$$
    in $\Rep_{\overline{\CK}}(G_n)^{[\mathfrak m]}$.
    \item Let $e_{\mathfrak m}$ be the idempotent of $E_{\lambda, \overline{\CK}}$ corresponding to the factor $E_{[\mathfrak m]}$.  There is a natural, $\overline{\CK}[X_{[\mathfrak m]}]$-equivariant isomorphism in $\Rep_{\overline{\CK}}(G_n)^{\lambda}:$
    $$e_{\mathfrak m} \CP_{\lambda} \otimes_{E_{[\mathfrak m]}} \overline{\CK}[X_{[\mathfrak m]}] \cong {\tilde I}([\mathfrak m]).$$
    In particular, ${\tilde I}([\mathfrak m])$ is a projective generator of $\Rep_{\overline{\CK}}(G_n)^{[\mathfrak m]}$ (and is independent, up to isomorphism in $\Rep_{\overline{\CK}}(G_n)^{[\mathfrak m]}$, of the ordering of the segments used to construct it).
\end{enumerate}
\end{thm}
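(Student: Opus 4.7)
The plan is to prove the four parts in sequence, with essentially all the technical content concentrated in part (3). For (1), I would verify that the subcategories $\Rep_{\overline{\CK}}(G_n)^{[\mathfrak m]}$ for distinct inertial multisegment classes $[\mathfrak m]$ admit no nonzero Hom interactions in the Serre quotient $\Rep_{\overline{\CK}}(G_n)^\lambda$. Within a fixed Bernstein block this reduces to producing orthogonal idempotents in $eE_\lambda$ corresponding to the different groupings of the cuspidal support into segments of the prescribed length pattern; the construction uses the Bernstein center acting on each block together with the parameterization of irreducibles $\mathfrak m \mapsto Z(\mathfrak m)$. Part (2) is then immediate: since $\CP_\lambda$ (after restriction to a Bernstein idempotent) is a projective generator, a product decomposition of the category transports to a product decomposition of its endomorphism ring.

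For part (3), the first and most difficult input is the commutativity of $E_{[\mathfrak m]}$, which is reduced in Section~\ref{sec:char zero ends} to the computation of $I^{\preceq\lambda}$ carried out in Theorem~\ref{thm:truncate}. Granting commutativity, the next step is to construct a natural ring map $\varphi\colon E_{[\mathfrak m]} \to \overline{\CK}[X_{[\mathfrak m]}]^{W_{[\mathfrak m]}}$. I would view $\tilde I([\mathfrak m])$ as a $\overline{\CK}[X_{[\mathfrak m]}]$-linear family in $\Rep_{\overline{\CK}}(G_n)^{\preceq\lambda}$; the formalism of families from Section~\ref{sec:families} then produces a ring map $E_{[\mathfrak m]} \to \overline{\CK}[X_{[\mathfrak m]}]$, and the ordering-independence of $\tilde I([\mathfrak m])$ in the Serre quotient (again a consequence of Theorem~\ref{thm:truncate}) forces its image to lie in the $W_{[\mathfrak m]}$-invariants.

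To show $\varphi$ is an isomorphism I would combine a bijection on closed points with a tangent space computation. On closed points both sides are canonically in bijection with the set of multisegments in $[\mathfrak m]$: on the right via the explicit bijection recorded just before the theorem statement, and on the left because $\CP_\lambda \otimes_{E_\lambda,f} \overline{\CK}$ recovers the irreducible $Z(\mathfrak m_f)$ associated to a character $f$, which is the characterizing property stated in (3). The right-hand ring is smooth of dimension $r$, the number of segments in $\mathfrak m$, since by Chevalley--Shephard--Todd the invariants of a finite permutation group acting on a polynomial ring form a polynomial ring. Smoothness of $E_{[\mathfrak m]}$ of the same dimension would follow from the tangent-space computation: the theory of families identifies the tangent space at the closed point $\pi = Z(\mathfrak m)$ with self-extensions of the standard module $\tilde I([\mathfrak m])_{\mathfrak m}$ in $\Rep^{[\mathfrak m]}$, and Theorem~\ref{thm:ext} computes this space to have dimension $r$. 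A map of commutative finite-type $\overline{\CK}$-algebras that is bijective on closed points and induces an isomorphism on tangent spaces over a smooth target of matching dimension is itself an isomorphism.

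Part (4) is essentially built into the construction: $\tilde I([\mathfrak m])$ realizes the base change $e_{\mathfrak m}\CP_\lambda \otimes_{E_{[\mathfrak m]}} \overline{\CK}[X_{[\mathfrak m]}]$, a statement that can be verified fiberwise using the fiber identification from (3), and projectivity and generation in $\Rep_{\overline{\CK}}(G_n)^{[\mathfrak m]}$ then follow from the fact that $\tilde I([\mathfrak m])$ is a faithfully flat extension of $e_{\mathfrak m}\CP_\lambda$. The two main obstacles I anticipate, in line with the discussion in the introduction, are the commutativity statement (for which Theorem~\ref{thm:truncate} is the essential technical input) and the self-extension computation (Theorem~\ref{thm:ext}); the remaining steps are formal consequences of these two inputs together with the theory of families.
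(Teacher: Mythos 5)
Your proposal is correct and follows essentially the same route as the paper: commutativity via Theorem~\ref{thm:truncate}, the map $E_{[\mathfrak m]} \to \overline{\CK}[X_{[\mathfrak m]}]^{W_{[\mathfrak m]}}$ obtained from the family $\tilde I([\mathfrak m])$, bijectivity on $\overline{\CK}$-points from the multisegment parametrization, the tangent-space bound from Theorem~\ref{thm:ext}, and the fact that a bijective morphism of smooth $\overline{\CK}$-varieties is an isomorphism. One small imprecision: you assert that smoothness of $\Spec E_{[\mathfrak m]}$ follows from the tangent-space computation alone, but one also needs a matching lower bound on local Krull dimension; the paper supplies this by noting that the map from the $r$-dimensional variety $\Spec \overline{\CK}[X_{[\mathfrak m]}]^{W_{[\mathfrak m]}}$ is injective on points, so $\Spec eE_\lambda$ has local dimension at least $r$ at each point, which together with the $r$-dimensional tangent space forces smoothness.
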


The proof of this theorem will occupy the next several sections, concluding in section~\ref{sec:proof}.


\section{The endomorphism ring $E_{\lambda,\overline{\CK}}$} \label{sec:char zero ends}

We now turn our attention to $E_{\lambda,\overline{\CK}},$ which we will study by studying its formal completions at maximal ideals $\mathfrak p$ of the center $Z_{n,\overline{\CK}}$ of $\Rep_{\overline{\CK}}(G_n)$.  

Let $\CP_{\lambda,\overline{\CK}}$ be the projective $\CP_{\lambda} \otimes_{\CO} \overline{\CK}$ of $\Rep_{\overline{\CK}}(G_n)^{\preceq \lambda}$.  Since the projection of $\CP_{\lambda,\overline{\CK}}$ to the Bernstein block containing ${\mathfrak p}$ is an admissible $Z_{n,\overline{\CK}}[G_n]$-module, the formal completion of $E_{\lambda,\overline{\CK}}$ at ${\mathfrak p}$ may be identified with the endomorphism ring of $\widehat{(\CP_{\lambda,\overline{\CK}})}_{\mathfrak p}.$  We can regard this module as an object of the category $\Rep_{\overline{\CK}}(G_n)^{\preceq \lambda}_{\mathfrak p}$ obtained by taking the pro-completion of the full subcategory of $\Rep_{\CO}(G_n)^{\preceq \lambda}$ whose objects are the finite-length $\CO[G_n]$-modules annihilated by some power of ${\mathfrak p}.$

Standard arguments show that there are finitely many simple objects of $\Rep_{\overline{\CK}}(G_n)^{\preceq \lambda}_{\mathfrak p}$ (namely, the simple $\CO[G_n]$-modules of highest derivative partition $\lambda' \preceq \lambda$ and supercuspidal support given by ${\mathfrak p}$), and that each such simple object $\pi$ admits a projective cover $\CP_{\pi} \rightarrow \pi,$ unique up to isomorphism.  Moreover, the indecomposable projective objects are precisely the $\CP_{\pi}.$  In particular, $\widehat{(\CP_{\lambda,\overline{\CK}})}_{\mathfrak p}$ decomposes as the finite direct sum of the representations $\CP_{\pi}$, where $\pi$ is simple of highest derivative partition $\lambda$ and supercuspidal support given by ${\mathfrak p}.$

\begin{lemma} \label{lemma:independence}
Let ${\mathfrak p}$ be a maximal ideal of $Z_{n,\overline{\CK}}$, and suppose that for any two distinct simple $\CO[G_n]$-modules $\pi,\pi'$ of highest derivative $\lambda$ and supercuspidal support given by ${\mathfrak p},$ we have that $\pi'$ is not isomorphic to a subquotient of $\CP_{\pi}.$  Then we have an isomorphism:
$$\widehat{(E_{\lambda,\overline{\CK}})}_{\mathfrak p} \cong \prod_{\pi} E_{\pi}$$
where $\pi$ runs over all isomorphism classes of simple $\overline{\CK}[G_n]$-modules of highest derivative partition $\lambda$ and supercuspidal support given by ${\mathfrak p}$, and $E_{\pi}$ is the endomorphism ring of $\CP_{\pi}.$  
\end{lemma}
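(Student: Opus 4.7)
The plan is to reduce the claim to the statement that $\Hom_{G_n}(\CP_\pi, \CP_{\pi'}) = 0$ for every pair of distinct simples $\pi, \pi'$ of highest derivative partition $\lambda$ and supercuspidal support ${\mathfrak p}$. The discussion preceding the lemma already identifies $\widehat{(\CP_{\lambda,\overline{\CK}})}_{\mathfrak p}$ with the finite direct sum $\bigoplus_\pi \CP_\pi$ over such $\pi$, and taking endomorphisms of this finite direct sum gives the ``matrix'' decomposition
$$\widehat{(E_{\lambda,\overline{\CK}})}_{\mathfrak p} \cong \prod_{\pi,\pi'} \Hom_{G_n}(\CP_\pi, \CP_{\pi'}),$$
with composition governed by matrix multiplication. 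Once the off-diagonal summands vanish, the right-hand side becomes the ring $\prod_\pi \End_{G_n}(\CP_\pi) = \prod_\pi E_\pi$, yielding the desired isomorphism.

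For the vanishing, fix distinct $\pi, \pi'$ and suppose $f \colon \CP_\pi \to \CP_{\pi'}$ is nonzero. Factor $f = \iota \circ q$ with $q \colon \CP_\pi \twoheadrightarrow Q$ surjective and $\iota \colon Q \hookrightarrow \CP_{\pi'}$ injective. Because $\CP_\pi$ is the projective cover of $\pi$, it is a local object with a unique maximal proper subobject $\mathrm{rad}(\CP_\pi)$, and the quotient by this subobject is $\pi$. The kernel of $q$ is a proper subobject of $\CP_\pi$, hence contained in $\mathrm{rad}(\CP_\pi)$, so the nonzero quotient $Q$ itself surjects onto $\pi$. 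Consequently $\pi$ appears as a subquotient of $Q$, and through $\iota$ also as a subquotient of $\CP_{\pi'}$. But the hypothesis of the lemma, applied with the roles of $\pi$ and $\pi'$ exchanged, states precisely that $\pi$ is not a subquotient of $\CP_{\pi'}$; this is a contradiction, so $f = 0$.

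The main technical point to keep in mind is that the above ``head'' argument is being applied inside the pro-completion $\Rep_{\overline{\CK}}(G_n)^{\preceq \lambda}_{\mathfrak p}$ rather than a genuine finite-length category. What makes this legitimate is the fact, recalled at the start of the section, that $\Rep_{\overline{\CK}}(G_n)^{\preceq \lambda}_{\mathfrak p}$ has only finitely many simples and each one admits a projective cover; as a consequence each $\CP_\pi$ is a local object with a unique maximal proper subobject, and the usual calculus of composition factors and epi/mono factorizations behaves as in the finite-length setting. The main obstacle in writing out a fully detailed proof is therefore to convince oneself that these local-object properties really do transfer to the pro-completion; granting them, the argument above is essentially formal.
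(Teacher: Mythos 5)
Your proof is correct and follows essentially the same route as the paper's (which simply asserts that the hypothesis forces $\Hom_{G_n}(\CP_\pi,\CP_{\pi'})=0$ for $\pi\not\cong\pi'$); you have merely unpacked the details. Your worry about the pro-completion is unfounded: that every proper subobject of $\CP_\pi$ lies in $\ker(\CP_\pi\to\pi)$ follows from the superfluous-kernel property of a projective cover together with the simplicity of $\pi$, an argument valid in any abelian category with no finiteness hypotheses.
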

\begin{proof}
We have that $\widehat{(E_{\lambda,\overline{\CK}})}_{\mathfrak p}$ is the endomorphism ring of $\oplus_{\pi} \CP_{\pi}$, and the given condition implies that $\Hom_{G_n}(\CP_{\pi},\CP_{\pi'}) = 0$ unless $\pi$ and $\pi'$ are isomorphic to each other.
\end{proof}

\begin{lemma} \label{lemma:commutativity}
Let ${\mathfrak p}$ be a maximal ideal of $Z_{n,\overline{\CK}}$, let $\CP$ be a projective object of $\Rep_{\overline{\CK}}(G_n)^{\preceq \lambda}_{\mathfrak p}$ that is admissible as a $\widehat{(Z_{n,\overline{\CK}})}_{\mathfrak p}[G_n]$-module, and let $A$ be a commutative complete local Noetherian $\widehat{(Z_{n,\overline{\CK}})}_{\mathfrak p}$-algebra, with maximal ideal ${\mathfrak q}$, such that $A$ acts on $\CP$ (compatibly with the action of $\widehat{(Z_{n,\overline{\CK}})}_{\mathfrak p}$).  Suppose that:
\begin{enumerate}
    \item The cosocle of the quotient $\CP/{\mathfrak q}\CP$ is isomorphic to an absolutely irreducible representation $\pi$ of $G_n$ with highest derivative partition $\lambda$ and supercuspidal support given by $\pi.$
    \item The quotient $\CP/{\mathfrak q}\CP$ contains no subquotients isomorphic to $\pi$ other than the cosocle.
\end{enumerate}
Then there is a quotient $A'$ of $A$ such that $\CP$ is isomorphic to $\CP_{\pi} \otimes_{\widehat{(Z_{n,\overline{\CK}})}_{\mathfrak p}} A'$ as an $A[G_n]$-module, and the action of $E_{\pi}$ on $\CP_{\pi} \otimes_{\widehat{(Z_{n,\overline{\CK}})}_{\mathfrak p}} A'$ is via an injection: $E_{\pi} \rightarrow A'$ that makes $A'$ into a free, finite rank $E_{\pi}$-module.  In particular $E_{\pi}$ is commutative.

Moreover, if ${\mathfrak m}$ denotes the maximal ideal of $E_{\pi}$, there is a natural isomorphism $\CP/{\mathfrak q}\CP \cong \CP_{\pi}/{\mathfrak m}\CP_{\pi} \otimes_{\kappa({\mathfrak m})} \kappa({\mathfrak q}),$ where $\kappa({\mathfrak m})$ and $\kappa({\mathfrak q})$ denote the residue fields of ${\mathfrak m}$ and ${\mathfrak q}$, respectively.
\end{lemma}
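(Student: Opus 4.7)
The plan is to build a surjection $\phi\colon \CP_\pi \otimes_{\widehat{(Z_{n,\overline{\CK}})}_{\mathfrak p}} A \twoheadrightarrow \CP$ of $A[G_n]$-modules, realize its kernel as coming from an ideal $J \subset A$, and identify $A' := A/J$ together with the required embedding $E_\pi \hookrightarrow A'$. First, hypothesis (1) yields a surjection $\CP \twoheadrightarrow \CP/\mathfrak{q}\CP \twoheadrightarrow \pi$; projectivity of $\CP_\pi$ lifts the cover $\CP_\pi \twoheadrightarrow \pi$ through this, giving a $G_n$-equivariant map $\CP_\pi \to \CP$, and extending by the $A$-action yields $\phi$. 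Surjectivity of $\phi$ follows from Nakayama: the reduction $\CP_\pi/\mathfrak{p}\CP_\pi \to \CP/\mathfrak{q}\CP$ (using $\mathfrak{q}\cap\widehat{(Z_{n,\overline{\CK}})}_{\mathfrak p}=\mathfrak{p}$) matches cosocles (both equal $\pi$), hence is surjective in the pro-category, and topological Nakayama for the complete local pair $(A,\mathfrak{q})$---applicable because admissibility of $\CP$ reduces everything to finitely generated $A$-modules upon passing to $K$-invariants---lifts this up to $\phi$.

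I would then identify $A'$ via the bimodule $M := \Hom_{G_n}(\CP_\pi, \CP)$, which carries commuting actions of $A$ (from $\CP$) and $E_\pi$ (by precomposition). Hypothesis (2) combined with projectivity of $\CP_\pi$ yields $\Hom_{G_n}(\CP_\pi, \CP/\mathfrak{q}\CP) \cong \kappa(\mathfrak{q})$ and an exact sequence $0 \to \Hom_{G_n}(\CP_\pi, \mathfrak{q}\CP) \to M \to \kappa(\mathfrak{q}) \to 0$. Verifying that the leftmost term equals $\mathfrak{q} M$---the rigidity extracted from multiplicity-one---gives $M/\mathfrak{q} M \cong \kappa(\mathfrak{q})$, whence Nakayama makes $M$ cyclic over $A$. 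Setting $A' := A/\operatorname{Ann}_A(M)$ gives $M \cong A'$ as $A$-modules, and the evaluation map $\CP_\pi \otimes_{E_\pi} M \to \CP$ agrees with $\phi$; comparing reductions modulo $\mathfrak{q}$ (both recover $\CP/\mathfrak{q}\CP$) forces it to be an isomorphism, so $\CP \cong \CP_\pi \otimes_{\widehat{(Z_{n,\overline{\CK}})}_{\mathfrak p}} A'$. Right multiplication by $E_\pi$ on $A' \cong M$ yields the map $E_\pi \to A'$, whose injectivity is automatic since any $e$ in its kernel annihilates the cyclic generator of $M$, hence $\CP$, and in particular acts trivially on the cosocle $\pi$, forcing $e = 0$ by absolute irreducibility.

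For freeness of $A'$ over $E_\pi$, admissibility of $\CP$ over $A[G_n]$ translates via the tensor description into finite generation of $A'$ as an $E_\pi$-module, and flatness (inherited from $\CP_\pi$ being $E_\pi$-flat together with the tensor realization) plus a residue-field dimension count promotes finite generation to freeness of finite rank. Commutativity of $E_\pi$ is then immediate from the embedding into the commutative ring $A'$. The moreover clause is a base change: setting $\mathfrak{m} := E_\pi \cap \mathfrak{q}$, one has $A'/\mathfrak{q} A' \cong \kappa(\mathfrak{q})$, so $\CP/\mathfrak{q}\CP \cong \CP_\pi \otimes_{E_\pi} \kappa(\mathfrak{q}) \cong \CP_\pi/\mathfrak{m}\CP_\pi \otimes_{\kappa(\mathfrak{m})} \kappa(\mathfrak{q})$. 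The main obstacle is the cyclicity identification $\Hom_{G_n}(\CP_\pi, \mathfrak{q}\CP) = \mathfrak{q} M$: without it Nakayama does not apply and the whole structural picture collapses. This is where multiplicity-one in (2) is genuinely used, and is the most delicate point in the argument.
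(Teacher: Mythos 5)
Your skeleton---build a surjection from copies of $\CP_\pi$, take $M = \Hom_{G_n}(\CP_\pi,\CP)$ as an $(A,E_\pi)$-bimodule, show $M$ is cyclic over $A$ and let $A'$ be its faithful quotient, then read off the embedding $E_\pi \hookrightarrow A'$---is the same skeleton the paper uses, and the surjectivity step and the cyclicity of $M$ are handled correctly. But there is a genuine gap in the middle, and it is exactly the step you do not dwell on.

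The paper's crucial structural input is projectivity of $\CP$ itself. Once the evaluation map $\CP_\pi \otimes \Hom_{G_n}(\CP_\pi,\CP) \to \CP$ is known to be surjective, projectivity of $\CP$ makes this surjection split, and Krull--Schmidt in the pro-category (where $\CP_\pi$ is indecomposable projective with local endomorphism ring) forces $\CP \cong \CP_\pi^{\oplus n}$ for some finite $n$ (finiteness from admissibility). This single fact does all the remaining work: it gives $M \cong E_\pi^n$ \emph{immediately}, hence freeness of $A' \cong M$ over $E_\pi$ with no flatness argument needed; and it gives $\CP_\pi \otimes_{E_\pi} M \cong \CP_\pi \otimes_{E_\pi} E_\pi^n \cong \CP$ \emph{immediately}, with no reduction-mod-$\mathfrak q$ bootstrap needed. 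You never invoke projectivity of $\CP$ in this way, and the two places where you try to route around it both break. Your isomorphism claim ``comparing reductions modulo $\mathfrak q$\ldots forces it to be an isomorphism'' is false as stated: for a surjection $N \to N'$ of $A$-modules with kernel $K$, the exact sequence $\mathrm{Tor}_1^A(N', A/\mathfrak q) \to K/\mathfrak q K \to N/\mathfrak q N \to N'/\mathfrak q N' \to 0$ shows that $K/\mathfrak q K = 0$ requires the $\mathrm{Tor}_1$ term to vanish, i.e.\ you would need to already know $\CP$ is $A$-flat. Likewise your freeness argument leans on ``$\CP_\pi$ being $E_\pi$-flat,'' which is neither established nor obvious and is not needed once the direct-sum decomposition is in hand.

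A secondary point: you locate ``the most delicate point'' at the identity $\Hom_{G_n}(\CP_\pi,\mathfrak q\CP) = \mathfrak q M$, and attribute it to multiplicity-one. That attribution is wrong. That identity follows from projectivity (hence exactness) of $\Hom_{G_n}(\CP_\pi,-)$ applied to a presentation of $\mathfrak q\CP$ as the image of a finite sum of copies of $\CP$, and has nothing to do with hypothesis (2). Hypothesis (2), together with (1), is what makes $\Hom_{G_n}(\CP_\pi, \CP/\mathfrak q\CP)$ one-dimensional; the delicate structural input you should be flagging is projectivity of $\CP$ and Krull--Schmidt, which is exactly the ingredient missing from your write-up.
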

\begin{proof}
Consider the map:
$$\CP_{\pi} \otimes \Hom_{G_n}(\CP_{\pi},\CP) \rightarrow \CP \rightarrow \CP/{\mathfrak q}\CP.$$
This map is equivariant for the action of $A$, and condition (1) implies that the composition is surjective.  It follows that $\CP$ is a finite direct sum of copies of $\CP_{\pi},$ so that
$\Hom_{G_n}(\CP_{\pi},\CP)$ is a free $E_{\pi}$-module of finite rank.  On the other hand, since $\CP_{\pi}$ is projective, we have an isomorphism:
$$\Hom_{G_n}(\CP_{\pi},\CP) \otimes_A A/{\mathfrak q} \cong \Hom_{G_n}(\CP_{\pi},\CP/{\mathfrak q}\CP)$$
of finitely generated $A[G_n]$-modules.  Moreover, the right-hand side has dimension one over $A/{\mathfrak q}$ by conditions (1) and (2), so $\Hom_{G_n}(\CP_{\pi},\CP)$ is cyclic as an $A$-module by Nakayama's lemma.  Thus if $A'$ denotes the quotient of $A$ that acts faithfully on $\Hom_{G_n}(\CP_{\pi},\CP)$, the action of $E_{\pi}$ on $\Hom_{G_n}(\CP_{\pi},\CP)$ is via a (necessarily injective) homomorphism from $E_{\pi}$ to $A'$.

We then have an isomorphism $\CP \cong \CP_{\pi} \otimes_{E_{\pi}} A'$; tensoring both sides, over $A'$, with $A'/{\mathfrak q} \cong E_{\pi}/{\mathfrak m}$, yields an isomorphism of $\CP/{\mathfrak q}\CP$ with $\CP_{\pi}/{\mathfrak m}\CP_{\pi} \otimes_{\kappa({\mathfrak m})} \kappa({\mathfrak q})$.
\end{proof}

Our strategy for studying the rings $E_{\pi}$ (and hence $E_{\lambda,\overline{\CK}}$) is to construct representations $\CP$ as in the above lemma for all irreducible representations $\pi$ of $G_n$ over $\overline{\CK}$.    Let $\pi$ be an irreducible $\overline{\CK}[G_n]$-module of highest derivative partition $\lambda$; we have $\pi = Z({\mathfrak m})$ for some multisegment ${\mathfrak m} = \{\Delta_1, \dots, \Delta_r\}$ over $\overline{\CK}.$  Write $\Delta_i = [a_i,b_i]_{\rho_i}$, and for each $1 \leq i \leq r$, and $a \leq j \leq b-1,$ let ${\tilde \rho_i \nu^j}$ denote the ``universal unramified twist'' of $\rho_i \nu^j$ over $\overline{\CK}[X_{ij}]^{\pm 1};$ that is, the representation $\rho_i \nu^j \otimes \chi_{ij}$, where $\chi_{ij}: F^{\times} \rightarrow \overline{\CK}[X_{ij}^{\pm 1}]$ is the unramified character taking a uniformizer of $F^{\times}$ to $X_{ij}.$  Then ${\tilde \rho_i \nu^j}$ is a projective $\overline{\CK}[G_{n_i}]$-module.

Let ${\tilde I}({\mathfrak m})$ denote the parabolic induction:
$$i_Q^{G_n} {\tilde \rho}_1 \nu^{b_1 -1} \chi_{1,b_1 - 1} \otimes \dots \otimes {\tilde \rho_1} \nu^{a_1} \chi_{1,a_1} \otimes \dots \otimes {\tilde \rho}_r \nu^{b_r - 1} \chi_{r,b_r - 1} \otimes \dots \otimes {\tilde \rho}_r \nu^{a_r} \chi_{r,a_r},$$
where $Q$ is a standard upper triangular parabolic subgroup of $G_n$. Moreover, we may assume that the $\Delta_i$ are orderded such that if for some $1\leq i<j\leq r$ there is an integer $\beta$ such that $\nu^{b_i+\beta}\rho_i\cong\nu^{b_j}\rho_j$ then $\beta\geq 0$. In particular this implies that $\Delta_i$ does not precede $\Delta_j$ for any $i > j$ (in the sense of~\cite{Zelevinsky}, section 4.1).  Then ${\tilde I}({\mathfrak m})$ is an admissible $\overline{\CK}[X_{ij}^{\pm 1}][G_n]$-module, and it is projective as a $\overline{\CK}[G_n]$-module by second adjointness.  Moreover, $Z_{n,\overline{\CK}}$ acts on ${\tilde I}({\mathfrak m})$ via a homomorphism $Z_{n,\overline{\CK}}\rightarrow \overline{\CK}[X_{ij}^{\pm 1}]$ that makes $\overline{\CK}[X_{ij}^{\pm 1}]$ into a finitely generated $Z_{n,\overline{\CK}}$-module.

Let ${\mathfrak q}$ be the maximal ideal of $\overline{\CK}[X_{ij}^{\pm 1}]$ generated by $X_{ij} - 1$ for all $i,j$. Then ${\mathfrak q}$ lies over the maximal ideal ${\mathfrak p}$ of $Z_{n,\overline{\CK}}$ corresponding to the supercuspidal support of $\pi$.  Set $\CP = \widehat{({\tilde I}({\mathfrak m})^{\preceq \lambda})}_{\mathfrak q}$; it is an admissible $\overline{\CK}[G_n]$-module that is a projective object of $\Rep_{\overline{\CK}}(G_n)_{\mathfrak p}^{\preceq\lambda}.$  Let $A$ be the quotient of $\widehat{\overline{\CK}[X_{ij}^{\pm 1}]}_{\mathfrak q}$ that acts faithfully on $\CP.$

In fact, the pair $(A,\CP)$ satisfies the conditions of Lemma~\ref{lemma:commutativity}.   To see this, note that for each segment $\Delta_i$, we have a natural surjection:
$$i_Q^{G_{n_i(b_i-a_i)}}\left(\nu^{b_i-1}\rho_i \otimes \dots \otimes \nu^{a_i} \rho_i \right)\rightarrow Z(\Delta_i)$$
for a suitable standard upper triangular parabolic $P$.
Combining these for all $i$ yields a surjection:
$$i_{Q'}^{G_n}\left( \nu^{b_1 - 1} \rho_1 \otimes \dots \otimes \nu^{a_1} \rho_1 \otimes \dots \otimes \nu^{b_r - 1} \rho_r \otimes \dots \otimes \nu^{a_r} \rho_r\right) \rightarrow i_{Q''}^{G_n} Z(\Delta_1) \otimes \dots \otimes Z(\Delta_r)$$
for suitable standard upper triangular parabolics $Q'$, $Q''$.  Note that the parabolic induction
$$i_{P''}^{G_n} Z(\Delta_1) \otimes \dots \otimes Z(\Delta_r)$$
is an object of $\Rep_{\overline{\CK}}(G_n)^{\preceq \lambda}_{\mathfrak p}$, and its unique irreducible subquotient of highest derivative partition $\lambda$ is isomorphic to $\pi$. Since our assumptions imply that $\Delta_i$ does not precede $\Delta_j$ for $i > j$, this subquotient appears as the cosocle of the parabolic induction; in particular this parabolic induction is indecomposable.

We then have:

\begin{thm} \label{thm:truncate}
The surjection:
$$i_{Q'}^{G_n} \nu^{b_1 - 1} \rho_1 \otimes \dots \otimes \nu^{a_1} \rho_1 \otimes \dots \otimes \nu^{b_r - 1} \rho_r \otimes \dots \otimes \nu^{a_r} \rho_r \rightarrow i_{Q''}^{G_n} Z(\Delta_1) \otimes \dots \otimes Z(\Delta_r)$$
induces an isomorphism of
$$(i_{Q'}^{G_n} \nu^{b_1 - 1} \rho_1 \otimes \dots \otimes \nu^{a_1} \rho_1 \otimes \dots \otimes \nu^{b_r - 1} \rho_r \otimes \dots \otimes \nu^{a_r} \rho_r)^{\preceq \lambda}$$
with $i_{Q''}^{G_n} Z(\Delta_1) \otimes \dots \otimes Z(\Delta_r).$
\end{thm}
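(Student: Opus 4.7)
The plan is to show that the surjection $I_1 \twoheadrightarrow I_2$ becomes an isomorphism after applying $(-)^{\preceq \lambda}$. Since $I_2$ already lies in $\Rep_{\overline{\CK}}(G_n)^{\preceq\lambda}$, the surjection $I_1 \to I_2$ factors through $I_1^{\preceq\lambda}$, giving a canonical surjection $I_1^{\preceq\lambda} \twoheadrightarrow I_2$ whose kernel I shall call $L$. The main tool will be a ``derivative detection principle'': for any $V \in \Rep_{\overline{\CK}}(G_n)^{\preceq\lambda}$ with finite Bernstein support, $V = 0$ iff $V^{\lambda'} = 0$ for every $\lambda' \preceq \lambda$. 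This follows from exactness of $(-)^{\lambda'}$ applied to a Jordan--H\"older filtration, together with the observation that any simple $\pi$ satisfies $\pi^{\lambda(\pi)} \neq 0$: if $V \neq 0$ lies in $\Rep^{\preceq\lambda}$, then some composition factor $\pi$ of $V$ has $\lambda(\pi) \preceq \lambda$, and exactness yields $V^{\lambda(\pi)} \neq 0$.

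With this principle, the theorem reduces to showing that the natural map $(I_1)^{\lambda'} \to (I_2)^{\lambda'}$ is an isomorphism for every $\lambda' \preceq \lambda$, equivalently that $K^{\lambda'} = 0$ where $K = \ker(I_1 \to I_2)$. Indeed, by exactness of $(-)^{\lambda'}$ and the inclusion $N = \ker(I_1 \to I_1^{\preceq\lambda}) \subseteq K$ with $L = K/N$, the vanishing $K^{\lambda'} = 0$ forces $L^{\lambda'} = 0$ too; detection then gives $L = 0$.

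To establish this isomorphism of derivatives I would use Bernstein's second adjointness to rewrite $(I_i)^{\lambda'} \cong \Hom_M(W_M, r^{P_{\lambda'}}_{G_n} I_i)$, where $M$ is the Levi associated to $\lambda'$ and $W_M$ the corresponding tensor product of Whittaker representations. The Bernstein--Zelevinsky geometric lemma then supplies filtrations of both Jacquet restrictions by pieces indexed by double cosets. On the $I_2$ side the multiplicativity of highest derivatives (Corollary~\ref{cor:iterated derivative multiplicativity}) reduces the computation to a product of derivatives of the individual $Z(\Delta_j)$, whose structure is pinned down rigidly by the Zelevinsky classification of segments. On the $I_1$ side, whose inducing data consists entirely of cuspidal twists, the Jacquet modules appearing in the geometric lemma are transparent; the ordering hypothesis on $\mathfrak{m}$ (that $\Delta_i$ does not precede $\Delta_j$ for $i > j$) ensures that in the range $\lambda' \preceq \lambda$ only the ``diagonal'' double-coset contribution --- the one matching the $I_2$ side --- is nontrivial, while the non-diagonal pieces yield Jacquet contents incompatible with the characters making up $W_M$ and thus vanish after applying $\Hom_M(W_M, -)$.

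The principal obstacle is precisely this combinatorial analysis of the geometric-lemma contributions for $I_1$: enumerating the relevant double cosets and verifying that the ordering condition on the segments forces all non-diagonal contributions to die in the $\lambda' \preceq \lambda$ range. This classification likely proceeds by induction on the number of segments in $\mathfrak{m}$ (or on a similar complexity invariant such as the total length $n$), and is the intricate bookkeeping that the authors defer to the appendix.
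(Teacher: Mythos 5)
Your reduction step contains a genuine error. You claim the theorem reduces to showing $K^{\lambda'} = 0$ for all $\lambda' \preceq \lambda$, where $K = \ker(I_1 \to I_2)$. But this statement is \emph{false} in general, while the theorem remains true. Consider $n = 2$, a single segment $\Delta_1 = [\rho, \nu\rho]$ with $\rho$ cuspidal of $G_1$, so $\lambda = (1,1)$. Then $I_1 = \nu\rho \times \rho$ has two composition factors, the quotient $Z(\Delta_1)$ and the subrepresentation $K = Z([\nu\rho],[\rho])$ (Steinberg-type, with highest derivative partition $(2)$). The full Jacquet module of $K$ to the torus is one-dimensional, so $K^{(1,1)} \neq 0$. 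The map $(I_1)^{(1,1)} \to (I_2)^{(1,1)}$ is a surjection from a two-dimensional space to a one-dimensional one --- not an isomorphism. The theorem nonetheless holds here, because $K$ is exactly the kernel $N$ of $I_1 \to I_1^{\preceq\lambda}$, so $L = K/N = 0$. The point is that $L^{\lambda'} = K^{\lambda'}/N^{\lambda'}$ can vanish without $K^{\lambda'}$ vanishing; your reduction discards $N^{\lambda'}$ as if it were zero, when in fact it can exactly cancel $K^{\lambda'}$. The problem is conceptual: $V^{\lambda'} \neq 0$ detects the existence of a subquotient $\pi$ with $\lambda' \preceq \lambda(\pi)$, so it picks up contributions from composition factors of $K$ with partition $\succ \lambda$, which have nothing to do with whether the $(-)^{\preceq\lambda}$-truncation of $K$ vanishes.

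The paper takes an entirely different route. They must show $\Omega^{\preceq\lambda} = 0$ where $\Omega = K$, and they observe this holds iff no irreducible \emph{quotient} of $\Omega$ has highest derivative partition $\preceq \lambda$. By passing to the smooth contragredient, this becomes: no irreducible \emph{subrepresentation} $\omega$ of $\tilde\Omega$ has highest derivative partition $\preceq \lambda$. Since any such $\omega$ embeds in a product $\pi_1 \times \dots \times \pi_r$ with the $\pi_i$ irreducible of supercuspidal support $\tilde\Delta_i$ and not all equal to $Z(\tilde\Delta_i)$, the crux becomes Proposition~\ref{mainprop}: such a subrepresentation must have highest derivative partition strictly larger than $\lambda$ in lexicographic order. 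This is proved not by the geometric lemma but by a delicate induction using the Bernstein--Zelevinsky functors $\Phi^\pm$, $\Psi^\pm$ on the mirabolic subgroup, together with a chain of lemmas giving lower bounds on the highest derivatives of subrepresentations of various products (Lemmas~\ref{reduction}, \ref{higherder}, etc.). The geometric lemma, and the double-coset bookkeeping you propose, does not appear; instead the control comes from supercuspidal-support constraints on iterated $\Phi^-$ applications and the specific segment shapes. To salvage your proposal you would need to replace the criterion ``$K^{\lambda'}=0$'' with the correct cosocle criterion, at which point you are essentially led to the paper's dual/submodule formulation.
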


The proof of this result is quite difficult and makes heavy use of the machinery of Bernstein-Zelevinsky; we postpone its proof to the appendix.  This computation has powerful implications for the structure of projectives of type $\lambda$:

\begin{lemma} \label{lemma:implications}
Let $(A,\CP)$ be the pair constructed above.  Then $(A,\CP)$ satisfies the hypotheses of Lemma~\ref{lemma:commutativity}.  Moreover, every irreducible subquotient of $\CP_{\pi}$ of highest derivative partition $\lambda$ is isomorphic to $\pi$.
\end{lemma}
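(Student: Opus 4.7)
The plan is to first identify $\CP/\mathfrak q\CP$ with $I(\mm)$ using Theorem~\ref{thm:truncate}, then deduce the hypotheses of Lemma~\ref{lemma:commutativity} and control the irreducible subquotients of $\CP_\pi$ by formal arguments. Because $V\mapsto V^{\preceq\lambda}$ is a left adjoint, it is right exact and commutes with base change; in particular,
$$\tilde I(\mm)^{\preceq\lambda}\otimes_{\overline{\CK}[X_{ij}^{\pm 1}]}\overline{\CK}\;\cong\;\bigl(\tilde I(\mm)\otimes_{\overline{\CK}[X_{ij}^{\pm 1}]}\overline{\CK}\bigr)^{\preceq\lambda}.$$
The right-hand side is the $\preceq\lambda$-truncation of $i_{Q'}^{G_n}\bigl(\nu^{b_1-1}\rho_1\otimes\dots\otimes\nu^{a_r}\rho_r\bigr)$, which Theorem~\ref{thm:truncate} identifies with $I(\mm)$. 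Since $I(\mm)$ sits entirely in the Bernstein block at $\mathfrak p$, passage from $\tilde I(\mm)^{\preceq\lambda}$ to its completion $\CP$ at $\mathfrak q$ does not affect this reduction, and I conclude $\CP/\mathfrak q\CP\cong I(\mm)$.

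The ordering assumption on the segments $\Delta_i$ (no $\Delta_i$ precedes $\Delta_j$ for $i>j$) then places $\pi=Z(\mm)$ as the unique irreducible quotient of $I(\mm)$ by the Zelevinsky classification, giving hypothesis~(1) of Lemma~\ref{lemma:commutativity}. For hypothesis~(2), the corollary to the proposition in Section~3 asserts that $\pi$ is the only irreducible subquotient of $I(\mm)$ of highest derivative partition $\lambda$; since $W'_\lambda$ is projective, $\Hom_{G_n}(W'_\lambda,-)$ is additive on Jordan--H\"older factors, so the one-dimensionality of $\Hom_{G_n}(W'_\lambda,I(\mm))$ sharpens this to multiplicity one. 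Invoking Lemma~\ref{lemma:commutativity} now yields $\CP\cong\CP_\pi\otimes_{E_\pi}A'$ with $A'$ finite free over $E_\pi$, so in particular $\CP_\pi$ is a direct summand of $\CP$.

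For the subquotient claim, let $\pi'$ be any irreducible in the block at $\mathfrak p$ with $\lambda(\pi')=\lambda$ and $\pi'\not\cong\pi$. A non-zero map $\CP_{\pi'}\to I(\mm)$ would force the cosocle $\pi'$ of $\CP_{\pi'}$ to appear as a subquotient of $I(\mm)$, contradicting the uniqueness established above, so $\Hom_{G_n}(\CP_{\pi'},I(\mm))=0$. Projectivity and finite generation of $\CP_{\pi'}$ give
$$\Hom_{G_n}(\CP_{\pi'},\CP)\otimes_A A/\mathfrak q\;\cong\;\Hom_{G_n}(\CP_{\pi'},\CP/\mathfrak q\CP)\;=\;0.$$
Admissibility of $\CP$ as an $A[G_n]$-module, combined with finite generation of $\CP_{\pi'}$, makes $\Hom_{G_n}(\CP_{\pi'},\CP)$ finitely generated over $A$, so Nakayama's lemma forces $\Hom_{G_n}(\CP_{\pi'},\CP)=0$. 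But any occurrence of $\pi'$ as a subquotient of $\CP_\pi$ would lift through the projective cover $\CP_{\pi'}\to\pi'$ to a non-zero map $\CP_{\pi'}\to\CP_\pi\hookrightarrow\CP$, a contradiction; hence no such $\pi'$ can occur.

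The only genuinely deep ingredient here is Theorem~\ref{thm:truncate}, postponed to the appendix. Granting that, the lemma is a purely formal consequence of adjunctions, projective covers, and Nakayama's lemma, and I anticipate no further technical obstacle.
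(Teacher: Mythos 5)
Your proposal is correct and follows essentially the same route as the paper. The first part — right exactness of $V\mapsto V^{\preceq\lambda}$, identification of $\CP/\mathfrak q\CP$ with $i_{Q''}^{G_n}Z(\Delta_1)\otimes\cdots\otimes Z(\Delta_r)$ via Theorem~\ref{thm:truncate}, and verification of conditions (1) and (2) of Lemma~\ref{lemma:commutativity} from indecomposability, the cosocle, and multiplicity-one of $\pi$ — matches the paper's argument step for step. The only real divergence is in the final claim about irreducible subquotients of $\CP_\pi$: the paper argues directly that any such subquotient sits inside some $\mathfrak q^r\CP/\mathfrak q^{r+1}\CP$, which is a quotient of $(\mathfrak q^r/\mathfrak q^{r+1})\otimes\CP/\mathfrak q\CP$, whereas you go through the projective cover $\CP_{\pi'}$ and Nakayama to show $\Hom_{G_n}(\CP_{\pi'},\CP)=0$. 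Both routes are formally equivalent and roughly the same length; the paper's is slightly more elementary (no appeal to finite generation of Hom-spaces or to projective covers beyond $\CP_\pi$ itself), while yours makes the block-theoretic orthogonality between $\CP_\pi$ and $\CP_{\pi'}$ more explicit, which is conceptually clean. One small point worth making precise in your write-up: the isomorphism $\Hom_{G_n}(\CP_{\pi'},\CP)\otimes_A A/\mathfrak q\cong\Hom_{G_n}(\CP_{\pi'},\CP/\mathfrak q\CP)$ follows from projectivity of $\CP_{\pi'}$ together with $\mathfrak q$ being finitely generated (apply $\Hom(\CP_{\pi'},-)$ to the presentation $\CP^k\to\CP\to\CP/\mathfrak q\CP\to 0$ induced by a finite generating set of $\mathfrak q$); you state this correctly but the justification should be spelled out since it is the load-bearing step.
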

\begin{proof}
Note that, since $V \mapsto V^{\preceq \lambda}$ is right exact, the quotient $\CP/{\mathfrak q}\CP$ is isomorphic to:
$$(i_{Q'}^{G_n} \nu^{b_1 - 1} \rho_1 \otimes \dots \otimes \nu^{a_1} \rho_1 \otimes \dots \otimes \nu^{b_r - 1} \rho_r \otimes \dots \otimes \nu^{a_r} \rho_r)^{\preceq \lambda}$$
and hence by Theorem~\ref{thm:truncate}, to
$i_{Q''}^{G_n} Z(\Delta_1) \otimes \dots \otimes Z(\Delta_r).$

Since the latter is indecomposable with cosocle isomorphic to $\pi$, and no other subquotients of highest derivative partition $\lambda$, the hypotheses of Lemma~\ref{lemma:commutativity} are verified.  In particular $\CP$ is a finite direct sum of copies of $\CP_{\pi},$ so it remains to show that every irreducible subquotient of $\CP$ of highest derivative partition $\lambda$ is isomorphic to $\pi.$  But every such subquotient is isomorphic to a subquotient of ${\mathfrak q}^r \CP/{\mathfrak q}^{r+1}\CP$, so this follows from the existence of a surjection:
$${\mathfrak q}^r/{\mathfrak q}^{r+1} \otimes \CP/{\mathfrak q}\CP \rightarrow {\mathfrak q}^r\CP/{\mathfrak q}^{r+1}\CP.$$
\end{proof}

\begin{cor}
Let ${\mathfrak p}$ be a maximal ideal of $Z_{n,\overline{\CK}}$ and let ${\mathfrak m}$ be a multisegment over $\overline{\CK}$ with supercuspidal support ${\mathfrak p}$ and highest derivative partition $\lambda.$  Then $\widehat{(E_{\lambda})}_{\mathfrak p}$ is commutative.
\end{cor}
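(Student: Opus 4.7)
The plan is essentially an assembly of the two structural lemmas (Lemmas~\ref{lemma:independence} and~\ref{lemma:commutativity}) together with the content of Lemma~\ref{lemma:implications}; once these are in hand, the corollary follows formally, so no new ideas are needed beyond making the gluing precise.

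First, I would enumerate the isomorphism classes of simple $\overline{\CK}[G_n]$-modules with highest derivative partition $\lambda$ and supercuspidal support ${\mathfrak p}$. By the Zelevinsky classification recalled in Section~3, these correspond bijectively to multisegments ${\mathfrak m}'$ with supercuspidal support ${\mathfrak p}$ and $\lambda({\mathfrak m}') = \lambda$, and this set is finite (and nonempty, by the hypothesis on ${\mathfrak m}$). For each such $\pi = Z({\mathfrak m}')$, I would run the construction preceding Theorem~\ref{thm:truncate} to produce the pair $(A,\CP)$ associated to ${\mathfrak m}'$. Lemma~\ref{lemma:implications} asserts exactly that this pair satisfies the hypotheses of Lemma~\ref{lemma:commutativity}, so the latter lemma produces a surjection $E_{\pi} \hookrightarrow A'$ realizing $A'$ as a free, finite-rank $E_{\pi}$-module. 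Since $A'$ is a quotient of the commutative complete local Noetherian ring $\widehat{\overline{\CK}[X_{ij}^{\pm 1}]}_{\mathfrak q}$, it is itself commutative, and hence the subring $E_{\pi}$ is commutative.

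Second, I would use the remaining conclusion of Lemma~\ref{lemma:implications}, namely that every irreducible subquotient of $\CP_{\pi}$ of highest derivative partition $\lambda$ is isomorphic to $\pi$. This is precisely the hypothesis of Lemma~\ref{lemma:independence}: for distinct simples $\pi, \pi'$ of highest derivative partition $\lambda$ and supercuspidal support ${\mathfrak p}$, the representation $\pi'$ cannot occur as a subquotient of $\CP_{\pi}$. Lemma~\ref{lemma:independence} then yields the product decomposition
\[
\widehat{(E_{\lambda,\overline{\CK}})}_{\mathfrak p} \;\cong\; \prod_{\pi} E_{\pi},
\]
where the product ranges over the (finitely many) $\pi$ described above. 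Since each factor $E_{\pi}$ is commutative by the previous step, the product is commutative, which gives the claim.

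There is no real obstacle at this stage of the argument: all the genuine work has been absorbed into Theorem~\ref{thm:truncate} and the verification of the hypotheses of Lemmas~\ref{lemma:independence} and~\ref{lemma:commutativity} carried out in Lemma~\ref{lemma:implications}. The only mild point to double-check is that the quotient ring $A'$ produced by Lemma~\ref{lemma:commutativity} is indeed commutative in our situation; this is immediate because it is a quotient of the Laurent polynomial ring $\overline{\CK}[X_{ij}^{\pm 1}]$ completed at a maximal ideal.
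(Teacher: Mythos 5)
Your proposal is correct and follows essentially the same route as the paper: you combine Lemma~\ref{lemma:implications} (feeding into both Lemma~\ref{lemma:commutativity} for commutativity of each $E_{\pi}$ and Lemma~\ref{lemma:independence} for the product decomposition) exactly as the paper does. One small slip in wording: Lemma~\ref{lemma:commutativity} produces an \emph{injection} $E_{\pi}\hookrightarrow A'$, not a surjection, but your subsequent use of it (that $E_{\pi}$ is a subring of the commutative ring $A'$) shows you meant the right thing.
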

\begin{proof}
By Lemma~\ref{lemma:independence} and Lemma~\ref{lemma:implications}, $\widehat{(E_{\lambda})}_{\mathfrak p}$ is isomorphic to the product of the endomorphism rings of the $\CP_{\pi}$, and these latter endomorphism rings are commutative by Lemma~\ref{lemma:commutativity} and Lemma~\ref{lemma:implications}.
\end{proof}

\section{Families of type $\lambda$} \label{sec:families}

In this section we use the projectivity of $\CP_{\lambda}$ to define certain families of representations of ``type $\lambda$'', generalizing the notion of a ``co-Whittaker family'' from~\cite{whittaker}.

\begin{definition} Let $A$ be a commutative $\CO$-algebra, and let $V$ be an admissible $A[G_n]$-module.  We say that $V$ is a {\em family of type $\lambda$} over $A$ if the following conditions are satisfied:
\begin{enumerate}
    \item $V$ lies in $\Rep_{\CO}(G_n)^{\preceq \lambda}$,
    \item the space $\Hom_{G_n}(\CP_{\lambda}, V)$ is free of rank one over $A$, and
    \item the map 
    $$\CP_{\lambda} \otimes_{\CO} \Hom_{G_n}(\CP_{\lambda}, V) \rightarrow V$$
    is surjective.
\end{enumerate}
\end{definition}

When $\lambda$ is the trivial partition $\{n\}$, then being of ``type $\lambda$'' is equivalent to being generic, and a family of type $\lambda$ in the above sense is a co-Whittaker family in the sense of~\cite{whittaker}, Definition 6.1.

Since $\CP_{\lambda}$ is projective, if $V$ is an $A[G_n]$-module and $f: A \rightarrow B$ is a morphism of Noetherian $\CO$-algebras, then the natural map:
$$\Hom_{G_n}(\CP_{\lambda}, V) \otimes_A B \rightarrow \Hom_{G_n}(\CP_{\lambda}, V \otimes_A B)$$
is an isomorphism.  In particular if $V$ is a family of type $\lambda$ over $A$, then $V \otimes_A B$ is a family of type $\lambda$ over $B$.

Consider any homomorphism $f: E_{\lambda} \rightarrow A$, where $A$ is commutative and Noetherian, and let $V = \CP_{\lambda} \otimes_{E_{\lambda},f} A$. Then $\Hom_{G_n}(\CP_{\lambda}, V)$ is free of rank one over $A$ and the map:
$$\CP_{\lambda} \otimes_{\CO} \Hom_{G_n}(\CP_{\lambda}, V) \rightarrow V$$ is surjective.  It follows that $V$ is a family of type $\lambda$ over $A$.  

This allows us to construct a ``universal'' family of type $\lambda$, in a manner exactly analogous to the construction of the universal co-Whittaker family in~\cite{whittaker}.  Indeed, let $E^{\ab}_{\lambda}$ be the quotient of $E_{\lambda}$ by its commutator ideal (or equivalently the maximal commutative quotient of $E_{\lambda}$).  Then any map from $E_{\lambda}$ to a commutative ring $A$ factors through $E^{\ab}_{\lambda}$.

The previous paragraph shows that $\CP_{\lambda} \otimes_{E_{\lambda}} E^{\ab}_{\lambda}$ is a family of type $\lambda$ over $E^{\ab}_{\lambda}.$  We will denote this family by $\CP_{\lambda}^{\ab}$.  Moreover, this family is universal in the following sense: for any family $V$ of type $\lambda$ over $A$, the action of $E_{\lambda}$ on $\Hom_{G_n}(\CP_{\lambda},V)$ induces a homomorphism $f_V: E_{\lambda} \rightarrow A$, and (after a choice of generator for $\Hom_{G_n}(\CP_{\lambda},V)$ as an $A$-module) the surjection
$$\CP_{\lambda} \otimes_{\CO} \Hom_{G_n}(\CP_{\lambda}, V) \rightarrow V$$
yields a surjection:
$$\CP_{\lambda}^{ab} \otimes_{E_{\lambda}^{\ab}} A \rightarrow V.$$  Moreover, applying the functor $\Hom_{G_n}(\CP_{\lambda}, -)$ to this surjection yields an isomorphism of free $A$-modules of rank one.

In general, if $V$ and $W$ are families of type $\lambda$ over $A$, we will say that $V$ dominates $W$ if there is a surjection from $V$ to $W$; such a surjection necessarily induces an isomorphism of $\Hom_{G_n}(\CP,V)$ with $\Hom_{G_n}(\CP,W)$.  We can make dominance into an equivalence relation by saying that $V$ is equivalent to $V'$ if there exists a $W$ that dominates both $V$ and $V'$.  Then the previous paragraph shows that the family $\CP_{\lambda}^{\ab}$ over $E_{\lambda}^{\ab}$ is universal for families of type $\lambda$ up to dominance.

Note that if we work over $\overline{\CK}$ rather than $\CO$, then $E_{\lambda} = E_{\lambda}^{\ab}$, and in such contexts we will not distinguish between the two rings.

\section{Tangent vectors} \label{sec:tangent}

We can use the theory of families developed above to describe the tangent vectors to a point of $\Spec E_{\lambda}^{\ab}.$  For a field $L$ that is a $\CO$-algebra, and a map $x: E_{\lambda}^{\ab} \rightarrow L$, we can regard $x$ as an $L$-point of $\Spec E_{\lambda}^{\ab} \otimes_{\CO} L$, and then a tangent vector to this scheme at $x$ is given by a map ${\tilde x}: E_{\lambda}^{\ab} \rightarrow L[\epsilon]/\epsilon^2$ lifting $x$.

The theory of the previous section associates to such a ${\tilde x}$ the family $\CP_{\lambda}^{\ab} \otimes_{E_{\lambda}^{\ab},{\tilde x}} L[\epsilon]/\epsilon^2.$  Denote this family by $V_{\tilde x}$, and set $V_x = V_{\tilde x}/\epsilon V_{\tilde x}$ (which can alternatively be described as $\CP_{\lambda} \otimes_{E_{\lambda}, x} L$.)  Then $V_x$ has an absolute irreducible cosocle $\pi_x$ of type $\lambda$.

We have an exact sequence:
$$0 \rightarrow \epsilon V_{\tilde x} \rightarrow V_{\tilde x} \rightarrow V_x \rightarrow 0.$$
Since multiplication by $\epsilon$ induces a surjection of $V_x$ onto $\epsilon V_{\tilde x}$, the cosocle of $V_{\tilde x}$ is isomorphic to $\pi_x$; let $W$ denote the kernel of the surjection $\epsilon V_{\tilde x} \rightarrow \pi_x$.  Then the above exact sequence induces an exact sequence:
$$0 \rightarrow \pi_x \rightarrow V_{\tilde x}/W \rightarrow V_x \rightarrow 0,$$
and hence gives a well-defined element of $\Ext^1_{G_n}(V_x,\pi_x).$

Conversely, given an element of $\Ext^1_{L[G_n]}(V_x,\pi_x)$, we have an extension:
$$0 \rightarrow \pi_x \rightarrow V \rightarrow V_x \rightarrow 0,$$
which we can make into an $L[\epsilon]/\epsilon^2$-module in which $\epsilon$ acts via the endomorphism of $V$ given by $V \rightarrow V_x \rightarrow \pi_x \hookrightarrow V$.  This makes $V$ into a family of type $\lambda$ over $L[\epsilon]/\epsilon^2$, and hence gives rise to a unique map $E_{\lambda}^{\ab} \rightarrow L[\epsilon]/\epsilon^2$ lifting $x$. Thus we obtain a bijection between tangent vectors to $\Spec E_{\lambda}^{\ab} \otimes_{\CO} L$ at $x$ and elements of $\Ext^1_{L[G_n]}(\pi_x,V_x)$.

In the particular case $L = \overline{\CK}$ (or any extension thereof) we can go further using the results of section~\ref{sec:char zero ends}.  In this setting, if the supercuspidal support of $\pi_x$ is given by a multisegment $\mathfrak m_x = \{\Delta_1,\dots,\Delta_r\}$, then we have seen that $V_x$ is given by the parabolic induction $I({\mathfrak m}_x) := i_{Q''}^{G_n} Z(\Delta_1) \otimes \dots\otimes Z(\Delta_r)$, where $Q''$ is the standard parabolic given by Theorem~\ref{thm:truncate} and the $\Delta_i$ are ordered as in the hypotheses to that theorem.

In this case we have:

\begin{prop} \label{prop:tangent}
Suppose $L$ is an extension of $\overline{\CK}$.  Then the natural map: $\Ext^1_{L[G_n]}(\pi_x,V_x) \rightarrow \Ext^1_{L[G_n]}(V_x,V_x)$ is an isomorphism.  In particular the tangent space to $\Spec E_{\lambda}$ at $x$ is isomorphic to $\Ext^1_{\L[G_n]}(V_x,V_x).$
\end{prop}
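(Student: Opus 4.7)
The plan is to realise $\alpha : \Ext^1_{L[G_n]}(\pi_x, V_x) \to \Ext^1_{L[G_n]}(V_x, V_x)$ as a middle map in a long exact sequence of $\Ext$-groups, and reduce the proposition to two vanishing statements. Let $K := \ker(V_x \to \pi_x)$, so we have a short exact sequence
\[
0 \to K \to V_x \to \pi_x \to 0.
\]
By Lemma~\ref{lemma:implications}, $\pi_x$ is the unique simple subquotient of $V_x$ of highest derivative partition $\lambda$ (occurring with multiplicity one); hence every simple subquotient of $K$ has partition strictly below $\lambda$, i.e. $K \in \Rep_{\overline{\CK}}(G_n)^{\prec \lambda}$. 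Applying the contravariant functor $\Hom_{L[G_n]}(-, V_x)$ yields the long exact sequence
\[
\Hom(V_x, V_x) \to \Hom(K, V_x) \xrightarrow{\delta} \Ext^1(\pi_x, V_x) \xrightarrow{\alpha} \Ext^1(V_x, V_x) \xrightarrow{\rho} \Ext^1(K, V_x),
\]
in which $\alpha$ is the pullback map of the proposition. Thus it suffices to establish $\delta = 0$ and $\rho = 0$.

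A preliminary simplification: any morphism $K \to V_x$ has image contained in the largest subobject of $V_x$ lying in $\Rep^{\prec \lambda}$, which is $K$ itself (since $V_x/K = \pi_x$ is simple of partition $\lambda$, so no proper subobject strictly containing $K$ can lie in $\Rep^{\prec \lambda}$). Consequently $\Hom(K, V_x) = \End(K)$, and $\delta = 0$ is equivalent to saying that every endomorphism of $K$ extends to an endomorphism of $V_x$. Similarly, $\rho = 0$ says that for every self-extension $0 \to V_x \to E \to V_x \to 0$, the pulled back extension $0 \to V_x \to E \times_{V_x} K \to K \to 0$ along the inclusion $K \hookrightarrow V_x$ is split.

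I expect the main obstacle to be the vanishing of $\rho$. The intended approach is to exploit the explicit description, provided by Theorem~\ref{thm:truncate}, of $V_x$ as the standard module $I(\mathfrak{m}_x) = i_{Q''}^{G_n} \bigotimes_i Z(\Delta_i)$ with segments ordered so that no $\Delta_i$ precedes $\Delta_j$ for $i > j$. Self-extensions of such standard modules are the content of Theorem~\ref{thm:ext} (to be proved in sections~\ref{sec:segment}--\ref{sec:exts}) and are controlled, via Bernstein--Zelevinsky theory, by the inertial deformation data of the segments themselves. The plan is to combine this description with a parallel computation of $\Ext^1(K, V_x)$, using second adjointness and the geometric lemma applied to the parabolic structure, and verify that $\rho$ factors through zero because segment-deformation data cannot act nontrivially on a subobject $K$ whose constituents lie strictly lower in the partition order. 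The vanishing of $\delta$ should fall out of a parallel Jacquet-module analysis, in which the rigidity of segments forces the restriction map $\End(V_x) \to \End(K)$ to be surjective. Once both vanishings are established, exactness of the sequence immediately yields that $\alpha$ is an isomorphism.
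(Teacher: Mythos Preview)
Your long exact sequence setup is correct, and reducing the isomorphism to the two vanishings $\delta=0$ and $\rho=0$ is a legitimate strategy. But what follows is a plan, not a proof: neither vanishing is actually established, and the sketches you give do not obviously go through.

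For $\delta=0$ you need the restriction $\End(V_x)\to \Hom(K,V_x)=\End(K)$ to be surjective. Since $V_x$ is indecomposable with simple cosocle, $\End(V_x)=L$; so you are claiming that $\End(K)=L$, i.e.\ that the radical of the standard module has only scalar endomorphisms. That is a nontrivial statement about $K$, and ``rigidity of segments forces surjectivity'' is not an argument. For $\rho=0$ the situation is worse: invoking Theorem~\ref{thm:ext} tells you $\Ext^1(V_x,V_x)$ is $r$-dimensional and spanned by infinitesimal unramified twists of the segments, but you give no mechanism for why restricting such an extension along $K\hookrightarrow V_x$ must split. The phrase ``segment-deformation data cannot act nontrivially on a subobject whose constituents lie strictly lower in the partition order'' is an assertion of the desired conclusion, not a justification; twisting a segment certainly moves the Jordan--H\"older constituents of $K$, so the claim needs real work. (Forward-referencing Theorem~\ref{thm:ext} is at least not circular, since its proof in section~\ref{sec:exts} does not use this proposition.)

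By contrast, the paper does not attempt an independent argument at all: it simply observes that the statement is the smooth dual of \cite{chan}, Lemma~7.1, together with the admissible-duality isomorphism $\Ext^1(V,W)\cong\Ext^1(W^\vee,V^\vee)$. If you want to recover a self-contained argument along your lines, you would need to carry out a genuine geometric-lemma analysis of $\Hom_M(r_{G_n}^P V_x,\bigotimes Z(\Delta_i))$ and its $\Ext^1$ analogue, as Chan does, rather than gesturing at one.
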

\begin{proof}
This is essentially Lemma 7.1 in~\cite{chan}; more precisely it follows by taking the smooth duals of the representations appearing in that lemma, and noting that $\Ext^1_{L[G_n]}(V,W)$ is isomorphic to $\Ext^1_{L[G_n]}(W^{\vee}, V^{\vee})$ when $V$ and $W$ are admissible.
\end{proof}

This means in particular that we can regard tangent vectors to $E_{\lambda}$ at $x$ as first-order deformations of a specific family of type $\lambda$ in the dominance class of $\pi_x$; namely the ``maximal'' such family $V_x$ over $L$.

\section{The single-segment case} \label{sec:segment}

We will use results of the previous section to show that, with coefficients in $\overline{\CK}$, the spectra of the rings $E_{\lambda}$ are smooth $\overline{\CK}$-algebras.  We begin by focusing on a special case.

Let $\chi^{\un}: G_n \rightarrow \overline{\CK}[X^{\pm 1}]^{\times}$ be the ``universal unramified character''; that is, the character taking $g$ to $X^{v(\det g)}$, where $v$ is the valuation on $F$.

As a representation of $G_n$, the character $\chi^{\un}$ has highest derivative partition $(1,1,\dots,1)$, and in fact we have:

\begin{prop} \label{prop:universal unramified}
Let $\lambda$ be the partition $(1,\dots,1)$ of $n$.  The character $\chi^{un}$ is a projective object of $\Rep_{\overline{\CK}}(G_n)^{\preceq \lambda}$.  Moreover, if $\Rep_{\overline{\CK}}(G_n)_1$ is the  block of $\Rep_{\overline{\CK}}(G_n)$ containing the trivial representation, we have an equivalence of categories between $\Rep_{\overline{\CK}}(G_n)_1^{\preceq \lambda}$  and the category of $\overline{\CK}[X^{\pm1}]$-modules, given by $V \mapsto \Hom_{G_n}(\chi^{un},V)$.
\end{prop}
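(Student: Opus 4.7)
The plan is to identify $\mathcal{C} := \Rep_{\overline{\CK}}(G_n)_1^{\preceq \lambda}$, for $\lambda = (1,\dots,1)$, with the category of smooth representations of $G_n / G_n^0$, where $G_n^0 := \ker(v \circ \det)$, and to observe that under this identification $\chi^{un}$ becomes the regular $\overline{\CK}[X^{\pm 1}]$-module, from which both assertions are immediate.

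First I identify the simple objects of $\mathcal{C}$. Since $(1,\dots,1)$ is the minimum of the dominance order, the hypothesis $\lambda(\pi) \preceq \lambda$ forces $\lambda(\pi) = \lambda$, and the Zelevinsky classification then forces $\pi = Z(\Delta)$ for a single segment $\Delta = [a,b]_{\rho}$ with $n(\rho) = 1$ and $\ell(\Delta) = n$; equivalently, $\pi$ is a character of $G_n$ factoring through $\det$. In the block $\Rep_{\overline{\CK}}(G_n)_1$ these are precisely the unramified characters $\chi_{\alpha}: g \mapsto \alpha^{v(\det g)}$ for $\alpha \in \overline{\CK}^{\times}$, each of which factors through $G_n / G_n^0 \cong \mathbb{Z}$.

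The main step is to promote this from simples to all of $\mathcal{C}$: I claim that $G_n^0$ acts trivially on every $V \in \mathcal{C}$. Any $V$ is a filtered colimit of finitely generated subrepresentations, and in a single Bernstein block over $\overline{\CK}$ (characteristic zero) these are admissible and of finite length, so it suffices to treat finite-length $V$ by induction on length. Given an extension $0 \to V' \to V \to \chi \to 0$ with $\chi$ an unramified character and $G_n^0$ acting trivially on $V'$, the action of $G_n^0$ on $V$ is encoded by a $1$-cocycle $G_n^0 \to \Hom_{\overline{\CK}}(\chi, V')$ with trivial coefficient action, hence a smooth group homomorphism. Writing $G_n^0 = \mathrm{SL}_n(F) \cdot T_0$ with $T_0 = \{\mathrm{diag}(u, 1, \dots, 1) : u \in \mathfrak{o}_F^{\times}\}$, two facts force this homomorphism to vanish: $\mathrm{SL}_n(F)$ is its own commutator subgroup (as $F$ is infinite), killing any map to an abelian group; and $T_0 \cong \mathfrak{o}_F^{\times}$ is compact profinite, so a smooth homomorphism into a torsion-free $\overline{\CK}$-vector space factors through a finite quotient and must vanish.

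Once $G_n^0$ acts trivially on all of $\mathcal{C}$, the category is equivalent to smooth representations of $G_n/G_n^0 \cong \mathbb{Z}$, i.e.\ to $\overline{\CK}[X^{\pm 1}]$-Mod, with $X$ corresponding to any chosen element of $G_n$ of determinant valuation $1$. Under this equivalence $\chi^{un}$ is identified with the free rank-one module $\overline{\CK}[X^{\pm 1}]$, and $V \mapsto \Hom_{G_n}(\chi^{un}, V)$ is identified with the identity functor on $\overline{\CK}[X^{\pm 1}]$-Mod; exactness gives projectivity of $\chi^{un}$, and the identity is an equivalence. I expect the main obstacle to be the finite-length cocycle computation above; the reduction to finite length and the concluding identifications are routine.
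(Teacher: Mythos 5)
There is a genuine gap in the reduction from general $V$ to finite-length $V$. You assert that in a single Bernstein block over $\overline{\CK}$ a finitely generated smooth representation is admissible and of finite length. This is false, and in fact the object of the proposition itself is a counterexample: $\chi^{\un}$ is cyclic as a $\overline{\CK}[G_n]$-module (generated by $1 \in \overline{\CK}[X^{\pm 1}]$), lies in the trivial block, yet has $(\chi^{\un})^K = \overline{\CK}[X^{\pm 1}]$ infinite-dimensional for every compact open $K$ (since $v(\det k)=0$ for $k\in K$), so it is neither admissible nor of finite length. The correct statement in this direction is that such a $V$ is admissible as a $Z$-module (i.e.\ $V^K$ is a finitely generated module over the Bernstein center), which does not bound the length. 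Since your cocycle computation is carried out one step of a finite composition series at a time, it does not apply to the objects you actually need to handle, and the inductive argument never terminates for infinite-length $V$.

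The gap is repairable, but not by the argument as written. One fix that stays close to your strategy: instead of the length induction, observe that $G_n^0$ is generated by compact subgroups (the root subgroups $U_\alpha$ are unions of compact subgroups, and $T_0 \cong \mathfrak{o}_F^\times$ is compact). For any compact subgroup $C$ of $G_n^0$, the restriction $V|_C$ is semisimple because $\overline{\CK}$ has characteristic zero, so it suffices to show every $C$-character in $V|_C$ is trivial. If a nontrivial $\chi$ occurred, take a $\chi$-eigenvector $v$; the cyclic $G_n$-module $\langle G_n v\rangle$ has an irreducible quotient $\pi$ through which $v$ has nonzero image (else $v$ would lie in a proper submodule containing the generator), so $\chi$ would appear in $\pi|_C$. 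But every irreducible in $\mathcal{C}$ factors through $\det$ and hence restricts trivially to $C$, a contradiction. This removes any finiteness hypothesis. For comparison, the paper's argument is structurally different: it shows by induction on $n$ that $V|_{P_n}$ is inflated from $G_{n-1}$ (using that the Bernstein--Zelevinsky filtration of $V|_{P_n}$ collapses because all $V^{(i)}$ vanish for $i\ge 2$), concludes that $V|_{P_n}$ factors through $\det$, and then uses that the conjugates of $P_n$ generate $G_n$. Your identification of the simple objects, of $G_n^0 = \ker(v\circ\det)$ as $\mathrm{SL}_n(F)\cdot T_0$, of $G_n/G_n^0\cong\mathbb{Z}$, and the concluding translation to $\overline{\CK}[X^{\pm 1}]$-modules are all fine; only the middle reduction needs to be replaced.
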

\begin{proof}
We first show that the $G_n$-action on an object $V$ of $\Rep_{\overline{\CK}}(G_n)_1^{\preceq \lambda}$ factors through the determinant.  When $n=1$ this is clear.  In general, the fact that the first derivative of $V$ is the highest implies that the restriction of $V$ to the mirabolic subgroup $P_n$ is obtained by inflating a representation of $G_{n-1}$ with highest derivative partition $(1,\dots,1)$ from $G_{n-1}$ to $P_n$.  Thus by induction the restriction of $V$ to $P_n$ factors through the determinant map, and therefore so does its restriction to any conjugate of $P_n$.  Since these conjugates together generate $G_n$ the claim follows.

Conversely, any $V$ that factors through the determinant is an object of $\Rep_{\overline{\CK}}(G_n)^{\preceq \lambda}$.  Those $V$ that additionally lie in $\Rep_{\overline{\CK}}(G_n)_1$ are thus precisely those representations of $G_n$ that factor through the subgroup $G^0_n$ generated by all compact open subgroups of $G$.  Thus $\Rep_{\overline{\CK}}(G_n)_1^{\preceq \lambda}$ is naturally identified with the category of $\overline{\CK}[G_n/G_n^0]$-modules; in particular it is clear that $\chi^{\un}$ is a projective generator of this category and the result follows.
\end{proof}

This result generalizes to a class of ``minimal'' cases: fix a segment $\Delta = [a,b]_{\rho}$; its highest derivative partition is of the form $(k,\dots,k)$ for some integer $k$.  Denote this partition by $\lambda'$.

Consider the representation $Z(\Delta) \otimes \chi^{\un}$ of $G_n$; it is a smooth $\overline{\CK}[X^{\pm 1}]$-module.  Let $H$ denote the group of unramified characters $\phi$ of $G_n$ such that $\rho \otimes \phi$ is isomorphic to $\rho$.  Then (using $\chi^{\un}$ to identify $\Spec \overline{\CK}[X^{\pm 1}]$ with the torus of unramified characters of $G_n$) we have an action of $H$ on $\overline{\CK}[X^{\pm 1}]$. Fixing isomorphisms of $Z(\Delta) \otimes \phi$ with $Z(\Delta)$ for all $\phi$ in $H$, we find that $Z(\Delta) \otimes \chi^{\un}$ descends uniquely to a smooth $\overline{\CK}[X^{\pm 1}]^H$-module, which we denote by $Z(\Delta)^{\un}$, and call the ``universal unramified twist of $Z(\Delta)$''. 

Now let $\Rep_{\overline{\CK}}(G_n)_{\Delta}$ denote the block of $\Rep_{\overline{\CK}}(G_n)$ containing $Z(\Delta)$.  This block has a projective generator, obtained as the parabolic induction of the universal unramified twist of $\rho^{\otimes b-a+1}$ from a suitable Levi subgroup to $G_n$, and the endomorphisms of this parabolic induction may be naturally identified with an Iwahori Hecke algebra.  This identification yields an equivalence between $\Rep_{\overline{\CK}}(G_n)_{\Delta}$ and $\Rep_{\overline{\CK}}(\GL_{b - a + 1}(F'))_1$ for a suitable unramified extension $F'$ of $F$.
Moreover, these equivalences are naturally compatible with parabolic induction and restriction, as well as the ``top derivative'' functors.  From this it is not hard to see that these equivalences are compatible with the filtration we have constructed, in the sense that for any partition $\lambda$ of $b - a + 1$, they induce equivalences between $\Rep_{\overline{\CK}}(\GL_{b - a + 1}(F'))_1^{\preceq \lambda}$ and $\Rep_{\overline{\CK}}(G_n)_{\Delta}^{\preceq k\lambda}$.

As a result, we deduce:

\begin{prop} \label{prop:single segment}
The representation $Z(\Delta)^{\un}$ is a projective object of $\Rep_{\overline{\CK}}(G_n)^{\preceq \lambda'}$, and the functor $V \mapsto \Hom_{G_n}(Z(\Delta)^{\un}, V)$ defines an equivalence of categories between $\Rep_{\overline{\CK}}(G_n)_{\Delta}^{\preceq \lambda'}$ and the category of $\overline{\CK}[T^{\pm 1}]$-modules.
\end{prop}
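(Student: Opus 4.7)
The plan is to reduce the statement entirely to Proposition~\ref{prop:universal unramified} by using the Bernstein block equivalence between $\Rep_{\overline{\CK}}(G_n)_{\Delta}$ and $\Rep_{\overline{\CK}}(\GL_{b-a+1}(F'))_1$ that is recalled in the paragraph preceding the proposition. Writing $\Phi$ for this equivalence, the crucial input is that $\Phi$ intertwines parabolic induction and restriction, as well as the top-derivative functors, on the two sides. Since, by Corollary~\ref{cor:iterated derivative multiplicativity}, the highest derivative partition of an irreducible representation is determined inductively from iterated top derivatives applied to its parabolic restrictions, compatibility of $\Phi$ with these operations forces $\Phi$ to carry $\Rep_{\overline{\CK}}(G_n)_{\Delta}^{\preceq k\mu}$ onto $\Rep_{\overline{\CK}}(\GL_{b-a+1}(F'))_1^{\preceq \mu}$ for every partition $\mu$ of $b-a+1$. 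Specializing $\mu = (1,\dots,1)$ gives an equivalence
\[
\Rep_{\overline{\CK}}(G_n)_{\Delta}^{\preceq \lambda'} \;\simeq\; \Rep_{\overline{\CK}}(\GL_{b-a+1}(F'))_1^{\preceq (1,\dots,1)}.
\]

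Next I would check that $Z(\Delta)^{\un}$ corresponds, through $\Phi$, to the universal unramified character $\chi^{\un}_{F'}$ of $\GL_{b-a+1}(F')$, up to a natural identification of parameter rings. On the nose $\Phi$ sends $Z(\Delta)$ to the trivial representation of $\GL_{b-a+1}(F')$; further, unramified twists on the two sides match compatibly, with the group $H$ of unramified characters of $G_n$ fixing $\rho$ acting nontrivially on the $G_n$-side parameter torus $\Spec \overline{\CK}[X^{\pm 1}]$ but trivially on the $\GL_{b-a+1}(F')$-side, since twisting by an element of $H$ does not change the isomorphism class of the inducing cuspidal used to build $Z(\Delta)$. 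This identifies $\overline{\CK}[X^{\pm 1}]^H$ with the ring $\overline{\CK}[T^{\pm 1}]$ of coordinates on the unramified torus of $\GL_{b-a+1}(F')$, and $Z(\Delta)^{\un}$ with $\chi^{\un}_{F'}$. Applying Proposition~\ref{prop:universal unramified} on the $\GL_{b-a+1}(F')$-side then immediately gives the projectivity of $Z(\Delta)^{\un}$ in $\Rep_{\overline{\CK}}(G_n)^{\preceq \lambda'}$ and the claimed equivalence with the category of $\overline{\CK}[T^{\pm 1}]$-modules.

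The hard part, I expect, will be the bookkeeping involved in pinning down the second step: one must verify precisely that $\Phi$ transports the pro-object $Z(\Delta)^{\un}$ (which lives over the $H$-invariant subring, obtained by choosing isomorphisms $Z(\Delta) \otimes \phi \risom Z(\Delta)$ for $\phi \in H$) to the genuinely $\GL_{b-a+1}(F')$-equivariant family $\chi^{\un}_{F'}$. This requires reconciling the two parameter tori (the unramified twists of $G_n$ on one hand, of $\GL_{b-a+1}(F')$ on the other, which differ by the covering corresponding to $H$) and confirming that the descent of $Z(\Delta) \otimes \chi^{\un}$ through $H$ agrees on the nose with the naive unramified twist on the Levi side. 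Once that matching is secured, everything else follows formally from Proposition~\ref{prop:universal unramified} together with the already recorded compatibility of $\Phi$ with the highest-derivative stratification.
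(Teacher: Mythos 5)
Your proposal is correct and follows essentially the same route the paper takes: the paper gives no separate proof but deduces the proposition from the preceding discussion, namely the Bernstein block equivalence $\Rep_{\overline{\CK}}(G_n)_{\Delta} \simeq \Rep_{\overline{\CK}}(\GL_{b-a+1}(F'))_1$, its compatibility with the $\preceq\lambda$ filtrations (via compatibility with parabolic induction/restriction and top derivatives), and then an application of Proposition~\ref{prop:universal unramified}. You fill in the details the paper leaves implicit, most usefully the matching of $Z(\Delta)^{\un}$ with $\chi^{\un}_{F'}$ and of the parameter ring $\overline{\CK}[X^{\pm 1}]^H$ with $\overline{\CK}[T^{\pm 1}]$; the only small quibble is that the reference to Corollary~\ref{cor:iterated derivative multiplicativity} is not quite the right citation for why the equivalence respects the stratification (the point is rather that the highest derivative partition is built out of iterated top derivatives, which the equivalence intertwines), but the substance of the argument is the intended one.
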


We will use these results to study homological properties of arbitrary $G_n$-representations; this will entail reducing to these ``minimal'' situations via functors of the form $V \mapsto V^{\preceq \lambda'}$.  In order for this to be compatible with homological considerations we will need to understand the left derived functor $V \mapsto L^{\preceq \lambda'} V$ of this functor.  The key result we will need is:

\begin{prop} \label{prop:derived truncation}
We have $H^i(L^{\preceq \lambda'} Z(\Delta))$ = $Z(\Delta)$ if $i=0$ and zero otherwise.
\end{prop}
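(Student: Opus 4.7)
The $i = 0$ case is immediate: since $Z(\Delta)$ has highest derivative partition exactly $\lambda'$, it lies in $\Rep_{\overline{\CK}}(G_n)^{\preceq \lambda'}$ and so $F := (-)^{\preceq \lambda'}$ acts on it as the identity.

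For the vanishing in other degrees, the strategy is to reduce to $F$-acyclicity of the universal unramified twist $Z(\Delta)^{\un}$ constructed in Proposition~\ref{prop:single segment}. Inside the subcategory $\Rep_{\overline{\CK}}(G_n)_\Delta^{\preceq \lambda'}$, the equivalence with $\overline{\CK}[X^{\pm 1}]$-modules sends $Z(\Delta)$ to $\overline{\CK}[X^{\pm 1}]/(X-1)$ and produces the two-term projective resolution
$$0 \to Z(\Delta)^{\un} \xrightarrow{X-1} Z(\Delta)^{\un} \to Z(\Delta) \to 0.$$
If $Z(\Delta)^{\un}$ is $F$-acyclic in the ambient category $\Rep_{\overline{\CK}}(G_n)$, then any projective resolution of $Z(\Delta)$ in the ambient category lifting this complex becomes, after applying $F$, again quasi-isomorphic to $Z(\Delta)$ concentrated in degree zero.

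The tool for establishing this $F$-acyclicity is the universal parabolic induction
$$\tilde I(\Delta) := i_{Q'}^{G_n}\bigl(\tilde\rho\nu^{b-1}\chi_{b-1} \otimes \cdots \otimes \tilde\rho\nu^a\chi_a\bigr),$$
which is projective in the full category because each universal unramified twist $\tilde\rho\nu^j\chi_j$ is a projective generator of its cuspidal block and parabolic induction preserves projectivity by second adjointness. The universal analog of Theorem~\ref{thm:truncate} identifies $F(\tilde I(\Delta))$ with $Z(\Delta)^{\un}$; let $N$ denote the kernel of $\tilde I(\Delta) \twoheadrightarrow Z(\Delta)^{\un}$. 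The key observation is that $F(W'_{\lambda''}) = 0$ whenever $\lambda'' \not\preceq \lambda'$: by Corollary~\ref{cor:iterated derivative multiplicity}, every simple quotient $\pi$ of $W'_{\lambda''}$ satisfies $\lambda(\pi) \succeq \lambda''$ and hence $\lambda(\pi) \not\preceq \lambda'$. Combined with the explicit cokernel formula for $F$, this forces $F(N) = 0$, and the long exact sequence of derived functors for $0 \to N \to \tilde I(\Delta) \to Z(\Delta)^{\un} \to 0$ gives $L^1 F(Z(\Delta)^{\un}) \cong F(N) = 0$.

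The main obstacle is extending the vanishing to all higher derived functors $L^i F(Z(\Delta)^{\un})$ for $i \geq 2$; inductively, this reduces to showing $L^j F(N) = 0$ for every $j \geq 0$. The plan is to iteratively resolve $N$ (and the successive kernels that appear) by direct sums of $W'_{\lambda''}$-s with $\lambda'' \not\preceq \lambda'$, each such term being projective in the full category and killed by $F$. The inductive step requires verifying that each successive kernel retains the property that all of its simple subquotients have highest derivative partition $\not\preceq \lambda'$, which follows because this property is inherited by submodules of direct sums of $W'_{\lambda''}$-s (again by Corollary~\ref{cor:iterated derivative multiplicity}). Once this is in place, the complex $F(\text{resolution})$ is identically zero, and dimension-shifting yields the claim.
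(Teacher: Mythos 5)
Your base case ($i=0$) and the overall reduction to $F$-acyclicity of $Z(\Delta)^{\un}$ (via the two-term Koszul resolution) are fine in spirit, but the argument for $F$-acyclicity has a genuine gap. The crucial inductive step asserts that the successive kernels in a resolution of $N$ by direct sums of $W'_{\lambda''}$ (with $\lambda'' \not\preceq \lambda'$) again have all simple subquotients of highest derivative partition $\not\preceq \lambda'$, ``because this property is inherited by submodules of direct sums of $W'_{\lambda''}$-s.'' That is false. Corollary~\ref{cor:iterated derivative multiplicity} controls the irreducible \emph{quotients} of $W'_{\lambda''}$ (these must have highest derivative partition $\succeq \lambda''$), not its subquotients. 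In fact $W'_{\lambda''}$ typically has subquotients with small highest derivative partition: for instance, in the Iwahori block $eW'_{(n)}$ is an indecomposable projective with simple cosocle Steinberg, and it contains (twists of) the trivial character as subquotients, and $\lambda(\text{triv}) = (1,\dots,1) \preceq \lambda'$ for every $\lambda'$. So a submodule of $\oplus W'_{\lambda''}$ can perfectly well map onto something in $\Rep^{\preceq \lambda'}$.

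Worse, the claim you need --- that the kernel $N'$ of a surjection $\oplus W'_{\mu} \twoheadrightarrow N$ satisfies $F(N') = 0$ --- is not merely unproven but literally equivalent to $L^1 F(N) = 0$: the long exact sequence for the short exact sequence $0 \to N' \to \oplus W'_{\mu} \to N \to 0$, with the middle term projective and $F(\oplus W'_{\mu}) = F(N) = 0$, gives an isomorphism $L^1 F(N) \cong F(N')$. So there is no independent handle on the syzygies, and the induction never gets started. (You would also need to actually prove the ``universal analog of Theorem~\ref{thm:truncate}'' you invoke; the paper's Theorem~\ref{thm:truncate} is stated at a closed point, and the universal statement is obtained in the paper only after the fact, via Nakayama-type arguments in Lemma~\ref{lemma:implications}.)

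The paper sidesteps the whole issue of resolving the syzygies by producing an explicit, finite projective resolution on which the truncation can be computed by hand: after reducing to the principal block (so $Z(\Delta)$ becomes the trivial character and $\lambda' = (1,\dots,1)$), one takes Bernstein's resolution of the trivial representation by compact inductions $\cInd_{P_F}^{G}\,1$ indexed by the faces $F$ of the fundamental alcove of the building. Each truncation $(\cInd_{P_F}^{G}\,1)^{\preceq (1,\dots,1)}$ is one-dimensional, the maps induced by face inclusions are isomorphisms, and the resulting complex is the augmented simplicial chain complex of the (contractible) alcove, so its homology is $\overline{\CK}$ in degree zero and vanishes elsewhere. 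That is what makes the derived functor computation go through without needing any control on abstract syzygies.
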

\begin{proof}
    
For convenience and to simplify notation, we reduce to the case of the principal block; this amounts to noting that the equivalence between $\Rep_{\overline{\CK}}(G_n)_{\Delta}$ and $\Rep_{\overline{\CK}}(\GL_{b-a+1}(F))_1$ takes $Z(\Delta)$ to the trivial character is compatible with the functors $V \mapsto V^{\preceq\lambda}$.  We thus henceforth assume that $Z(\Delta)$ is the trivial character of $G_{b-a+1}$, so that $\lambda = (1,\dots,1)$.

The trivial representation of $G_{b-a+1}$ has a projective resolution:
$$0 \rightarrow P_{b-a+1} \rightarrow \dots \rightarrow P_0 \rightarrow \overline{\CK} \rightarrow 0$$
described by Bernstein in section 4.2 of Bernstein's lecture notes \cite{rumelhart}.  The projective $P_i$ is the sum, over the dimension $i$ faces $F$ of the fundamental alcove of the Bruhat-Tits building of $G_{b-a+1}$, of $\cInd_{P_F}^{G_{b-a+1}} 1$, where $P_F$ is the pointwise stabilizer of $F$.  It is clear that $(\cInd_{P_F}^{G_{b-a+1}} 1)^{\preceq (1,\dots,1)} = \overline{\CK}$ for all facets $F$, and that the map:
$$(\cInd_{P_F}^{G_{b-a+1}} 1)^{\preceq (1,\dots,1)} \rightarrow (\cInd_{P_F'}^{G_{b-a+1}} 1)^{\preceq (1,\dots,1)}$$
induced by an inclusion of facets $F' \subset F$ is an isomorphism.  From this the result follows.
\end{proof}

\section{Self-Ext algebras and dimensions of tangent spaces} \label{sec:exts}

We now use the results of the previous section, together with the geometric lemma, to compute the self-Ext algebras $\Ext^{\bullet}_{\overline{\CK}[G_n]}(V_x,V_x)$ for $x$ a $\overline{\CK}$-point of $\Spec E_{\lambda}$.  Let ${\mathfrak m} = \{\Delta_1,\dots,\Delta_r\}$ be the corresponding multisegment, with the $\Delta_i$ ordered so that 
$$V_x = i_P^{G_n} Z(\Delta_1) \otimes \dots \otimes Z(\Delta_r).$$  

Our main result is as follows:
\begin{thm} \label{thm:ext}
The $\Ext$ algebra $\Ext^{\bullet}_{\overline{\CK}[G_n]}(V_x,V_x)$ is an exterior algebra over $\overline{\CK}$ in $r$ variables of cohomological degree one.  In particular, $\Ext^i_{\overline{\CK}[G_n]}(V_x,V_x)$ is a $\overline{\CK}$-vector space of dimension $\binom{r}{i}$ for $0 \leq i \leq r$, and is zero otherwise.
\end{thm}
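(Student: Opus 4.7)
My plan is to compute $\Ext^{\bullet}_{\overline{\CK}[G_n]}(V_x, V_x)$ via Bernstein's second adjointness, the geometric lemma, and the single-segment analysis of Section~\ref{sec:segment}. By second adjointness,
$$\Ext^{\bullet}_{G_n}(V_x, V_x) \cong \Ext^{\bullet}_M\Bigl(\bigotimes_{i=1}^r Z(\Delta_i),\; r^{-}_P V_x\Bigr),$$
where $r^{-}_P$ denotes Jacquet restriction to $P^{\circ}$. The geometric lemma filters $r^{-}_P i_P(\bigotimes_i Z(\Delta_i))$ by pieces indexed by appropriate double cosets in the Weyl group; each graded piece is a parabolic induction of a twisted Jacquet restriction of $\bigotimes_i Z(\Delta_i)$.

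The main technical step is to show that only the identity double coset contributes nontrivially to the Ext algebra. For any nontrivial $w$, the corresponding graded piece involves a proper Jacquet restriction of at least one $Z(\Delta_j)$, decomposing it into sub-segment constituents and producing parabolically induced summands whose pairings against $\bigotimes_i Z(\Delta_i)$ force strong segment-matching constraints. The ordering hypothesis on the $\Delta_i$ (that $\Delta_i$ does not precede $\Delta_j$ for $i > j$), combined with the classical Bernstein-Zelevinsky Hom- and Ext-vanishing between ``non-matching'' segment representations, then forces these contributions to vanish in every cohomological degree. I would carry this out by induction on the length of $w$ and careful bookkeeping of supercuspidal supports, modelled on Chan's argument in \cite{chan}, Lemma 7.3.

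For the identity piece, the Künneth formula collapses the contribution to $\bigotimes_i \Ext^{\bullet}_{G_{n_i \ell(\Delta_i)}}(Z(\Delta_i), Z(\Delta_i))$, so it remains to compute the single-segment Ext algebra. Here I would use the universal unramified twist $Z(\Delta_i)^{\un}$: by Proposition~\ref{prop:single segment}, it is a projective object whose category is equivalent to $\overline{\CK}[T^{\pm 1}]^H$-modules, with $Z(\Delta_i)$ corresponding to the residue field at $T=1$. The two-term Koszul resolution $0 \to Z(\Delta_i)^{\un} \xrightarrow{T-1} Z(\Delta_i)^{\un} \to Z(\Delta_i) \to 0$ then computes $\Ext^{\bullet}_{G_{n_i\ell(\Delta_i)}}(Z(\Delta_i), Z(\Delta_i)) \cong \Lambda^{\bullet}(\overline{\CK})$, with Proposition~\ref{prop:derived truncation} ensuring that Ext in the Serre subcategory agrees with Ext in the ambient category. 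Taking the tensor product over $i$ yields the exterior algebra on $r$ generators of degree one, and the Yoneda ring structure is naturally compatible with all identifications above. The hardest step will be the vanishing of non-identity contributions in the second paragraph; everything else is a reasonably direct application of Künneth and Section~\ref{sec:segment}.
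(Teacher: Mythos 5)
Your proposed reduction to single segments has a genuine gap: the claim that only the identity double coset contributes fails precisely when some of the segments in $\mathfrak m$ repeat. If $\Delta_i = \Delta_j$ for $i \ne j$, the Weyl group element $w$ exchanging the corresponding blocks normalizes the Levi $M$ and does \emph{not} produce a proper Jacquet restriction of any $Z(\Delta_k)$; the associated graded piece in the geometric lemma is again $\bigotimes_i Z(\Delta_i)$ (with factors permuted), which visibly has nonzero $\Hom$ against $\bigotimes_i Z(\Delta_i)$ in degree zero and in general contributes in higher degree. Indeed, when $\mathfrak m$ consists of $r$ copies of one segment $\Delta$, the geometric lemma filtration of $r_{G_n}^P i_P^{G_n} Z(\Delta)^{\otimes r}$ has $r!$ successive quotients isomorphic to $Z(\Delta)^{\otimes r}$, not just one, so an identity-only argument cannot work as stated. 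The ordering hypothesis you invoke rules out \emph{linking} degeneracies between distinct segments, not permutations of isomorphic ones.

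The paper's proof handles this in two stages. First, Chan's Lemma 7.3 reduces to the case where all segments have the same type (so $V_x = i_P^{G_n} Z(\Delta)^{\otimes r}$); this is the piece of your strategy that survives, but only after grouping segments by isomorphism class rather than fully separating them. For the isotypic case, the paper does \emph{not} use K\"unneth. Instead it truncates the geometric lemma filtration to a length-$r!$ quotient $Q$ whose composition factors are all $Z(\Delta)^{\otimes r}$, passes to the Serre quotient category (where $e\CP_{\lambda^r}$ is a projective generator with commutative endomorphism ring $B_r \cong \overline{\CK}[X_1^{\pm 1},\dots,X_r^{\pm 1}]$), identifies the resulting $B_r$-module $M_Q$ with $B_r/\mathfrak p B_r$ using the Bernstein-center action through $A_r = B_r^{S_r}$, and then applies Hom-tensor adjunction to land on $\Ext^{\bullet}_{A_r}(A_r/\mathfrak p, A_r/\mathfrak p)$. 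The exterior algebra emerges from the smoothness of $\Spec A_r$ at $\mathfrak p$, not from a tensor product of single-segment computations. It is a coincidence that the numerical answer (exterior in $r$ generators) matches what your K\"unneth shortcut would predict; the ring-theoretic structure responsible for it is different.

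Your Koszul-resolution computation of $\Ext^{\bullet}(Z(\Delta), Z(\Delta))$ for a single segment is correct in spirit and is essentially what Propositions~\ref{prop:single segment} and~\ref{prop:derived truncation} are built for, so that part of the proposal is sound. The hole is the claimed vanishing of non-identity contributions when segments coincide; filling it forces you into something like the paper's commutative-algebra argument for the multiplicity-$r$ case.
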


From this and Proposition~\ref{prop:tangent} one deduces:
\begin{cor} \label{cor:tangent}
The tangent space to $\Spec E_{\lambda}$ at an $\overline{\CK}$-point $x$ corresponding to a multisegment $\{\Delta_1,\dots,\Delta_r\}$ containing $r$ segments is of dimension $r$.
\end{cor}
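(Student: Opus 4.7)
The plan is to combine Proposition~\ref{prop:tangent} and Theorem~\ref{thm:ext} directly; no new ingredients are needed. First, since $x$ is an $\overline{\CK}$-point of $\Spec E_{\lambda}$, Proposition~\ref{prop:tangent} canonically identifies the tangent space at $x$ with the self-extension group $\Ext^1_{\overline{\CK}[G_n]}(V_x, V_x)$, where $V_x = \CP_{\lambda} \otimes_{E_{\lambda},x} \overline{\CK}$. By the discussion preceding Proposition~\ref{prop:tangent}, together with Theorem~\ref{thm:truncate}, this $V_x$ is precisely the parabolic induction $i_{Q''}^{G_n} Z(\Delta_1) \otimes \cdots \otimes Z(\Delta_r)$ associated (with the appropriate ordering) to the multisegment $\mathfrak{m}_x = \{\Delta_1,\dots,\Delta_r\}$ corresponding to $x$.

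Next I invoke Theorem~\ref{thm:ext} for this specific $V_x$: since $\mathfrak{m}_x$ contains exactly $r$ segments, the Ext algebra $\Ext^{\bullet}_{\overline{\CK}[G_n]}(V_x, V_x)$ is the exterior algebra over $\overline{\CK}$ on $r$ generators in cohomological degree one. Extracting the degree-one component gives
$$\dim_{\overline{\CK}} \Ext^1_{\overline{\CK}[G_n]}(V_x, V_x) = \binom{r}{1} = r,$$
which under the identification above is the dimension of the tangent space to $\Spec E_{\lambda}$ at $x$.

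Since both of the inputs are already established, there is no genuine obstacle in the corollary itself: all the work has been done in Theorem~\ref{thm:ext} (the self-Ext computation via the geometric lemma and the derived truncation of Proposition~\ref{prop:derived truncation}) and in Proposition~\ref{prop:tangent} (the translation between tangent vectors and first-order deformations of $V_x$). The proof therefore reduces to chaining these two identifications together and reading off the numerical consequence.
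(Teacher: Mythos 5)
Your proof is correct and follows exactly the paper's intended argument: the corollary is obtained by chaining Proposition~\ref{prop:tangent} (which identifies the tangent space at $x$ with $\Ext^1_{\overline{\CK}[G_n]}(V_x,V_x)$) with the degree-one part of Theorem~\ref{thm:ext}. The paper deduces the corollary in precisely this way, with all substantive work already residing in those two results.
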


For a segment $\Delta$ appearing in ${\mathfrak m}$, let $n_{\Delta}$ be the multiplicity of $\Delta$ in ${\mathfrak m}$, and let ${\mathfrak m}_{\Delta}$ be the multisegment consisting of $n_{\Delta}$ copies of $\Delta$.  Then, for a suitable ordering $\Delta_1, \dots, \Delta_s$ of the segments appearing in ${\mathfrak m}$, and a suitable parabolic $Q = MU$ of $G_n$, we have:
$$V_x = i_Q^{G_n} Z({\mathfrak m}_{\Delta_1}) \otimes \dots \otimes Z({\mathfrak m}_{\Delta_s}).$$

One then has:
\begin{lemma}[\cite{chan}, Lemma 7.3]
The natural map: 
$$\Ext^i_{\overline{\CK}[M]}(\otimes_j Z({\mathfrak m}_{\Delta_j}), \otimes_j Z({\mathfrak m}_{\Delta_j})) \rightarrow \Ext^i_{\overline{\CK}[G_n]}(V_x,V_x)$$
induced by functoriality of parabolic induction is an isomorphism for all $i$.
\end{lemma}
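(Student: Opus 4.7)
The strategy is to reduce to a Jacquet-module computation via second adjointness, apply the geometric lemma of Bernstein--Zelevinsky to unpack $r_{Q^-}^{G_n}\,i_Q^{G_n}\sigma$, and then argue that only the ``identity'' contribution of that filtration pairs nontrivially with $\sigma$ under $\Ext^\bullet_M(\sigma,-)$.

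\textbf{Step 1 (second adjointness).} Write $\sigma = \otimes_j Z({\mathfrak m}_{\Delta_j})$ and $V_x = i_Q^{G_n}\sigma$. Because $i_Q^{G_n}$ is exact and, by Bernstein's second adjointness, left adjoint to the exact functor $r_{Q^-}^{G_n}$, it sends projectives to projectives and the adjunction extends to all Ext groups:
$$\Ext^i_{G_n}(V_x,V_x) \;\cong\; \Ext^i_M\!\bigl(\sigma,\; r_{Q^-}^{G_n} i_Q^{G_n}\sigma\bigr).$$
Under this identification, the natural map in the lemma is the map induced by the unit $\sigma \to r_{Q^-}^{G_n}i_Q^{G_n}\sigma$.

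\textbf{Step 2 (geometric lemma).} The Bernstein--Zelevinsky geometric lemma produces a finite filtration on $r_{Q^-}^{G_n} i_Q^{G_n}\sigma$ whose graded pieces $\Xi_w$ are indexed by shortest representatives of $W_M \backslash W_{G_n}/W_M$; each $\Xi_w$ is, inside $M$, a parabolic induction of a $w$-twisted Jacquet module of $\sigma$. The $w=1$ piece is $\sigma$ itself, and the unit of Step 1 factors through this piece as the natural inclusion.

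\textbf{Step 3 (vanishing for $w \neq 1$; the main obstacle).} The crux is to show $\Ext^i_M(\sigma,\Xi_w)=0$ for every $i$ and every $w\neq 1$. Write $M = \prod_{j=1}^s \GL_{m_j}(F)$ with $m_j = n_{\Delta_j}d(\Delta_j)$, and $\sigma = \sigma_1 \otimes \cdots \otimes \sigma_s$ with $\sigma_j = Z({\mathfrak m}_{\Delta_j})$. By the K\"unneth decomposition for tensor products of representations on a product Levi, it suffices to show that for some $j$ the $j$-th tensor factor of $\Xi_w$ has no Ext with $\sigma_j$. A non-trivial double coset representative $w$ necessarily moves some cuspidal constituents between distinct $M_j$-blocks; since the $\Delta_j$ are chosen pairwise distinct as segments, this alters the cuspidal support of at least one factor. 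Combined with the observation that $\Ext$-groups vanish between different Bernstein blocks, and for inertially equivalent but non-equal $\Delta_j$'s the finer non-linking analysis of Zelevinsky representations of the form $Z(n\cdot[\Delta])$ (the content of Chan's Lemma 7.3), we conclude that the non-identity graded pieces are $\Ext^\bullet_M(\sigma,-)$-acyclic.

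\textbf{Step 4 (assembly).} The filtration of Step 2 yields a spectral sequence converging to $\Ext^\bullet_M(\sigma, r_{Q^-}^{G_n} i_Q^{G_n}\sigma)$ whose $E_1$-page has only the $w=1$ column nonzero, by Step 3. The spectral sequence therefore degenerates at $E_1$ and the edge map is an isomorphism $\Ext^i_M(\sigma,\sigma) \risom \Ext^i_M(\sigma, r_{Q^-}^{G_n} i_Q^{G_n}\sigma)$. Combined with Step 1, this gives the desired isomorphism, and the construction shows it agrees with the functoriality map, as claimed. The hard part is Step 3: the naive Bernstein-block argument suffices when the segments $\Delta_j$ are pairwise inertially inequivalent, but the general case requires Chan's analysis of the Ext-behaviour of Speh-type representations under parabolic induction.
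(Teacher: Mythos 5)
The paper's own ``proof'' is a one-line citation: up to smooth duals this is exactly \cite{chan}, Lemma 7.3, and the proof is asserted to follow from the geometric lemma. Your proposal is the right outline of such a proof --- second adjointness, the Bernstein--Zelevinsky filtration of $r_{Q^-}^{G_n} i_Q^{G_n}\sigma$ indexed by double cosets, and a spectral-sequence / long-exact-sequence argument reducing to showing that the non-identity graded pieces are $\Ext^\bullet_M(\sigma,-)$-acyclic. Steps 1, 2 and 4 are fine, modulo the usual care that $\Xi_w$ is itself a parabolic induction within $M$ rather than a literal tensor product (so one should really say: by K\"unneth, vanishing of the $\Ext$'s from the factors of some Jacquet piece of $\Xi_w$ against the $\sigma_j$ suffices).

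The problem is Step 3, which you yourself flag as the crux. The assertion that ``since the $\Delta_j$ are chosen pairwise distinct as segments, this alters the cuspidal support of at least one factor'' is simply false in general. Distinct segments may have overlapping cuspidal support (e.g.\ $[0,2]_\rho$ and $[1,3]_\rho$ both contain $\nu\rho$ and $\nu^2\rho$), and a nontrivial $w$ in the geometric-lemma index set can swap sub-pieces between the $M_j$-blocks that carry identical cuspidal representations, leaving the multiset of cuspidal supports of each factor of $\Xi_w$ unchanged. In that situation the Bernstein-block argument gives nothing. You then fall back on ``the finer non-linking analysis of Zelevinsky representations of the form $Z(n\cdot[\Delta])$ (the content of Chan's Lemma 7.3)'' --- but the statement you are trying to prove \emph{is} Chan's Lemma 7.3, so this last appeal is circular. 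Also, your dichotomy ``inertially inequivalent vs.\ inertially equivalent'' does not quite isolate the hard case: what creates trouble is overlap of the underlying cuspidal constituents, which can occur between segments that are not inertially equivalent as segments (e.g.\ segments of different length over twists of the same $\rho$). In short: the overall strategy matches what the paper (and Chan) do, but the key vanishing step is not actually established, and the reduction to Chan's lemma as you've framed it begs the question.
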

\begin{proof}
Up to taking smooth duals of the representations involved, this is~\cite{chan}, Lemma 7.3; the proof is via a straightforward application of the geometric lemma.
\end{proof}

This allows us to reduce to the special case where ${\mathfrak m} = {\mathfrak m}_{\Delta}$ for some $\Delta$; that is, when ${\mathfrak m}$ consists of some number of copies of $\Delta$.  Indeed, the Levi $M$ is a product of general linear groups $G_{n_1}, \dots, G_{n_s}$.  Suppose that for each factor we knew that $\Ext^{\bullet}_{\overline{\CK}[G_{n_j}]} (Z({\mathfrak m}_{\Delta_j}),Z({\mathfrak m}_{\Delta_j}))$ was a graded exterior algebra in $n_{\Delta_j}$ generators of degree $1$.  The algebra $\Ext^i_{\overline{\CK}[M]}(\otimes_j Z({\mathfrak m}_{\Delta_j}), \otimes_j Z({\mathfrak m}_{\Delta_j}))$
is the graded tensor product of such algebras, and this would be a graded exterior algebra in $n_{\Delta_1} + \dots + n_{\Delta_s} = r$ generators of degree $1$ as Theorem~\ref{thm:ext} claims.

It thus suffices to consider the case where ${\mathfrak m}$ consists of $r$ copies of $\Delta$.  In this case $Z({\mathfrak m}) = i_P^{G_n} Z(\Delta)^{\otimes r}$, for some parabolic $P = LN$ of $G_n$.  We have a natural isomorphism:
$$\Ext^{\bullet}_{\overline{\CK}[G_n]}(Z({\mathfrak m}),Z({\mathfrak m})) \cong
\Ext^{\bullet}_{\overline{\CK}[L]}(r_{G_n}^P i_P^{G_n} Z(\Delta)^{\otimes r}, Z(\Delta)^{\otimes r}).$$

We first show:
\begin{lemma}
There exists an exact sequence of $\overline{\CK}[L]$-modules:
$$0 \rightarrow K \rightarrow r_{G_n}^P i_P^{G_n} Z(\Delta)^{\otimes r} \rightarrow Q \rightarrow 0,$$
where $Q$ has length $r!$, every irreducible subquotient of $Q$ is isomorphic to $Z(\Delta)^{\otimes r}$, and every irreducible subquotient $\pi$ of $K$ satisfies $\Ext^i_{\overline{\CK}[L]}(\pi, Z(\Delta)^{\otimes r}) = 0$ for all $i \geq 0$.

In particular we have an isomorphism: 
$$\Ext^{\bullet}_{\overline{\CK}[L]}(r_{G_n}^P i_P^{G_n} Z(\Delta)^{\otimes r}, Z(\Delta)^{\otimes r}) \cong \Ext^{\bullet}_{\overline{\CK}[L]}(Q,Z(\Delta)^{\otimes r}).$$
\end{lemma}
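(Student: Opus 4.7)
The plan is to construct the filtration of $r_{G_n}^P i_P^{G_n} Z(\Delta)^{\otimes r}$ via the geometric lemma of Bernstein-Zelevinsky applied to the composition $r_{G_n}^P\circ i_P^{G_n}$. Writing $L = G_m^r$ with $m = d(\Delta)$, the relevant double cosets $W_L\backslash W_{G_n}/W_L$ are parameterized by $r\times r$ matrices $A = (a_{ij})$ of non-negative integers with all row and column sums equal to $m$. The geometric lemma produces a filtration of $r_{G_n}^P i_P^{G_n} Z(\Delta)^{\otimes r}$ whose subquotients are parabolic inductions $i_{L_A}^L \tau_A$, where $L_A = \prod_{i,j} G_{a_{ij}}$ is a Levi of $L$ and $\tau_A$ is a tensor product of segment representations $Z(\Delta_{ij})$, with $\Delta_{ij}$ a subsegment of $\Delta$ of length $a_{ij}$ whose position in $\Delta$ is dictated by the cumulative sums of the column $j$ of $A$.

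I first isolate the \emph{block-permuting} matrices, namely those with a single nonzero entry (necessarily $m$) in each row and column. These correspond bijectively to the subgroup $S_r\subseteq S_{rm} = W_{G_n}$ that permutes the $r$ identical blocks of size $m$; there are exactly $r!$ such matrices. For each one, $L_A = L$ and $\tau_A \cong Z(\Delta)^{\otimes r}$, since the factors are merely permuted but are mutually isomorphic. Thus the multiplicity of $Z(\Delta)^{\otimes r}$ as a composition factor of $r_{G_n}^P i_P^{G_n} Z(\Delta)^{\otimes r}$ is at least $r!$.

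The main technical step is to show that any irreducible subquotient $\pi \cong \pi_1 \otimes \cdots \otimes \pi_r$ of $i_{L_A}^L \tau_A$ arising from a non-block-permuting $A$ satisfies $\Ext^i_L(\pi, Z(\Delta)^{\otimes r}) = 0$ for all $i \geq 0$. By the K\"unneth formula on the product group $L = G_m^r$,
\[
\Ext^i_L(\pi_1\otimes \cdots \otimes \pi_r,\, Z(\Delta)^{\otimes r}) \cong \bigoplus_{i_1 + \cdots + i_r = i} \bigotimes_j \Ext^{i_j}_{G_m}(\pi_j, Z(\Delta)),
\]
so it suffices to exhibit some index $j$ for which $\pi_j$ lies in a different Bernstein block of $\Rep_{\overline{\CK}}(G_m)$ from $Z(\Delta)$. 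Since parabolic induction preserves cuspidal support and $Z(\Delta)$ has cuspidal support $\{\nu^a\rho, \ldots, \nu^b\rho\}$ (each cuspidal of multiplicity one), it will be enough to exhibit a row $i$ of $A$ such that the subsegments $\Delta_{ij}$ (for $j$ with $a_{ij}>0$) fail to tile $[a,b]$ disjointly. The key observation is that for row $1$ the subsegment $\Delta_{1j}$ always starts at $\nu^a\rho$; so if row $1$ contains more than one nonzero entry, then $\nu^a\rho$ appears in $\pi_1$'s cuspidal support with multiplicity greater than one, placing $\pi_1$ in a different Bernstein block. If instead row $1$ has a unique nonzero entry $m$, the corresponding column of $A$ is entirely absorbed by row $1$, and I reduce inductively to the remaining $(r-1)\times(r-1)$ submatrix. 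If this reduction never terminates in a ``bad'' row, then every row has a unique nonzero entry and $A$ is block-permuting, contradicting the assumption.

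With this in hand, define $K$ to be the maximal subobject of $r_{G_n}^P i_P^{G_n} Z(\Delta)^{\otimes r}$ whose composition factors are \emph{not} isomorphic to $Z(\Delta)^{\otimes r}$, and let $Q$ denote the quotient. Then every composition factor of $Q$ is isomorphic to $Z(\Delta)^{\otimes r}$, and the multiplicity count forces $Q$ to have length exactly $r!$. Every irreducible subquotient of $K$ is, by construction, not isomorphic to $Z(\Delta)^{\otimes r}$, and, being a composition factor of $r_{G_n}^P i_P^{G_n}Z(\Delta)^{\otimes r}$, it must arise from a non-block-permuting $A$, so the Ext vanishing applies. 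The final isomorphism of Ext algebras follows from the long exact sequence associated to $0 \to K \to r_{G_n}^P i_P^{G_n} Z(\Delta)^{\otimes r} \to Q \to 0$. The principal obstacle is the combinatorial/cuspidal-support analysis in the third paragraph, where one must justify the inductive ``peeling off'' of trivial rows and verify that the resulting Bernstein block of the relevant factor is indeed distinct from that of $Z(\Delta)$.
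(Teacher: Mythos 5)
Your overall strategy coincides with the paper's: apply the geometric lemma to $r_{G_n}^P i_P^{G_n}$, note that the $r!$ block-permuting double cosets give subquotients isomorphic to $Z(\Delta)^{\otimes r}$, and kill the Ext groups from the remaining pieces by cuspidal support considerations. You supply considerably more combinatorial detail for that last step than the paper does, which is a genuine contribution. However, two points need attention.

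\emph{The Ext vanishing reason is mislabelled.} You assert that for a non-block-permuting $A$ some factor $\pi_j$ lies in ``a different Bernstein block'' from $Z(\Delta)$. This is not true: the cuspidal support of any irreducible subquotient appearing in the geometric lemma filtration is still a multiset of unramified twists of $\rho$ (of the same cardinality), so it is inertially equivalent to that of $Z(\Delta)$ and therefore lives in the \emph{same} Bernstein block. What your combinatorics actually shows (correctly) is that the cuspidal support is a different multiset -- e.g.\ $\nu^a\rho$ appears with multiplicity greater than one. The Ext vanishing then follows because the two representations correspond to different maximal ideals of the Bernstein centre of the block, not because the block itself changes. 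The conclusion is right but the stated reason is wrong; rephrase in terms of distinct points of $\Spec Z_{m,\overline{\CK}}$ (equivalently, distinct supercuspidal supports).

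\emph{The construction of $K$ has an unclosed gap.} You take $K$ to be the maximal subobject of $M:=r_{G_n}^P i_P^{G_n} Z(\Delta)^{\otimes r}$ with no composition factor isomorphic to $Z(\Delta)^{\otimes r}$, and assert without argument that ``then every composition factor of $Q=M/K$ is isomorphic to $Z(\Delta)^{\otimes r}$.'' This does not follow from the definition of $K$ alone -- a priori $Q$ could contain a non-$Z(\Delta)^{\otimes r}$ factor that cannot be pushed into $K$, because it sits above several copies of $Z(\Delta)^{\otimes r}$. You need the Ext vanishing again here: take a composition series of $Q$ and the lowest step $Q_j/Q_{j-1}\not\cong Z(\Delta)^{\otimes r}$; since $Q_{j-1}$ is an iterated extension of $Z(\Delta)^{\otimes r}$'s, $\Ext^1(Q_j/Q_{j-1}, Q_{j-1})=0$, so $Q_j/Q_{j-1}$ lifts to a subobject of $Q$, whose preimage in $M$ enlarges $K$, a contradiction. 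This is precisely the content of the paper's ``swapping adjacent subquotients'' argument applied to a refinement of the geometric lemma filtration -- you have the ingredients, but the step is not written down.

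Fix these two points and the argument is correct, and indeed somewhat more explicit on the combinatorial side than the version in the paper.
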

\begin{proof}
We apply the geometric lemma to $r_{G_n}^P i_P^{G_n} Z(\Delta)^{\otimes r}$.  One finds that in the resulting filtration there are $r!$ successive quotients of the form $Z(\Delta)^{\otimes r}$, indexed by those elements of the Weyl group of $G_n$ that normalize $L$.  Moreover, for each successive quotient $\pi$ in the filtration that is not of this form, one can check that $\Ext^i_{\overline{\CK}[L]}(\pi, Z(\Delta)^{\otimes r}) = 0$ by supercuspidal support considerations.

The geometric lemma filtration is indexed by double cosets $P \backslash G_n/P$, partially ordered via the closure relations.  Refine this to a total order, so that we obtain a filtration 
$$0 = J_0 \subseteq J_1 \subseteq \dots \subseteq J_s = r_{G_n}^P i_P^{G_n} Z(\Delta)^{\otimes r}.$$
Suppose that for some $i$ we have $J_{i+1}/J_i \cong Z(\Delta)^{\otimes r}$ and $J_{i+2}/J_{i+1}$ a successive quotient $\pi$ of the geometric lemma filtration not isomorphic to $Z(\Delta)^{\otimes r}$.  Since $\Ext^1(\pi,Z(\Delta)^{\otimes r}) = 0$ the exact sequence:
$$0 \rightarrow Z(\Delta)^{\otimes r} \rightarrow J_{i+2}/J_i \rightarrow \pi \rightarrow 0$$
splits.  Choose an isomorphism $J_{i+1}/J_i \cong \pi \oplus Z(\Delta)^{\otimes r}$, and let $J_{i+1}'$ be the preimage of the summand $\pi$ in $J_{i+2}$.  Then replacing $J_{i+1}$ with $J_{i+1}'$ gives a new filtration in which the $i$th and $i+1$st successive quotients have exchanged places.  Iterating this process eventually yields a filtration in which every successive quotient of $J_s/J_{s - r!}$ is isomorphic to $Z(\Delta)^{\otimes r}$ and every successive quotient of $J_{s-r!}$ is not.  We may thus take $K = J_{s-r!}$ and the result follows.
\end{proof}

Set $\lambda = \lambda(\Delta)$, and let $\Rep_{\overline{\CK}}(M)^{\preceq \lambda^r}$ denote the full subcategory of $Rep_{\overline{\CK}}(M)$ consisting of representations such that every irreducible subquotient is of the form $\pi_1 \otimes \dots \otimes \pi_r$, with each $\pi_i$ an irreducible representation of $G_{\frac{n}{r}}$ whose highest derivative partition $\lambda(\pi_i)$ satisfies $\lambda(\pi_i) \preceq \lambda$.  Similarly define a full subcategory $\Rep_{\overline{\CK}}(M)^{\prec \lambda^r}$ in which we additionally require that for every irreducible subquotient $\pi$ as above, there exists an $i$ with $\lambda(\pi_i) \prec \lambda$.
As usual let $\Rep_{\overline{\CK}}(M)^{\lambda^r}$ denote the corresponding Serre quotient.

Let $e$ be the primitive idempotent of $Z_{M,\overline{\CK}}$ corresponding to the inertial class of the supercuspidal support of $Z(\Delta)^{\otimes r}$.  Then $e\Rep_{\overline{\CK}}(M)^{\prec \lambda^r} = 0$, so we have an equality $e\Rep_{\overline{\CK}}(M)^{\preceq \lambda^r} = e\Rep_{\overline{\CK}}(M)^{\lambda}$.  In particular, the representation $e\CP_{\lambda^r}$ given by $e(\CP_{\lambda}^{\otimes r})$ is a projective generator of $e\Rep_{\overline{\CK}}(M)^{\preceq \lambda}$, and the functor $\Hom_M(e\CP_{\lambda^r}, -)$ induces an equivalence of this category with the category of $e(E_{\lambda,\overline{\CK}}^{\otimes r})$-modules.  By the results of the previous section this latter ring is isomorphic to $\overline{\CK}[X_1^{\pm 1}, \dots, X_r^{\pm 1}]$.  Denote this ring by $B_r$.

We have isomorphisms:
$$\Ext^{\bullet}_{\overline{\CK}[G]}(i_P^{G_n} Z(\Delta)^{\otimes r}, i_P^{G_n} Z(\Delta)^{\otimes r}) \cong
\Ext^{\bullet}_{\overline{\CK}[M]}(Q,Z(\Delta)^{\otimes r}) \cong
\Ext^{\bullet}_{\overline{\CK}[M],\lambda}(L^{\preceq \lambda^r} Q, Z(\Delta)^{\otimes r})$$
and applying the equivalence $\Hom_M(e\CP_{\lambda^r}, -)$ to the right hand side we find that it is isomorphic to $\Ext^{\bullet}_{B_r}(M_Q, B_r/{\mathfrak q})$, where $M_Q$ is the complex $\Hom_{\overline{\CK}[M]}(e\CP_{\lambda^r}, L^{\preceq \lambda^r Q})$ and ${\mathfrak q}$ is the maximal ideal $\langle X_1 - 1, \dots, X_r  - 1 \rangle$ of $B_r$.  It thus suffices to compute $M_Q$.

\begin{lemma}
The natural map $L^{\preceq \lambda^r} Q \rightarrow Q$ is an isomorphism, and thus identifies $M_Q$ with $\Hom_{\overline{\CK}[M]}(e\CP_{\lambda^r}, Q).$
\end{lemma}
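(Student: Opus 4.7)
The plan is to reduce to Proposition~\ref{prop:derived truncation} via a filtration argument. Since the composition factors of $Q$ all lie in $\Rep_{\overline{\CK}}(M)^{\preceq \lambda^r}$ (because $Z(\Delta)$ has highest derivative partition $\lambda$, and the highest derivative partition of a tensor product is componentwise), $Q$ itself lies in $\Rep_{\overline{\CK}}(M)^{\preceq \lambda^r}$. Hence the underived truncation $Q^{\preceq \lambda^r}\to Q$ is already an isomorphism, and the only content of the statement is the vanishing of the higher derived functors $L^i_{\preceq\lambda^r}(Q)$ for $i\geq 1$.

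First I would establish the vanishing of higher derived truncations on the single object $Z(\Delta)^{\otimes r}$. The Levi $M$ is a product of $r$ copies of $G_{n/r}$, and the category $\Rep_{\overline{\CK}}(M)^{\preceq \lambda^r}$ is by its very definition the external product of the $r$ copies of $\Rep_{\overline{\CK}}(G_{n/r})^{\preceq \lambda}$; equivalently the truncation functor on $M$ factors as the iterated external application of the truncation on each factor. Therefore, given a projective resolution $P_\bullet \twoheadrightarrow Z(\Delta)$ in $\Rep_{\overline{\CK}}(G_{n/r})$, the external tensor product $P_\bullet^{\boxtimes r}$ is a projective resolution of $Z(\Delta)^{\otimes r}$, and applying $V\mapsto V^{\preceq \lambda^r}$ yields the $r$-fold external tensor product of the complexes $(P_\bullet)^{\preceq \lambda}$. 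By Proposition~\ref{prop:derived truncation} each of these factor complexes is quasi-isomorphic to $Z(\Delta)$ concentrated in degree zero; a Künneth/spectral-sequence argument (all modules here are flat over the base field $\overline{\CK}$) then gives that the tensor complex is quasi-isomorphic to $Z(\Delta)^{\otimes r}$ concentrated in degree zero. Thus $L^i_{\preceq \lambda^r}\bigl(Z(\Delta)^{\otimes r}\bigr)=0$ for $i\geq 1$, and $L^0$ agrees with $Z(\Delta)^{\otimes r}$.

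Next I would transfer this from $Z(\Delta)^{\otimes r}$ to $Q$ by induction on the length of the filtration. Choose a filtration $0=Q_0\subset Q_1\subset\cdots\subset Q_{r!}=Q$ whose successive quotients are all isomorphic to $Z(\Delta)^{\otimes r}$ (such a filtration exists, by the previous lemma). Each short exact sequence
\[
0\to Q_{i-1}\to Q_i\to Z(\Delta)^{\otimes r}\to 0
\]
gives a long exact sequence of the $L^\bullet_{\preceq \lambda^r}$; the two outer terms have vanishing higher derived truncation (by induction and by the previous step), and the underived truncations are already the original modules, so the middle term has the same property. At the top of the filtration we conclude that $L^i_{\preceq \lambda^r}(Q)=0$ for $i\geq 1$ and $L^0_{\preceq \lambda^r}(Q)=Q$, i.e.\ the natural map $L^{\preceq \lambda^r} Q\to Q$ is a quasi-isomorphism.

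The identification of $M_Q$ with $\Hom_{\overline{\CK}[M]}(e\CP_{\lambda^r},Q)$ is then immediate, since $e\CP_{\lambda^r}$ is projective so the computation of $\Hom$ from it is unchanged by replacing $L^{\preceq \lambda^r} Q$ with its $H^0$. The main subtlety I expect is verifying the compatibility of the truncation functor $V\mapsto V^{\preceq \lambda^r}$ on $M$ with external tensor products; this should be a formal consequence of the factorization of the category $\Rep_{\overline{\CK}}(M)^{\preceq \lambda^r}$ along factors of $M$, but writing it down carefully may require spelling out the universal property and checking that a tensor product of projectives remains projective for $M$.
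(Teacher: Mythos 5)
Your proposal is correct and takes essentially the same route as the paper: reduce to Proposition~\ref{prop:derived truncation} on $Z(\Delta)^{\otimes r}$ and then run an induction on the filtration of $Q$ by copies of that module via the long exact sequence for $L^{\preceq\lambda^r}$. The one point you spell out more carefully than the paper is the K\"unneth/external-product step needed to pass from the single-factor statement of Proposition~\ref{prop:derived truncation} to its $r$-fold tensor power on $M$; the paper cites the proposition as if it directly gave the vanishing for $Z(\Delta)^{\otimes r}$, leaving that compatibility implicit, so your extra detail is a genuine (and correct) clarification rather than a divergence.
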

\begin{proof}
The representation $Q$ has finite length, and every irreducible subquotient of $Q$ is isomorphic to $Z(\Delta)^{\otimes r}$, and by Proposition~\ref{prop:derived truncation} we have $H^i(L^{\preceq \lambda^r} Z(\Delta)^{\otimes r}) = 0$ for $i > 0$.  We thus deduce inductively that $H^i(L^{\preceq \lambda^r} Q) = 0$ for $i > 0$, and the claim follows.
\end{proof}

Thus we may regard $M_Q$ as a $B_r$-module, rather than a complex.  We have:

\begin{lemma}
The module $M_Q$ has length $r!$, and its support consists of the maximal ideal ${\mathfrak q}$.
Moreover, $\Hom_{B_r}(M_Q, B_r/{\mathfrak q})$ is one-dimensional; in particular $M_Q$ is a cyclic $B_r$-module.
\end{lemma}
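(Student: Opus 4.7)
The plan is to transport the problem across the equivalence of categories $V \mapsto \Hom_M(e\CP_{\lambda^r}, V)$ between $e\Rep_{\overline{\CK}}(M)^{\preceq \lambda^r}$ and the category of $B_r$-modules. Under this equivalence the simple object $Z(\Delta)^{\otimes r}$ corresponds to the residue field $B_r/\mathfrak q$, and since the equivalence is exact it preserves length and Hom spaces. The length and support claims then follow immediately: the preceding lemma gives that $Q$ has length $r!$ with every composition factor isomorphic to $Z(\Delta)^{\otimes r}$, so $M_Q$ has length $r!$ with every composition factor isomorphic to $B_r/\mathfrak q$, and its support is therefore exactly $\{\mathfrak q\}$.

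For the one-dimensionality of $\Hom_{B_r}(M_Q, B_r/\mathfrak q)$, it suffices to show that $\Hom_M(Q, Z(\Delta)^{\otimes r})$ is one-dimensional. I would apply $\Hom_M(-, Z(\Delta)^{\otimes r})$ to the short exact sequence $0 \to K \to r_{G_n}^P i_P^{G_n} Z(\Delta)^{\otimes r} \to Q \to 0$ of the previous lemma. The vanishing $\Hom_M(K, Z(\Delta)^{\otimes r}) = 0$ follows by dévissage from the vanishing on each composition factor of $K$ (the $i=0$ case of the $\Ext$ vanishing guaranteed by that lemma, applied inductively to the finite-length module $K$). This identifies $\Hom_M(Q, Z(\Delta)^{\otimes r})$ with $\Hom_M(r_{G_n}^P i_P^{G_n} Z(\Delta)^{\otimes r}, Z(\Delta)^{\otimes r}) \cong \End_{G_n}(i_P^{G_n} Z(\Delta)^{\otimes r})$ via Bernstein's first adjointness. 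Because any two copies of the same segment $\Delta$ are unlinked in the sense of Zelevinsky, the induction $i_P^{G_n} Z(\Delta)^{\otimes r}$ is irreducible and its endomorphism ring is one-dimensional.

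Cyclicity then follows by Nakayama's lemma. Being finitely generated with support $\{\mathfrak q\}$, $M_Q$ is naturally a module over the local ring $(B_r)_{\mathfrak q}$. The identification $\Hom_{B_r}(M_Q, B_r/\mathfrak q) = \Hom_{B_r/\mathfrak q}(M_Q/\mathfrak q M_Q, B_r/\mathfrak q)$ shows that $M_Q/\mathfrak q M_Q$ is one-dimensional over the residue field, so Nakayama produces a single generator over $(B_r)_{\mathfrak q}$; because the natural map $M_Q \to (M_Q)_{\mathfrak q}$ is an isomorphism (as both sides are supported only at $\mathfrak q$), this element also generates $M_Q$ as a $B_r$-module.

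I expect the only non-formal step to be the Hom computation, specifically the appeal to Zelevinsky's irreducibility criterion for $i_P^{G_n} Z(\Delta)^{\otimes r}$ together with Bernstein's adjunction; everything else reduces, via the equivalence of categories and the preceding lemma, to standard commutative-algebra manipulations.
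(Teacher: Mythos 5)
Your proof is correct and takes essentially the same route as the paper: transport via the equivalence $\Hom_M(e\CP_{\lambda^r},-)$ for the length/support claims, then apply $\Hom_M(-,Z(\Delta)^{\otimes r})$ to the short exact sequence, using the $\Ext^0$-vanishing on $K$ to identify $\Hom_M(Q, Z(\Delta)^{\otimes r})$ with $\End_{G_n}(i_P^{G_n} Z(\Delta)^{\otimes r})$ via Frobenius reciprocity and Zelevinsky's irreducibility criterion. The paper leaves the dévissage on $K$ and the Nakayama deduction of cyclicity implicit; your write-up fills in exactly those two routine steps, so the arguments coincide in substance.
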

\begin{proof}
Since the equivalence of $\Rep_{\overline{\CK}}(M)^{\lambda}$ with $B_r$-modules takes $Z(\Delta)^{\otimes r}$ to $B_r/{\mathfrak q}$, the first two claims follow from the fact that $Q$ has length $r!$ and every irreducible subquotient of $Q$ is isomorphic to $Z(\Delta)^{\otimes r}$.

Similarly, $\Hom_{B_r}(M_q, B_r/{\mathfrak q})$ is isomorphic to $\Hom_{\overline{\CK}[M]}(Q, Z(\Delta)^{\otimes r})$.  The latter is isomorphic to $\Hom_{\overline{\CK}[M]}(r_{G_n}^P i_P^{G_n} Z(\Delta)^{\otimes r}, Z(\Delta)^{\otimes r})$, and thus also to $\End_{\overline{\CK}[G_n]}(i_P^{G_n} Z(\Delta)^{\otimes r})$.  But $i_P^{G_n} Z(\Delta)^{\otimes r}$ is irreducible so this endomorphism ring is one-dimensional.
\end{proof}

Let $A_r$ be the subring $B_r^{S_r}$, where the symmetric group $S_r$ acts by permuting the $X_i$.  The preimage of ${\mathfrak q}$ in $S_r$ is a maximal ideal of $A_r$, which we denote by ${\mathfrak p}$.  We have:
\begin{lemma}
We have an isomorphism of $B_r$-modules:
$$M_Q \cong B_r/{\mathfrak p}B_r.$$
\end{lemma}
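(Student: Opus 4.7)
The strategy is to leverage the previous lemma's assertion that $M_Q$ is cyclic as a $B_r$-module of length $r!$, together with the fact that $B_r/\mathfrak{p}B_r$ is also a cyclic $B_r$-module of $\overline{\CK}$-dimension $r!$ (since $B_r$ is free of rank $r!$ as an $A_r$-module, so $B_r/\mathfrak{p}B_r = B_r \otimes_{A_r} A_r/\mathfrak{p}$ has dimension $r!$ over $\overline{\CK} = A_r/\mathfrak{p}$). A length comparison will then reduce the problem to exhibiting any surjection $B_r/\mathfrak{p}B_r \twoheadrightarrow M_Q$ of $B_r$-modules, which in turn is equivalent to showing that the action of the subring $A_r \subset B_r$ on $M_Q$ factors through $A_r/\mathfrak{p}$.

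To prove this factorization, the plan is to recognize $A_r$ as the image of the Bernstein center of the $G_n$-block containing $V_x = i_P^{G_n} Z(\Delta)^{\otimes r}$. On the $M = G_{n/r}^r$ side, Proposition~\ref{prop:single segment} applied factor-by-factor identifies the Bernstein center of the relevant block of $M$ with $B_r$. On the $G_n$ side, the Bernstein decomposition identifies the center of the block corresponding to the inertial multisegment consisting of $r$ copies of $[\Delta]$ with the $S_r$-invariants $B_r^{S_r} = A_r$, since $S_r$ is precisely the Weyl-group stabilizer of this inertial multisegment. The standard compatibility of the Bernstein center with parabolic induction and restriction then identifies the canonical map $Z_{G_n,\mathrm{block}} \to Z_{M,\mathrm{block}}$ with the inclusion $A_r \hookrightarrow B_r$.

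With this in hand, the $A_r$-action on $M_Q$ via precomposition (through the inclusion into $B_r = \End_M(e\CP_{\lambda^r})$) will coincide with the $A_r$-action by postcomposition coming from the $Z_{G_n}$-action on the $G_n$-module $i_P^{G_n} Z(\Delta)^{\otimes r}$ (and hence on the subquotient $Q$ of its parabolic restriction). But the Bernstein center of $G_n$ acts on $i_P^{G_n} Z(\Delta)^{\otimes r}$ by the scalar character corresponding to its supercuspidal support, which under our identifications is exactly the maximal ideal $\mathfrak{p}$. Consequently $\mathfrak{p}$ annihilates $M_Q$; together with cyclicity this produces the required surjection $B_r/\mathfrak{p}B_r \twoheadrightarrow M_Q$, which must be an isomorphism by length. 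The hard part will be establishing the identification of $A_r$ with the image of $Z_{G_n}$ in $B_r$ and verifying the compatibility of the two a priori distinct $A_r$-actions on $M_Q$; these are essentially classical Bernstein-theoretic inputs for $\GL_n$, but some care with conventions (particularly concerning the role of the subgroup $H$ of unramified twists fixing $\rho$, which is built into the definition of $B_r$) will be required.
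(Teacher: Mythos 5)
Your proposal matches the paper's argument: both reduce to showing that $\mathfrak{p}$ annihilates $M_Q$ (via the cyclic-of-length-$r!$ comparison), both identify $A_r$ as the image of $Z_{n,\overline{\CK}}$ in $B_r$ acting on the parabolically induced projective, and both deduce the annihilation from the fact that $Z_{n,\overline{\CK}}$ acts on the irreducible $i_P^{G_n} Z(\Delta)^{\otimes r}$ through the supercuspidal-support character $\mathfrak{p}$, combined with $Z_{n,\overline{\CK}}$-equivariance of the relevant Hom space. The only presentational difference is that the paper routes through the explicit chain $M_Q \cong \Hom_M(\CP_{\lambda^r},Q)$ as a quotient of $\Hom_{G_n}(i_P^{G_n}\CP_{\lambda^r}, i_P^{G_n}Z(\Delta)^{\otimes r})$ rather than phrasing it as a coincidence of pre- and postcomposition actions, but this is the same equivariance input.
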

\begin{proof}
Since $M_Q$ is cyclic of length $r!$, and $B_r/{\mathfrak p}B_r$ also has length $r!$, it suffices to show that ${\mathfrak p}B_r$ annihilates $M_Q$.  We have an isomorphism:
$$M_Q \cong \Hom_{\overline{\CK}[M]}(\CP_{\lambda^r}, Q)$$
and the latter is a quotient of the space:
$$\Hom_{\overline{\CK}[M]}(\CP_{\lambda^r}, r_{G_n}^P i_P^{G_n} Z(\Delta)^{\otimes r}),$$ which in turn is isomorphic to 
$$\Hom_{\overline{\CK}[G_n]}(i_P^{G_n} \CP_{\lambda^r}, i_P^{G_n} Z(\Delta)^{\otimes r}).$$

Consider the action of $Z_{n,\overline{\CK}}$ on $\Hom_{\overline{\CK}[G_n]}(i_P^{G_n} \CP_{\lambda^r}, i_P^{G_n} Z(\Delta)^r)$.  On the one hand, the action on $i_P^{G_n} \CP_{\lambda^r}$ is via a map $Z_{n,\overline{\CK}} \rightarrow B_r$ whose image is $A_r$.  On the other hand, the right hand side is annihilated by an ideal of $Z_{n,\overline{\CK}}$ whose image in $A_r$ under the above map is equal to ${\mathfrak p}$.  Since every element of $\Hom_{\overline{\CK}[G_n]}(i_P^{G_n} \CP_{\lambda^r}, i_P^{G_n} Z(\Delta)^{\otimes r})$ is $Z_{n,\overline{\CK}}$-equivariant it follows that ${\mathfrak p}$ annihilates both sides as claimed.
\end{proof}

It is now straightforward to compute $\Ext^{\bullet}_{\overline{\CK}[G_n]}(i_P^{G_n} Z(\Delta)^{\otimes r}, i_P^{G_n} Z(\Delta)^{\otimes r})$.  Indeed, we have seen that there is an isomorphism of the latter with $\Ext^{\bullet}_{B_r}(M_q, B_r/{\mathfrak q})$, and using the previous lemma and the Hom-tensor adjunction we can rewrite this as $\Ext^{\bullet}_{A_r}(A_r/{\mathfrak p}, A_r/{\mathfrak p})$.  But since ${\mathfrak p}$ is a smooth dimension $r$ point of $\Spec A_r$, this latter algebra is a graded exterior algebra on $r$ generators of degree $1$.

\section{Proof of Theorem~\ref{thm:main}} \label{sec:proof}

We are now in a position to prove Theorem~\ref{thm:main}.  Let $\lambda$ be a partition of $n$.

Recall that if $[\mathfrak m]$ denotes an inertial multisegment with $\lambda([\mathfrak m]) = \lambda$, then we let ${\tilde I}([\mathfrak m])$ denote a representation of $G_n$ of the form:
$$i_P^{G_{d([\mathfrak m])}} {\tilde Z}(\Delta_1) \otimes {\tilde Z}(\Delta_2) \otimes \dots \otimes {\tilde Z}(\Delta_r).$$
We remind the reader that the isomorphism class of this representation depends on the choice of ordering of the $\Delta_i$, but this will be harmless for what follows.

We have natural isomorphisms:
$$\Hom_{\overline{\CK}[G_n]}(\CP_{\lambda}, {\tilde I}([\mathfrak m]) \cong
{\tilde I}([\mathfrak m])^{\lambda} \cong \overline{\CK}[X_{[\mathfrak m]}]$$
where the second isomorphism follows from the multiplicativity of the derivative.  The action of $E_{\lambda}$ on the left hand side induces an action of $E_{\lambda}$ on $\overline{\CK}[X_{[\mathfrak m]}]$, which is by a natural map:
$$f_{[\mathfrak m]}: E_{\lambda} \rightarrow \overline{\CK}[X_{[\mathfrak m]}].$$

We first show:
\begin{lemma}
The map $f_{[\mathfrak m]}$ takes a $\overline{\CK}$-point $x$ of $\Spec \overline{\CK}[X_{[\mathfrak m]}]$ to the $\overline{\CK}$-point of $\Spec E_{\lambda}$ giving the action of $E_{\lambda}$ on $\Hom_{\overline{\CK}[G_n]}(\CP_{\lambda}, Z({\mathfrak m}_x))$.

In particular, $f_{[\mathfrak m]}$ factors through $\overline{\CK}[X_{[\mathfrak m]}]^{W_{[\mathfrak m]}}$, and is independent of the choice of ordering of segments used to define ${\tilde I}([\mathfrak m]).$
\end{lemma}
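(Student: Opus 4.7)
The plan is to compute the specialization of the family ${\tilde I}([\mathfrak m])$ at each closed point $x$ of $\Spec \overline{\CK}[X_{[\mathfrak m]}]$ and trace through what this does to the induced $E_\lambda$-module structure on $\Hom_{G_n}(\CP_\lambda, -)$. First, since each ${\tilde Z}(\Delta_i)$ is by construction $Z(\Delta_i)$ twisted by the ``universal'' unramified character $X_{\Delta_i}^{v(\det)}$, specialization along $f_x : \overline{\CK}[X_{[\mathfrak m]}] \to \overline{\CK}$ produces the twist of $Z(\Delta_i)$ by the unramified character associated to the scalar $f_x(X_{\Delta_i})$. Compatibility of parabolic induction with base change then gives
$${\tilde I}([\mathfrak m]) \otimes_{\overline{\CK}[X_{[\mathfrak m]}], f_x} \overline{\CK} \;\cong\; I({\mathfrak m}_x).$$

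Next I would exploit base-change compatibility of $\Hom_{G_n}(\CP_\lambda, -)$: the projection of $\CP_\lambda$ to any Bernstein block is finitely generated and projective, so taking $\Hom$ from it against an admissible module commutes with tensoring over $\overline{\CK}[X_{[\mathfrak m]}]$. Combined with the previous identification this yields an $E_\lambda$-equivariant isomorphism
$$\Hom_{G_n}(\CP_\lambda, {\tilde I}([\mathfrak m])) \otimes_{\overline{\CK}[X_{[\mathfrak m]}], f_x} \overline{\CK} \;\cong\; \Hom_{G_n}(\CP_\lambda, I({\mathfrak m}_x)),$$
in which the left-hand side is the specialization along $f_x$ of the rank-one free module on which $E_\lambda$ acts via $f_{[\mathfrak m]}$.

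To finish the first assertion I would replace $I({\mathfrak m}_x)$ by its cosocle $Z({\mathfrak m}_x)$: the ordering hypothesis coming from Theorem~\ref{thm:truncate} makes $Z({\mathfrak m}_x)$ the cosocle of $I({\mathfrak m}_x)$, and all other irreducible subquotients have highest derivative partition strictly below $\lambda$. By Corollary~\ref{cor:iterated derivative multiplicity} and projectivity of $\CP_\lambda$, the functor $\Hom_{G_n}(\CP_\lambda, -)$ vanishes on the kernel of the cosocle quotient, so it identifies $\Hom_{G_n}(\CP_\lambda, I({\mathfrak m}_x))$ with $\Hom_{G_n}(\CP_\lambda, Z({\mathfrak m}_x))$ as $E_\lambda$-modules. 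This shows that the composition $f_x \circ f_{[\mathfrak m]}$ equals the $\overline{\CK}$-point of $\Spec E_\lambda$ describing the action on the one-dimensional space $\Hom_{G_n}(\CP_\lambda, Z({\mathfrak m}_x))$.

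For the remaining two claims I would argue by density: whenever ${\mathfrak m}_x = {\mathfrak m}_y$ the first assertion forces $f_x \circ f_{[\mathfrak m]} = f_y \circ f_{[\mathfrak m]}$. Specializing to $y = wx$ for $w \in W_{[\mathfrak m]}$ yields $W_{[\mathfrak m]}$-invariance of the image of $f_{[\mathfrak m]}$ at every closed point, and reducedness of $\overline{\CK}[X_{[\mathfrak m]}]$ then promotes this to honest $W_{[\mathfrak m]}$-invariance, giving the factorization through $\overline{\CK}[X_{[\mathfrak m]}]^{W_{[\mathfrak m]}}$. The same density argument, comparing the outputs of $f_{[\mathfrak m]}$ for two different orderings of the $\Delta_i$ (which produce the same ${\mathfrak m}_x$ at every closed point), shows that $f_{[\mathfrak m]}$ does not depend on the ordering. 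The only genuinely technical input in the whole argument is the base-change compatibility of $\Hom_{G_n}(\CP_\lambda, -)$, which is essentially formal once one has finite generation of $e\CP_\lambda$ per Bernstein block; no other step should present serious difficulty.
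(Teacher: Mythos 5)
Your proposal follows the same strategy as the paper's (terse) proof: specialize the family $\tilde I([\mathfrak m])$ at $x$, use that $\Hom_{G_n}(\CP_\lambda,-)$ commutes with $\otimes_{\overline{\CK}[X_{[\mathfrak m]}],x} \overline{\CK}$ (which indeed follows from projectivity and per-block finite generation of $\CP_\lambda$, as you note), identify the resulting Hom-space with $\Hom_{G_n}(\CP_\lambda, Z({\mathfrak m}_x))$, and then deduce the invariance statements from reducedness of $\overline{\CK}[X_{[\mathfrak m]}]$. This is correct in outline and matches the paper.

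There is, however, one step that is both factually wrong as stated and potentially circular: you assert that ``the ordering hypothesis coming from Theorem~\ref{thm:truncate} makes $Z({\mathfrak m}_x)$ the cosocle of $I({\mathfrak m}_x)$.'' The definition of $\tilde I([\mathfrak m])$ at the start of this section fixes an arbitrary ordering of the inertial segments, with no linkage condition imposed, and for a general point $x$ the twisted segments of ${\mathfrak m}_x$ will not be ordered so that $\Delta_i$ does not precede $\Delta_j$ for $i>j$. So $Z({\mathfrak m}_x)$ need not be the cosocle of the specialization $I({\mathfrak m}_x)$; worse, asserting that it is would implicitly presuppose the very ordering-independence that the lemma is designed to establish. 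Fortunately the cosocle claim is not needed. It suffices to observe that $I({\mathfrak m}_x)$ lies in $\Rep_{\overline{\CK}}(G_n)^{\preceq\lambda}$ and has $Z({\mathfrak m}_x)$ as its unique Jordan--H\"older constituent of highest derivative partition $\lambda$, with multiplicity one; since $\CP_\lambda$ is projective in $\Rep_{\overline{\CK}}(G_n)^{\preceq\lambda}$, the functor $\Hom_{G_n}(\CP_\lambda,-)$ is exact there and kills every other constituent, so it yields a canonical $E_\lambda$-equivariant isomorphism $\Hom_{G_n}(\CP_\lambda, I({\mathfrak m}_x)) \cong \Hom_{G_n}(\CP_\lambda, Z({\mathfrak m}_x))$ regardless of where $Z({\mathfrak m}_x)$ sits in the composition series. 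With that replacement, your proof is sound.
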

\begin{proof}
The $W_{[\mathfrak m]}$-invariance and independence of order are immediate from the first claim, since $\overline{\CK}[X_{[\mathfrak m]}]$ is reduced.  For the first claim, note that the composition $x \circ f_{[\mathfrak m]}$ is the map giving the action of $E_{\lambda}$ on 
$$\Hom_{\overline{\CK}[G_n]}(\CP_{\lambda}, {\tilde I}([\mathfrak m]) \otimes_{\overline{\CK}[X_{[\mathfrak m]}], x} \overline{\CK}),$$
and this space is isomorphic to $\Hom_{\overline{\CK}[G_n]}(\CP_{\lambda}, Z({\mathfrak m}_x)).$
\end{proof}

Let $e$ be the primitive idempotent of $Z_{n,\overline{\CK}}$ corresponding to the block containing ${\tilde I}([\mathfrak m])$, and let $S(e,\lambda)$ be the set of equivalence classes of inertial multisegments $[{\mathfrak m}']$ such that $\lambda([{\mathfrak m}']) = \lambda$ and ${\tilde I}([{\mathfrak m}'])$ is contained in the block corresponding to $e$.  We can now show:

\begin{thm}
The map $f_{[\mathfrak m]}: E_{\lambda} \rightarrow \overline{\CK}[X_{[\mathfrak m]}]^{W_{[\mathfrak m]}}$ identifies $\Spec \overline{\CK}[X_{[\mathfrak m]}]^{W_{[\mathfrak m]}}$ with a connected component of $\Spec eE_{\lambda}$.  The product, over $S(e,\lambda)$, of the maps $f_{[{\mathfrak m}']}$ identifies $eE_{\lambda}$ with the product of the rings $\overline{\CK}[X_{[{\mathfrak m}']}]^{W_{[{\mathfrak m}']}}.$
\end{thm}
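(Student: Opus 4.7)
The plan is to prove the stronger product-decomposition statement directly and deduce the first claim by restriction. Let $\phi = \prod_{[\mathfrak m']\in S(e,\lambda)} f_{[\mathfrak m']}\colon eE_\lambda \to R := \prod_{[\mathfrak m']\in S(e,\lambda)} \overline{\CK}[X_{[\mathfrak m']}]^{W_{[\mathfrak m']}}$. The set $S(e,\lambda)$ is finite, since the inertial supercuspidal support determined by $e$ admits only finitely many refinements into multisegments of given highest-derivative partition. Both sides are commutative finite-type $\overline{\CK}$-algebras: commutativity of $eE_\lambda$ is the corollary at the end of Section~\ref{sec:char zero ends}, while each factor $\overline{\CK}[X_{[\mathfrak m']}]^{W_{[\mathfrak m']}}$ of $R$ is a smooth $r$-dimensional ring, the invariants of a Laurent polynomial ring under a permutation action (Chevalley--Shephard--Todd). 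Given these, I would verify that $\phi$ is bijective on $\overline{\CK}$-points and induces an isomorphism on completed local rings at every closed point; faithful flatness of completion then forces $\phi$ to be an isomorphism of algebras, and the product decomposition of $R$ pulls back to $eE_\lambda$, yielding both claims.

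The bijection on closed points follows from the identifications already recalled: $\overline{\CK}$-points of $\Spec eE_\lambda$ are simple objects of $e\Rep_{\overline{\CK}}(G_n)^\lambda$, which are partitioned by their inertial multisegment class $[\pi]$, and the $[\mathfrak m']$-factor of $R$ has $\overline{\CK}$-points precisely the irreducibles with $[\pi]=[\mathfrak m']$ via the bijections (1)--(4) preceding Theorem~\ref{thm:main}. For the local isomorphism, fix $\pi$ with $[\pi]=[\mathfrak m]$ and write $E_\pi$ for the endomorphism ring of its projective cover. Lemmas~\ref{lemma:independence} and~\ref{lemma:implications} give $\widehat{(eE_\lambda)}_\pi\cong E_\pi$. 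Applying Lemma~\ref{lemma:commutativity} to the pair $(A,\CP)$ of Lemma~\ref{lemma:implications}, $E_\pi$ embeds in $A'$ with $A'$ finite free over $E_\pi$. The crucial identification, which follows by combining Theorem~\ref{thm:truncate} (so that the truncated universal deformation is $\tilde I([\mathfrak m])$) with the faithfulness of the $\overline{\CK}[X_{[\mathfrak m]}]$-action on $\tilde I([\mathfrak m])$, is that $A'\cong\widehat{(\overline{\CK}[X_{[\mathfrak m]}])}_{\mathfrak q'}$, a regular local ring of Krull dimension $r$. Hence $\dim E_\pi=r$, and combined with the embedding-dimension bound $\leq r$ from Corollary~\ref{cor:tangent}, $E_\pi$ is itself regular local of dimension $r$.

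By the factoring lemma proved just before the theorem, $E_\pi\hookrightarrow A'$ lands inside $A'^{W_{[\mathfrak m]}}=\widehat{(\overline{\CK}[X_{[\mathfrak m]}]^{W_{[\mathfrak m]}})}_\pi$, also regular local of dimension $r$. For two complete regular local $\overline{\CK}$-algebras of the same dimension with residue field $\overline{\CK}$, a local map is an isomorphism if and only if it is injective on tangent spaces. Both tangent spaces are $r$-dimensional, that of $\Spec E_\lambda$ at $\pi$ being $\Ext^1_{G_n}(V_\pi,V_\pi)$ by Proposition~\ref{prop:tangent} and Corollary~\ref{cor:tangent}. The tangent map sends a first-order perturbation of the universal twist-characters $X_{\Delta_i}$ to the Yoneda class of the corresponding first-order deformation of $V_\pi$. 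The explicit Ext computation of Section~\ref{sec:exts} realises $\Ext^1_{G_n}(V_\pi,V_\pi)$ as the cotangent space at the closed point of $\Spec A_r$, the ring of Laurent symmetric functions appearing in that argument; its $r$ generators are precisely the segment-twist directions, so the tangent map is injective. This gives $E_\pi=A'^{W_{[\mathfrak m]}}$.

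The main obstacle is the identification in the last step: matching the $r$ segment-twist tangent vectors on $\Spec\overline{\CK}[X_{[\mathfrak m]}]^{W_{[\mathfrak m]}}$ with the generators of the exterior algebra produced in Section~\ref{sec:exts}. This requires tracing through the equivalence $\Rep_{\overline{\CK}}(M)^{\lambda^r}\cong B_r\text{-mod}$ used there and checking that the extension classes appearing as the degree-one generators of $\Ext^{\bullet}_{A_r}(A_r/\mathfrak p,A_r/\mathfrak p)$ are represented by the first-order twists inside $\tilde I([\mathfrak m])$. Once this compatibility is in hand, the globalisation into an isomorphism of algebras is a routine faithfully-flat-descent argument using the bijection on closed points together with the local isomorphisms established above.
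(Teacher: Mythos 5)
Your overall outline correctly isolates the two ingredients the theorem must use --- a bijection on $\overline{\CK}$-points (which you set up exactly as the paper does) and some control of the local rings --- but there are two genuine gaps in the way you handle the local rings, and the paper's own proof is designed precisely to avoid both.

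First, you assert that $A' \cong \widehat{(\overline{\CK}[X_{[\mathfrak m]}])}_{\mathfrak q'}$, citing Theorem~\ref{thm:truncate} and ``faithfulness of the $\overline{\CK}[X_{[\mathfrak m]}]$-action on $\tilde I([\mathfrak m])$.'' But the object $\tilde I({\mathfrak m})$ used in Section~\ref{sec:char zero ends} carries an action of the much larger ring $\overline{\CK}[X_{ij}^{\pm 1}]$ (one variable for each cuspidal constituent of each segment, not one per segment), and $A'$ is by construction a quotient of the completion of that large ring. Theorem~\ref{thm:truncate} computes only the special fibre $\CP/{\mathfrak q}\CP$; it says nothing about how the $X_{ij}$ act to first or higher order, so it does not show that the action factors through the $r$-variable subring $\overline{\CK}[X_{[\mathfrak m]}]$, nor does it compute $\dim A'$. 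Nothing in the paper establishes this identification, and it would need an argument of its own. (One could try to bound $\dim E_\pi$ from below using the finite-free map $E_\pi \hookrightarrow A'$, but without knowing $\dim A'$ this is circular.)

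Second, even granting the first point, your final step requires the cotangent map $E_\pi \to \widehat{(\overline{\CK}[X_{[\mathfrak m]}]^{W_{[\mathfrak m]}})}$ to be surjective (equivalently, the tangent map injective), and for that you need to match the $r$ Ext-classes produced in Section~\ref{sec:exts} with the $r$ segment-twist directions in $\Spec\overline{\CK}[X_{[\mathfrak m]}]^{W_{[\mathfrak m]}}$. You flag this yourself as the ``main obstacle.'' The Ext computation in Section~\ref{sec:exts} produces an exterior algebra abstractly but does not identify a basis of $\Ext^1$ with twist directions, so this compatibility is not available.

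The paper's proof avoids both issues by arguing globally rather than matching local rings. It uses the bijection on $\overline{\CK}$-points together with only two dimension bounds at each point $x$: the tangent space has dimension $r_x$ (Corollary~\ref{cor:tangent}) giving an upper bound on local dimension, and the map from $\Spec\overline{\CK}[X_{[\mathfrak m']}]^{W_{[\mathfrak m']}}$ is injective with $r_x$-dimensional source, giving a lower bound. These two inequalities force $\Spec eE_\lambda$ to be smooth of dimension $r_x$ at every point; then a bijective morphism of smooth $\overline{\CK}$-varieties is an isomorphism. No identification of $A'$ and no tangent-direction matching is needed. If you want to salvage your local approach, you would at minimum need to prove the claimed description of $A'$, which the paper never does and which looks substantially harder than the global dimension count.
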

\begin{proof}
The first claim follows from the second, so we focus on the second claim.  Note that the maximal ideals of $eE_{\lambda}$ are in bijection with the irreducible representations of $G_n$ of type $\lambda$ in the block corresponding to $e$, and hence also with multisegments of type $\lambda$.  Every such multisegment is in the equivalence class of precisely one inertial multisegment $[{\mathfrak m}']$ in $S(e,\lambda)$, and the $\overline{\CK}$ points of $\Spec \overline{\CK}[X_{[\mathfrak m]}]^{W_{[{\mathfrak m}']}}$ are in bijection with the multisegments in this equivalence class.  Thus the product of the maps $f_{[{\mathfrak m}']}$ induces a bijection of $\overline{\CK}$-points between $\Spec eE_{\lambda}$ and the disjoint union of the smooth schemes $\Spec \overline{\CK}[X_{[{\mathfrak m}']}]^{W_{[{\mathfrak m}']}}.$

Let $x$ be a point of $\Spec eE_{\lambda}$, and let ${\mathfrak m}_x = \{\Delta_{1,x}, \dots, \Delta_{r_x,x}\}$ denote the corresponding multisegment; let $[{\mathfrak m}']$ denote the inertial class of ${\mathfrak m}_x$.  Then $x$ lies in the image of the map from $\Spec \overline{\CK}[X_{[{\mathfrak m}']}]^{W_{[{\mathfrak m}']}}$, and since this map is injective and its domain has dimension $r_x$, we find that the local dimension of $\Spec eE_{\lambda}$ at $x$ is at least $r_x$.  On the other hand by Corollary~\ref{cor:tangent} the dimension of the tangent space to $\Spec eE_{\lambda}$ at $x$ is $r_x$, so the local dimension is at most $r_x$.  Thus $x$ is a smooth point of dimension $r_x$.  Since $x$ was arbitrary, we find that $\Spec eE_{\lambda}$ is a smooth scheme.

Since a map of smooth $\overline{\CK}$-varieties that is a bijection on points is an isomorphism, the claim follows.
\end{proof}

From this result parts (1), (2), and (3) of Theorem~\ref{thm:main} are immediate.  On the other hand, for any $V$ in $e\Rep_{\overline{\CK}}(G_n)^{\lambda}$, one has an isomorphism:
$$\CP_{\lambda} \otimes \Hom_{\overline{\CK}[G_n]}(e\CP_{\lambda}, V) \cong V.$$
Applying this for $V = {\tilde I}([\mathfrak m])$ we see that we have an isomorphism:
$$e\CP_{\lambda} \otimes_{eE_{\lambda}} \overline{\CK}[X_{[\mathfrak m]}] \cong {\tilde I}([\mathfrak m]).$$
Since $\overline{\CK}[X_{[\mathfrak m]}]$ is a projective $eE_{\lambda}$-module, it follows that ${\tilde I}([\mathfrak m])$ is projective in $\Rep_{\overline{\CK}}(G_n)^{\lambda}$, and this completes the proof of (4).


\section{Appendix: Proof of Theorem~\ref{thm:truncate}} \label{sec:appendix}

In this appendix we will use the theory of Bernstein-Zelevinksy derivatives to prove Theorem~\ref{thm:truncate}. In particular, all representations will have coefficients in $\overline{\mathcal K}$. Firstly, we will recall some aspects of their theory and by using their techniques, deduce some results which will be needed later. 
\subsection{Prerequisites} For representations $\pi_i$ of $G_{n_i}$, where $1\leq i\leq r$, we define $\pi_1\times\dotsc\times\pi_r$ to be the normalized parabolic induction 
$$\pi_1\times\dotsc\times\pi_r\coloneqq i_{Q_{(n_1,\dotsc,n_r)}}^{G_{n_1+\dotsc+n_r}}(\pi_1\otimes\dotsc\otimes\pi_r),$$
where $Q_{(n_1,\dotsc,n_r)}$ is the standard parabolic subgroup of $G_{n_1+\dotsc+n_r}$ with blocks of sizes $n_1,\dotsc,n_r$.

Let $\rho$ be an irreducible cuspidal complex representation of $G_k$. Recall that for a nonempty segment $\Delta=[0,\alpha-1]_{\rho}=[\rho,\nu\rho,\dotsc,\nu^{\alpha-1}\rho]$ we define the length of $\Delta$ as $l(\Delta)=\alpha k$. Moreover, we set
$$I(\Delta)\coloneqq \nu^{\alpha-1}\rho\times\dotsc\times\nu\rho\times\rho$$ 
and
$$J(\Delta)\coloneqq \rho\times\nu\rho\times\dotsc\times\nu^{\alpha-1}\rho.$$
Then $I(\Delta)$ has an unique irreducible quotient which we denote by $Z(\Delta)$. The unique irreducible subrepresentation of $J(\Delta)$ is also $Z(\Delta)$. If $\Delta$ is the empty set we set $I(\Delta)$ and $J(\Delta)$ to be the trivial representation of the trivial group $G_0=\{1\}$.
For $1\leq\beta\leq\alpha-1$ we define the segment $\Delta^{-,\beta}$ as $[0,\alpha-\beta-1]_\rho$. If $\beta>\alpha-1$ let $\Delta^{-,\beta}$ be the empty set. For ease of notation we often write $\Delta^-$ for $\Delta^{-,1}$. Moreover, we set $^-\Delta$ to be the segment $[1,\alpha-1]_\rho$.

Let $P_k$ be the mirabolic subgroup of $G_k$. We often write $\pi|_{P_k}$ for the restriction of a representation $\pi$ of $G_k$ to the mirabolic subgroup $P_k$. When we say that a $P_k$-representation $\sigma$ is contained in a representation $\pi$ of $G_k$ we mean that there is an injective $P_k$-homomorphism $\sigma\hookrightarrow\pi|_{P_k}$.
Let $U_n$ be the unipotent radical of $P_n$, i.e.\ $$U_n=\{(u_{ij})\in P_n\mid u_{ij}=\delta_{ij}\text{ for }j<n\}.$$
We define a character $\psi$ of $U_n$ by setting that $\psi((u_{ij}))=\psi(u_{n-1,n})$.
We will need the following functors, originally defined by Bernstein and Zelevinsky. Let
\begin{align*}
    \Psi^-\colon\operatorname{Rep}_{\overline{\mathcal K}}(P_n)&\to\operatorname{Rep}_{\overline{\mathcal K}}(G_{n-1}),\\
    \pi&\mapsto\delta_{U_n}^{-1/2}\otimes \pi/\pi(U_n,1)\\
    \Phi^-\colon\operatorname{Rep}_{\overline{\mathcal K}}(P_n)&\to\operatorname{Rep}_{\overline{\mathcal K}}(P_{n-1}),\\
    \pi&\mapsto\delta_{U_n}^{-1/2}\otimes \pi/\pi(U_n,\psi)\\
    \Psi^+\colon\operatorname{Rep}_{\overline{\mathcal K}}(G_{n-1})&\to\operatorname{Rep}_{\overline{\mathcal K}}(P_n)\\
    \pi&\mapsto\delta_{U_n}^{1/2}\otimes\pi\text{ (where }U_n\text{ acts trivially),}\\
    \hat{\Phi}^+\colon\operatorname{Rep}_{\overline{\mathcal K}}(P_{n-1})&\to\operatorname{Rep}_{\overline{\mathcal K}}(P_n)\\
    \pi&\mapsto\operatorname{Ind}_{P_{n-1}U_n}^{P_n}(\delta_{U_n}^{1/2}\otimes\pi)\text{ (where }U_n\text{ acts via }\psi).
    \end{align*}
Their basic properties can be found in Section 3 of \cite{BZI}. Note that for $\pi\in\operatorname{Rep}_{\overline{\mathcal K}}(G_{n})$ its $r$-th Bernstein-Zelevinsky derivative equals
$$\pi^{(r)}=\Psi^{-}(\Phi^-)^{r-1}(\pi|_{P_n})\in\operatorname{Rep}_{\overline{\mathcal K}}(G_{n-r}).$$

The following is a special case of Proposition 4.6 in \cite{BZI}. 
\begin{lemma}\label{resolve}
Let $\Delta_1,\Delta_2$ be two juxtaposed segments where $\Delta_2$ precedes $\Delta_1$ and $l(\Delta_1\cup\Delta_2)=r$. There is a short exact sequence of smooth $G_{r}$-representations
	$$0\to Z(\Delta_1,\Delta_2)\hookrightarrow Z(\Delta_1)\times Z(\Delta_2)\twoheadrightarrow Z(\Delta_2\cup\Delta_1)\to 0.$$
\end{lemma}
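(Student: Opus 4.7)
The plan is to deduce the statement from the standard structure theory of parabolic inductions of Zelevinsky segment representations. The first step is to observe that, with $\Delta_2 = [a,b]_\rho$ and $\Delta_1 = [b+1,c]_\rho$, the decreasing sequence $\nu^c\rho, \nu^{c-1}\rho, \dotsc, \nu^a\rho$ defining $I(\Delta_2 \cup \Delta_1)$ splits at the boundary of the two segments, giving a natural identification $I(\Delta_2 \cup \Delta_1) \cong I(\Delta_1) \times I(\Delta_2)$. Composing with the canonical surjections $I(\Delta_i) \twoheadrightarrow Z(\Delta_i)$ and invoking exactness of parabolic induction yields a surjection
$$I(\Delta_2 \cup \Delta_1) \twoheadrightarrow Z(\Delta_1) \times Z(\Delta_2).$$
Since $Z(\Delta_2 \cup \Delta_1)$ is by definition the unique irreducible quotient of $I(\Delta_2 \cup \Delta_1)$, it must appear as a quotient of $Z(\Delta_1) \times Z(\Delta_2)$, yielding the surjection on the right-hand side of the claimed exact sequence.

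The second step is to identify the kernel of this surjection. By the Zelevinsky classification, every irreducible subquotient of $Z(\Delta_1) \times Z(\Delta_2)$ is of the form $Z(\mathfrak{m}')$ for a multisegment $\mathfrak{m}'$ whose supercuspidal support matches that of $\{\Delta_1, \Delta_2\}$. A direct enumeration of multisegments with this fixed support shows that only two arise: $\{\Delta_1, \Delta_2\}$ itself and $\{\Delta_2 \cup \Delta_1\}$. The next step is a length-two assertion: each of $Z(\Delta_1, \Delta_2)$ and $Z(\Delta_2 \cup \Delta_1)$ occurs exactly once in the Jordan--H\"older series of $Z(\Delta_1) \times Z(\Delta_2)$. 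This can be established by an inductive computation on $l(\Delta_1) + l(\Delta_2)$ using the Bernstein-Zelevinsky derivative functors $\Phi^-$ and $\Psi^-$ together with their Leibniz-type compatibility with parabolic induction, comparing the derivatives of both sides. Combined with the surjection constructed above, this forces the kernel to be the irreducible $Z(\Delta_1, \Delta_2)$, yielding the embedding on the left.

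The main obstacle is the length-two claim, which is the substantive content of \cite{BZI}, Proposition~4.6. The surjection part is essentially formal from the definitions of $I(\Delta)$ and $Z(\Delta)$, and the enumeration of possible Jordan--H\"older constituents is elementary once one has the classification in hand; the nontrivial work is ruling out higher multiplicities and additional constituents, which is handled by the derivative computation. Given that proposition, the rest of the argument is a standard identification of composition factors and requires no further delicate analysis.
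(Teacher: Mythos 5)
The paper records no proof for this lemma at all; it simply attributes the statement, as a special case, to Proposition~4.6 of the cited Bernstein--Zelevinsky paper, so any reconstruction necessarily goes beyond what the paper writes down. Your first two steps are correct and standard: transitivity of parabolic induction gives $I(\Delta_2\cup\Delta_1)\cong I(\Delta_1)\times I(\Delta_2)$, exactness of induction gives the surjection onto $Z(\Delta_1)\times Z(\Delta_2)$, and the unique-irreducible-quotient property of $I(\Delta_2\cup\Delta_1)$ forces $Z(\Delta_2\cup\Delta_1)$ to be the cosocle of $Z(\Delta_1)\times Z(\Delta_2)$. The weak point is the assertion that ``a direct enumeration of multisegments with this fixed support shows that only two arise.'' That is false as stated: \emph{every} partition of the chain $[a,c]_\rho$ into disjoint consecutive subsegments yields a multisegment with the same supercuspidal support, so there are many candidate $\mathfrak m'$, not two. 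What actually cuts the list down to $\{\Delta_1,\Delta_2\}$ and $\{\Delta_2\cup\Delta_1\}$ is the order-theoretic constraint from Zelevinsky's theory (Theorem~7.1 of \cite{Zelevinsky}): composition factors of $Z(\Delta_1)\times Z(\Delta_2)$ correspond to multisegments obtained from $\{\Delta_1,\Delta_2\}$ by elementary operations, and for a juxtaposed pair the only such operation produces $\{\Delta_2\cup\Delta_1\}$. You do concede at the end that ``additional constituents'' must be ruled out by the derivative computation, which contradicts the earlier enumeration claim; once the argument is rephrased so that either the partial-order theorem or the Bernstein--Zelevinsky derivative computation carries the burden of excluding the other partitions, your outline is a sound reconstruction that bottoms out in the same reference the paper cites.
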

\begin{lemma}\label{lemcomp}
	Let $\Delta=[0,\alpha-1]_\rho$ be a segment. Then
	$$Z([\nu^\alpha\rho],\Delta)^{(k)}=Z([\nu^\alpha\rho],\Delta^-).$$
\end{lemma}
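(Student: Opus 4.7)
The plan is to deduce the claim from the Bernstein--Zelevinsky exact sequence of Lemma~\ref{resolve} followed by a Leibniz-type analysis of the $k$-th derivative on a parabolic induction. Since $\Delta$ precedes $[\nu^\alpha\rho]$ and the two segments are juxtaposed with union $[0,\alpha]_\rho$, Lemma~\ref{resolve} applied with $\Delta_2 = \Delta$ and $\Delta_1 = [\nu^\alpha\rho]$ yields a short exact sequence
$$0 \to Z([\nu^\alpha\rho], \Delta) \to Z([\nu^\alpha\rho]) \times Z(\Delta) \to Z([0,\alpha]_\rho) \to 0.$$
Applying the exact functor $(-)^{(k)}$ and using that $Z([0,\alpha]_\rho)^{(k)} = Z(\Delta)$ (from the defining property of $Z$ on a single segment recalled in Section~3), one obtains
$$0 \to Z([\nu^\alpha\rho], \Delta)^{(k)} \to (Z([\nu^\alpha\rho]) \times Z(\Delta))^{(k)} \to Z(\Delta) \to 0.$$

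The second step is to resolve the middle term using the Bernstein--Zelevinsky Leibniz filtration, which exhibits $(Z([\nu^\alpha\rho]) \times Z(\Delta))^{(k)}$ as a successive extension of the pieces $Z([\nu^\alpha\rho])^{(i)} \times Z(\Delta)^{(k-i)}$ for $0 \le i \le k$. Because $\nu^\alpha\rho$ is cuspidal on $G_k$, its only nonzero derivatives are in degrees $0$ and $k$, and $Z(\Delta)^{(j)}$ vanishes for $0 < j < k$ for the same cuspidal-level reason. Exactly two pieces survive: $Z([\nu^\alpha\rho]) \times Z(\Delta^-)$ at $i = 0$ and $Z(\Delta)$ at $i = k$. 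Both are irreducible and non-isomorphic, so the middle term has length two.

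The final step is to identify $Z([\nu^\alpha\rho], \Delta)^{(k)}$ as the length-one kernel of the surjection to $Z(\Delta)$. The key input is that $Z([\nu^\alpha\rho]) \times Z(\Delta^-)$ and $Z(\Delta)$ have disjoint supercuspidal supports: the former contains $\nu^\alpha\rho$ and omits $\nu^{\alpha-1}\rho$, while the latter does the opposite. Hence $\Hom(Z([\nu^\alpha\rho]) \times Z(\Delta^-), Z(\Delta)) = 0$, and a short case analysis on the two possible orderings of the Leibniz filtration (in the ``wrong'' order the middle must split, in the ``right'' order the identification is immediate) forces the kernel to be $Z([\nu^\alpha\rho]) \times Z(\Delta^-)$ in either case. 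Since $[\nu^\alpha\rho]$ and $\Delta^-$ are unlinked (their union has a gap at $\nu^{\alpha-1}\rho$), this product is irreducible and equals $Z([\nu^\alpha\rho], \Delta^-)$, giving the claim. The only mildly delicate point is the filtration-order case split, but it is handled uniformly by the disjoint-support vanishing of Hom.
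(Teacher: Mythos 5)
Your proof is correct, and it is essentially the same argument as the paper's: both start from Lemma~\ref{resolve} applied to $Z([\nu^\alpha\rho])\times Z(\Delta)$, take the $k$-th derivative, use the Leibniz/geometric-lemma decomposition of the derivative of a product together with the facts that $\nu^\alpha\rho$ is cuspidal on $G_k$ and $Z([0,\alpha]_\rho)^{(k)}=Z(\Delta)$, and then identify the leftover piece as $\nu^\alpha\rho\times Z(\Delta^-)=Z([\nu^\alpha\rho],\Delta^-)$.

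The only real difference is one of packaging. The paper computes entirely in the Grothendieck group, where the Leibniz expansion is an equality on the nose; after cancelling $Z(\Delta)$ from both sides one reads off $[Z([\nu^\alpha\rho],\Delta)^{(k)}]=[\nu^\alpha\rho\times Z(\Delta^-)]$, and since the right-hand side is irreducible this already pins down the isomorphism class. Your version works with actual extensions, which forces you to argue that the length-two filtration of $(Z([\nu^\alpha\rho])\times Z(\Delta))^{(k)}$ splits and then to identify the kernel of the surjection to $Z(\Delta)$. That extra step is handled fine by the supercuspidal-support observation (though the supports are distinct, not literally disjoint — one contains $\nu^\alpha\rho$ but not $\nu^{\alpha-1}\rho$, and vice versa; what you need is only that $\Hom(\nu^\alpha\rho\times Z(\Delta^-),Z(\Delta))=0$, which follows from the supports being different). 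So your argument is a slightly more explicit, slightly longer version of the same computation; the Grothendieck-group shortcut is available precisely because the answer turns out to be irreducible, which is worth noticing as a simplification.
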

\begin{proof}
	By Lemma \ref{resolve} we have that
	$$v^{\alpha}\rho\times Z(\Delta)=Z([\nu^\alpha\rho],\Delta)+Z(\Delta\cup[\nu^\alpha\rho])$$
	in the Grothendieck group. By taking the $(k)$-th derivative of this identity we obtain
	$$Z([\nu^\alpha\rho],\Delta)^{(k)}+Z(\Delta\cup[\nu^\alpha\rho])^{(k)}=Z(\Delta)+\nu^\alpha\rho\times Z(\Delta^-),$$
	and since $\Delta^-$ and $[\nu^\alpha\rho]$ are unlinked we have that $Z([\nu^\alpha\rho],\Delta^-)=\nu^\alpha\rho\times Z(\Delta^-)$ which proves the result.
\end{proof}

\begin{lemma}\label{redpar}
	 Let $\Delta_1,\Delta_2$ be two segments, where $\Delta_1=[0,\alpha-1]_\rho$ and $\alpha\geq 2$, and $\Delta_2=[\beta,-1]_\rho$ for some integer $\beta<-1$. Then there is an embedding
	$$Z(\Delta_1,\Delta_2)\hookrightarrow Z(\Delta_1^-,\Delta_2)\times\nu^{\alpha-1}\rho.$$
\end{lemma}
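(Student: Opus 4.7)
The plan is to build the embedding by chaining together several standard inclusions into a large parabolic induction, and then cut down to the desired target via a composition-factor argument. Specifically, I will first produce an embedding of $Z(\Delta_1,\Delta_2)$ into $Z(\Delta_1^-)\times Z(\Delta_2)\times\nu^{\alpha-1}\rho$, and then argue that its image must land inside the subrepresentation $Z(\Delta_1^-,\Delta_2)\times\nu^{\alpha-1}\rho$.

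First, since $\Delta_2=[\beta,-1]_\rho$ is juxtaposed with both $\Delta_1=[0,\alpha-1]_\rho$ and $\Delta_1^-=[0,\alpha-2]_\rho$ and precedes each (using $\beta<-1$ and $\alpha\geq 2$), Lemma~\ref{resolve} yields embeddings $Z(\Delta_1,\Delta_2)\hookrightarrow Z(\Delta_1)\times Z(\Delta_2)$ and $Z(\Delta_1^-,\Delta_2)\hookrightarrow Z(\Delta_1^-)\times Z(\Delta_2)$, the latter with cokernel $Z([\beta,\alpha-2]_\rho)$. Second, applying parabolic induction to the inclusion $Z(\Delta_1^-)\hookrightarrow J(\Delta_1^-)$ and invoking that $Z(\Delta_1)$ is the unique irreducible subrepresentation of $J(\Delta_1)=J(\Delta_1^-)\times\nu^{\alpha-1}\rho$, the socle of the nonzero finite-length subrepresentation $Z(\Delta_1^-)\times\nu^{\alpha-1}\rho\subseteq J(\Delta_1)$ must be $Z(\Delta_1)$, giving $Z(\Delta_1)\hookrightarrow Z(\Delta_1^-)\times\nu^{\alpha-1}\rho$. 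Third, since $\alpha\geq 2$ the segments $[\nu^{\alpha-1}\rho]$ and $\Delta_2$ are unlinked, so $\nu^{\alpha-1}\rho\times Z(\Delta_2)\cong Z(\Delta_2)\times\nu^{\alpha-1}\rho$. Chaining these yields
$$Z(\Delta_1,\Delta_2)\hookrightarrow Z(\Delta_1^-)\times Z(\Delta_2)\times\nu^{\alpha-1}\rho,$$
whose quotient by $Z(\Delta_1^-,\Delta_2)\times\nu^{\alpha-1}\rho$ is $Z([\beta,\alpha-2]_\rho)\times\nu^{\alpha-1}\rho$.

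The hard step is to verify that the composition $Z(\Delta_1,\Delta_2)\to Z([\beta,\alpha-2]_\rho)\times\nu^{\alpha-1}\rho$ vanishes. Since $Z(\Delta_1,\Delta_2)$ is irreducible this amounts to checking it is not a composition factor of the target. Another application of Lemma~\ref{resolve}, this time with $[\beta,\alpha-2]_\rho$ preceding $[\nu^{\alpha-1}\rho]$, combined with commutativity of the Grothendieck group, identifies the composition factors of $Z([\beta,\alpha-2]_\rho)\times\nu^{\alpha-1}\rho$ as $Z([\beta,\alpha-1]_\rho)$ and $Z([\nu^{\alpha-1}\rho],[\beta,\alpha-2]_\rho)$. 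Neither of the corresponding multisegments equals $\{\Delta_1,\Delta_2\}$ (the first has only one segment; the second has segments of lengths $1$ and $\alpha-1-\beta$, rather than $\alpha$ and $-\beta$), so Zelevinsky's bijection rules out $Z(\Delta_1,\Delta_2)$ appearing, and the desired embedding follows.

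The main obstacle is the final composition-factor comparison, but it reduces to a clean combinatorial check on multisegments once the correct parabolic induction is set up; all the other inputs (Lemma~\ref{resolve}, exactness of parabolic induction, uniqueness of the socle of $J(\Delta)$, and the unlinked commutation) are essentially routine.
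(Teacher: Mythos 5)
Your proof is correct and shares the same skeleton as the paper's: both chain $Z(\Delta_1)\hookrightarrow Z(\Delta_1^-)\times\nu^{\alpha-1}\rho$ with Lemma~\ref{resolve} and the unlinked commutation to land $Z(\Delta_1,\Delta_2)$ inside $Z(\Delta_1^-)\times Z(\Delta_2)\times\nu^{\alpha-1}\rho$, and then use the short exact sequence coming from the two Jordan--H\"older factors of $Z(\Delta_1^-)\times Z(\Delta_2)$ to reduce to showing the composite map to $Z(\Delta_2\cup\Delta_1^-)\times\nu^{\alpha-1}\rho$ is zero.

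Where you genuinely diverge is in ruling out this bad case. The paper applies the $(2k)$-th Bernstein--Zelevinsky derivative to the hypothetical embedding $Z(\Delta_1,\Delta_2)\hookrightarrow Z(\Delta_2\cup\Delta_1^-)\times\nu^{\alpha-1}\rho$, producing $Z(\Delta_1^-,\Delta_2^-)\hookrightarrow Z(\Delta_2\cup\Delta_1^{-,2})$ and deriving a contradiction because the target is irreducible and corresponds to a different multisegment. You instead compute the two Jordan--H\"older factors of the quotient $Z([\beta,\alpha-2]_\rho)\times\nu^{\alpha-1}\rho$ directly (another application of Lemma~\ref{resolve} plus commutativity in the Grothendieck group), and observe that neither multisegment $\{[\beta,\alpha-1]_\rho\}$ nor $\{[\nu^{\alpha-1}\rho],[\beta,\alpha-2]_\rho\}$ can equal $\{\Delta_1,\Delta_2\}$, invoking Zelevinsky's bijection. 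Both arguments work; yours has the mild advantage of not needing the multiplicativity of highest derivatives with respect to parabolic induction, trading it for a hands-on composition-series identification. The combinatorial check on segment lengths you give to distinguish the multisegments is correct ($\alpha\geq 2$ and $-\beta\geq 2$ ensure neither length is $1$).
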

\begin{proof}
	Note that there is an embedding $Z(\Delta_1)\hookrightarrow Z(\Delta_1^-)\times\nu^{\alpha-1}\rho$ and hence by Lemma \ref{resolve} we have an embedding
	$$Z(\Delta_1,\Delta_2)\hookrightarrow Z(\Delta_1^-)\times\nu^{\alpha-1}\rho\times Z(\Delta_2)\cong Z(\Delta_1^-)\times Z(\Delta_2)\times\nu^{\alpha-1}\rho.$$
	Now $Z(\Delta_1^-)\times Z(\Delta_2)$ has two Jordan-H\"{o}lder factors, $Z(\Delta_1^-,\Delta_2)$ and $Z(\Delta_2\cup\Delta_1^-)$. 
	If $Z(\Delta_1,\Delta_2)$ would be contained in $Z(\Delta_2\cup\Delta_1^-)\times\nu^{\alpha-1}\rho$ by taking the $(2k)$-derivative we would obtain that
	$$Z(\Delta_1^-,\Delta_2^-)\hookrightarrow Z(\Delta_2\cup\Delta_1^{-,2}),$$
	which is absurd.
\end{proof}

\begin{lemma}\label{weirdcase}
	Let $\Delta=[0,\alpha-1]_\rho$ be a segment. Then the Jordan-H\"{o}lder constituents of 
	$$\nu^{\alpha+1}\rho\times Z([\nu^{\alpha}\rho],\Delta)$$
	are $Z([\nu^{\alpha+1}\rho],[\nu^{\alpha}\rho],\Delta)$ and $Z([\nu^{\alpha}\rho,\nu^{\alpha+1}\rho],\Delta)$.
	
\end{lemma}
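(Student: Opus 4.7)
The plan is to realise $V := \nu^{\alpha+1}\rho \times Z([\nu^{\alpha}\rho], \Delta)$ as a subrepresentation of the standard module $V'' := \nu^{\alpha+1}\rho \times \nu^{\alpha}\rho \times Z(\Delta)$ and to identify its Jordan--H\"older constituents by sandwiching $V$ between two short exact sequences coming from Lemma~\ref{resolve}, and then using derivatives to bound lengths. First, parabolically inducing the short exact sequence of Lemma~\ref{resolve} applied to $\Delta_1 = [\nu^{\alpha}\rho]$ and $\Delta_2 = \Delta$ (here $\Delta$ precedes $[\nu^{\alpha}\rho]$) gives
\[
0 \to V \to V'' \to \nu^{\alpha+1}\rho \times Z([\rho, \nu^{\alpha}\rho]) \to 0.
\]
A further application of Lemma~\ref{resolve} (with $\Delta_1 = [\nu^{\alpha+1}\rho]$, $\Delta_2 = [\rho,\nu^{\alpha}\rho]$) shows that the quotient on the right has length two, with constituents $Z([\nu^{\alpha+1}\rho], [\rho, \nu^{\alpha}\rho])$ and $Z([\rho, \nu^{\alpha+1}\rho])$.

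I would next exhibit $A := Z([\nu^{\alpha+1}\rho], [\nu^{\alpha}\rho], \Delta)$ and $C := Z([\nu^{\alpha}\rho, \nu^{\alpha+1}\rho], \Delta)$ as constituents of $V$. For $A$: a Leibniz-rule computation using the fact that the only nonzero derivatives of $\nu^{\alpha+1}\rho$ are the $0$-th and $1$-st, combined with Lemma~\ref{lemcomp} which gives $Z([\nu^{\alpha}\rho],\Delta)^{(1)} = \nu^{\alpha}\rho \times Z(\Delta^-)$ and $Z([\nu^{\alpha}\rho],\Delta)^{(2)} = Z(\Delta^-)$, yields $V^{(3)} \cong Z(\Delta^-)$, which is irreducible; since $A$ has highest derivative partition $(3,1,\dots,1)$ with $A^{(3)} \cong Z(\Delta^-)$, it is the unique composition factor of $V$ with highest derivative $\geq 3$, appearing with multiplicity one. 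For $C$: apply Lemma~\ref{resolve} instead to $[\nu^{\alpha+1}\rho]$ and $[\nu^{\alpha}\rho]$, and parabolically induce with $Z(\Delta)$ to produce the dual short exact sequence
\[
0 \to Z([\nu^{\alpha+1}\rho], [\nu^{\alpha}\rho]) \times Z(\Delta) \to V'' \to Z([\nu^{\alpha}\rho, \nu^{\alpha+1}\rho]) \times Z(\Delta) \to 0,
\]
whose rightmost term (again by Lemma~\ref{resolve}) contains $C$ as a subrepresentation. Since $C$ is not isomorphic to either $Z([\nu^{\alpha+1}\rho], [\rho, \nu^{\alpha}\rho])$ or $Z([\rho, \nu^{\alpha+1}\rho])$, it cannot lie in the quotient of the first SES and must therefore appear in $V$.

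To conclude that $\{A, C\}$ exhausts the composition factors of $V$, I would show $Z([\nu^{\alpha+1}\rho], [\nu^{\alpha}\rho]) \times Z(\Delta)$ has length two; combined with the second SES this forces $V''$ to have length four, and hence by the first SES $V$ has length $4-2=2$. To do this, a further Leibniz computation gives $(Z([\nu^{\alpha+1}\rho], [\nu^{\alpha}\rho]) \times Z(\Delta))^{(3)} \cong Z(\Delta^-)$ (identifying $A$ with multiplicity one) and $(Z([\nu^{\alpha+1}\rho], [\nu^{\alpha}\rho]) \times Z(\Delta))^{(2)} \cong Z(\Delta) + \nu^{\alpha+1}\rho \times Z(\Delta^-)$; matching this with the top derivatives of candidate composition factors $Z(\mathfrak{m}')$ of the correct supercuspidal support, one checks that the only remaining multisegment whose top derivative can contribute is that of $B := Z([\nu^{\alpha+1}\rho], [\rho, \nu^{\alpha}\rho])$ (with $B^{(2)} \cong Z(\Delta)$), the other possibilities being eliminated by comparing supercuspidal supports of the relevant pieces. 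The main obstacle is precisely this last step: carefully ruling out exotic multisegments, particularly those that would internally split $\Delta$, requires a delicate analysis of supercuspidal supports of derivatives together with the iterated-derivative formulas of Lemma~\ref{lemcomp} and its variants.
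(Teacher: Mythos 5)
Your strategy differs from the paper's and has some merit, but it contains a genuine gap that you yourself flag, and that gap is in fact equivalent to the statement being proved.

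Your ``sandwich'' picture is correct as far as it goes: applying Lemma~\ref{resolve} in two ways produces the exact sequences $0\to V\to V''\to \nu^{\alpha+1}\rho\times Z([\rho,\nu^\alpha\rho])\to 0$ and $0\to W\to V''\to Z([\nu^\alpha\rho,\nu^{\alpha+1}\rho])\times Z(\Delta)\to 0$, where $W:=Z([\nu^{\alpha+1}\rho],[\nu^\alpha\rho])\times Z(\Delta)$, and the derivative argument showing that $V^{(3k)}\cong Z(\Delta^-)$ is irreducible does pin down $A$ as a multiplicity-one constituent (note, though, that you should write $V^{(3k)}$ rather than $V^{(3)}$, since $\rho$ is cuspidal on $G_k$ and the derivatives jump in steps of $k$; the paper is careful about this throughout). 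The exclusion argument placing $C$ inside $V$ rather than in $V''/V$ is also correct.

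The problem is the step you defer, namely that $W$ has length two. Writing $D:=Z([\rho,\nu^{\alpha+1}\rho])=Z(\Delta\cup[\nu^\alpha\rho,\nu^{\alpha+1}\rho])$, the two exact sequences give $[V'']=[V]+[B]+[D]=[W]+[C]+[D]$ in the Grothendieck group, hence $[V]-[W]=[C]-[B]$. Since both $V$ and $W$ contain $A$ with multiplicity one (same $(3k)$-derivative argument), and $V$ contains $C$ while $W$ contains $B$, the identity shows that ``$V$ has length $2$'' and ``$W$ has length $2$'' are \emph{equivalent} statements. So your reduction has not made the problem easier; it has simply relabelled it, and the ``delicate analysis'' you then sketch for $W$ is of the same nature and difficulty as a direct analysis of $V$. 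The paper carries out precisely such a direct analysis: it shows $A$ is the unique irreducible subrepresentation of $\pi$ (via Zelevinsky's Theorem 6.1), proves $\pi\ne A$ by comparing $\pi^{(2k)}$ with $(Z([\nu^{\alpha+1}\rho],[\nu^\alpha\rho])\times Z(\Delta))^{(2k)}$ in the Grothendieck group, and then rules out all candidate constituents with highest derivative $k$ or $3k$ and pins down the highest-derivative-$2k$ constituent as $C$ by supercuspidal-support bookkeeping on $\pi^{(k)}$, $\pi^{(2k)}$, $\pi^{(3k)}$ using Lemma~\ref{lemcomp}. To complete your argument you would need to carry out an analogous derivative-and-support analysis for $W$ in full detail; as it stands, the crucial length bound is asserted but not established.
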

\begin{proof}
Let $\pi=\nu^{\alpha+1}\rho\times Z([\nu^{\alpha}\rho],\Delta)$. Note that $Z([\nu^{\alpha+1}\rho],[\nu^{\alpha}\rho],\Delta)$ is the unique irreducible subrepresentation of $\nu^{\alpha+1}\rho\times\nu^{\alpha}\rho\times Z(\Delta)$ by Theorem 6.1 of \cite{Zelevinsky}. We have embeddings
$$Z([\nu^{\alpha+1}\rho],[\nu^{\alpha}\rho],\Delta)\hookrightarrow\pi\hookrightarrow\nu^{\alpha+1}\rho\times\nu^{\alpha}\rho\times Z(\Delta)$$
and hence $Z([\nu^{\alpha+1}\rho],[\nu^{\alpha}\rho],\Delta)$ is the unique irreducible subrepresentation of $\pi$. First note that 
	$$Z([\nu^{\alpha+1}\rho],[\nu^{\alpha}\rho],\Delta)\not\cong\pi.$$
	This follows since we also have an embedding 
	$$Z([\nu^{\alpha+1}\rho],[\nu^{\alpha}\rho],\Delta)\hookrightarrow Z([\nu^{\alpha+1}\rho],[\nu^{\alpha}\rho])\times Z(\Delta)$$ and hence would obtain an embedding
	$$\pi^{(2k)}\hookrightarrow (Z([\nu^{\alpha+1}\rho],[\nu^{\alpha}\rho])\times Z(\Delta))^{(2k)}.$$
	However, in the Grothendieck group we have that 
	$$\pi^{(2k)}=Z([\nu^{\alpha}\rho],\Delta^-)+\nu^{\alpha+1}\rho\times Z(\Delta^{-})$$
	and 
	$$(Z([\nu^{\alpha+1}\rho],[\nu^{\alpha}\rho])\times Z(\Delta))^{(2k)}=Z(\Delta)+\nu^{\alpha+1}\rho\times Z(\Delta^-)$$
	which yields a contradiction.
 
	Now let $\omega$ be a Jordan-H\"{o}lder constituent of $\pi$ different from $Z([\nu^{\alpha+1}\rho],[\nu^{\alpha}\rho],\Delta)$. Then $\omega$ has to have highest derivative either $k$, $2k$ or $3k$ since $\pi^{(i)}=0$ for $i>3k$. Note that
	$$\pi^{(3k)}=Z([\nu^{\alpha+1}\rho],[\nu^{\alpha}\rho],\Delta)^{(3k)}=Z(\Delta^-),$$
	which implies that $\omega$ cannot have highest derivative $3k$. If $\omega$ has highest derivative $k$ then we have to have that $\omega=Z(\Delta\cup[\nu^{\alpha}\rho,\nu^{\alpha+1}\rho])$ and $\omega^{(k)}=Z(\Delta\cup[\nu^{\alpha}\rho])$. However, by Lemma \ref{lemcomp} we have
	$$\pi^{(k)}=\nu^{\alpha+1}\rho\times Z([\nu^{\alpha}\rho],\Delta^-)+ Z([\nu^{\alpha}\rho],\Delta)$$ and hence $\omega$ has highest derivative $2k$. Since $\pi^{(2k)}=Z([\nu^{\alpha}\rho],\Delta^-)+\nu^{\alpha+1}\rho\times Z(\Delta^{-})$ this implies that $\omega=Z([\nu^{\alpha}\rho,\nu^{\alpha+1}\rho],\Delta)$.
\end{proof}

We will need the following variants of Propositions 5.1 and 5.3 of \cite{Zelevinsky}. We refer to Section 3 of \cite{Zelevinsky} for the necessary definitions. 
\begin{lemma}
	Let $\tau$ be a smooth representation of $P_n$. The following are equivalent:
	\begin{enumerate}
		\item All  nonzero subrepresentations of $\tau$ have highest derivative at least $m$.
		\item There is an injective $P_n$-morphism $\tau\to\bigoplus_{j\geq m}(\hat{\Phi}^+)^{j-1}\Psi^+(\tau^{(j)})$.
	\end{enumerate}
\end{lemma}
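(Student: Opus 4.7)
The plan is to produce a canonical candidate map for the injection in (2) and then characterize its kernel. Iterating the standard Bernstein--Zelevinsky adjunctions recalled in Section~3 of \cite{BZI}---namely that $\hat{\Phi}^+$ is right adjoint to $\Phi^-$, and $\Psi^+$ is right adjoint to $\Psi^-$---yields for every $j\geq 1$ a natural isomorphism
$$\Hom_{P_n}\bigl(\tau,\,(\hat{\Phi}^+)^{j-1}\Psi^+ Y\bigr)\;\cong\;\Hom_{G_{n-j}}\bigl(\tau^{(j)},\,Y\bigr)$$
for every $Y\in\Rep_{\overline{\mathcal K}}(G_{n-j})$. In other words, $(\hat{\Phi}^+)^{j-1}\Psi^+$ is right adjoint to the $j$-th Bernstein--Zelevinsky derivative functor on $\Rep_{\overline{\mathcal K}}(P_n)$. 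Applying this adjunction to $\operatorname{id}_{\tau^{(j)}}$ produces a canonical unit map $\eta_j\colon\tau\to(\hat{\Phi}^+)^{j-1}\Psi^+\tau^{(j)}$, and I take $\eta=\bigoplus_{j\geq m}\eta_j$ as the candidate injection for~(2).

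For (1)$\Rightarrow$(2) the task reduces to showing that $\eta$ is injective under the hypothesis. For any subrepresentation $\sigma\subseteq\tau$, naturality of the adjunction identifies $\eta_j|_\sigma$ with the morphism $\sigma^{(j)}\to\tau^{(j)}$ induced by $\sigma\hookrightarrow\tau$; this morphism is injective because the derivative functor $(-)^{(j)}$ is exact, so it vanishes precisely when $\sigma^{(j)}=0$. Consequently $\sigma\subseteq\ker(\eta)$ iff $\sigma^{(j)}=0$ for every $j\geq m$, i.e.\ iff the highest derivative of $\sigma$ is strictly less than $m$. Hypothesis (1) forbids any such nonzero $\sigma$, so $\ker(\eta)=0$.

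For (2)$\Rightarrow$(1), let $\iota\colon\tau\hookrightarrow\bigoplus_{j\geq m}(\hat{\Phi}^+)^{j-1}\Psi^+\tau^{(j)}$ be any injection and let $\sigma\subseteq\tau$ be a nonzero subrepresentation. Then $\iota|_\sigma$ is nonzero, so its projection to some summand $(\hat{\Phi}^+)^{j_0-1}\Psi^+\tau^{(j_0)}$ with $j_0\geq m$ is nonzero, and the same adjunction transports this to a nonzero morphism $\sigma^{(j_0)}\to\tau^{(j_0)}$. In particular $\sigma^{(j_0)}\neq 0$, so the highest derivative of $\sigma$ is at least $j_0\geq m$, as required.

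The only point genuinely requiring care is the derivation of the adjunction identification of $(\hat{\Phi}^+)^{j-1}\Psi^+$ as right adjoint to $(-)^{(j)}$: this amounts to carefully tracking the modular twists $\delta_{U_n}^{\pm 1/2}$ in the definitions of $\Psi^\pm$ and $\hat{\Phi}^+$ and combining Frobenius reciprocity for $\Ind_{P_{n-1}U_n}^{P_n}$ with the inflation--coinvariants adjunction. Once this is in place, both directions of the equivalence fall out formally, and in particular the BZ filtration of $\tau$ with successive quotients $(\hat{\Phi}^+)^{j-1}\Psi^+\tau^{(j)}$ is not needed for the argument.
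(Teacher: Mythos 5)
Your proposal is correct and follows essentially the same route as the paper's proof: both directions hinge on the adjunction $\Hom_{P_n}(\sigma,(\hat{\Phi}^+)^{j-1}\Psi^+(\omega^{(j)}))\cong\Hom_{G_{n-j}}(\sigma^{(j)},\omega^{(j)})$ from BZ Section~3.2, the unit maps $\eta_j$ and their kernel for (1)$\Rightarrow$(2), and picking a nonvanishing projection for (2)$\Rightarrow$(1). Your remark that the full BZ filtration of $\tau$ is not needed is accurate and matches how the paper proceeds.
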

\begin{proof}
	Note that for any smooth representations $\sigma$ and $\omega$ of $P_n$ we have by Section 3.2 of \cite{BZI} that 
	\begin{equation}\label{eqfrobfunc}
		\Hom_{P_n}(\sigma,(\hat{\Phi}^+)^{r-1}\circ\Psi^+(\omega^{(r)}))\cong \Hom_{G_{n-r}}(\sigma^{(r)},\omega^{(r)})
	\end{equation}
	for $1\leq r\leq n$.
 
	(1) implies (2): For any $1\leq r\leq n$ there is a canonical map $\tau\to(\hat{\Phi}^+)^{r-1}\Psi^+(\tau^{(r)})$ which corresponds to the identity morphism $\tau^{(r)}\to\tau^{(r)}$ under the above isomorphism. Let $\Omega$ be the kernel of the induced map $\tau\to\bigoplus_{j\geq m}(\hat{\Phi}^+)^{j-1}\Psi^+(\tau^{(j)})$. For all $j\geq m$ the composition of $\Omega\hookrightarrow\tau$ and $\tau\to(\hat{\Phi}^+)^{j-1}\Psi^+(\tau^{(j)})$ is zero. Equation (\ref{eqfrobfunc}) implies that the embedding $\Omega^{(j)}\hookrightarrow\tau^{(j)}$ is the zero map and hence $\Omega^{(j)}=0$ for all $j\geq m$. By our assumption we obtain that $\Omega=0$.
 
	(2) implies (1): Let $\sigma$ be a nonzero subrepresentation of $\tau$. Then there is $r\geq m$ such that the image of $\sigma$ under the map $\tau\to (\hat{\Phi}^+)^{r-1}\Psi^+(\tau^{(r)})$ is nonzero. In particular we obtain that $\Hom_{P_n}(\sigma,(\hat{\Phi})^{r-1}\circ\Psi^+(\tau^{(r)}))$ is nonzero, which by Equation (\ref{eqfrobfunc}) implies that $\sigma^{(r)}\not=0$. 
\end{proof}
This result allows us to prove the following.
\begin{lemma}\label{Lemmaprod} 
	Let $\tau$ be a smooth nonzero representation of $P_n$ and $\pi$ a smooth nonzero representation of $G_k$. Suppose all subrepresentations of $\tau$ have highest derivative at least $m$, then all subrepresentations of $\pi\times\tau$ have highest derivative at least $m$.
\end{lemma}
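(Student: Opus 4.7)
The plan is to reduce the claim to the characterization proved in the preceding lemma, exploiting the fact that the functors $\hat{\Phi}^+$ and $\Psi^+$ commute with left multiplication by $\pi$. First I would apply the preceding lemma to $\tau$ to obtain an embedding
$$\tau \hookrightarrow \bigoplus_{j\geq m}(\hat{\Phi}^+)^{j-1}\Psi^+(\tau^{(j)}).$$
Since $\pi\times(-)$ is exact, applying it yields an injection
$$\pi\times\tau \hookrightarrow \bigoplus_{j\geq m}\pi\times(\hat{\Phi}^+)^{j-1}\Psi^+(\tau^{(j)}),$$
and I would then invoke the standard commutation relations $\pi\times\hat{\Phi}^+(-)\cong\hat{\Phi}^+(\pi\times-)$ and $\pi\times\Psi^+(-)\cong\Psi^+(\pi\times-)$ to rewrite the right-hand side as $\bigoplus_{j\geq m}(\hat{\Phi}^+)^{j-1}\Psi^+(\pi\times\tau^{(j)})$.

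With this embedding in hand, I would take a nonzero subrepresentation $\sigma\subseteq\pi\times\tau$ and let $j_0$ be the largest index $j\geq m$ for which the projection of $\sigma$ onto the summand $(\hat{\Phi}^+)^{j_0-1}\Psi^+(\pi\times\tau^{(j_0)})$ is nonzero; such a $j_0$ exists since $\sigma\neq 0$. The key observation is that for any $G_N$-representation $\xi$, the representation $(\hat{\Phi}^+)^{j_0-1}\Psi^+(\xi)$ has $\xi$ in derivative degree $j_0$ and vanishes in all other degrees; this follows from the identities $\Psi^-\hat{\Phi}^+=0$, $\Phi^-\hat{\Phi}^+=\operatorname{id}$, and $\Psi^-\Psi^+=\operatorname{id}$. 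Since any nonzero smooth $P_N$-representation admits some nonzero derivative, the nonzero image of $\sigma$ in the $j_0$-summand must have nonzero $(j_0)$-th derivative. Exactness of the derivative functor then gives a surjection $\sigma^{(j_0)}\twoheadrightarrow (\operatorname{image})^{(j_0)}\neq 0$, so $\sigma^{(j_0)}\neq 0$ with $j_0\geq m$, which is exactly the claim.

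The main obstacle will be justifying the commutation of $\pi\times(-)$ with $\hat{\Phi}^+$ and $\Psi^+$. The statement for $\Psi^+$ is essentially formal, since $\Psi^+(\xi)$ is just $\xi$ (up to a modulus twist) regarded as a $P_N$-representation with $U_N$ acting trivially, so intertwining with $\pi\times(-)$ reduces to matching two descriptions of the same induced module. The statement for $\hat{\Phi}^+$ requires a little more care, as $\hat{\Phi}^+$ is defined by compact induction, but it follows from transitivity of induction together with an identification of the two inducing subgroups involved; this is a standard piece of the Bernstein--Zelevinsky toolkit and involves no double coset sum. Once this commutation is granted, the rest of the argument is purely formal.
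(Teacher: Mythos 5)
Your overall strategy matches the paper's: reduce to the characterization from the preceding lemma, apply exactness of $\pi\times(-)$, and move $\pi\times(-)$ inside the $(\hat{\Phi}^+)^{j-1}\Psi^+$. The main discrepancy is your claim that $\pi\times\hat{\Phi}^+(-)\cong\hat{\Phi}^+(\pi\times-)$. Recall that $\hat{\Phi}^+$ is defined by \emph{full} (not compact) induction, being the \emph{right} adjoint of $\Phi^-$; full induction does not commute with the compact induction $\pi\times(-)$, so in general one only gets a natural \emph{embedding} $\pi\times\hat{\Phi}^+(\gamma)\hookrightarrow\hat{\Phi}^+(\pi\times\gamma)$, which is precisely what the paper extracts from \cite{Zelevinsky}, 5.3. (For $\Phi^+$, which is compact induction and the left adjoint, the commutation really is an isomorphism; your intuition about transitivity applies there but not to $\hat{\Phi}^+$.) Fortunately the argument only ever needs the embedding, so this slip does not affect the proof. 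The other difference is cosmetic: having embedded $\pi\times\tau$ into $\bigoplus_{j\geq m}(\hat{\Phi}^+)^{j-1}\Psi^+((\pi\times\tau)^{(j)})$, the paper simply invokes the equivalence (2)$\Rightarrow$(1) of the lemma it just proved, whereas you re-derive that implication by hand (projecting to a summand, using $\Psi^-\hat{\Phi}^+=0$, $\Phi^-\hat{\Phi}^+=\mathrm{id}$, $\Psi^-\Psi^+=\mathrm{id}$, and the fact that a nonzero smooth $P_N$-module has some nonzero derivative). Your re-derivation is correct but redundant; also the choice of the \emph{largest} $j_0$ plays no role --- any $j_0\geq m$ with nonzero projection suffices, which is exactly how the cited direction of the lemma is proved.
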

\begin{proof} 
 By assumption and the above proposition we have an injective map $$\tau\to\bigoplus_{j\geq m}(\hat{\Phi}^+)^{j-1}\Psi^+(\tau^{(j)}).$$ By exactness of $\pi\times\dotsc$ we see that 
 $\pi\times\tau\to\pi\times\bigoplus_{j\geq m}(\hat{\Phi}^+)^{j-1}\Psi^+(\tau^{(j)})$ is injective and note that $$\pi\times\bigoplus_{j\geq m}(\hat{\Phi}^+)^{j-1}\Psi^+(\tau^{(j)})\cong\bigoplus_{j\geq m}\pi\times(\hat{\Phi}^+)^{j-1}\Psi^+(\tau^{(j)}).$$
 As in 5.3 of \cite{Zelevinsky} we have an embedding $\pi\times(\hat{\Phi}^+)^{j-1}\Psi^+(\tau^{(j)})\hookrightarrow (\hat{\Phi}^+)^{j-1}\Psi^+((\pi\times\tau)^{(j)})$ for all $j\geq m$ and hence obtain an injective map $$\pi\times\tau\to\bigoplus_{j\geq m} (\hat{\Phi}^+)^{j-1}\Psi^+((\pi\times\tau)^{(j)}),$$
 which by the above proposition implies the result. 
\end{proof}

\begin{lemma}\label{zelvlemm} Let $\pi$ (and $\sigma$) be a smooth representation of $G_n$ (respectively $G_m$). 
\begin{enumerate}
    \item If $\omega$ is a nonzero submodule of $(\sigma\times\pi)|_{P_{m+n}},$ then there is $r\geq 0$ such that $(\Phi^{-})^r(\omega)\not=0$ and an embedding $(\Phi^{-})^{r}(\omega)\hookrightarrow\sigma^{(r)}\times\pi|_{P_n}$.
    \item Suppose that all nonzero subrepresentations of $\pi|_{P_n}$ have highest derivative at least $\mu$. Then all nonzero subrepresentations of $(\sigma\times\pi)|_{P_{m+n}}$ have highest derivative at least $\mu$. 
\end{enumerate}
\end{lemma}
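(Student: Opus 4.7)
The plan is to deduce part~(2) from part~(1) together with Lemma~\ref{Lemmaprod}, and to prove part~(1) via the Bernstein--Zelevinsky analysis of $(\sigma\times\pi)|_{P_{m+n}}$.

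For part~(1), the key input is that the restriction $(\sigma\times\pi)|_{P_{m+n}}$ admits a canonical $P_{m+n}$-stable finite filtration whose successive subquotients are built from the functors $\hat{\Phi}^+$ and $\Psi^+$ applied to external products of derivatives of $\sigma$ with $\pi|_{P_n}$; see Section~4 of \cite{BZI} and Section~5 of \cite{Zelevinsky}. Given a nonzero subrepresentation $\omega$, one tracks its position with respect to this filtration and applies the functor $\Phi^-$ an appropriate number of times, exploiting the identities $\Phi^-\Psi^+ = 0$ and $\Phi^-\hat{\Phi}^+\cong \mathrm{id}$ from Section~3 of \cite{BZI} together with the fact that a nonzero subrepresentation of $\hat{\Phi}^+(X)$ has nonzero image under $\Phi^-$ (since $U_n$ acts on $\hat{\Phi}^+(X)$ via the nontrivial character $\psi$). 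This process extracts the integer $r\geq 0$ and the desired embedding $(\Phi^-)^r(\omega)\hookrightarrow\sigma^{(r)}\times(\pi|_{P_n})$.

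For part~(2), given a nonzero subrepresentation $\omega$ of $(\sigma\times\pi)|_{P_{m+n}}$, part~(1) yields some $r\geq 0$ with a nonzero embedding $(\Phi^-)^r(\omega)\hookrightarrow\sigma^{(r)}\times(\pi|_{P_n})$. By the hypothesis that every nonzero subrepresentation of $\pi|_{P_n}$ has highest derivative at least $\mu$, applying Lemma~\ref{Lemmaprod} with $\sigma^{(r)}$ in place of $\pi$ and $\pi|_{P_n}$ in place of $\tau$ shows that every nonzero subrepresentation of $\sigma^{(r)}\times(\pi|_{P_n})$ has highest derivative at least $\mu$; in particular so does $(\Phi^-)^r(\omega)$. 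Hence there exists $s\geq\mu$ with
$$\omega^{(s+r)} = \Psi^-(\Phi^-)^{s-1}\bigl((\Phi^-)^r(\omega)\bigr) \neq 0,$$
so $\omega$ has highest derivative at least $s+r\geq\mu$.

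The main obstacle is the careful execution of part~(1): writing down the precise BZ filtration on $(\sigma\times\pi)|_{P_{m+n}}$ in a form amenable to $\Phi^-$-computations, and verifying that tracking $\omega$ through this filtration together with the mirabolic functor identities produces an honest embedding in the end. This is standard Bernstein--Zelevinsky geometric-lemma bookkeeping, in the spirit of Section~5 of \cite{Zelevinsky}, but must be executed with care so that at each stage the iterated image of $\omega$ under $\Phi^-$ remains nonzero and injects into the expected subquotient.
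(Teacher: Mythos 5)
Your part (2) matches the paper's argument exactly: by part (1) one embeds $(\Phi^-)^r(\omega)$ into $\sigma^{(r)}\times(\pi|_{P_n})$, then Lemma~\ref{Lemmaprod} (with $\sigma^{(r)}$ playing the role of $\pi$ and $\pi|_{P_n}$ the role of $\tau$) shows this target has all nonzero subrepresentations of highest derivative at least $\mu$, and hence $\omega$ has highest derivative at least $\mu + r \geq \mu$.

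For part (1), however, you have sketched a plan rather than given a proof, and you say so yourself (``The main obstacle is the careful execution of part~(1)\ldots''). The paper does not in fact build a single explicit $P_{m+n}$-stable filtration with $\hat\Phi^+/\Psi^+$ pieces, nor does it need the functor identities $\Phi^-\Psi^+=0$ or $\Phi^-\hat\Phi^+\cong\operatorname{id}$. Instead it argues by contradiction, repeatedly invoking Proposition 4.13 of \cite{BZI} directly: 4.13a) gives the short exact sequence
$0\to \sigma|_{P_m}\times\pi\to (\sigma\times\pi)|_{P_{n+m}}\to \sigma\times(\pi|_{P_n})\to 0$;
if $\omega$ is not already contained in the right-hand quotient (which would be the case $r=0$), it lands in $\sigma|_{P_m}\times\pi$, and then 4.13c) and 4.13d) show inductively that $(\Phi^-)^{i+1}(\omega)$ is nonzero and embeds into $(\Phi^-)^{i+1}(\sigma|_{P_m})\times\pi$ unless it already embeds into $\sigma^{(i+1)}\times\pi|_{P_n}$. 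After $m$ steps the only remaining possibility is $\Phi^-\bigl((\Phi^-)^{m-1}(\sigma|_{P_m})\times\pi\bigr)\cong\sigma^{(m)}\times\pi|_{P_n}$, giving the desired embedding (or, phrased as a contradiction, refuting the assumption that no such $r$ exists). Your filtration picture is morally the same iteration unrolled, but the nontrivial work — establishing at each stage that the iterated $\Phi^-$-image of $\omega$ is nonzero and genuinely injects into the relevant subquotient, using 4.13d) — is precisely what you flag as unresolved; that is the whole content of part (1), so as written this is a gap rather than a proof.
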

\begin{proof}
ad (1):
Assume the contrary. By Proposition 4.13a) of ~\cite{BZI} there is a short exact sequence
$$0\to \sigma|_{P_m}\times\pi\to (\sigma\times\pi)|_{P_{n+m}}\to \sigma\times(\pi|_{P_n})\to 0,$$
and by our assumption $\omega$ cannot be contained in $\sigma^{(0)}\times(\pi|_{P_n})=\sigma\times(\pi|_{P_n})$. Hence we have an embedding $\omega\hookrightarrow\sigma|_{P_m}\times\pi$.
 For $i\geq 0$ suppose that $(\Phi^-)^{i}(\omega)\not=0$ and $(\Phi^-)^{i}(\omega)$ can be embedded into $(\Phi^-)^{i}(\sigma|_{P_m})\times\pi$. By Proposition 4.13 d) of ~\cite{BZI} we obtain that $(\Phi^-)^{i+1}(\omega)\not=0$. By Proposition 4.13c) of ~\cite{BZI} we have a short exact sequence
 $$0\to (\Phi^-)^{i+1}(\sigma|_{P_m})\times\pi\to\Phi^{-}( (\Phi^-)^{i}(\sigma|_{P_m})\times\pi)\to \sigma^{(i+1)}\times\pi|_{P_n}\to 0,$$
 and hence $(\Phi^-)^{i+1}(\omega)$ is contained in either $(\Phi^-)^{i+1}(\sigma|_{P_m})\times\pi$ or in $\sigma^{(i+1)}\times\pi|_{P_n}$. However, by our assumption the latter case is impossible and hence there is an embedding $(\Phi^-)^{i+1}(\omega)\hookrightarrow(\Phi^-)^{i+1}(\sigma|_{P_m})\times\pi$. 
 Inductively, we obtain that $(\Phi^-)^{m-1}(\omega)\not=0$ and an embedding $(\Phi^-)^{m-1}(\omega)\hookrightarrow(\Phi^-)^{m-1}(\sigma|_{P_m})\times\pi$. Again by Proposition 4.13 d) ~\cite{BZI} we see that $(\Phi^-)^{m}(\omega)\not=0$ and obtain an embedding
 $$(\Phi^-)^{m}(\omega)\hookrightarrow\Phi^{-}( (\Phi^-)^{m-1}(\sigma|_{P_m})\times\pi).$$
 However, $\Phi^{-}( (\Phi^-)^{m-1}(\sigma|_{P_m})\times\pi)$ is isomorphic to $\sigma^{(m)}\times\pi|_{P_n}$ which yields a contradiction.

ad (2): Let $\omega'$ be a nonzero subrepresentations of $(\sigma\times\pi)|_{P_{n+m}}$. By (1) there is an integer $r\geq 0$ such that such that $(\Phi^{-})^r(\omega')\not=0$ and there is an embedding $(\Phi^{-})^{r}(\omega')\hookrightarrow\sigma^{(r)}\times\pi|_{P_n}$. By Lemma \ref{Lemmaprod} every nonzero subrepresentation of $\sigma^{(r)}\times\pi|_{P_n}$ has highest derivative at least $\mu$. This implies that $\omega'$ has highest derivative at least $\mu+r$, which finishes the proof.
\end{proof}

\begin{lemma}\label{reduction}
	Let $\pi$ be a smooth finite length representation of $G_n$ and suppose that all nonzero subrepresentations of $\pi|_{P_n}$ have highest derivative at least $\mu$. Let $\Delta$ be the segment $[\rho,\nu\rho,\dotsc,\nu^{\alpha-1}\rho]$ and suppose that $\nu^{\alpha}\rho$ is not an element of the supercuspidal support of any irreducible subquotient of $\pi$.
 \begin{enumerate}
     \item For any permutation $s\in S_{\alpha}$ we set $\Sigma_s$ to be the representation $\Sigma_s=\nu^{s(1)-1}\rho\times\nu^{s(2)-1}\rho\times\dotsc\times \nu^{s(\alpha)-1}\rho$. Then all nonzero subrepresentations of $(\Sigma_s\times\pi)|_{P_{\alpha k+n}}$ have highest derivative at least $k+\mu$.
     \item Let $\sigma$ be an irreducible representation of $G_{\alpha k}$ with supercuspidal support $\Delta$. Then all nonzero subrepresentations of $(\sigma\times\pi)|_{P_{\alpha k+n}}$ have highest derivative at least $k+\mu$.
 \end{enumerate}
\end{lemma}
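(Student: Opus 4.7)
My plan is to prove part (1) by induction on $\alpha$, and then deduce part (2) via a Zelevinsky embedding.

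For part (2), the irreducible $\sigma$ with supercuspidal support $\Delta$ can be written as $\sigma = Z(\mathfrak{m})$ for a multisegment $\mathfrak{m} = \{\Delta_1, \ldots, \Delta_N\}$ partitioning $\Delta$, with the $\Delta_i$ ordered so that no $\Delta_i$ precedes $\Delta_j$ for $i > j$. The standard Zelevinsky embedding $Z(\mathfrak{m}) \hookrightarrow Z(\Delta_1) \times \cdots \times Z(\Delta_N)$, combined with the embedding $Z(\Delta_i) \hookrightarrow J(\Delta_i)$ of each segment representation into the product of its cuspidals in natural order, yields $\sigma \hookrightarrow \Sigma_s$ for some $s \in S_{\alpha}$. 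Restriction to $P_{\alpha k + n}$ preserves embeddings, so every subrepresentation of $(\sigma \times \pi)|_{P_{\alpha k + n}}$ is a subrepresentation of $(\Sigma_s \times \pi)|_{P_{\alpha k + n}}$ and the bound from part (1) applies.

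For part (1), applying Lemma~\ref{zelvlemm}(1) to a nonzero subrepresentation $\omega \subseteq (\Sigma_s \times \pi)|_{P_{\alpha k + n}}$ yields an $r \geq 0$ with $(\Phi^{-})^r(\omega) \neq 0$ and an embedding $(\Phi^{-})^r(\omega) \hookrightarrow \Sigma_s^{(r)} \times \pi|_{P_n}$. Because $\Sigma_s$ is a product of $\alpha$ cuspidal representations of $G_k$, the Leibniz rule together with the vanishing of $(\nu^j \rho)^{(i)}$ outside $i \in \{0, k\}$ forces $\Sigma_s^{(r)} = 0$ unless $r$ is a nonnegative multiple of $k$. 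Consequently either $r = 0$ or $r \geq k$. In the latter case, Lemma~\ref{Lemmaprod} gives that every nonzero subrepresentation of $\Sigma_s^{(r)} \times \pi|_{P_n}$ has highest derivative at least $\mu$, so the image of $(\Phi^{-})^r(\omega)$ has highest derivative at least $\mu$, and therefore $\omega$ has highest derivative at least $r + \mu \geq k + \mu$, as required.

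The case $r = 0$, in which $\omega \hookrightarrow \Sigma_s \times \pi|_{P_n}$, is the main technical obstacle, since Lemmas~\ref{zelvlemm}(2) and~\ref{Lemmaprod} only yield the weaker bound $\mu$ directly. I will handle it by induction on $\alpha$: after rearranging the factors of $\Sigma_s$ to place $\rho = \nu^0 \rho$ in the leftmost position---using that $\rho$ is unlinked with $\nu^j \rho$ for $j \geq 2$, so these factors commute, and treating the single potentially linked swap $\rho \times \nu\rho$ via the short exact sequence of Lemma~\ref{resolve}---one obtains $\Sigma_s \cong \rho \times \Sigma_{s'}$, where $\Sigma_{s'}$ corresponds (setting $\rho' := \nu \rho$) to the segment $[0, \alpha-2]_{\rho'}$ under some permutation $s' \in S_{\alpha - 1}$. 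Since $\nu^{\alpha - 1}\rho' = \nu^\alpha \rho$ remains outside the supercuspidal support of $\pi$, the inductive hypothesis applies and shows that every nonzero subrepresentation of $(\Sigma_{s'} \times \pi)|_{P_{(\alpha - 1) k + n}}$ has highest derivative at least $k + \mu$; Lemma~\ref{zelvlemm}(2) with $\sigma = \rho$ then transfers this bound to subreps of $(\Sigma_s \times \pi)|_{P_{\alpha k + n}}$. The base case $\alpha = 1$ must be proved directly: here $\omega \hookrightarrow \rho \times \pi|_{P_n}$, and the argument relies on the equivalence criterion (a $P_n$-representation has all subrepresentations of highest derivative at least $m$ iff it embeds into $\bigoplus_{j \geq m} (\hat{\Phi}^{+})^{j - 1} \Psi^{+}(\cdot^{(j)})$) together with the Zelevinsky intertwiner of~\cite{Zelevinsky}, \S 5, to show that the hypothesis $\nu \rho \notin \operatorname{supp}(\pi)$ forces the low-$j$ Leibniz pieces of $\rho \times \pi$ to lift through higher layers. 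The hardest point throughout is controlling the linked-swap short exact sequences and verifying that their constituents do not contribute subrepresentations of highest derivative below $k + \mu$.
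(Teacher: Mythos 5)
Your treatment of the case $r\geq k$ matches the paper exactly, and your reduction of part (2) to part (1) via a Zelevinsky embedding $\sigma\hookrightarrow\Sigma_s$ is also the paper's argument. The problem is the case $r=0$.

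First, the claim ``$\Sigma_s\cong\rho\times\Sigma_{s'}$'' after rearrangement is false whenever $\nu\rho$ appears to the left of $\rho$ in $\Sigma_s$. While $\rho$ commutes past $\nu^j\rho$ for $j\geq 2$ (unlinked cuspidals), the linked pair satisfies $\nu\rho\times\rho\not\cong\rho\times\nu\rho$: the former has socle $Z([\nu\rho],[\rho])$ and cosocle $Z([\rho,\nu\rho])$, while the latter has these swapped. Lemma~\ref{resolve} gives you a short exact sequence for $\nu\rho\times\rho$ whose pieces are $Z([\nu\rho],[\rho])$ and $Z([\rho,\nu\rho])$, neither of which is a product of cuspidals; it does not deliver an isomorphism with $\rho\times\nu\rho$, nor a filtration by products of the form $\rho\times(\cdots)$. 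So the inductive step is not available in the form you state it, and any filtration-based repair would produce representations outside the inductive framework. Second, your base case $\alpha=1$ is only gestured at (``the low-$j$ Leibniz pieces lift through higher layers'') and is not actually proved; but this is exactly where the substance of the $r=0$ obstruction lives.

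The paper dispenses with the $r=0$ case in one stroke, with no induction on $\alpha$: if $r=0$ then $\omega\hookrightarrow\Sigma_s\times(\pi|_{P_n})$, so taking the highest ($l$th) derivative and using Corollary 4.14 of \cite{BZI} gives $\omega^{(l)}\hookrightarrow\Sigma_s\times\pi^{(l)}$. Combined with Lemma~\ref{mirapart} (so $\omega^{(l)}$ is the irreducible highest derivative of a Jordan--H\"older factor $\tau$ of $\Sigma_s\times\pi$) and the supercuspidal-support behaviour of highest derivatives (Lemma 4.7 of \cite{BZI}), this forces $\nu^{\alpha}\rho$ into the supercuspidal support of a subquotient of $\Sigma_s\times\pi$, contradicting the hypothesis. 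In short, the hypothesis that $\nu^{\alpha}\rho$ is absent is exactly what kills the $r=0$ case outright; your proposal never deploys this hypothesis in that case, which is why it has to resort to a (flawed) induction.
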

\begin{proof}
ad (1): Let $\omega$ be an irreducible subrepresentation of $(\Sigma_s\times\pi)|_{P_{\alpha k+n}}$ with highest derivative $l$. By Lemma \ref{zelvlemm} there is an integer $r\geq 0$ such that $(\Phi^-)^r(\omega)\not=0$ and $(\Phi^-)^r(\omega)\hookrightarrow\Sigma_s^{(r)}\times\pi|_{P_n}$.
	Firstly, suppose that $r=0$. Then $\omega^{(l)}$ is contained in
$(\Sigma_s\times(\pi|_{P_n}))^{(l)}$ which by Corollary 4.14 of \cite{BZI} equals $\Sigma_s\times\pi^{(l)}.$
	By Lemma 4.7 of \cite{BZI} this would imply that $\nu^\alpha\rho$ is an element of the supercuspidal support of an irreducible subquotient of $\Sigma_s\times\pi$, which contradicts our assumptions.
 
Note that for $1\leq j<k$ the representation $\Sigma_s^{(j)}$ is zero. Hence we obtain that $r\geq k$. By Lemma \ref{Lemmaprod} every nonzero irreducible subrepresentation of $\Sigma_s^{(r)}\times\pi|_{P_n}$ has highest derivative at least $\mu$ so in particular $(\Phi^-)^r(\omega)$ has highest derivative at least $\mu$. Then $\omega$ has highest derivative at least $k+\mu$.

ad (2): This follows immediately from part (1), since, by the theory of Bernstein and Zelevinsky, for any irreducible $G_{\alpha k}$-representation $\sigma$ with supercuspidal support $\Delta$ there exists a permutation $s\in S_\alpha$ such that there is an embedding of $\sigma$ into $\Sigma_s$.
\end{proof}

\begin{lemma}\label{higherder}
Let $\pi$ be a smooth representation of $G_n$ such that every nonzero subrepresentation of $\pi|_{P_n}$ has highest derivative at least $\mu$. Let $\Delta=[0,\alpha-1]_{\rho}$ be a segment of length at least two and suppose that $\nu^{\alpha}\rho$ is not an element of the supercuspidal support of any irreducible subquotient of $\pi$. Then all nonzero subrepresentations of $(Z([\nu^{\alpha-1}\rho],\Delta^-)\times\pi)|_P$ have highest derivative at least $2k+\mu$.
\end{lemma}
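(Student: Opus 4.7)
The plan is to apply Lemma~\ref{zelvlemm}(1) to a nonzero subrepresentation $\omega\hookrightarrow(M\times\pi)|_P$ of highest derivative $l$, where $M$ denotes $Z([\nu^{\alpha-1}\rho],\Delta^-)$; this produces an integer $r\geq 0$ with $(\Phi^-)^r(\omega)\neq 0$ and $(\Phi^-)^r(\omega)\hookrightarrow M^{(r)}\times\pi|_P$. I will analyse each possible value of $r$ and show $l\geq 2k+\mu$ in every case.

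First I would compute $M^{(r)}$ using the short exact sequence $0\to M\to\nu^{\alpha-1}\rho\times Z(\Delta^-)\to Z(\Delta)\to 0$ from Lemma~\ref{resolve}, together with exactness of the derivative and the Leibniz filtration for derivatives of a product. This gives $M^{(r)}=0$ unless $r\in\{0,k,2k\}$, with $M^{(2k)}\cong Z(\Delta^{-,2})$. For the identification $M^{(k)}\cong\nu^{\alpha-1}\rho\times Z(\Delta^{-,2})$, I would observe that the subrepresentation $\nu^{\alpha-1}\rho\times Z(\Delta^{-,2})$ and the quotient $Z(\Delta^-)$ in the Leibniz filtration of $(\nu^{\alpha-1}\rho\times Z(\Delta^-))^{(k)}$ have disjoint supercuspidal supports, so the $(k)$-derivative of the surjection $\nu^{\alpha-1}\rho\times Z(\Delta^-)\twoheadrightarrow Z(\Delta)$ must annihilate the subrepresentation. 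Hence the integer $r$ produced by Lemma~\ref{zelvlemm} must lie in $\{0,k,2k\}$.

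The case $r=0$ I would rule out by exactly the argument used for the analogous step in Lemma~\ref{reduction}: Corollary~4.14 of \cite{BZI} yields $\omega^{(l)}\hookrightarrow M\times\pi^{(l)}$, and Lemma~4.7 of \cite{BZI} would then force $\nu^\alpha\rho$ into the supercuspidal support of an irreducible subquotient of $M\times\pi$; but neither $\operatorname{supp}(M)=\Delta$ nor $\operatorname{supp}(\pi)$ contains $\nu^\alpha\rho$. The case $r=2k$ is immediate: $(\Phi^-)^{2k}(\omega)\hookrightarrow Z(\Delta^{-,2})\times\pi|_P$, and Lemma~\ref{Lemmaprod} forces the highest derivative of $(\Phi^-)^{2k}(\omega)$ to be at least $\mu$, whence $l\geq 2k+\mu$.

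The crucial case is $r=k$, where $(\Phi^-)^k(\omega)\hookrightarrow\nu^{\alpha-1}\rho\times(Z(\Delta^{-,2})\times\pi)|_P$. Here I would apply Lemma~\ref{reduction}(2) with the length-one segment $[\nu^{\alpha-1}\rho]$ and with target representation $Z(\Delta^{-,2})\times\pi$: the required absence of $\nu^\alpha\rho$ from the supercuspidal support is inherited from our hypothesis together with $\operatorname{supp}(Z(\Delta^{-,2}))=\{\rho,\dotsc,\nu^{\alpha-3}\rho\}$, and the lower bound $\mu$ on highest derivatives of subrepresentations of $(Z(\Delta^{-,2})\times\pi)|_P$ is provided by Lemma~\ref{Lemmaprod}. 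The lemma then yields that $(\Phi^-)^k(\omega)$ has highest derivative at least $k+\mu$, so $l-k\geq k+\mu$ and $l\geq 2k+\mu$. The main technical obstacle I expect is the identification of $M^{(k)}$ as a concrete representation rather than merely as a Grothendieck class; once this is settled, the rest of the argument is a careful reassembly of Lemmas~\ref{zelvlemm}, \ref{Lemmaprod}, and \ref{reduction}.
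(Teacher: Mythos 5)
Your strategy agrees with the paper's up to the point where you treat $r=k$, but there your argument has a genuine type error that breaks it. From Lemma~\ref{zelvlemm}(1) applied to $\omega\hookrightarrow(M\times\pi)|_P$ (with $M=Z([\nu^{\alpha-1}\rho],\Delta^-)$), the embedding you actually obtain is
$$(\Phi^-)^k(\omega)\hookrightarrow M^{(k)}\times\pi|_{P_n}\cong\nu^{\alpha-1}\rho\times Z(\Delta^{-,2})\times\pi|_{P_n},$$
which by associativity is $\nu^{\alpha-1}\rho\times\bigl(Z(\Delta^{-,2})\times\pi|_{P_n}\bigr)$. This is \emph{not} the same object as $\nu^{\alpha-1}\rho\times\bigl(Z(\Delta^{-,2})\times\pi\bigr)|_{P}$: by Proposition 4.13a of \cite{BZI} the latter has $\nu^{\alpha-1}\rho\times Z(\Delta^{-,2})\times\pi|_{P_n}$ only as a \emph{quotient} (not a subrepresentation), so the two are not interchangeable when one cares about subrepresentations. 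As a result, Lemma~\ref{reduction}(2), whose conclusion concerns subrepresentations of $(\sigma\times\pi')|_{P}$ for a $G$-representation $\pi'$, simply does not apply to the representation $(\Phi^-)^k(\omega)$ embeds into. The only tool that does apply is Lemma~\ref{Lemmaprod}, and it gives at most the bound $\geq\mu$ on the highest derivative of $(\Phi^-)^k(\omega)$, yielding $l\geq k+\mu$, which is insufficient.

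The paper does not try to salvage a lower bound from the $r=k$ case at all; it rules the case out. Having identified $M^{(k)}=Z([\nu^{\alpha-1}\rho],\Delta^{-,2})$ via Lemma~\ref{lemcomp}, it passes to $\omega^{(l)}\hookrightarrow Z([\nu^{\alpha-1}\rho],\Delta^{-,2})\times\pi^{(l-k)}$ and invokes Lemma~4.7 of \cite{BZI} exactly as in your $r=0$ argument: the presence of $\nu^{\alpha-1}\rho$ in the supercuspidal support of $M^{(k)}$ would force $\nu^\alpha\rho$ into the supercuspidal support of an irreducible subquotient of $M\times\pi$, contradicting the hypothesis. So $r=k$ cannot occur, and only $r\geq 2k$ survives, where your reasoning (which matches the paper's) finishes the proof. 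In short: your $r=0$ and $r\geq 2k$ analyses are correct and match the paper; the $r=k$ step needs to be replaced by the same supercuspidal-support contradiction you already used for $r=0$, rather than an appeal to Lemma~\ref{reduction}(2). One further small slip: in the final lines you cite Lemma~\ref{Lemmaprod} to bound subrepresentations of $(Z(\Delta^{-,2})\times\pi)|_P$, but that statement is Lemma~\ref{zelvlemm}(2); Lemma~\ref{Lemmaprod} is for $\sigma\times\tau$ with $\tau$ already a $P$-representation.
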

\begin{proof}
    Let $\omega$ be a nonzero irreducible subrepresentation of $(Z([\nu^{\alpha-1}\rho],\Delta^-)\times\pi)|_P$ with highest derivative $l\geq 1$. Lemma \ref{reduction} yields that $l\geq k+\mu$. By Lemma \ref{zelvlemm} there is an integer $r\geq 0$ such that $(\Phi^-)^r(\omega)\not=0$ and $$(\Phi^-)^r(\omega)\hookrightarrow Z([\nu^{\alpha-1}\rho],\Delta^-)^{(r)}\times\pi|_{P_n}.$$ 
    Note that for $r<2k$ we have that $Z([\nu^{\alpha-1}\rho],\Delta^-)^{(r)}=0$ unless $r$ is $0$ or $k$.
    
    If $r=0$ we see by Corollary 4.14 of \cite{BZI} that $\omega^{(l)}$ is contained in
	$$(Z([\nu^{\alpha-1}\rho],\Delta^-)\times\pi|_{P_n})^{(l)}=Z([\nu^{\alpha-1}\rho],\Delta^-)\times\pi^{(l)}.$$
	By Lemma 4.7 of \cite{BZI} this would imply that $\nu^\alpha\rho$ is an element of the supercuspidal support of an irreducible subquotient of $Z([\nu^{\alpha-1}\rho],\Delta^-)\times\pi$, which contradicts our assumptions.
 
    If $r=k$ we have an embedding $(\Phi^-)^k(\omega)\hookrightarrow Z([\nu^{\alpha-1}\rho],\Delta^-)^{(k)}\times\pi|_{P_n}$. By Lemma \ref{lemcomp} we have that
$$Z([\nu^{\alpha-1}\rho],\Delta^-)^{(k)}=Z([\nu^{\alpha-1}\rho],\Delta^{-,2})$$
and obtain an embedding 
 $$\omega^{(l)}\hookrightarrow Z([\nu^{\alpha-1}\rho],\Delta^{-,2})\times\pi^{(l-k)}.$$
As before, we would have by Lemma 4.7 of \cite{BZI} that $\nu^{\alpha}\rho$ is contained in the supercuspidal support of an irreducible subquotient of $Z([\nu^{\alpha-1}\rho],\Delta^-)\times\pi$, which again contradicts our assumptions.

Hence we obtain that $r\geq 2k$. By Lemma \ref{Lemmaprod} any nonzero subrepresentation of $$Z([\nu^{\alpha-1}\rho],\Delta^-)^{(r)}\times\pi|_{P_n}$$ has highest derivative at least $\mu$. In particular $(\Phi^-)^r(\omega)$ has highest derivative at least $\mu$, which immediately shows that $\omega$ has highest derivative at least $r+\mu\geq 2k+\mu$.
\end{proof}
\begin{lemma}\label{mirapart}
	Let $\sigma$ be an irreducible $P_n$-subrepresentation which can be embeded into a finite length $G_n$-representation $\pi$. Then the highest derivative sequence of $\sigma$ is a partition.
\end{lemma}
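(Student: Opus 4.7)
The plan is a proof by induction on $n$. The base case $n=1$ is trivial since $P_1$ is the trivial group.

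For the inductive step I would first reduce to the case $\pi$ irreducible: choosing a composition series of $\pi$, let $j$ be the largest index with $\sigma\subset\pi_j$; irreducibility of $\sigma$ forces an injection into the simple subquotient $\pi_j/\pi_{j+1}$, and we may replace $\pi$ by this representation. Assume henceforth that $\pi$ is irreducible with highest derivative partition $\lambda=(\lambda_1,\dotsc,\lambda_r)$ (a partition by Lemma~2.1). Next I would apply the Bernstein--Zelevinsky filtration $\pi|_{P_n}=\pi_1\supset\pi_2\supset\dotsb\supset\pi_{\lambda_1}\supset 0$ with $\pi_k/\pi_{k+1}\cong(\hat\Phi^+)^{k-1}\Psi^+(\pi^{(k)})$. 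Let $k_1$ be the highest derivative of $\sigma$; by Bernstein--Zelevinsky theory $\sigma\cong(\hat\Phi^+)^{k_1-1}\Psi^+(\tau_1)$ for an irreducible $G_{n-k_1}$-representation $\tau_1=\sigma^{(k_1)}$, and the identities $\Phi^-\Psi^+=0$ and $\Psi^-\hat\Phi^+=0$ imply $\sigma^{(r)}=0$ for $r\neq k_1$.

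Letting $r_0$ be maximal with $\sigma\subset\pi_{r_0}$, irreducibility of $\sigma$ gives an injection $\sigma\hookrightarrow\pi_{r_0}/\pi_{r_0+1}\cong(\hat\Phi^+)^{r_0-1}\Psi^+(\pi^{(r_0)})$, and the adjunction
$$\Hom_{P_n}(\sigma,(\hat\Phi^+)^{r-1}\Psi^+(\omega))\cong\Hom_{G_{n-r}}(\sigma^{(r)},\omega)$$
forces $\sigma^{(r_0)}\neq 0$, hence $r_0=k_1$ and produces an embedding $\tau_1\hookrightarrow\pi^{(k_1)}$ of $G_{n-k_1}$-representations. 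Lemma~2.1 applied to the irreducible $G_{n-k_1}$-representation $\tau_1$ shows that its highest derivative sequence $(k_2,k_3,\dotsc)$ is a partition, so the highest derivative sequence of $\sigma$ is $(k_1,k_2,k_3,\dotsc)$ and it remains only to verify $k_1\geq k_2$.

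To obtain this last inequality I would argue by contradiction: suppose $k_2>k_1$, so that $\tau_1^{(k_2)}\neq 0$. By exactness of the derivative functors the embedding $\tau_1\hookrightarrow\pi^{(k_1)}$ gives $\tau_1^{(k_2)}\hookrightarrow(\pi^{(k_1)})^{(k_2)}$, and commutativity of iterated derivatives on the finite length $G_n$-representation $\pi$ identifies $(\pi^{(k_1)})^{(k_2)}\cong(\pi^{(k_2)})^{(k_1)}$. The key point is to transfer this to $\sigma$: the embedding $\sigma\hookrightarrow\pi|_{P_n}$ yields embeddings $\sigma^{(k_1,k_2)}\hookrightarrow\pi^{(k_1,k_2)}$ and $\sigma^{(k_2,k_1)}\hookrightarrow\pi^{(k_2,k_1)}$, and describing both iterated derivatives as coinvariants by nested unipotent subgroups one checks (via naturality of the conjugation isomorphism realising the commutativity) that the two images correspond. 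Since $\sigma^{(k_2)}=0$ for $k_2\neq k_1$, this gives $\sigma^{(k_2,k_1)}=0$, and hence $\sigma^{(k_1,k_2)}=\tau_1^{(k_2)}=0$, the desired contradiction. The main obstacle is precisely this last step: extending the commutativity of iterated derivatives from $G_n$-representations to admissible $P_n$-subrepresentations of finite length $G_n$-representations. Equivalently, one may prove the same result by establishing uniqueness of the $P_n$-socle of $\pi|_{P_n}$ for irreducible $\pi$ (so that $\sigma$ must coincide with the Kirillov model $(\hat\Phi^+)^{\lambda_1-1}\Psi^+(\pi^{(\lambda_1)})$), which would yield the stronger conclusion that the highest derivative sequence of $\sigma$ equals the highest derivative partition of $\pi$.
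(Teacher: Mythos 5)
Your opening reductions are fine and run parallel to the paper's proof: replace $\pi$ by an irreducible Jordan--H\"older factor $\rho$, use the Bernstein--Zelevinsky filtration of $\rho|_{P_n}$ and the adjunction to write $\sigma \cong (\hat\Phi^+)^{k_1-1}\Psi^+(\tau_1)$ with $\tau_1 = \sigma^{(k_1)} \hookrightarrow \rho^{(k_1)}$, so that the only remaining issue is the inequality $k_1 \geq k_2$. But the obstacle you flag at the end is a genuine gap, and the mechanism you propose to close it does not work: iterated derivatives do \emph{not} commute for $P_n$-representations, and they fail to commute precisely in the situation at hand. For $\sigma = (\hat\Phi^+)^{k_1-1}\Psi^+(\tau_1)$, the standard identities ($\Phi^-\hat\Phi^+ = \mathrm{id}$, $\Psi^-\hat\Phi^+ = 0$, $\Phi^-\Psi^+ = 0$, $\Psi^-\Psi^+ = \mathrm{id}$) give $\sigma^{(j)} = 0$ for all $j \neq k_1$, so $(\sigma^{(k_2)})^{(k_1)} = 0$, while $(\sigma^{(k_1)})^{(k_2)} = \tau_1^{(k_2)}$ is exactly the object you are trying to show vanishes. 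The isomorphism $\pi^{(k_1,k_2)} \cong \pi^{(k_2,k_1)}$ is a statement intrinsic to the $G_n$-action — it is obtained by comparing parabolic restrictions to two different parabolics, conjugate by a Weyl element that does not lie in $P_n$ — so there is no ``naturality'' carrying the image of a $P_n$-subobject on one side to its image on the other. Appealing to it here is circular.

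The correct fix is your parenthetical alternative, and it is exactly what the paper does: it cites Corollary~6.8 of Zelevinsky, which says that for $\rho$ irreducible the restriction $\rho|_{P_n}$ is \emph{homogeneous} — every nonzero $P_n$-subrepresentation has highest derivative equal to the highest derivative $\lambda_1$ of $\rho$. Granting this, $k_1 = \lambda_1$ immediately, $\sigma^{(\lambda_1)}$ embeds in the irreducible $\rho^{(\lambda_1)}$ and hence coincides with it, and the highest derivative sequence of $\sigma$ is the highest derivative partition of $\rho$ with no separate argument for $k_1 \geq k_2$ needed. Without citing or reproving homogeneity (equivalently, uniqueness of the irreducible $P_n$-submodule of $\rho|_{P_n}$), the argument does not close.
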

\begin{proof}
	We have that $\sigma$ is an irreducible representation of $\rho|_{P_n}$, where $\rho$ is a Jordan-H\"{o}lder factor of $\pi$. However, by Corollary 6.8 of \cite{Zelevinsky}, $\rho$ is a homogeneous representation with highest derivative $\rho^{(k)}$ for some natural number $k$. Then $\sigma^{(k)}\cong\rho^{(k)}$ and hence the classifying sequence of $\sigma$ coincides with the classifying partition of $\rho$.
\end{proof}

\subsection{Technical Result}
The following Proposition \ref{mainprop} is the main technical result we will need. Firstly we will set up some notation. For two cuspidal representations $\rho,\rho'$ of $G_k$ we say that $\rho'\geq\rho$ if either
\begin{itemize}
    \item there is no integer $\beta$ such that $\nu^\beta\rho\cong \rho'$,
    \item or if there is some integer $\beta$ such that $\nu^\beta\rho\cong\rho'$ then $\beta\geq 0$. 
    \end{itemize}
Let $\Delta'_1,\dotsc,\Delta'_r$ be a sequence of segments such that $\Delta'_i=[\rho_i,\nu\rho_i,\dotsc,\nu^{\alpha_i-1}\rho_i]$, where $\rho_i$ is a cuspidal representation of $G_{n_i}$. Set $n=\sum_{i=1}^r\alpha_in_i$. We assume that $\nu^{\alpha_i-1}\rho_i\geq\nu^{\alpha_j-1}\rho_j$ for $i\leq j$. In particular this implies that $\Delta'_i$ does not precede $\Delta'_j$ if $i<j$. Let $\lambda=(\lambda_1,\lambda_2,\dotsc)$ be the highest derivative partition of $Z(\Delta'_1,\dotsc,\Delta'_r)$. Note that $\lambda_i$ is the highest derivative of $Z((\Delta'_1)^{-,i-1},\dotsc,(\Delta'_r)^{-,i-1})$. 

For $1\leq i\leq r$ let $\pi_i$ be an irreducible representation such that the supercuspidal support of $\pi_i$ equals $\Delta'_i$.
We have that
$$\pi_i=Z(\Lambda^i_1,\dotsc,\Lambda^i_{k_i}),$$
where $\Lambda^i_j$ are disjoint segments whose union is $\Delta'_i$ and $\Lambda^i_{j+1}$ precedes $\Lambda^i_{j}$. We also set that $\pi_i^-=Z((\Lambda^i_1)^-,\dotsc,\Lambda^i_{k_i})$. Let
$$J(\pi_i)=J(\Lambda_1^i)\times J(\Lambda_2^i)\times\dotsc\times J(\Lambda_{k_i}^i)$$
and similarly
$$J(\pi_i^-)=J((\Lambda_1^i)^-)\times J(\Lambda_2^i)\times\dotsc\times J(\Lambda_{k_i}^i).$$
For any sequence of segments $\Delta_1',\dotsc,\Delta_r'$ and representations $\pi_1,\dotsc,\pi_r$ with conditions as above and if there is $1\leq s\leq r$ such that $\pi_s\not\cong Z(\Delta_s')$ we define the representation $\mathbb X(\pi_1,\dotsc,\pi_r,s)$ as
$$\left(\prod_{t=1}^{s-1}J(\pi_t)\times\left(Z([\nu^{\alpha_s-l}\rho_s],\Lambda_2^s)\times J({}^-\Lambda^s_1)\times \prod_{t=3}^{k_s}J(\Lambda_t^s)\right)\times \prod_{t=s+1}^{r}J(\pi_t)\right)|_{P}.$$

\begin{prop}\label{mainprop}
	Let $\omega$ be a nonzero irreducible subrepresentation of $(\pi_1\times\dotsc\times\pi_r)|_{P_n}$. If $\omega$ has highest derivative partition smaller than or equal to $\lambda$ (with respect to the lexicographic ordering), then $\pi_i=Z(\Delta'_i)$ for all $i=1,\dotsc,r$.
\end{prop}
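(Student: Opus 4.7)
I will argue by contradiction, with an induction on $n=\sum_i\alpha_i n_i$. Assume some $\pi_s\not\cong Z(\Delta'_s)$, and take $s$ minimal among such indices, so that $\pi_s=Z(\Lambda^s_1,\dotsc,\Lambda^s_{k_s})$ with $k_s\geq 2$ while $\pi_t=Z(\Delta'_t)$ for every $t<s$. Write $l=\ell(\Lambda^s_1)$, so that $\Lambda^s_1=[\nu^{\alpha_s-l}\rho_s,\dotsc,\nu^{\alpha_s-1}\rho_s]$. The plan is to exhibit an embedding of $\omega$ into $\mathbb{X}(\pi_1,\dotsc,\pi_r,s)$ and then to use the derivative machinery of this subsection to show that every irreducible $P$-subrepresentation of $\mathbb{X}(\pi_1,\dotsc,\pi_r,s)$ has highest derivative partition strictly greater than $\lambda$ in the lexicographic order, contradicting the hypothesis on $\omega$.

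\textbf{Step 1 (embedding).} Since parabolic induction is exact and $Z(\Lambda)\hookrightarrow J(\Lambda)$ for any segment $\Lambda$, we obtain $\pi_t\hookrightarrow J(\pi_t)$ for every $t$ by combining this with $\pi_t\hookrightarrow Z(\Lambda^t_1)\times\dotsc\times Z(\Lambda^t_{k_t})$, and hence $\omega\hookrightarrow (J(\pi_1)\times\dotsc\times J(\pi_r))|_{P_n}$. To refine the $s$-th factor, write $J(\Lambda^s_1)=\nu^{\alpha_s-l}\rho_s\times J({}^-\Lambda^s_1)$. The cuspidal supports of $J({}^-\Lambda^s_1)$ and $J(\Lambda^s_2)$ are unlinked (the missing element between them is $\nu^{\alpha_s-l}\rho_s$), so their parabolic inductions commute, giving $J(\Lambda^s_1)\times J(\Lambda^s_2)\cong\nu^{\alpha_s-l}\rho_s\times J(\Lambda^s_2)\times J({}^-\Lambda^s_1)$. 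By Lemma~\ref{resolve}, applied to the juxtaposed pair $\Lambda^s_2$ and $[\nu^{\alpha_s-l}\rho_s]$, the representation $Z([\nu^{\alpha_s-l}\rho_s],\Lambda^s_2)$ is the unique irreducible subrepresentation of $\nu^{\alpha_s-l}\rho_s\times Z(\Lambda^s_2)$. Since $\pi_s$ is irreducible and embeds into the rewritten $(s\text{-th})$ product, its image must land in the subrepresentation $Z([\nu^{\alpha_s-l}\rho_s],\Lambda^s_2)\times J({}^-\Lambda^s_1)\times\prod_{t\geq 3}J(\Lambda^s_t)$, and this yields the embedding $\omega\hookrightarrow\mathbb{X}(\pi_1,\dotsc,\pi_r,s)$.

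\textbf{Step 2 (derivative bound).} I then show that every irreducible $P$-subrepresentation of $\mathbb{X}(\pi_1,\dotsc,\pi_r,s)$ has highest derivative partition strictly greater than $\lambda$ in lex order. The central factor $Z([\nu^{\alpha_s-l}\rho_s],\Lambda^s_2)$ is exactly the form of representation treated in Lemmas~\ref{lemcomp} and~\ref{weirdcase}, and Lemma~\ref{higherder} applied to this factor gives a lower bound of $2n_s+\mu$ on the highest derivative of subrepresentations of products involving it---strictly larger than the $n_s+\mu$ one would get from the ``standard'' decomposition $\nu^{\alpha_s-l}\rho_s\times Z(\Lambda^s_2)$. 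Iterating Lemma~\ref{reduction} on the remaining $J(\Lambda^i_j)$-factors then propagates the bound outward; the ordering hypothesis $\nu^{\alpha_i-1}\rho_i\geq\nu^{\alpha_j-1}\rho_j$ for $i\leq j$ is exactly what guarantees that the supercuspidal-support hypothesis required at each stage of Lemmas~\ref{reduction} and~\ref{higherder} is satisfied. Minimality of $s$ ensures that the leading coordinates of $\lambda(\omega)$ contributed by the $t<s$ factors match those of $\lambda$ exactly, while the extra $n_s$-worth of derivative supplied by the central factor forces a strict inequality at the first coordinate where the two partitions differ.

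\textbf{Main obstacle.} The delicate point is Step~2: the lexicographic order is more sensitive than the dominance order, so the derivative bound must not only be large in total, but must land at precisely the right partition coordinate. The engineering of $\mathbb{X}$---replacing $J(\Lambda^s_1)\times J(\Lambda^s_2)$ by $Z([\nu^{\alpha_s-l}\rho_s],\Lambda^s_2)\times J({}^-\Lambda^s_1)$---is exactly what makes Lemma~\ref{higherder} applicable, and what routes the extra derivative to the lex-sensitive coordinate rather than to a later one where it would be invisible to the lex comparison. Carrying out this bookkeeping with the iterated application of Lemmas~\ref{reduction} and~\ref{higherder}, and checking at each step that the supercuspidal-support hypothesis is preserved by the ordering convention on the $\Delta'_i$, is the technical heart of the appendix.
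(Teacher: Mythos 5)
Your overall plan matches the paper's: embed $\omega$ into $\mathbb{X}(\pi_1,\dotsc,\pi_r,\,\cdot\,)$ and then argue that every irreducible $P$-subrepresentation of that target has highest derivative partition lexicographically above $\lambda$. However, there is a genuine gap in how you select the distinguished index.

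You choose $s$ to be the \emph{minimal index} with $\pi_s\not\cong Z(\Delta'_s)$. The paper instead chooses $m$ among the offending indices so that the \emph{length of $\Lambda_1^m$ is minimal}. These are not the same, and the distinction is load-bearing. In the proof of Lemma~\ref{lem2} (the inductive step of Lemma~\ref{lem3}, which is exactly the ``derivative bound'' you invoke in Step~2), one needs the identity $X_{i,i}\cong X_{i,i}^-\times\nu^{\alpha_i-1}\rho_i$ for every $i$; the verification uses that $\nu^{\alpha_i-1}\rho_i$ is unlinked with all factors of $\Lambda_j^i$ for $j\geq 2$, and the justification offered there is precisely that for any $i$ with $k_i\geq 2$ one has $\ell(\Lambda_1^i)\geq\ell(\Lambda_1^m)=l>1$. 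If you instead take $s$ minimal, there may exist $i>s$ with $k_i\geq 2$ and $\ell(\Lambda_1^i)=1$; then $\Lambda_1^i=[\nu^{\alpha_i-1}\rho_i]$ is juxtaposed with $\Lambda_2^i$, the required commutation fails, and the whole peeling argument that routes the extra derivative to the correct lexicographic coordinate collapses. Note also that minimality of length is preserved under passing to $\pi^-$ (the new $l$ drops by one while the other $\ell(\Lambda_1^i)$ are unchanged), which is what keeps the induction in Lemma~\ref{lem3} running; minimality of index has no such stability property.

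Two smaller points. First, your Step~1 derivation of the embedding is not quite complete: from $\pi_s\hookrightarrow\nu^{\alpha_s-l}\rho_s\times J(\Lambda_2^s)\times J({}^-\Lambda_1^s)\times\cdots$ you cannot immediately conclude the image lands in $Z([\nu^{\alpha_s-l}\rho_s],\Lambda_2^s)\times J({}^-\Lambda_1^s)\times\cdots$, since the socle of the ambient product need not reduce to that of $\nu^{\alpha_s-l}\rho_s\times Z(\Lambda_2^s)$; the paper instead first embeds $\pi_m\hookrightarrow Z(\Lambda_1^m,\Lambda_2^m)\times J(\Lambda_3^m)\times\cdots$ via Bernstein--Zelevinsky and then applies Lemma~\ref{redpar} repeatedly to $Z(\Lambda_1^m,\Lambda_2^m)$, which is the clean way to get the embedding. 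Second, your Step~2 treats the lex-coordinate bookkeeping as a single estimate, but the actual proof (Lemma~\ref{lem3}) is an induction on $l$: the base case $l=1$ gives a strict jump $\lambda_1^\sigma\geq\lambda_1+n_m$ via Lemma~\ref{lem1}, while for $l>1$ Lemma~\ref{lem2} either gives $\lambda_1^\sigma>\lambda_1$ or forces $\sigma^{(\lambda_1)}$ into $\mathbb{X}(\pi_1^-,\dotsc,\pi_r^-,m)$ with $l$ dropping by one. This inductive peeling, not a one-shot derivative bound, is what routes the strict inequality to the first differing coordinate.
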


\begin{proof}[Proof of Proposition \ref{mainprop}] Assume that not all $\pi_i$ are isomorphic to $Z(\Delta_i')$ for $1\leq i\leq r$. 
By the theory of Bernstein and Zelevinsky we have a map
$$\pi_i\hookrightarrow J(\pi_i).$$ 
Let $\Gamma$ be the set of integers $1\leq i\leq r$ where $k_i>1$, i.e.\ those integers $1\leq i\leq r$ such that $\pi_i\not\cong Z(\Delta'_i)$. By our assumption $\Gamma$ is nonempty. 
Choose $m\in\Gamma$ such that the length of $\Lambda_1^m$ is minimal among the lengths of $\Lambda_1^i$ for $i\in\Gamma$. Set $l$ to be the length of $\Lambda_1^m$, i.e.\ $\Lambda_1^m=[\nu^{\alpha_m-l}\rho_m,\dotsc,\nu^{\alpha_m-1}\rho_m]$. By our assumption, note that $l$ is strictly smaller than the length of $\Delta'_m$. The theory of Bernstein-Zelevinsky yields an embedding
$$\pi_m\hookrightarrow Z(\Lambda_1^m,\Lambda_2^m)\times J(\Lambda_3^m)\times\dotsc\times J(\Lambda^m_{k_m}).$$
Since ${}^-\Lambda^m_1=[\nu^{\alpha_m-l+1}\rho_m,\dotsc,\nu^{\alpha_m-1}\rho_m]$ and hence $\Lambda_1^m=[\nu^{\alpha_m-l}\rho_m]\cup{}^-\Lambda^m_1$, we obtain by repeatedly applying Lemma \ref{redpar} an embedding
$$Z(\Lambda_1^m,\Lambda_2^m)\hookrightarrow Z([\nu^{\alpha_m-l}\rho_m],\Lambda_2^m)\times J({}^-\Lambda^m_1).$$
Overall the above arguments yield an embedding of $\omega$ into $\mathbb X(\pi_1,\dotsc,\pi_r,m)$, i.e.\
\begin{equation}\label{eq2}
\left(\prod_{t=1}^{m-1}J(\pi_t)\times\left(Z([\nu^{\alpha_m-l}\rho_m],\Lambda_2^m)\times J({}^-\Lambda^m_1)\times \prod_{t=3}^{k_m}J(\Lambda_t^m)\right)\times \prod_{t=m+1}^{r}J(\pi_t)\right)|_{P_n}.
\end{equation}

However, by the following Lemma \ref{lem3} (whose proof appears at the end of this section), the highest derivative partition of $\omega$ has to be larger than $\lambda$ (with respect to the lexicographic ordering).
\end{proof}

\begin{lemma}\label{lem3}
Let $\sigma$ be an irreducible subrepresentation of $\mathbb X(\pi_1,\dotsc,\pi_r,m)$, i.e. Equation (\ref{eq2}), with highest derivative partition $\lambda^\sigma=(\lambda_1^\sigma,\lambda_2^\sigma,\dotsc)$. We have that $\lambda_i^\sigma\geq\lambda_i$ for all $1\leq i\leq l$ and either
 \begin{itemize}
     \item there is an $1\leq j<l$ such that $\lambda_j^\sigma>\lambda_j$,
     \item or $\lambda_l^\sigma>\lambda_l+n_m$.
 \end{itemize}

\end{lemma}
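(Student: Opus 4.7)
The plan is to establish the partition comparison by iteratively peeling derivatives from $\mathbb{X}(\pi_1,\dotsc,\pi_r,m)$, using the bounding results Lemma~\ref{Lemmaprod}, Lemma~\ref{reduction}, Lemma~\ref{higherder}, and Lemma~\ref{zelvlemm}. The guiding idea is that at the first $l-1$ derivative levels, the peeling proceeds as it would for the ``standard'' product $J(\pi_1)\times\dotsc\times J(\pi_r)$ whose relevant subrepresentations have partition exactly $\lambda$, while at the $l$-th level the anomalous factor $Z([\nu^{\alpha_m-l}\rho_m],\Lambda_2^m)$ activates and forces an enhanced contribution of $2n_m$ via Lemma~\ref{higherder}.

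Using associativity of parabolic induction, I would first rewrite $\mathbb{X}$ as $A\times Z([\nu^{\alpha_m-l}\rho_m],\Lambda_2^m)\times B$, where $A=\prod_{t<m}J(\pi_t)$ and $B=J({}^-\Lambda_1^m)\times\prod_{t=3}^{k_m}J(\Lambda_t^m)\times\prod_{t>m}J(\pi_t)$. This cleanly exhibits the anomalous factor and lets us track the three blocks separately.

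For each level $1\le i\le l-1$, I would use Lemma~\ref{zelvlemm}(2) to move $\Phi^-$ through the factors and Lemma~\ref{reduction} (part (1) for the purely cuspidal chains $J(\Lambda)$, part (2) for the irreducible blocks within $A$ or $B$) to extract a contribution of $n_t$ from each slot $t$ with $\alpha_t\ge i$: for $t\ne m$ this comes from $J(\pi_t)$, while for $t=m$ it comes from the successive tops $\nu^{\alpha_m-1}\rho_m,\dotsc,\nu^{\alpha_m-l+1}\rho_m$ of $J({}^-\Lambda_1^m)$, which is still nonempty as long as $i\le l-1$. The supercuspidal-support hypothesis of Lemma~\ref{reduction} is verified at each step using the ordering condition $\nu^{\alpha_i-1}\rho_i\ge\nu^{\alpha_j-1}\rho_j$ for $i\le j$ together with the fact that $\Lambda_2^m,\Lambda_3^m,\dotsc$ all lie strictly below $\nu^{\alpha_m-l}\rho_m$ on the $\rho_m$-chain; summing the contributions yields $\lambda_i^\sigma\ge\lambda_i$.

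At the critical level $i=l$, the factor $J({}^-\Lambda_1^m)$ has been fully consumed, so the residual tail no longer contains $\nu^{\alpha_m-l+1}\rho_m$ in its supercuspidal support. This is precisely the hypothesis required to apply Lemma~\ref{higherder} to $Z([\nu^{\alpha_m-l}\rho_m],\Lambda_2^m)$ together with the remainder, yielding a contribution of $2n_m$ rather than $n_m$ at level $l$, and hence $\lambda_l^\sigma\ge\lambda_l+n_m$. To conclude the disjunction, I would argue as follows: if any $\lambda_j^\sigma>\lambda_j$ occurred for $j<l$, the first branch is done; otherwise equality holds at all earlier levels, which (via an analysis in the spirit of Lemma~\ref{weirdcase} of the possible Jordan--H\"older constituents whose partition attains the bound $\lambda_l+n_m$ at level $l$) forces $\sigma$ to correspond to a specific factor whose partition actually exceeds $\lambda_l+n_m$ at level $l$, giving the strict inequality $\lambda_l^\sigma>\lambda_l+n_m$.

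The main obstacle I anticipate is the careful supercuspidal-support bookkeeping through the successive peelings—particularly when $\rho_t$ for some $t\ne m$ is itself an unramified twist of $\rho_m$, so that $\nu^{\alpha_m-l+1}\rho_m$ might accidentally appear in the sup-support of a non-$m$ slot and break the hypothesis of Lemma~\ref{higherder}. Resolving this requires either choosing the peeling order carefully or using the minimality of $l$ (as chosen in the proof of Proposition~\ref{mainprop}) to rule out such collisions. The other delicate point is upgrading the non-strict bound from Lemma~\ref{higherder} to the strict inequality $\lambda_l^\sigma>\lambda_l+n_m$, which requires a finer Jordan--H\"older analysis beyond the raw application of the bounding lemmas.
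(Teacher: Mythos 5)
Your high‑level intuition is right, and the shape of the argument is broadly the one the paper follows (peel derivatives level by level, the ordinary slots contribute $\lambda_i$ each, and the anomalous factor $Z([\nu^{\alpha_m-l}\rho_m],\Lambda_2^m)$ only gives its extra $n_m$ once the cushion $J({}^-\Lambda_1^m)$ has been consumed).  But there is a real gap in the mechanism you propose, and it sits exactly where the technical weight of the proof lives.

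The step you phrase as ``the peeling proceeds as it would for the standard product'' is not something Lemma~\ref{zelvlemm} and Lemma~\ref{reduction} give you.  Those lemmas give \emph{lower bounds} on the derivative degree of subrepresentations; they do not tell you what $\sigma^{(\lambda_1)}$ (or the later iterated derivatives) actually embeds into.  After one application of $\Phi^-$, Lemma~\ref{zelvlemm}(1) leaves you inside something of the form $\sigma_1^{(r_1)}\times\dotsc\times\pi_j|_P$, which is not a priori of the form $\mathbb X(\cdot)$, so the iteration does not close up.  The paper handles this with an explicit structural result, namely the second half of Lemma~\ref{lem2}: if $\sigma$ has highest derivative \emph{exactly} $\lambda_1$, then $\sigma^{(\lambda_1)}$ embeds into $\mathbb X(\pi_1^-,\dotsc,\pi_r^-,m)$, i.e.\ the \emph{same} shape of object but with every $\Lambda_1^i$ shortened by one.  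This is the recursion that your plan assumes but never establishes, and its proof is not automatic — it is the long double‑induction argument with four cases in the proof of Lemma~\ref{lem2}, involving the commutation $X_{i,i}\cong X_{i,i}^-\times\nu^{\alpha_i-1}\rho_i$, moving the loose cuspidal $\nu^{\alpha_i-1}\rho_i$ past the remaining factors, and in the colliding cases (e.g.\ when $\nu^{\alpha_i-1}\rho_i\cong\nu^{\alpha_{j+1}-1}\rho_{j+1}$) using the reducibility of a two‑cuspidal product, Lemma~\ref{redpar}, and Lemma~\ref{weirdcase} to discard the Jordan--H\"older constituent with too large a derivative.  Your proposal gestures at exactly this difficulty (``when $\rho_t$ is an unramified twist of $\rho_m$ \dots might accidentally appear \dots'') but leaves it unresolved; it cannot be dismissed with ``choose the peeling order carefully,'' because the paper's case analysis shows the collisions genuinely occur and must be ruled out one by one.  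With Lemma~\ref{lem2} in hand, the proof becomes a short induction on $l$, with the base case $l=1$ being Lemma~\ref{lem1}.

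On the strictness point: your worry about upgrading from $\lambda_l^\sigma\geq\lambda_l+n_m$ to a strict inequality is not a gap you need to fill.  The paper's base case, Lemma~\ref{lem1}, only gives highest derivative \emph{at least} $\lambda_1+n_m$, and the inductive step likewise only produces a nonvanishing iterated derivative at degree $\lambda_l+n_m$, so the paper's argument yields $\lambda_l^\sigma\geq\lambda_l+n_m$, not $>$.  The strict $>$ in the statement of Lemma~\ref{lem3} looks like a slip; the non‑strict bound is all that is used in the proof of Proposition~\ref{mainprop} (since $\lambda_l+n_m>\lambda_l$ already forces the partition of $\omega$ to be lexicographically larger than $\lambda$).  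So you should not spend effort on a ``finer Jordan--H\"older analysis'' to force strictness; the real work you are missing is the recursion in Lemma~\ref{lem2}.
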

We will prove this result via induction on $l$, for which we need the following lemmata.
\begin{lemma}\label{lem1}
If $l=1$, i.e.\ $\Lambda_1^m=[\nu^{\alpha_m-1}\rho_m]$, any nonzero subrepresentation $\sigma$ of $\mathbb X(\pi_1,\dotsc,\pi_r,m)$ has highest derivative at least $\lambda_1+n_m$.
\end{lemma}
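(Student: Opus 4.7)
The plan is to bound the highest derivative of any nonzero subrepresentation of $\mathbb X(\pi_1,\dots,\pi_r,m)|_P$ from below by iteratively peeling factors, writing $\mathbb X = \mathcal A \times \mathcal B \times \mathcal C$ where $\mathcal A = \prod_{t<m} J(\pi_t)$, $\mathcal B = Z([\nu^{\alpha_m-1}\rho_m],\Lambda_2^m) \times \prod_{t=3}^{k_m} J(\Lambda_t^m)$, and $\mathcal C = \prod_{t>m} J(\pi_t)$. Since the highest derivative of a subrepresentation is bounded above by that of the ambient representation (by exactness of the derivative functors), it suffices to treat irreducible subrepresentations.

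The first step is to peel off each factor $J(\pi_t)$ for $t \ne m$. Each $J(\pi_t)$ is the product, in a specific order, of the cuspidals in the supercuspidal support of $\Delta'_t$, hence is of the form $\Sigma_s$ appearing in Lemma~\ref{reduction}(1) with $\Delta = \Delta'_t$ and $k = n_t$. The hypothesis of that lemma, that $\nu^{\alpha_t}\rho_t$ does not occur in the supercuspidal support of any irreducible subquotient of the remaining factors, follows from the ordering condition $\nu^{\alpha_i - 1}\rho_i \geq \nu^{\alpha_j - 1}\rho_j$ for $i \leq j$, which places $\nu^{\alpha_t}\rho_t$ strictly outside the support of every $\Delta'_s$ with $s \neq t$ and of every factor appearing in $\mathcal B$. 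Iterating from the left to peel the factors of $\mathcal A$, and using the symmetric right-sided variant of Lemma~\ref{reduction}(1) (proved by the parallel argument on the dual mirabolic filtration) to peel the factors of $\mathcal C$ from the right, we obtain a contribution of $\sum_{t \ne m} n_t$ to the lower bound.

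The second step is to show that any irreducible subrepresentation of the central piece $\mathcal B|_P$ has highest derivative at least $2 n_m$. Set $V = Z([\nu^{\alpha_m-1}\rho_m], \Lambda_2^m)$. Since $l = 1$, the segment $\Lambda_2^m$ ends at $\nu^{\alpha_m-2}\rho_m$, so Lemma~\ref{lemcomp} yields $V^{(n_m)} = Z([\nu^{\alpha_m-1}\rho_m], \Lambda_2^{m,-}) \cong \nu^{\alpha_m-1}\rho_m \times Z(\Lambda_2^{m,-})$ (the two segments being unlinked), $V^{(2n_m)} \cong Z(\Lambda_2^{m,-})$, and $V^{(r)} = 0$ for all other $r > 0$. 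Applying Lemma~\ref{zelvlemm}(1) to an irreducible subrepresentation $\omega$ of $(V \times \pi)|_P$ of highest derivative $l$, where $\pi = \prod_{t=3}^{k_m} J(\Lambda_t^m)$, produces an integer $r \geq 0$ with $(\Phi^-)^r\omega \neq 0$ embedded in $V^{(r)} \times \pi|_P$, forcing $r \in \{0, n_m, 2n_m\}$. A supercuspidal support argument modeled on Lemma~\ref{higherder} and invoking Lemma~4.7 of~\cite{BZI} excludes $r = 0$ and $r = n_m$: in both cases the resulting embedding at the level of $(l)$-th derivatives would force $\nu^{\alpha_m}\rho_m$ into the supercuspidal support of an irreducible subquotient of the product, contradicting the fact that no factor of $\mathcal B$ or $\mathcal C$ contains $\nu^{\alpha_m}\rho_m$ in its supercuspidal support (again by the ordering condition). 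Thus $r \geq 2n_m$, and Lemma~\ref{Lemmaprod} implies $\omega$ has highest derivative at least $2n_m$.

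Summing the contributions from the two steps gives the desired lower bound $\sum_{t \ne m} n_t + 2 n_m = \lambda_1 + n_m$. The principal obstacle is the exclusion of the intermediate case $r = n_m$ in the second step: because $\Lambda_2^m$ need not equal the full reduced segment $(\Delta'_m)^{-}$, Lemma~\ref{higherder} does not apply verbatim, and one must adapt its proof by carefully tracking how $V^{(n_m)}$ alters the multiset supercuspidal support before appealing to Lemma~4.7 of~\cite{BZI}.
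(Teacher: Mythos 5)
Your overall strategy (show the central factor contributes $2n_m$ and the outer factors contribute $\sum_{t\ne m} n_t$) matches the paper, but the execution has one genuine gap and one misplaced worry.

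\textbf{The gap.} You peel the factors of $\mathcal C$ from the \emph{right} by invoking a ``symmetric right-sided variant of Lemma~\ref{reduction}(1),'' asserted to follow ``by the parallel argument on the dual mirabolic filtration.'' This is not established in the paper and is not a trivial mirror image. The Bernstein--Zelevinsky derivative machinery is inherently one-sided: $P_n$ is defined by a condition on the last row, so in the filtration of $(\sigma\times\pi)|_{P}$ given by Proposition~4.13 of~\cite{BZI} the functor $\Phi^-$ strips derivatives off the \emph{left} factor $\sigma$ (this is the content of Lemma~\ref{zelvlemm}(1): $(\Phi^-)^r\omega\hookrightarrow\sigma^{(r)}\times\pi|_{P}$), while the right factor only ever appears restricted to its own mirabolic. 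If you apply Lemma~\ref{zelvlemm}(1) to $(\pi\times\Sigma_s)|_{P}$ in hopes of appending $\Sigma_s$ on the right, the exponent $r$ now governs the derivatives of $\pi$, not of $\Sigma_s$, so you can neither force $r\ge k$ nor recover the bound $\mu$ on $\pi$ from the resulting embedding. Passing instead to a conjugate mirabolic (first column fixed) changes the derivative functors themselves, and relating those to the standard highest-derivative partition is not immediate. Without a proof of this variant, the first step of your argument is incomplete.

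The paper avoids this entirely by peeling all factors from the left, starting at the rightmost factor and building inward: first bound subrepresentations of $J(\pi_r)|_P$ by $n_r$ directly (since $J(\pi_r)^{(t)}=0$ for $0<t<n_r$); then prepend $J(\pi_{r-1}),\dots,J(\pi_{m+1})$ via Lemma~\ref{reduction}; then prepend the remaining central factors $Y^m_{3,k_m}$ via Lemma~\ref{zelvlemm}(2) (which shows prepending never decreases the bound); then prepend $Z([\nu^{\alpha_m-1}\rho_m],\Lambda_2^m)$ via Lemma~\ref{higherder} to add $2n_m$; and finally prepend $J(\pi_{m-1}),\dots,J(\pi_1)$ via Lemma~\ref{reduction} again. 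No right-sided variant is ever needed; your proof can be repaired simply by reordering the peeling into this single left-to-right-interior sweep.

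\textbf{The misplaced worry.} You say Lemma~\ref{higherder} ``does not apply verbatim'' because $\Lambda_2^m$ need not be $(\Delta'_m)^-$, and so you redo the exclusion of $r=0,n_m$ by hand. But Lemma~\ref{higherder} is stated for an arbitrary segment $\Delta=[0,\alpha-1]_\rho$ of length $\ge 2$: writing $\Lambda_2^m=[\nu^a\rho_m,\dots,\nu^{\alpha_m-2}\rho_m]$ and taking $\rho=\nu^a\rho_m$, $\alpha=\alpha_m-a\ge 2$, one has $\Delta^-=\Lambda_2^m$, $[\nu^{\alpha-1}\rho]=[\nu^{\alpha_m-1}\rho_m]$, and $\nu^\alpha\rho=\nu^{\alpha_m}\rho_m$, so the hypothesis on the supercuspidal support is exactly the one you verify. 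The lemma applies with no modification, and your hand-rederivation is an unnecessary detour.
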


\begin{lemma}\label{lem2}
Suppose that $l>1$. Then any nonzero subrepresentation $\sigma$ of 
$\mathbb X(\pi_1,\dotsc,\pi_r,m)$
has highest derivative at least $\lambda_1$. If the highest derivative of $\sigma$ equals $\lambda_1$, we have that
$$\sigma^{(\lambda_1)}\hookrightarrow \mathbb X(\pi_1^-,\dotsc,\pi_r^-,m).$$
\end{lemma}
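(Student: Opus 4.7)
The plan is to analyze the Bernstein--Zelevinsky derivatives of $\mathbb X(\pi_1,\dotsc,\pi_r,m)$ by iteratively applying Lemma~\ref{reduction} and its refinements, building the product from right to left. For each outer factor $J(\pi_t)=J(\Lambda^t_1)\times\dotsc\times J(\Lambda^t_{k_t})$ with $t\neq m$, I would observe that $J(\pi_t)$ is precisely a product $\Sigma_s$ of all cuspidals in the single segment $\Delta'_t$, where $s\in S_{\alpha_t}$ is the permutation encoding the multisegment decomposition of $\pi_t$. A single application of Lemma~\ref{reduction}(1) to $J(\pi_t)$ (rather than one per sub-segment $\Lambda^t_j$) therefore adds $n_t$ to the lower bound on highest derivatives of subrepresentations; the hypothesis $\nu^{\alpha_t}\rho_t\notin\operatorname{supp}(\pi_{s'})$ for indices $s'\neq t$ to the right of $J(\pi_t)$ follows from the ordering assumption $\nu^{\alpha_t-1}\rho_t\geq\nu^{\alpha_{s'}-1}\rho_{s'}$ for $t<s'$. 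Processing the outer factors this way contributes $\sum_{t\neq m}n_t$ to the total bound.

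For the middle block $B=Z_0\times J({}^-\Lambda^m_1)\times J(\Lambda^m_3)\times\dotsc\times J(\Lambda^m_{k_m})$, where I set $Z_0:=Z([\nu^{\alpha_m-l}\rho_m],\Lambda^m_2)$, direct application of Lemma~\ref{reduction} is obstructed: the cuspidal $\nu^{\alpha_m-l+1}\rho_m$ appears both as the leftmost factor of $J({}^-\Lambda^m_1)$ and as the ``top plus one'' of the supercuspidal support of $Z_0$. I would treat $Z_0\times J({}^-\Lambda^m_1)$ as a single block and analyze its derivative structure directly: commuting $\nu^{\alpha_m-l+1}\rho_m$ past $Z_0$ via Lemma~\ref{weirdcase} decomposes $\nu^{\alpha_m-l+1}\rho_m\times Z_0$ into two Jordan--H\"older constituents, only one of which, namely $Z([\nu^{\alpha_m-l}\rho_m,\nu^{\alpha_m-l+1}\rho_m],\Lambda^m_2)$, can give the minimal highest derivative, while the other (a three-segment Zelevinsky representation) already forces strict excess over $\lambda_1$. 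Combining with the remaining cuspidals of $J({}^-\Lambda^m_1)$ and the factors $J(\Lambda^m_j)$ for $j\geq 3$ via repeated application of Lemma~\ref{reduction}, the middle block contributes $n_m$ to the bound, so the total matches $\sum_t n_t=\lambda_1$.

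For the second statement, I would analyze the embedding $\sigma^{(\lambda_1)}\hookrightarrow\mathbb X^{(\lambda_1)}$ using the Leibniz-type filtration on derivatives of parabolic induction (Proposition~4.13 of \cite{BZI}). Only the distribution of total derivative degree that assigns $n_t$ to each outer factor and $n_m$ to the middle block can produce a nonzero contribution at degree $\lambda_1$; all other distributions would force the highest derivative of $\sigma$ to exceed $\lambda_1$ and hence must vanish in the equality case. Applying Lemma~\ref{lemcomp} to identify the $n_t$-th top derivative of $J(\pi_t)$ with $J(\pi_t^-)$, and the corresponding identification for the middle block (where the top derivative of $Z_0\times J({}^-\Lambda^m_1)$ collapses to the middle block of $\mathbb X(\pi_1^-,\dotsc,\pi_r^-,m)$), one matches the surviving constituent with $\mathbb X(\pi_1^-,\dotsc,\pi_r^-,m)$ and obtains the desired embedding.

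The main obstacle is the careful bookkeeping in the middle block: the overlap of $\nu^{\alpha_m-l+1}\rho_m$ between $Z_0$ and $J({}^-\Lambda^m_1)$ means the reduction lemmas do not apply cleanly, and the analysis must be resolved via direct derivative computations combining Lemmas~\ref{lemcomp}, \ref{weirdcase}, and~\ref{higherder}.
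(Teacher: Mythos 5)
Your handling of the first claim (the bound $\geq\lambda_1$) is reasonable and in the same spirit as the paper: treating each $J(\pi_t)$ as a product $\Sigma_s$ of all cuspidals in $\Delta'_t$ and invoking Lemma~\ref{reduction}(1) once per outer factor is clean, and you correctly locate the obstruction in the middle block, where $\nu^{\alpha_m-l+1}\rho_m$ lies both in the support of $J({}^-\Lambda^m_1)$ and is the ``top plus one'' of the support of $Z_0$. (Note though that invoking Lemma~\ref{weirdcase} here is awkward: in $\mathbb X$ the factor $Z_0$ sits to the \emph{left} of $\nu^{\alpha_m-l+1}\rho_m$, whereas Lemma~\ref{weirdcase} analyses $\nu^{\alpha+1}\rho\times Z([\nu^\alpha\rho],\Delta)$ with the cuspidal on the left, and since these two factors are linked the two orderings are not isomorphic. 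The paper sidesteps this entirely: $Z_0$ need contribute nothing for the bound, so one simply prepends it via Lemma~\ref{zelvlemm}(2), and the remaining factors already supply $\lambda_1=\sum_t n_t$.)

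The real gap is in your treatment of the second claim. Your proposed argument is to pass to $\sigma^{(\lambda_1)}\hookrightarrow\mathbb X^{(\lambda_1)}$, filter $\mathbb X^{(\lambda_1)}$ by the Leibniz rule, and claim that only the piece assigning derivative degree $n_t$ to each factor and then ``identifying $J(\pi_t)^{(n_t)}$ with $J(\pi_t^-)$'' survives. This last identification is false: since $J(\pi_t)=J(\Lambda^t_1)\times\dots\times J(\Lambda^t_{k_t})$ and each $J(\Lambda^t_i)$ has derivatives vanishing except in degrees divisible by $n_t$, the Leibniz filtration of $J(\pi_t)^{(n_t)}$ has $k_t$ graded pieces, namely $J(\Lambda^t_1)\times\dots\times J((\Lambda^t_i)^-)\times\dots\times J(\Lambda^t_{k_t})$ for each $1\le i\le k_t$. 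Only the $i=1$ piece equals $J(\pi_t^-)$. Your sketch contains no argument for why $\sigma^{(\lambda_1)}$ must land in exactly that piece rather than in one of the others, and this is precisely where the difficulty resides: ruling out the spurious pieces requires the supercuspidal-support and derivative-bound case analysis that constitutes most of the paper's proof.

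The paper's route is genuinely different from a filtration of $\mathbb X^{(\lambda_1)}$. It first upgrades the given embedding $\sigma\hookrightarrow\mathbb X(\pi_1,\dots,\pi_r,m)$ to an embedding $\sigma\hookrightarrow(X^-_{1,r}\times\prod_{t=1}^r\nu^{\alpha_t-1}\rho_t)|_{P_n}$, by a double induction that moves each ``top'' cuspidal $\nu^{\alpha_i-1}\rho_i$ past the factors to its right. Each step either commutes outright (when unlinked) or splits into two Jordan--H\"older constituents, one of which is ruled out because it would force the highest derivative of $\sigma$ to exceed $\lambda_1$; this breaks into four cases depending on whether $\nu^{\alpha_i-1}\rho_i\cong\nu^{\alpha_{j+1}-1}\rho_{j+1}$, whether $j+1=m$, and whether $l=2$ or $l>2$. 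Once that embedding is established, a single application of Lemma~\ref{zelvlemm}(1) pins $\gamma=0$, and taking the $\lambda_1$-th derivative of the right-hand factor gives $\sigma^{(\lambda_1)}\hookrightarrow X^-_{1,r}$ immediately. None of this commutation-and-elimination mechanism appears in your proposal, so as written the second claim remains unproven.
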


To prove Lemma \ref{lem1} and Lemma \ref{lem2} we introduce some new notation. For integers $1\leq\beta\leq r$ and $1\leq i\leq j$ if $j\leq k_\beta$ let
$Y^\beta_{i,j}$ be the representation $\prod_{t=i}^{j}J(\Lambda_{t}^\beta)$
and if $j>k_\beta$ we set $Y^\beta_{i,j}$ to be the trivial representation of $G_0=\{1\}$. Moreover, for integers $1\leq i\leq j\leq m$ we define
$$X_{i,j}\coloneqq\prod_{t=i}^{j}J(\pi_t)$$
if $m<i$ or $m>j$ and 
$$X_{i,j}\coloneqq\prod_{t=i}^{m-1}J(\pi_t)\times\left(Z([\nu^{\alpha_m-l}\rho_m],\Lambda_2^m)\times J({}^-\Lambda^m_1)\times
Y_{3,k_m}^m\right)\times\prod_{t=m+1}^{j}J(\pi_t)$$
if $i\leq m\leq j$.
If $l>1$ we set
$$  X_{i,j}^-\coloneqq
\prod_{t=i}^{j}J(\pi_t^-)$$
if $m<i$ or $m>j$ and 
$$X_{i,j}^-\coloneqq\prod_{t=i}^{m-1}J(\pi_t^-)\times\left(Z([\nu^{\alpha_m-l}\rho_m],\Lambda_2^m)\times J(({}^-\Lambda^m_1)^-)\times Y_{3,k_m}^m\right)\times\prod_{t=m+1}^{j}J(\pi_t^-)
$$
if $i\leq m\leq j$. Note that $X_{1,r}|_{P_n}=\mathbb X(\pi_1,\dotsc,\pi_r,m)$ and $X_{1,r}^-|_{P_n}=\mathbb X(\pi_1^-,\dotsc,\pi_r^-,m)$.

\begin{proof}[Proof of Lemma \ref{lem1}]
	
	Note that $J(\pi_r)^{(t)}=0$ for $1\leq t< n_r$ and hence any nontrivial $P_{\alpha_rn_r}$-submodule of $J(\pi_r)|_P$ has highest derivative at least $n_r$. By repeatedly applying Lemma \ref{reduction} we obtain that every nonzero $P_{\sum_{i=m+1}^r\alpha_in_i}$-submodule of $$X_{m+1,r}|_P=(J(\pi_{m+1})\times\dotsc\times J(\pi_r))|_P$$ has highest derivative at least $n_{m+1}+\dotsc+n_r$. By Lemma \ref{zelvlemm} 2) and Lemma \ref{higherder} any nonzero subrepresentation of
	$$X_{m,r}|_P=\left(Z([\nu^{\alpha_m-1}\rho_m],\Lambda_2^m)\times Y_{3,k_m}^m\times X_{m+1,r}\right)|_P$$ 
has to have highest derivative at least 
	$$2n_m+n_{m+1}+\dotsc+n_r.$$ From this we obtain by repeatedly applying Lemma \ref{reduction} that $\sigma$ has to have highest derivative at least 
	$$n_1+\dotsc+n_{m-1}+2n_m+n_{m+1}+\dotsc+n_r$$ which equals $\lambda_1+n_m$.
\end{proof}

\begin{proof}[Proof of Lemma \ref{lem2}]
Again note that $J(\pi_r)^{(t)}=0$ for $1\leq t< n_r$ and hence any nonzero $P_{\alpha_rn_r}$-submodule of $J(\pi_r)|_P$ has highest derivative at least $n_r$. A repeated application of Lemma \ref{zelvlemm} 2) and Lemma \ref{reduction} yields that $\sigma$ has highest derivative at least $\lambda_1$.\\
First we show that if $\sigma$ has highest derivative $\lambda_1$ then we have an embedding
 $$\sigma\hookrightarrow\left(X_{1,r}^-\times\prod_{t=1}^{r}\nu^{\alpha_t-1}\rho_t\right)|_{P_n}.$$
We will prove this using induction, so suppose that for some $1\leq i\leq r$ we have shown that there is an embedding
\begin{equation}\label{lem2eq1}
    \sigma\hookrightarrow\left(X_{1,i}\times X_{i+1,r}^-\times\prod_{t=i+1}^{r}\nu^{\alpha_t-1}\rho_t\right)|_{P_n}.
\end{equation}
We will then prove that we also have an embedding
$$\sigma\hookrightarrow\left(X_{1,i-1}\times X_{i,r}^-\times\prod_{t=i}^{r}\nu^{\alpha_t-1}\rho_t\right)|_{P_n}.$$
Firstly, for all $1\leq i\leq r$, we will show that\begin{equation}\label{lem2eq2}
    X_{i,i}\cong X_{i,i}^{-}\times\nu^{\alpha_i-1}\rho_i.
\end{equation}
Recall that for all $i\not=m$ we have
	$$X_{i,i}=J(\pi_i)=J(\Lambda_1^i)\times J(\Lambda_2^i)\times\dotsc\times J(\Lambda_{k_i}^i).$$ Let $l_i$ be the length of $\Lambda_1^i$ and hence $J(\Lambda_1^i)=\nu^{\alpha_i-l_i}\rho_i\times\nu^{\alpha_i-l_i+1}\rho_i\times\dotsc\times\nu^{\alpha_i-1}\rho_i.$ If $k_i=1$ we are done. If $k_i\geq 2$ note that for $2\leq j\leq k_i$ we have that $\nu^{\alpha_i-1}\rho_i$ is unlinked with all factors of $\Lambda_j^i$ (since, if $k_i\geq 2$, by assumption the length of $\Lambda_1^i$ is larger than one). This implies for all $2\leq j\leq k_i$ that
 $$\nu^{\alpha_i-1}\rho_i\times J(\Lambda_j^{i})\cong J(\Lambda_j^{i})\times\nu^{\alpha_i-1}\rho_i.$$
 and hence that
 $$X_{i,i}\cong X_{i,i}^{-}\times\nu^{\alpha_i-1}\rho_i.$$
If $i$ equals $m$, then
$$X_{m,m}=Z([\nu^{\alpha_m-l}\rho_m],\Lambda_2^m)\times J({}^-\Lambda^m_1)\times Y_{3,k_m}^m$$
and since $\nu^{\alpha_m-1}\rho_m$ is unlinked with all factors of $Y_{3,k_m}^m$ we analogously see that 
\begin{align*}
    J({}^-\Lambda^m_1)\times Y_{3,k_m}^m&\cong J(({}^-\Lambda^m_1)^-)\times\nu^{\alpha_m-1}\rho_m\times Y_{3,k_m}^m\\
    &\cong J(({}^-\Lambda^m_1)^-)\times Y_{3,k_m}^m\times\nu^{\alpha_m-1}\rho_m
\end{align*}
and hence
 $$X_{m,m}\cong X_{m,m}^{-}\times\nu^{\alpha_m-1}\rho_m.$$
By putting together Equations (\ref{lem2eq1}) and (\ref{lem2eq2}) we obtain an embedding 
$$\sigma\hookrightarrow\left(X_{1,i-1}\times X_{i,i}^{-}\times\nu^{\alpha_i-1}\rho_i\times X_{i+1,r}^-\times\prod_{t=i+1}^{r}\nu^{\alpha_t-1}\rho_t\right)|_{P_n}.$$
Suppose now that we have shown that there is an embedding 
$$\sigma\hookrightarrow\left(X_{1,i-1}\times X_{i,j}^-\times\nu^{\alpha_i-1}\rho_i\times X_{j+1,r}^-\times\prod_{t=i+1}^{r}\nu^{\alpha_t-1}\rho_t\right)|_{P_n}$$
for some $i\leq j\leq r-1$. We proceed via induction on $j$ and show that in this case there is also an embedding
$$\sigma\hookrightarrow\left(X_{1,i-1}\times X_{i,j+1}^-\times\nu^{\alpha_i-1}\rho_i\times X_{j+2,r}^-\times\prod_{t=i+1}^{r}\nu^{\alpha_t-1}\rho_t\right)|_{P_n}.$$
There are four cases.
\begin{itemize}
\item If $\nu^{\alpha_i-1}\rho_i\not\cong\nu^{\alpha_{j+1}-1}\rho_{j+1}$, then $\nu^{\alpha_i-1}\rho_i$ is unlinked with all factors in $X_{j+1,j+1}^-$ and we obtain that
$$\nu^{\alpha_i-1}\rho_i\times X_{j+1,j+1}^-\cong X_{j+1,j+1}^-\times\nu^{\alpha_i-1}\rho_i$$
and hence
$$\sigma\hookrightarrow\left(X_{1,i-1}\times X_{i,j+1}^-\times\nu^{\alpha_i-1}\rho_i\times X_{j+2,r}^-\times\prod_{t=i+1}^{r}\nu^{\alpha_t-1}\rho_t\right)|_{P_n}.$$

\item If $\nu^{\alpha_i-1}\rho_i\cong\nu^{\alpha_{j+1}-1}\rho_{j+1}$ and $j+1\not=m$ we have that $X_{j+1,j+1}^-=J(\pi_{j+1}^-)$ and obtain
$$\nu^{\alpha_i-1}\rho_i\times X_{j+1,j+1}^-\cong J((\Lambda^{j+1}_1)^{-,2} )\times\nu^{\alpha_i-1}\rho_i\times\nu^{\alpha_{j+1}-2}\rho_{j+1}\times Y^{j+1}_{2,k_{j+1}}.$$
Since $\nu^{\alpha_i-1}\rho_i\cong\nu^{\alpha_{j+1}-1}\rho_{j+1}$ note that $\nu^{\alpha_i-1}\rho_i\times\nu^{\alpha_{j+1}-2}\rho_{j+1}$ is reducible with the two Jordan-H\"{o}lder components $Z([\nu^{\alpha_i-1}\rho_i],[\nu^{\alpha_i-2}\rho_i])$ and $Z([\nu^{\alpha_i-2}\rho_i,\nu^{\alpha_i-1}\rho_i])$.
Write $\Theta$ for
$$Y^{j+1}_{2,k_{j+1}}\times X_{j+2,r}^-\times\prod_{t=i+1}^{r}\nu^{\alpha_t-1}\rho_t.$$
Then there is an embedding of $\sigma$ into 

$$\left(X_{1,i-1}\times X_{i,j}^-\times J((\Lambda^{j+1}_1)^{-,2} )\times Z([\nu^{\alpha_i-1}\rho_i],[\nu^{\alpha_i-2}\rho_i])\times\Theta\right)|_{P_n}$$
or into
$$\left(X_{1,i-1}\times X_{i,j}^-\times J((\Lambda^{j+1}_1)^{-,2})\times Z([\nu^{\alpha_i-2}\rho_i,\nu^{\alpha_i-1}\rho_i])\times\Theta\right)|_{P_n}.$$
By Lemma \ref{reduction} any nonzero subrepresentation of $(\prod_{t=i+1}^{r}\nu^{\alpha_t-1}\rho_t)|_P$ has highest derivative at least $n_{i+1}+\dotsc+n_r$ and then by Lemma \ref{zelvlemm} 2) any nonzero subrepresentation of $\Theta$ also has highest derivative at least $n_{i+1}+\dotsc+n_r$. By Lemma \ref{higherder} any nonzero subrepresentation of 
$$(Z([\nu^{\alpha_i-1}\rho_i],[\nu^{\alpha_i-2}\rho_i])\times\Theta)|_{P}$$
has highest derivative at least $2n_i+n_{i+1}+\dotsc+n_r$. By applying Lemma \ref{zelvlemm} 2) and Lemma \ref{reduction} again we obtain that any nonzero subrepresentation of
 $$\left(X_{1,i-1}\times X_{i,j}^-\times J((\Lambda^{j+1}_1)^{-,2} )\times Z([\nu^{\alpha_i-1}\rho_i],[\nu^{\alpha_i-2}\rho_i])\times\Theta\right)|_{P_n}$$
 has highest derivative at least $\lambda_1+n_i$. However, since by assumption $\sigma$ has highest derivative $\lambda_1$ we see that $\sigma$ can be embedded into
$$\left(X_{1,i-1}\times X_{i,j}^-\times J((\Lambda^{j+1}_1)^{-,2})\times Z([\nu^{\alpha_i-2}\rho_i,\nu^{\alpha_i-1}\rho_i])\times\Theta\right)|_{P_n}.$$
Since 
$$Z([\nu^{\alpha_i-2}\rho_i,\nu^{\alpha_i-1}\rho_i])\hookrightarrow \nu^{\alpha_i-2}\rho_i\times\nu^{\alpha_i-1}\rho_i$$
and 
$$\nu^{\alpha_i-1}\rho_i\times\prod_{t=2}^{k_{j+1}}J(\Lambda^{j+1}_t)\cong\prod_{t=2}^{k_{j+1}}J(\Lambda^{j+1}_t)\times\nu^{\alpha_i-1}\rho_i$$
we obtain an embedding of $\sigma$ into
$$\left(X_{1,i-1}\times X_{i,j+1}^-\times\nu^{\alpha_i-1}\rho_i\times X_{j+2,r}^-\times\prod_{t=i+1}^{r}\nu^{\alpha_t-1}\rho_t\right)|_{P_n}.$$
\item If $j+1=m,l>2$ and $\nu^{\alpha_i-1}\rho_i\cong\nu^{\alpha_{m}-1}\rho_{m}$ recall that 
$$X_{m,m}^-=Z([\nu^{\alpha_m-l}\rho_m],\Lambda_2^m)\times J(({}^-\Lambda^m_1)^-)\times Y_{3,k_m}^m.$$ Then $\nu^{\alpha_i-1}\rho_i$ is unlinked with $[\nu^{\alpha_m-l}\rho_m]$ and $\Lambda_2^m$ and all factors of $J(({}^-\Lambda^m_1)^{-,2})$. Write 
$\Xi_1$ for 
$$Z([\nu^{\alpha_m-l}\rho_m],\Lambda_2^m)\times J(({}^-\Lambda^m_1)^{-,2})$$
and $\Xi_2$ for
$$Y_{3,k_m}^m\times X_{j+2,r}^-\times\prod_{t=i+1}^{r}\nu^{\alpha_t-1}\rho_t.$$
We obtain an embedding of $\sigma$ into
$$\left(X_{1,i-1}\times X_{i,j}^-\times \Xi_1\times \nu^{\alpha_i-1}\rho_i\times\nu^{\alpha_{i}-2}\rho_{i}\times \Xi_2\right)|_{P_n}.$$
By Lemma \ref{zelvlemm} 2) and Lemma \ref{reduction} any nonzero subrepresentation of $\Xi_2|_P$ has highest derivative at least $n_{i+1}+\dotsc+n_r$. Lemma \ref{higherder} implies that any nonzero subrepresentation of $$\left(Z([\nu^{\alpha_i-1}\rho_i],[\nu^{\alpha_{i}-2}\rho_{i}])\times \Xi_2\right)|_P$$ has highest derivative at least $2n_i+n_{i+1}+\dotsc+n_r$
and by Lemma \ref{zelvlemm} 2) and Lemma \ref{reduction} we then obtain that any nonzero subrepresentation of 
$$\left(X_{1,i-1}\times X_{i,j}^-\times \Xi_1\times Z([\nu^{\alpha_i-1}\rho_i],[\nu^{\alpha_{i}-2}\rho_{i}])\times \Xi_2\right)|_{P_n}.$$
has highest derivative at least $\lambda_1+n_i$. By assumption we then have an embedding of $\sigma$ into 
$$\left(X_{1,i-1}\times X_{i,j}^-\times\Xi_1\times Z([\nu^{\alpha_{i}-2}\rho_{i},\nu^{\alpha_i-1}\rho_i]) \times \Xi_2\right)|_{P_n}.$$
Since $Z([\nu^{\alpha_i-2}\rho_i,\nu^{\alpha_i-1}\rho_i])\hookrightarrow \nu^{\alpha_i-2}\rho_i\times\nu^{\alpha_i-1}\rho_i$ and $\nu^{\alpha_i-1}\rho_i\times Y_{3,k_m}^m\cong Y_{3,k_m}^m\times\nu^{\alpha_i-1}\rho_i$ we obtain an embedding 
$$\sigma\hookrightarrow\left(X_{1,i-1}\times X_{i,m}^-\times\nu^{\alpha_i-1}\rho_i\times X_{m+1,r}^-\times\prod_{t=i+1}^{r}\nu^{\alpha_t-1}\rho_t\right)|_{P_n}$$
\item If $j+1=m,l=2$ and $\nu^{\alpha_i-1}\rho_i\cong\nu^{\alpha_{m}-1}\rho_{m}$ then 
$$X_{m,m}^-=Z([\nu^{\alpha_m-2}\rho_m],\Lambda_2^m)\times Y_{3,k_m}^m$$ and the representation
	$$\nu^{\alpha_{i}-1}\rho_{i}\times Z([\nu^{\alpha_m-2}\rho_m],\Lambda^m_2)$$
 is reducible and by Lemma \ref{weirdcase} has the two Jordan-H\"{o}lder components
	$$Z([\nu^{\alpha_i-1}\rho_i],[\nu^{\alpha_i-2}\rho_i],\Lambda^m_2)$$ and 
	$$Z([\nu^{\alpha_i-1}\rho_i,\nu^{\alpha_i-2}\rho_i],\Lambda^m_2).$$
 Write $\Upsilon$ for the representation $$Y_{3,k_m}^m\times X_{m+1,r}^-\times\prod_{t=m}^{r}\nu^{\alpha_t-1}\rho_t$$ and note that by Lemma \ref{zelvlemm} 2) and Lemma \ref{reduction} any nonzero subrepresentation of $\Upsilon|_P$ has highest derivative at least $n_m+\dotsc+n_{r}$. Then $\sigma$ is contained in either
 \begin{equation*}
     \left(X_{1,i-1}\times X_{i,m-1}^-\times Z([\nu^{\alpha_i-1}\rho_i],[\nu^{\alpha_i-2}\rho_i],\Lambda^m_2)\times\Upsilon\right)|_{P_n}
 \end{equation*}
 or in
 \begin{equation}\label{fourcastrue}
    \left(X_{1,i-1}\times X_{i,m-1}^-\times Z([\nu^{\alpha_i-1}\rho_i,\nu^{\alpha_i-2}\rho_i],\Lambda^m_2)\times\Upsilon\right)|_{P_n}.
 \end{equation}
In the first case note that by the theory of Bernstein and Zelevinsky that $$Z([\nu^{\alpha_i-1}\rho_i],[\nu^{\alpha_i-2}\rho_i],\Lambda^m_2)\hookrightarrow Z([\nu^{\alpha_i-1}\rho_i],[\nu^{\alpha_i-2}\rho_i])\times Z(\Lambda^m_2)$$
and hence we have an embedding of $\sigma$ into
\begin{equation}\label{fourcas2}
    \left(X_{1,i-1}\times X_{i,m-1}^-\times Z([\nu^{\alpha_i-1}\rho_i],[\nu^{\alpha_i-2}\rho_i])\times Z(\Lambda^m_2)\times\Upsilon\right)|_{P_n}.
\end{equation}
However, by Lemma \ref{zelvlemm} 2) and Lemma \ref{higherder} any nonzero subrepresentation of
$$\left(Z([\nu^{\alpha_i-1}\rho_i],[\nu^{\alpha_i-2}\rho_i])\times Z(\Lambda^m_2)\times\Upsilon\right)|_P$$
has highest derivative at least $2n_{i}+n_{i+1}+\dotsc+n_{r}$. By Lemma \ref{zelvlemm} and Lemma \ref{reduction} any nonzero subrepresentation of (\ref{fourcas2}) has highest derivative at least $\lambda_1+n_{i}$. Hence $\sigma$ is contained in (\ref{fourcastrue}), which by Lemma \ref{redpar} implies that $\sigma$ can be embedded into 
$$\left(X_{1,i-1}\times X_{i,m-1}^-\times Z([\nu^{\alpha_i-2}\rho_i],\Lambda^m_2)\times \nu^{\alpha_i-1}\rho_i\times\Upsilon\right)|_{P_n}.$$
Since $\nu^{\alpha_i-1}\rho_i$ and all factors of $Y_{3,k_m}^m$ are unlinked we obtain an embedding
$$\sigma\hookrightarrow\left(X_{1,i-1}\times X_{i,m}^-\times\nu^{\alpha_i-1}\rho_i\times X_{m+1,r}^-\times\prod_{t=i+1}^{r}\nu^{\alpha_t-1}\rho_t\right)|_{P_n}.$$
\end{itemize}
Induction on $j$ shows that 
$$\sigma\hookrightarrow\left(X_{1,i-1}\times X_{i,r}^-\times\prod_{t=i}^{r}\nu^{\alpha_t-1}\rho_t\right)|_{P_n}$$
and via induction on $i$ we obtain that
$$\sigma\hookrightarrow\left(X_{1,r}^-\times\prod_{t=1}^{r}\nu^{\alpha_t-1}\rho_t\right)|_{P_n}.$$
By Lemma \ref{zelvlemm} 1) there is a $\gamma\geq 0$ such that $(\Phi^-)^{\gamma}(\sigma)\not=0$ and $(\Phi^-)^{\gamma}(\sigma)$ can be embedded into
$$\left(X_{1,r}^-\right)^{(\gamma)}\times\left(\left[\prod_{t=1}^{r}\nu^{\alpha_t-1}\rho_t\right]|_P\right).$$
 By Lemma \ref{reduction} any nonzero subrepresentation of $(\prod_{t=1}^{r}\nu^{\alpha_t-1}\rho_t)|_P$ has highest derivative at least $\lambda_1$, which implies that $\sigma$ has highest derivative at least $\lambda_1+\gamma$. Since by assumption $\sigma$ has highest derivative $\lambda_1$ we see that $\gamma$ equals zero and hence
$$\sigma\hookrightarrow X_{1,r}^-\times\left(\left[\prod_{t=1}^{r}\nu^{\alpha_t-1}\rho_t\right]|_P\right).$$
 By taking the $\lambda_1$-derivative and Corollary 4.14 of ~\cite{BZI} we obtain that 
$$\sigma^{(\lambda_1)}\hookrightarrow X_{1,r}^-$$
which finishes the proof.
\end{proof}

\begin{proof}[Proof of Lemma \ref{lem3}]
	We proceed via induction on $l$, the length of $\Lambda_1^m$. If $l=1$ this follows from Lemma \ref{lem1}. Suppose now that $l>1$. By Lemma \ref{lem2} we have that $\lambda_1^\sigma\geq\lambda_1$. If $\lambda_1^\sigma>\lambda_1$ we are done. If $\lambda_1^\sigma=\lambda_1$ we can apply the second part of Lemma \ref{lem2} and obtain an embedding
	$$\sigma^{(\lambda_1)}\hookrightarrow\mathbb X(\pi_1^-,\dotsc,\pi_r^-,m).$$
	Let $\sigma'$ be an irreducible subrepresentation of $\sigma^{(\lambda_1)}|_P$ and $\lambda^{\sigma'}=(\lambda_1^{\sigma'},\lambda_2^{\sigma'},\dotsc)$ be the highest derivative partition of $\sigma'$. Clearly we have an embedding $\sigma'\hookrightarrow\mathbb X(\pi_1^-,\dotsc,\pi_r^-,m)$. Since $\pi_m^-=Z((\Lambda_1^m)^-,\Lambda_2^m,\dotsc,\Lambda_{k_m}^m)$ and the length of $(\Lambda_1^m)^-$ is $l-1$ we can apply our induction hypothesis. We obtain that $\lambda_i^{\sigma'}\geq\lambda_{i+1}$ for $1\leq i\leq l-1$ and either:
	\begin{itemize}
		\item there exits $1\leq j<l-1$ such that $\lambda_j^{\sigma'}>\lambda_{j+1}$,
		\item or $\lambda_{l-1}^{\sigma'}>\lambda_l+n_m$.
	\end{itemize}  
	In the first case 
	$$(((\sigma')^{(\lambda_2)})^{(\lambda_3)\dotsc})^{(\lambda_j^{\sigma'})}\not=0$$ 
	and hence  $$(((\sigma')^{(\lambda_2)})^{(\lambda_3)\dotsc})^{(\lambda_j^{\sigma'})}\hookrightarrow((\sigma^{(\lambda_1)})^{(\lambda_2)\dotsc})^{(\lambda_j^{\sigma'})}\not=0.$$
	If $\lambda_j^{\sigma'}=\lambda_{j+1}$ for $1\leq j<l-1$ we have 
	$$(((\sigma')^{(\lambda_2)})^{(\lambda_3)\dotsc})^{(\lambda_l+n_m)}\not=0$$ 
	and hence
	$$(((\sigma')^{(\lambda_2)})^{(\lambda_3)\dotsc})^{(\lambda_l+n_m)}\hookrightarrow((\sigma^{(\lambda_1)})^{(\lambda_2)\dotsc})^{(\lambda_l+n_m)}\not=0.$$
    In both cases we see that $(\lambda_1,\lambda_2,\dotsc)$ cannot be the highest derivative partition of $\sigma$ which implies the result.
\end{proof}

We are now almost in a position to prove the Theorem \ref{thm:truncate}. The only point missing is the following lemma.

\subsection{Proof of Theorem \ref{thm:truncate}}
\begin{lemma}
Let $\pi$ be a finitely generated representation of $G_n$. Suppose that each summand in the cosocle of $\pi$ has highest derivative partition not smaller than or equal to $\lambda$. Then $\pi^{\preceq\lambda}=0$.
\end{lemma}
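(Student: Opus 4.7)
The plan is to argue by contradiction, using only the defining properties of the functor $V \mapsto V^{\preceq \lambda}$ as the left adjoint to inclusion. Suppose for contradiction that $\pi^{\preceq \lambda} \neq 0$. Since $V \mapsto V^{\preceq \lambda}$ is right exact and the natural map $\pi \to \pi^{\preceq \lambda}$ is surjective, the target is finitely generated. Any nonzero finitely generated smooth $\CO[G_n]$-module has a maximal proper subobject (Zorn), and hence admits a nonzero irreducible quotient $\pi'$.

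Now $\pi'$ is a quotient of $\pi^{\preceq \lambda}$, and $\Rep_{\CO}(G_n)^{\preceq \lambda}$ is closed under subquotients, so $\pi'$ lies in $\Rep_{\CO}(G_n)^{\preceq \lambda}$; by definition this means its highest derivative partition $\lambda(\pi')$ satisfies $\lambda(\pi') \preceq \lambda$. On the other hand, the composite $\pi \twoheadrightarrow \pi^{\preceq \lambda} \twoheadrightarrow \pi'$ exhibits $\pi'$ as an irreducible quotient of $\pi$, hence as a direct summand of the cosocle of $\pi$. The hypothesis of the lemma then forces $\lambda(\pi') \not\preceq \lambda$, contradicting the previous sentence. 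Therefore $\pi^{\preceq \lambda} = 0$.

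There is essentially no obstacle here: the whole argument rests on the two features of $V \mapsto V^{\preceq \lambda}$ emphasized earlier in the paper (surjectivity of the unit, and the characterization of $\Rep^{\preceq \lambda}$ in terms of highest derivative partitions of simple subquotients), together with the elementary fact that finitely generated smooth representations have irreducible quotients. The only point that needs care is the preservation of finite generation under the truncation functor, which is recorded in the excerpt.
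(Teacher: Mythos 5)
Your proof is correct, and it closes the argument by a genuinely different mechanism than the paper's. Both proofs begin the same way: assume $\pi^{\preceq\lambda}\neq 0$, use finite generation to produce a nonzero irreducible quotient $\sigma$, and note that $\sigma$ is then also an irreducible quotient of $\pi$, whence the hypothesis forces $\lambda(\sigma)\not\preceq\lambda$. At this point the two arguments diverge. The paper derives its contradiction from the explicit cokernel description of $V\mapsto V^{\preceq\lambda}$: since $\lambda(\sigma)\not\preceq\lambda$ there is a nonzero map $W'_{\lambda(\sigma)}\to\sigma$, which by projectivity of $W'_{\lambda(\sigma)}$ lifts to a map $W'_{\lambda(\sigma)}\to\pi$ whose image is not contained in the submodule $\pi'=\ker(\pi\to\pi^{\preceq\lambda})$; but by the very definition of $\pi'$ as the image of $\bigoplus_{\lambda'\not\preceq\lambda}W'_{\lambda'}\otimes\Hom(W'_{\lambda'},\pi)$, that image must lie in $\pi'$. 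You instead use only the abstract fact, recorded earlier in the section, that $\pi^{\preceq\lambda}$ lies in $\Rep_{\CO}(G_n)^{\preceq\lambda}$, together with closure of that full subcategory under subquotients, to conclude directly that $\lambda(\sigma)\preceq\lambda$, which contradicts $\lambda(\sigma)\not\preceq\lambda$. Your route is cleaner and makes no reference to the $W'_{\lambda'}$ or their projectivity; the paper's route is more explicit and self-contained in that it essentially re-derives the relevant part of the adjunction on the spot. Both are complete; the foundational facts each relies on (the cokernel formula for the paper, the statement that $\pi^{\preceq\lambda}\in\Rep^{\preceq\lambda}$ for you) are each asserted in section 5.
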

\begin{proof}
Let $\pi'$ be the image of $\bigoplus_{\lambda'\not\preceq\lambda}W'_\lambda\otimes\Hom(W'_\lambda,\pi)\to\pi$, i.e.\ $\pi^{\preceq\lambda}=\pi/\pi'$. If $\pi^{\preceq\lambda}$ is not zero it has an irreducible quotient $\sigma$, i.e.\ we have a surjective map $\pi^{\preceq\lambda}\twoheadrightarrow\sigma$. However, then clearly $\sigma$ also appears as a quotient of $\pi$ and hence by assumption has highest derivative partition $\lambda_\sigma$ not smaller than or equal to $\lambda$. We have a nonzero map $W'_{\lambda_\sigma}\to\sigma$ which by the projectivity of $W'_{\lambda_\sigma}$ gives rise to a nonzero map $W'_{\lambda_\sigma}\to\pi$, whose image is not contained in $\pi'$. This yields a contradiction.
\end{proof}
\begin{proof}[Proof of Theorem \ref{thm:truncate}]
The statement we want to prove is that for $1\leq t\leq r$ let $\Delta_t$ be segments such that $\Delta_i$ does not precede $\Delta_j$ for any $i>j$. Moreover, if for some $1\leq i,j\leq r$ we have that $\Delta_i\supset\Delta_j$ we assume that $i>j$. Let $\lambda$ be the highest derivative partition of $Z(\Delta_1,\dotsc,\Delta_r)$. Then the surjection
	$$I(\Delta_1)\times\dotsc\times I(\Delta_r)\twoheadrightarrow Z(\Delta_1)\times\dotsc\times Z(\Delta_r)$$
	induces an isomorphism 
	$$\left(I(\Delta_1)\times\dotsc\times I(\Delta_r)\right)^{\preceq\lambda}\cong  Z(\Delta_1)\times\dotsc\times Z(\Delta_r).$$
Let $\Omega$ be the kernel of the map
$$I(\Delta_1)\times\dotsc\times I(\Delta_r)\twoheadrightarrow Z(\Delta_1)\times\dotsc\times Z(\Delta_r).$$
Then, since the functor $V\mapsto V^{\preceq\lambda}$ is right exact, it is enough to prove that $\Omega^{\preceq\lambda}=0$.
    By the above Lemma this would follow if any summand of the cosocle of $\Omega$ has highest derivative partition not smaller than or equal to $\lambda$. By taking contragredients it is hence enough to show that if $\omega$ is an irreducible subrepresentation of $\widetilde{\Omega}$, then $\omega$ has highest derivative partition not smaller than or equal to $\lambda$. Now any irreducible subrepresentation of $\widetilde{\Omega}$ is contained in a representation of the form $\pi_1\times\dotsc\times\pi_r$ where $\pi_i$ are irreducible representations with supercuspidal supports $\widetilde{\Delta_i}$ and who are not all isomorphic to $Z(\widetilde{\Delta_i})$. The result then follows from Proposition \ref{mainprop}.
\end{proof}

\end{document}